\documentclass[10pt]{amsart}
      \usepackage[mathscr]{eucal}
      \usepackage{amsmath,amsfonts}
      %\usepackage{showkeys}
      %

      %%%%%%%%%%
             \hoffset -1.3cm
      \voffset -1cm
      %\voffset +0.2cm
      %\textwidth 13.5truecm
      %\textwidth 14.5truecm
      %\textwidth 15truecm
      %\textwidth 15.5truecm
\textwidth 16truecm
      %\textheight 21.5truecm
      \textheight 22.5truecm

      \parskip\smallskipamount
          \newtheorem{theorem}{Theorem}[section]
      
      \newtheorem{proposition}[theorem]{Proposition}
      \newtheorem{corollary}[theorem]{Corollary}
      \newtheorem{lemma}[theorem]{Lemma}
      \newtheorem{example}[theorem]{Example}
      
      \newtheorem{remark}[theorem]{Remark}
      % The following paragraph writes the equation numbers with two counters,
      % the first is the section number and the second resets within the section.
      \makeatletter
      \@addtoreset{equation}{section}
      \makeatother

      \newcommand{\CC}{{\mathbb C}}
      \newcommand{\NN}{{\mathbb N}}

      \newcommand{\DD}{{\mathbb D}}
      
      \newcommand{\FF}{{\mathbb F}}

      \newcommand{\cA}{{\mathcal A}}
      
      \newcommand{\cC}{{\mathcal C}}
      \newcommand{\cD}{{\mathcal D}}
      \newcommand{\cE}{{\mathcal E}}
      
      \newcommand{\cG}{{\mathcal G}}
      \newcommand{\cH}{{\mathcal H}}
      \newcommand{\cK}{{\mathcal K}}
      
      \newcommand{\cM}{{\mathcal M}}
      \newcommand{\cQ}{{\mathcal Q}}
      \newcommand{\cN}{{\mathcal N}}
      \newcommand{\cP}{{\mathcal P}}
      \newcommand{\cR}{{\mathcal R}}
      \newcommand{\cS}{{\mathcal S}}
      
      \newcommand{\cU}{{\mathcal U}}
      \newcommand{\cV}{{\mathcal V}}
      \newcommand{\cY}{{\mathcal Y}}
      \newcommand{\cX}{{\mathcal X}}

      \newdimen\expt
      \expt=.1ex
      \def\boxit#1{\setbox0\hbox{$\displaystyle{#1}$}
            \hbox{\lower.4\expt
       \hbox{\lower3\expt\hbox{\lower\dp0
            \hbox{\vbox{\hrule height.4\expt
       \hbox{\vrule width.4\expt\hskip3\expt
            \vbox{\vskip3\expt\box0\vskip2\expt}%
       \hskip3\expt\vrule width.4\expt}\hrule height.4\expt}}}}}}
      \begin{document}
       \pagestyle{myheadings}
      \markboth{ Gelu Popescu}{ Noncommutative Berezin transforms and model theory }
      %\pagestyle{plain}
      %\begin{flushright}
       % \it Date of this draft: \today
      %\end{flushright}
      %\bigskip

      \title [ Noncommutative Berezin transforms and model theory]
      { Noncommutative Berezin transforms and model theory
      }
        \author{Gelu Popescu}
     % \date{\today}
\date{February 28, 2007}
      \thanks{Research supported in part by an NSF grant}
       \subjclass[2000]{Primary: 47A20, 47A56;  Secondary:
47A13, 47A63}
      \keywords{Multivariable operator theory, Noncommutative domain, Noncommutative variety,
Dilation theory,  Model theory,  Weighted shift,   Wold
decomposition,
 Fock space,
von Neumann inequality,   Berezin transform,   Fock space, Creation
operators.
       }
      \address{Department of Mathematics, The University of Texas
      at San Antonio \\ San Antonio, TX 78249, USA}
      \email{\tt gelu.popescu@utsa.edu}

\begin{abstract}
In this paper, we initiate the  study  of  a class ${\bf D}_p^m(\cH)$
of noncommutative domains
     of $n$-tuples of bounded linear operators
    on a Hilbert space $\cH$, where $m\geq 2$, $n\geq2$, and $p$ is a positive
     regular  polynomial in $n$ noncommutative indeterminates. These
     domains
      are  defined  by certain positivity conditions on $p$, i.e.,
          $$
{\bf D}_p^m(\cH):=\left\{X:=(X_1,\ldots,X_n) :\ (1-p)^k(X,X^*) \geq
0 \ \text{ for } \ 1\leq k\leq m \right\}.
$$
    Each such a domain  has a  universal model $(W_1,\ldots, W_n)$ of
weighted shifts acting on the full Fock space $F^2(H_n)$  with $n$
generators. The  study of  ${\bf D}_p^m(\cH)$ is close related to
the study of the weighted shifts $W_1,\ldots,W_n$, their joint
invariant subspaces, and the representations of the algebras they
generate: the domain algebra $\cA_n({\bf D}_p^m)$, the Hardy algebra
$F_n^\infty({\bf D}_p^m)$, and the $C^*$-algebra $C^*(W_1,\ldots,
W_n)$. A good part of this paper deals with these issues.

The main tool, which we  introduce
      here, is  a
     noncommutative Berezin type  transform associated with each
    $n$-tuple of operators in  ${\bf D}_p^m(\cH)$.
    The study of this transform and its boundary behavior leads to Fatou type results,
      functional calculi, and
   a model theory for $n$-tuples of operators in ${\bf D}_p^m(\cH)$.
These results extend to  noncommutative  varieties
$\cV_{p,\cQ}^m(\cH)\subset {\bf D}_p^m(\cH)$
    generated by   classes $\cQ$  of noncommutative polynomials.
    When $m\geq 2$, $n\geq2$, $p=Z_1+\cdots +Z_n$,  and  $\cQ=0$, the elements
    of the corresponding variety ${\cV }_{p,\cQ}^m(\cH)$ can be seen as
    multivariable noncommutative analogues of Agler's
    $m$-hypercontractions.

Our results apply, in  particular, when $\cQ$ consists of the
noncommutative polynomials
  $Z_iZ_j-Z_jZ_i$,  $i,j=1,\ldots, n$. In this case, the model space
  is a
      symmetric weighted Fock space $F_s^2({\bf D}_p^m)$,  which is  identified with
      a reproducing kernel Hilbert space of holomorphic functions on a Reinhardt domain
      in $\CC^n$, and the universal model   is
      the $n$-tuple $(M_{\lambda_1},\ldots,
      M_{\lambda_n})$  of multipliers  by the coordinate functions.
In this particular case, we obtain  a model theory for commuting
$n$-tuples of operators in ${\bf D}_p^m(\cH)$, recovering several
results already existent in the literature.
\end{abstract}

      \maketitle

\bigskip

\section*{Introduction}

Let $\FF_n^+$ be the unital free semigroup on $n$ generators
$g_1,\ldots, g_n$ and the identity $g_0$,  and consider a polynomial
$q=q(Z_1,\ldots, Z_n)=\sum c_\alpha Z_\alpha$   in noncommutative
indeterminates $Z_1,\ldots,Z_n$, where we denote $Z_\alpha:=
Z_{i_1}\ldots Z_{i_k}$ if $\alpha=g_{i_1}\ldots g_{i_k}\in \FF_n^+$,
\ $i_1,\ldots i_k\in \{1,\ldots,n\}$, and $Z_{g_0}:=I$. We associate
with $q$
  the operator
$$q(X,X^*):=\sum c_\alpha X_\alpha X_\alpha^*,
$$
where $X:=(X_1,\ldots,X_n)\in B(\cH)^n$ and $B(\cH)$ is the algebra
of all bounded linear operators  on a Hilbert space  $\cH$. Let
$p=p(Z_1,\ldots, Z_n)=\sum a_\alpha Z_\alpha$, $a_\alpha\in \CC$, be
a positive regular polynomial, i.e., $a_\alpha\geq 0$, $a_{g_0}=0$,
and $a_{g_i}>0$, $i=1,\ldots,n$. Given $m,n\in\{1,2,\ldots \}$, we
define the noncommutative domain
$$
{\bf D}_p^m(\cH):=\left\{X:=(X_1,\ldots,X_n)\in B(\cH)^n:\
(1-p)^k(X,X^*) \geq 0 \ \text{ for } \ 1\leq k\leq m \right\}.
$$
In the last fifty years, these domains have been studied in several
particular cases. Most of all, we should mention that the study of
the closed operator unit ball
$$
[B(\cH)]_1^-:=\{X\in B(\cH):\ I-XX^*\geq 0\}
$$
(which corresponds to the case $m=1$, $n=1$, and $p=Z$) has
generated the celebrated
 Sz.-Nagy--Foias theory of contractions on Hilbert spaces and has had profound
 implications  in  function theory, interpolation,
prediction theory, scattering theory, and linear system theory (see
\cite{SzF-book}, \cite{FF-book}, \cite{FFGK-book}, \cite{BaGR},
etc).
 The case when $m=1$, $n\geq 2$, and $p=Z_1+\cdots+ Z_n$, corresponds to
 the closed operator  ball
 $$
 [B(\cH)^n]_1^-:=\left\{
(X_1,\ldots, X_n)\in B(\cH)^n:\ I-X_1 X_1^*-\cdots -X_nX_n^*\geq 0
 \right\}
$$
and its study has generated a {\it free } analogue of
Sz.-Nagy--Foias theory (see \cite{F}, \cite{B}, \cite{Po-models},
\cite{Po-isometric}, \cite{Po-charact}, \cite{Po-multi},
\cite{Po-von},  \cite{Po-funct}, \cite{Po-analytic}, \cite{Po-disc},
\cite{Po-interpo}, \cite{Po-poisson},  \cite{DKS}, \cite{BV},
\cite{Po-curvature},
  \cite{Po-entropy}, \cite{Po-varieties}, \cite{Po-varieties2}, \cite{Po-unitary},
etc.)
The commutative case, which  corresponds to the subvariety of
$[B(\cH)^n]_1$ determined by the commutators $Z_iZ_j-Z_jZ_i$,
$i,j=1,\ldots,n$, was considered by Drurry \cite{Dru}, extensively
studied by Arveson \cite{Arv}, and considered by the author
\cite{Po-poisson} in connection with  noncommutative Poisson
transforms. More general  subvarieties  in $[B(\cH)^n]_1$,
determined by classes of noncommutative polynomials, were considered
by the author in
 \cite{Po-varieties} and  \cite{Po-varieties2}.
The study of the unit ball $[B(\cH)^n]_1$ was  extended, in
\cite{Po-domains}, to  noncommutative domains
  ${\bf D}_p^m(\cH)$ (resp. subvarieties) when $m=1$, $n\geq 1$, and $p$ is
   any positive regular noncommutative  polynomial
   (resp.   free holomorphic function
   in the sense of \cite{Po-holomorphic}).

   In this paper, we initiate the  study  of  noncommutative domains
    ${\bf D}_p^m(\cH)$, when $m\geq 2$,
    $n\geq 2$, and $p$ is
   any positive regular noncommutative  polynomial. What makes the case $m\geq 2$ quite
    different from the case $m=1$ is that  ${\bf D}_p^m(\cH)$ is not
     a ball-like domain, when $m\geq 2$.  This can be seen even in the single variable case
     ($n=1$) (see \cite{Ag1}, \cite{Ag2},
     \cite{O1}, \cite{O2}).
We introduce  a class of
      noncommutative Berezin transforms associated with any
    $n$-tuple of operators in  ${\bf D}_p^m(\cH)$.
    The study of these transforms and  their boundary behavior leads to Fatou type results,
      functional calculi, and
   a model theory for $n$-tuples of operators in ${\bf D}_p^m(\cH)$.
Our results extend to  noncommutative  varieties $\cV_{p,\cQ}^m(\cH)$
    generated
    by   classes $\cQ$  of noncommutative polynomials, i.e.,
    $$
\cV_{p,\cQ}^m(\cH):=\left\{(X_1,\ldots, X_n)\in {\bf D}_p^m(\cH): \
 q(X_1,\ldots, X_n)=0, \, q\in \cQ\right\}.
 $$

In   Section 1, we associate with each $m,n\in\{1,2,\ldots\}$ and
each  positive regular noncommutative
 polynomial $p=p(Z_1,\ldots, Z_n)=\sum a_\alpha Z_\alpha$,
 a noncommutative domain
${\bold D}_p^m(\cH)\subset B(\cH)^n$ and a unique $n$-tuple
$(W_1,\ldots, W_n)$  of weighted shifts acting on the full Fock
space $F^2(H_n)$ with $n$ generators.
 They will play the role of the {\it universal model }
for the elements of ${\bold D}_p^m(\cH)$.  We also introduce the
$n$-tuple $(\Lambda_1,\ldots, \Lambda_n)$ associated with ${\bold
D}_p^m(\cH)$, which  turns out to be the universal model associated
with the noncommutative domain
 ${\bold D}_{\widetilde p}^m(\cH)$, where  $\widetilde
p=\widetilde p(Z_1,\ldots, Z_n)=\sum a_{\tilde \alpha} Z_\alpha$ and
$\widetilde \alpha$ denotes the reverse of $\alpha =g_{i_1}\cdots
g_{i_k}$, i.e., $\widetilde \alpha:= g_{i_k}\cdots g_{i_1}$.

In Section 2, we introduce a {\it noncommutative Berezin transform}
${\bf B}_T$
 associated with  each  $n$-tuple of operators $T:=(T_1,\ldots,
T_n)\in {\bf D}_p^m(\cH)$ with the joint spectral radius
$r_p(T_1,\ldots, T_n)<1$.  More precisely,
  the map
${\bf B}_T:B(F^2(H_n)) \to B(\cH)$  is defined by
\begin{equation*}
 \left<{\bf B}_T[g]x,y\right>:=
\left<\left( I-\sum_{|\alpha|\geq 1} \overline{a}_{\tilde\alpha}
\Lambda_\alpha^* \otimes T_{\tilde\alpha} \right)^{-m}
 (g\otimes \Delta_{T,m,p}^2)
  \left(
I-\sum_{|\alpha|\geq 1} a_{\tilde\alpha} \Lambda_\alpha \otimes
T_{\tilde\alpha}^* \right)^{-m}(1\otimes x), 1\otimes y\right>
\end{equation*}
  where $\Delta_{T,m,p}:= [(1-p)^m(T,T^*)]^{1/2}$ and $x,y\in\cH$.
We remark that in the particular case when: $m=1$, $n=1$, $p=Z$,
     $\cH=\CC$,
 and $T=\lambda\in \DD$, we recover the Berezin transform \cite{Be} of a bounded
linear operator on the Hardy space $H^2(\DD)$, i.e.,
$$
{\bf B}_\lambda [g]=(1-|\lambda|^2)\left<g k_\lambda,
k_\lambda\right>,\quad g\in B(H^2(\DD)),
$$
where $k_\lambda(z):=(1-\overline{\lambda} z)^{-1}$ and  $z,
\lambda\in \DD$. The noncommutative Berezin  transform which will
play an important role in this paper.

 First, we show that the Berezin transform has an
extension $\widetilde{\bf B}_T:B(F^2(H_n))\to B(\cH)$  to any
$n$-tuple $T\in {\bf D}_p^m(\cH)$. This is used to prove that
 the restriction of  $\widetilde {\bf B}_T$
 to  the operator system $\cS:=\overline{\text{\rm  span}} \{ W_\alpha W_\beta^*;\
\alpha,\beta\in \FF_n^+\}$ is
    a unital completely contractive linear map
such that
 $$
\widetilde{\bf B}_T[W_\alpha W_\beta^*]=T_\alpha T_\beta^*, \quad
\alpha,\beta\in \FF_n^+,
$$
when $T:=(T_1,\ldots, T_n)\in {\bf D}_p^m(\cH)$ is a
  {\it pure}
 $n$-tuple of operators  (i.e. $p^k(T,T^*)\to 0$ strongly as $k\to \infty$).
We obtain a similar result  for $n$-tuple of operators with the
radial property, i.e., $(rT_1,\ldots, rT_n)\in {\bf D}_p^m(\cH)$ for
any $r\in (\delta,1]$ and some $\delta\in(0,1)$. In this case, we
show that
$$
\Psi(g):=\lim_{r\to 1} {\bf B}_{rT}[g],\quad g\in \cS,
$$
exists in the norm operator topology and defines a  unital
completely contractive map $\Psi:\cS\to B(\cH)$ such that
$\Psi(W_\alpha W_\beta^*)=T_\alpha T_\beta^*, \quad \alpha,\beta\in
\FF_n^+$.

In  Section 3, we introduce the Hardy algebra $F_n^\infty({\bf
D}^m_p)$ (resp. $R_n^\infty({\bf D}^m_p)$) associated with the
noncommutative domain ${\bf D}^m_p$  and prove some basic
properties. We mention that an $n$-tuple of operators $T:=(T_1,\ldots, T_n)\in
{\bf D}^m_p(\cH)$ is  called {\it completely non-coisometric}
(c.n.c.) if there is no vector $h\in \cH$, $h\neq 0$, such that
$\left<p^k(T,T^*)h,h\right>=\|h\|^2$ for any $k=1,2,\ldots$. The
main result  of Section 3 is an $F_n^\infty({\bf
D}^m_p)$--functional calculus for
 (c.n.c.) $n$-tuples of operators in
the noncommutative domain ${\bf D}^m_p(\cH)$.
 More
precisely, we show that if $T:=(T_1,\ldots, T_n)$ is a c.n.c.
$n$-tuple
   of operators   in a noncommutative domain ${\bf
 D}^m_p(\cH)$ with the radial property,
 then
 $$\Phi(g):=\text{\rm SOT-}\lim_{r\to 1} g(rT_1,\ldots, rT_n), \qquad
  g=g(W_1,\ldots,
 W_n)\in F_n^\infty({\bf D}_p^m),
 $$
 exists in the strong operator topology   and defines a map
 $\Phi:F_n^\infty({\bf D}_p^m)\to B(\cH)$ with the following
 properties:
\begin{enumerate}
\item[(i)]
$\Phi(g)=\text{\rm SOT-}\lim\limits_{r\to 1}{\bf B}_{rT}[g]$, where
${\bf B}_{rT}$ is the Berezin transform  at  $rT\in {\bf
D}_p^m(\cH)$;
\item[(ii)] $\Phi$ is    WOT-continuous (resp.
SOT-continuous)  on bounded sets;
\item[(iii)]
$\Phi$ is a unital completely contractive homomorphism.
\end{enumerate}

In Section 4,  we find all the eigenvectors  for $W_1^*,\ldots,
W_n^*$, where $(W_1,\ldots, W_n)$  is the  universal model
associated with the noncommutative domain ${\bf D}^m_p$. As
consequences, we
 identify the $w^*$-continuous multiplicative
linear functional on the Hardy algebra $F_n^\infty({\bf D}^m_p)$ and
find  the joint right spectrum of  $(W_1,\ldots, W_n)$. We introduce
the symmetric weighted Fock space $F_s^2({\bf D}^m_p)$ and identify
it with $H^2({\bf D}_{f,\circ}^1(\CC))$, the reproducing kernel
Hilbert space
 with reproducing
kernel $K_p:{\bf D}_{p,\circ}^1(\CC)\times {\bf
D}_{p,\circ}^1(\CC)\to \CC$ defined by
$$
K_p(\mu,\lambda):=\frac{1}{\left(1-\sum  a_\alpha \mu_\alpha
\overline{\lambda}_\alpha\right)^m}\quad \text{ for all }\
\lambda,\mu\in {\bf D}_{p,\circ}^1(\CC),
$$
where
$${\bf D}_{p,\circ}^1(\CC):=\left\{ \lambda=(\lambda_1,\ldots, \lambda_n)\in
\CC^n:\ \sum  a_\alpha |\lambda_\alpha|^2<1\right\}\subset {\bf
D}^m_p(\CC),
$$
  $\lambda_\alpha:=\lambda_{i_1}\cdots \lambda_{i_m}$ if
$\alpha=g_{i_1}\cdots g_{i_m}\in \FF_n^+$, and $\lambda_{g_0}$=1.

 We show that
the algebra  $H^\infty({\bf D}_{p,\circ}^1(\CC))$ of all multipliers
of the Hilbert space $H^2({\bf D}_{p,\circ}^1(\CC))$ is reflexive
and coincides with the weakly closed algebra generated by the
identity and  the multipliers $M_{\lambda_1},\ldots M_{\lambda_n}$
by the coordinate functions. Moreover, the multipliers
$M_{\lambda_1},\ldots M_{\lambda_n}$ can be identified with the
operators $L_1,\ldots, L_n$, where
$$L_i:=P_{F_s^2({\bf D}^m_p)} W_i|_{F_s^2({\bf D}^m_p)}, \qquad i=1,\ldots,
n,
$$
and $(W_1,\ldots, W_n)$  is the  universal model associated with the
noncommutative domain ${\bf D}^m_p$. Section 4 will play an
important role in connecting the  results of the present paper  to
analytic function theory on Reinhardt domains in $\CC^n$, as well
as, to model theory for commuting $n$-tuples of operators.

In Section 5, we consider noncommutative varieties
$\cV_{p,\cQ}^m(\cH)\subset {\bf D}_p^m(\cH)$ determined by  sets
$\cQ$ of noncommutative polynomials. We associate  with each such a
variety a {\it universal model} $(B_1,\ldots, B_n)\in
\cV_{p,\cQ}^m(\cN_\cQ)$, which is the compression of $(W_1,\ldots,
W_n)$ to an appropriate subspace $\cN_\cQ$ of the full Fock space
$F^2(H_n)$. We introduce the {\it constrained noncommutative Berezin
transform} ${\bf B}^c_T:B(\cN_\cQ)\to B(\cH)$ and use it to obtain
analogues of the results of Section 2, for subvarieties. We also
show that,  if the constants belong to the subspace $\cN_\cQ$, then
the $C^*$-algebra $C^*(B_1,\ldots, B_n)$ is irreducible  and all the
compacts operators in $B(\cN_\cQ)$ are contained in the operator
space
 $\overline{\text{\rm span}}\{B_\alpha B_\beta^*:\ \alpha,\beta\in \FF_n^+\}$.
 These results are vital for the development of model theory on noncommutative
  varieties.

  In Section 6, we obtain dilation and model theorems for the elements of
  the noncommutative variety $\cV_{p,\cQ}^m(\cH)$.  First, we prove that
   an $n$-tuple
  of operators $T:=(T_1,\ldots, T_n)\in B(\cH)^n$ is a pure element of
$\cV_{p,\cQ}^m(\cH)$ if and only if
$$T_i^*=(B_i^*\otimes I_\cD)|_\cH,
 \quad i=1,\ldots,n,
 $$
 where
  $\cH$ is an invariant subspace under each operator
 $B_i^*\otimes I_\cD$, $i=1,\ldots, n$,  $\cD:=\overline{\Delta_{p,m,T}\cH}$,  and
  $\Delta_{p,m,T}:=[(1-p)^m(T,T^*)]^{1/2}$.

When $(T_1,\ldots, T_n)\in \cV_{p,\cQ}^m(\cH) $
 is an $n$-tuple of operators (on a separable Hilbert space $\cH)$
 with the  radial property  and $\cQ$ is a set of
    homogenous noncommutative polynomials,
    we show that  there exists  a $*$-representation $\pi:C^*(B_1,\ldots, B_n)\to
B(\cK_\pi)$  on a separable Hilbert space $\cK_\pi$,  which
annihilates the compact operators and
$$
p(\pi(B), \pi(B)^*)=I_{\cK_\pi}, \quad \text{ where }\
 \pi(B):=(\pi(B_1),\ldots, \pi(B_n)),
$$
such that $T_i^*=V_i^*|\cH$ for $ i=1,\ldots, n$, where the operators
$$
V_i:=\left[\begin{matrix} B_i\otimes
I_{\cD}&0\\0&\pi(B_i)
\end{matrix}\right],\quad i=1,\ldots,n,
$$
are acting on the Hilbert space $\tilde\cK:=(\cN_\cQ\otimes \cD)\oplus
\cK_\pi$
and
$\cH$ is  identified with a $*$-cyclic co-invariant subspace of
$\tilde\cK$ under  each operator $V_i$, $i=1,\ldots,n$.

In
   the single variable case, when  $m\geq 2$,  $n=1$,  $p=Z$, and $\cQ=0$,  the
    corresponding variety coincides with the set of all
    $m$-hypercontractions  studied by Agler  in \cite{Ag1}, \cite{Ag2},
    and  recently by
    Olofsson \cite{O1}, \cite{O2}.\
    When $m\geq 2$, $n\geq2$, $p=Z_1+\cdots +Z_n$,  and  $\cQ=0$, the elements
    of the corresponding domain ${\bf D}_p^m(\cH)$ can be seen as
    multivariable noncommutative analogues of Agler's
    $m$-hypercontractions.

In the particular case when $\cQ_c$  coincides with the set of
polynomials
  $Z_iZ_j-Z_jZ_i$,  $i,j=1,\ldots, n$, we can combine the results of Section 4 and Section 6
  to  recover  several results concerning
   model theory for commuting $n$-tuples of operators.
   The case $m\geq 2$, $n\geq 2$,  $p=Z_1+\cdots + Z_n$, and $\cQ=\cQ_c$, was studied
   by Athavale \cite{At}, M\" uller \cite{M}, M\" uller-Vasilescu \cite{MV},
   Vasilescu \cite{Va}, and Curto-Vasilescu \cite{CV1}.
   Some  of these results concerning  model theory were extended by
   S.~Pott \cite{Pot} to positive regular  polynomials in commuting indeterminates.

We should mention that most of the results  of this paper  are
presented in a more general setting, namely, when the polynomial $p$
is replaced by a positive regular free holomorphic function (see
Section 1 for terminology).   In a future paper, we expect to use
these results   to obtain functional models for the elements of the
noncommutative domain ${\bf D}_p^m(\cH)$ (resp. subvariety
$\cV_{p,\cQ}^m(\cH)$), based on characteristic functions.

\bigskip

\section{Noncommutative domains and universal models}

In this section, we associate with each  positive regular free
holomorphic function $f$ on $[B(\cH)^n]_\rho$, $\rho>0$,  and each
$m,n\in\{1,2,\ldots\}$,
 a noncommutative domain
${\bold D}_f^m(\cH)\subset B(\cH)^n$ and a unique $n$-tuple
$(W_1,\ldots, W_n)$ of weighted shifts. This $n$-tuple of operators
will play the role of the {\it universal model} for the elements  of
${\bold D}_f^m(\cH)$. We also introduce the $n$-tuple
$(\Lambda_1,\ldots, \Lambda_n)$  associated with ${\bold
D}_f^m(\cH)$, which turns out to be the universal model  for the
elements of the  noncommutative domain ${\bf D}_{\tilde f}$.

      Let $H_n$ be an $n$-dimensional complex  Hilbert space with orthonormal
      basis
      $e_1$, $e_2$, $\dots,e_n$, where $n\in\{1,2,\dots\}$.        We consider
      the full Fock space  of $H_n$ defined by
      $$F^2(H_n):=\bigoplus_{k\geq 0} H_n^{\otimes k},$$
      where $H_n^{\otimes 0}:=\CC 1$ and $H_n^{\otimes k}$ is the (Hilbert)
      tensor product of $k$ copies of $H_n$.
      Define the left creation
      operators $S_i:F^2(H_n)\to F^2(H_n), \  i=1,\dots, n$,  by
      $$
       S_i\varphi:=e_i\otimes\varphi, \quad  \varphi\in F^2(H_n),
      $$
      and  the right creation operators
      $R_i:F^2(H_n)\to F^2(H_n)$, \  $i=1,\dots, n$,  by
      $
       R_i\varphi:=\varphi\otimes e_i$, \ $ \varphi\in F^2(H_n)$.

The    algebra   $F_n^\infty$
  and  its norm closed version,
  the noncommutative disc
 algebra  $\cA_n$,  were introduced by the author   \cite{Po-von} in connection
   with a multivariable noncommutative von Neumann inequality.
$F_n^\infty$  is the algebra of left multipliers of $F^2(H_n)$  and
can be identified with
 the
  weakly closed  (or $w^*$-closed) algebra generated by the left creation operators
   $S_1,\dots, S_n$  acting on   $F^2(H_n)$,
    and the identity.
     The noncommutative disc algebra $\cA_n$ is
    the  norm closed algebra generated by
   $S_1,\dots, S_n$,
    and the identity. For basic properties concerning
    the  noncommutative analytic Toeplitz algebra   $F_n^\infty$
  we refer to
\cite{Po-charact},  \cite{Po-multi},  \cite{Po-funct},
\cite{Po-analytic}, \cite{Po-disc}, \cite{Po-poisson},
  \cite{DP1}, \cite{DP2},   \cite{DP}, \cite{DKP},
  and \cite{ArPo2}.

Let $\FF_n^+$ be the unital free semigroup on $n$ generators
$g_1,\ldots, g_n$ and the identity $g_0$.  The length of $\alpha\in
\FF_n^+$ is defined by $|\alpha|:=0$ if $\alpha=g_0$  and
$|\alpha|:=k$ if
 $\alpha=g_{i_1}\cdots g_{i_k}$, where $i_1,\ldots, i_k\in \{1,\ldots, n\}$.
If $X:=(X_1,\ldots, X_n)\in B(\cH)^n$, where $B(\cH)$ is the algebra
of all bounded linear operators on the Hilbert space $\cH$,    we
denote $X_\alpha:= X_{i_1}\cdots X_{i_k}$  and $X_{g_0}:=I_\cH$.

We say that  $f=f(X_1,\ldots, X_n):= \sum_{\alpha\in \FF_n^+}
a_\alpha X_\alpha$, \ $a_\alpha\in \CC$,  is  a free holomorphic
function on the noncommutative ball  $[B(\cH)^n]_\rho$ for some
$\rho>0$, where
$$[B(\cH)^n]_\rho:=\{(X_1,\ldots, X_n)\in B(\cH)^n: \
\|X_1X_1^*+\cdots + X_nX_n^*\|< \rho\},
$$
if the series $\sum_{k=0}^\infty \sum_{|\alpha|=k} a_\alpha
X_\alpha$ is convergent in the operator norm topology for any
$(X_1,\ldots, X_n)\in [B(\cH)^n]_\rho$.
  According to  \cite{Po-holomorphic},  $f$  is  a free holomorphic
function on   $[B(\cH)^n]_\rho$ if and only if
\begin{equation*}
\limsup_{k\to\infty} \left( \sum_{|\alpha|=k}
|a_\alpha|^2\right)^{1/2k}\leq \frac{1}{\rho}.
\end{equation*}
Throughout this paper, we
  assume that  $a_\alpha\geq 0$ for any $\alpha\in \FF_n^+$, \ $a_{g_0}=0$,
 \ and  $a_{g_i}>0$, $i=1,\ldots, n$.
 A function $f$ satisfying  all these conditions on the coefficients is
 called a {\it positive regular free holomorphic function on}
 $[B(\cH)^n]_\rho$  for some
$\rho>0$.

\begin{lemma}\label{b-alpha}
Let $f$ be a positive regular free holomorphic function on
$[B(\cH)^n]_\rho$, $\rho>0$,  with
 the representation
 $f(X_1,\ldots, X_n):= \sum_{\alpha\in \FF_n^+} a_\alpha
X_\alpha$, \ $a_\alpha\in \CC$. Then there exists $r\in (0,1)$ such that
$\|f(rS_1,\ldots, rS_n)\|<1$ and, for any $m=1,2,\ldots$,
$$
[1-f(rS_1,\ldots, rS_n)]^{-m}=\sum_{k=0}^\infty \sum_{|\alpha|=k}
b_\alpha^{(m)} r^{|\alpha|} S_\alpha,
$$
where $b_{g_0}^{(m)}=1$ and
\begin{equation}
\label{b-al}
 b_\alpha^{(m)}= \sum_{j=1}^{|\alpha|}
\sum_{{\gamma_1\cdots \gamma_j=\alpha }\atop {|\gamma_1|\geq
1,\ldots, |\gamma_j|\geq 1}} a_{\gamma_1}\cdots a_{\gamma_j}
\left(\begin{matrix} j+m-1\\m-1
\end{matrix}\right)  \qquad
\text{ if } \ |\alpha|\geq 1.
\end{equation}
\end{lemma}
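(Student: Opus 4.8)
The plan is to establish the two claims in sequence: first that some dilation $r \in (0,1)$ makes $\|f(rS_1,\ldots,rS_n)\| < 1$, and then that the $(-m)$-th power of $1 - f(rS_1,\ldots,rS_n)$ expands as the stated power series with coefficients $b_\alpha^{(m)}$. For the first claim I would use the characterization of positive regular free holomorphic functions on $[B(\cH)^n]_\rho$ recalled in the excerpt, namely $\limsup_{k\to\infty}\bigl(\sum_{|\alpha|=k}|a_\alpha|^2\bigr)^{1/2k} \le 1/\rho$. The operator $f(rS_1,\ldots,rS_n) = \sum_{k\ge 1}\sum_{|\alpha|=k} a_\alpha r^{|\alpha|} S_\alpha$ acts on $F^2(H_n)$, and since the $S_\alpha$ with $|\alpha|=k$ have mutually orthogonal ranges, one computes that the norm of the degree-$k$ homogeneous part $\sum_{|\alpha|=k} a_\alpha r^k S_\alpha$ is exactly $r^k\bigl(\sum_{|\alpha|=k}|a_\alpha|^2\bigr)^{1/2}$ (using that $\|(S_\alpha 1)\|$ are an orthonormal system and computing the norm of the row/column operator). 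Hence $\|f(rS_1,\ldots,rS_n)\| \le \sum_{k\ge 1} r^k\bigl(\sum_{|\alpha|=k}|a_\alpha|^2\bigr)^{1/2}$; by the root-test estimate this series has radius of convergence (in $r$) at least $\rho > 0$, and since its value at $r=0$ is $0$, it is continuous and tends to $0$ as $r\to 0^+$. Therefore we may fix $r \in (0,1)$ small enough that this bound is $< 1$, which gives $\|f(rS_1,\ldots,rS_n)\| < 1$.

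Once $\|f(rS_1,\ldots,rS_n)\| < 1$, the operator $1 - f(rS_1,\ldots,rS_n)$ is invertible and $[1 - f(rS_1,\ldots,rS_n)]^{-m} = \sum_{j\ge 0}\binom{j+m-1}{m-1} f(rS_1,\ldots,rS_n)^j$, the binomial series converging in operator norm. The next step is to expand each power $f(rS_1,\ldots,rS_n)^j$. Writing $f(rS_1,\ldots,rS_n) = \sum_{|\gamma|\ge 1} a_\gamma r^{|\gamma|} S_\gamma$ and using the semigroup relation $S_{\gamma_1}\cdots S_{\gamma_j} = S_{\gamma_1\cdots\gamma_j}$ together with $r^{|\gamma_1|}\cdots r^{|\gamma_j|} = r^{|\gamma_1\cdots\gamma_j|}$, we get
$$
f(rS_1,\ldots,rS_n)^j = \sum_{{\gamma_1\cdots\gamma_j=\alpha}\atop{|\gamma_1|\ge 1,\ldots,|\gamma_j|\ge 1}} a_{\gamma_1}\cdots a_{\gamma_j}\, r^{|\alpha|} S_\alpha,
$$
the sum ranging over all $\alpha \in \FF_n^+$ admitting such a factorization (necessarily $|\alpha|\ge j$). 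Summing over $j$ with the binomial weights and collecting the coefficient of $r^{|\alpha|}S_\alpha$ yields precisely the formula \eqref{b-al} for $b_\alpha^{(m)}$, with $b_{g_0}^{(m)} = 1$ coming from the $j=0$ term; note the inner sum over $j$ is finite ($1 \le j \le |\alpha|$) since each $\gamma_i$ has length $\ge 1$.

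The one point requiring care — and the main obstacle — is justifying the rearrangement of the double series (sum over $j$, then over factorizations) into a single series $\sum_k \sum_{|\alpha|=k} b_\alpha^{(m)} r^{|\alpha|} S_\alpha$ and verifying the latter converges in operator norm. For this I would control the growth of $b_\alpha^{(m)}$: from \eqref{b-al}, $\sum_{|\alpha|=k} b_\alpha^{(m)} \le \sum_{j=1}^k \binom{j+m-1}{m-1}\bigl(\sum_{|\gamma|\ge 1} a_\gamma\bigr)^{\cdots}$-type bound, or more robustly, observe that the $\ell^2$-norm of the homogeneous part, $\bigl(\sum_{|\alpha|=k}|b_\alpha^{(m)}|^2\bigr)^{1/2}$, is dominated by the corresponding quantity for $[1 - \bar f]^{-m}$ evaluated formally, and since $\|f(rS_1,\ldots,rS_n)\|<1$ guarantees $\sum_j \binom{j+m-1}{m-1}\|f(rS_1,\ldots,rS_n)\|^j < \infty$, absolute convergence of the norm-majorant series follows. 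Given absolute convergence, Fubini for series in a Banach space allows the regrouping, and the homogeneous parts of $[1-f(rS_1,\ldots,rS_n)]^{-m}$ are identified term by term with $\sum_{|\alpha|=k} b_\alpha^{(m)} r^{|\alpha|} S_\alpha$, completing the proof.
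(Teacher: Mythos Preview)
Your proof is correct and takes a genuinely different route from the paper. You use the negative binomial expansion $(1-x)^{-m}=\sum_{j\ge 0}\binom{j+m-1}{m-1}x^j$ directly, then expand each $f(rS)^j$ and collect coefficients of $S_\alpha$; since only the terms $j\le|\alpha|$ contribute to $S_\alpha$, the formula \eqref{b-al} drops out immediately. The paper instead establishes the case $m=1$ via the geometric series and then proceeds by induction on $m$, multiplying the expansion for $[1-f(rS)]^{-m}$ by that for $[1-f(rS)]^{-1}$ and doing a careful combinatorial count of how many times each product $a_{\eta_1}\cdots a_{\eta_p}$ occurs, together with the hockey-stick identity $\sum_{j=0}^p\binom{j+m-1}{m-1}=\binom{p+m}{m}$. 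Your approach is more economical and avoids the induction; the paper's approach has the advantage that every step is carried out within the Fourier-representation framework for $F_n^\infty$, so the rearrangement/convergence issues never arise explicitly. On the first claim you give a direct norm estimate using orthogonality of the ranges of the $S_\alpha$, whereas the paper simply cites a Schwarz-type lemma from \cite{Po-holomorphic}; both are fine. The one place your write-up should be tightened is the rearrangement paragraph: the cleanest way to make it rigorous is to note that $[1-f(rS)]^{-m}\in\cA_n\subset F_n^\infty$ has a unique Fourier representation, and that the coefficient-extraction functional $A\mapsto P_\CC S_\alpha^*A(1)$ is norm-continuous, so applying it to the norm-convergent series $\sum_j\binom{j+m-1}{m-1}f(rS)^j$ termwise (where only finitely many terms contribute for each fixed $\alpha$) identifies the Fourier coefficients as the $b_\alpha^{(m)}r^{|\alpha|}$.
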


\begin{proof}
Due to the Schwartz type lemma for free holomorphic functions on the
open unit ball  $[B(\cH)^n]_1$ (see \cite{Po-holomorphic}), there
exists $r>0$ such that
    $f(rS_1,\ldots, rS_n)$ is  in the noncommutative disc algebra
  $\cA_n$ and   $\|f(rS_1,\ldots, rS_n)\|<1$.
Therefore, the operator $I-f(rS_1,\ldots, rS_n)$ is invertible with
its inverse
 $g(rS_1,\ldots, rS_n):=[I-f(rS_1,\ldots, rS_n)]^{-1}$  in $\cA_n\subset
 F_n^\infty$. Assume that $g(rS_1,\ldots, rS_n)$
  has the Fourier representation
$  \sum_{\alpha\in \FF_n^+} b_\alpha^{(1)} r^{|\alpha|} S_\alpha $
for some constants $b_\alpha^{(1)}\in \CC$. Consequently,
   using the fact that $r^{|\alpha|}b_\alpha^{(1)}=P_\CC S_\alpha^*
g(rS_1,\ldots, rS_n)(1)$, we  deduce that
\begin{equation*}
\begin{split}
g(rS_1,\ldots, rS_n)&= I+ f(rS_1,\ldots, rS_n)+ f(rS_1,\ldots, rS_n)^2+\cdots \\
&=I+\sum_{k=1}^\infty \sum_{|\alpha|=k}\left(\sum_{j=1}^{|\alpha|}
\sum_{{\gamma_1\cdots \gamma_j=\alpha }\atop {|\gamma_1|\geq
1,\ldots,
 |\gamma_j|\geq 1}} a_{\gamma_1}\cdots a_{\gamma_j} \right) r^{|\alpha|}
  S_\alpha.
\end{split}
\end{equation*}
Due to the uniqueness of the Fourier representation of the elements
in $F_n^\infty$,  we deduce relation \eqref{b-al}, when $m=1$. Now,
we proceed  by induction over $m$. Assume that relation \eqref{b-al}
holds for $m$ and let us prove it for $m+1$. Notice that
\begin{equation*}
\begin{split}
&[I- f(rS_1,\ldots, rS_n)]^{-(m+1)}\\
 &\quad=[I- f(rS_1,\ldots,rS_n)]^{-m}
[I- f(rS_1,\ldots, rS_n)]\\
&\quad=\left\{ I+\sum_{|\omega|\geq 1} \left[\sum_{j=1}^{|\omega|}
\sum_{{\xi_1\cdots \xi_j=\omega }\atop {|\xi_1|\geq 1,\ldots,
 |\xi_j|\geq 1}} a_{\xi_1}\cdots a_{\xi_j} \left(\begin{matrix} j+m-1\\m-1
\end{matrix}\right)\right]
 r^{|\omega|} S_\omega\right\}\\
 &\quad\qquad \times
\left\{ I+\sum_{|\sigma|\geq 1} \left[\sum_{k=1}^{|\sigma|}
\sum_{{\epsilon_1\cdots \epsilon_k=\sigma }\atop {|\epsilon_1|\geq
1,\ldots,
 |\epsilon_k|\geq 1}} a_{\epsilon_1}\cdots a_{\epsilon_k}  \right] r^{|\sigma|}
 S_\sigma\right\}\\
 &\quad=
 I+\sum_{|\gamma|\geq 1}
 \left[\sum_{k=1}^{|\gamma|}
\sum_{{\epsilon_1\cdots \epsilon_k=\gamma }\atop {|\epsilon_1|\geq
1,\ldots,
 |\epsilon_k|\geq 1}} a_{\epsilon_1}\cdots a_{\epsilon_k}+
\sum_{j=1}^{|\gamma|} \sum_{{\xi_1\cdots \xi_j=\gamma }\atop
{|\xi_1|\geq 1,\ldots,
 |\xi_j|\geq 1}} a_{\xi_1}\cdots a_{\xi_j} \left(\begin{matrix} j+m-1\\m-1
\end{matrix}\right)\right.\\
&\quad\qquad \qquad  \qquad +\left. \sum_{{\omega
\sigma=\gamma}\atop {|\omega|\geq 1, |\sigma|\geq 1}}
\sum_{j=1}^{|\omega|}\sum_{k=1}^{|\sigma|} \sum_{{\xi_1\cdots
\xi_j=\gamma }\atop {|\xi_1|\geq 1,\ldots,
 |\xi_j|\geq 1}}
 \sum_{{\epsilon_1\cdots \epsilon_k=\gamma }\atop {|\epsilon_1|\geq
1,\ldots,
 |\epsilon_k|\geq 1}} \left(\begin{matrix} j+m-1\\m-1
\end{matrix}\right)
 a_{\xi_1}\cdots a_{\xi_j}a_{\epsilon_1}\cdots a_{\epsilon_k}
 \right] r^{|\gamma|} S_\gamma.
\end{split}
\end{equation*}
If we look closer to the  sums in the brackets, we notice that each
product $a_{\eta_1}\cdots a_{\eta_p}$, where $\eta_1\cdots
\eta_p=\gamma$ with $\eta_1,\ldots \eta_p\in \FF_n^+$ and
$|\eta_1|\geq 1,\ldots, |\eta_p|\geq 1$, occurs $p+1$ times. This is
because
\begin{equation*}
a_{\eta_1}\cdots a_{\eta_p}=
\begin{cases} a_{\epsilon_1}\cdots a_{\epsilon_k} & \text{ if }\
(\eta_1,\ldots \eta_p)=(\epsilon_1,\ldots, \epsilon_k)  \\
a_{\xi_1}\cdots a_{\xi_j}a_{\epsilon_1}\cdots a_{\epsilon_k} &
\text{ if }\ (\eta_1,\ldots \eta_p)=(\xi_1,\ldots,
\xi_j,\epsilon_1,\ldots, \epsilon_k)   \ \text{ and } \ j=1,\ldots, p-1\\
a_{\xi_1}\cdots a_{\xi_j}
 & \text{ if }\
(\eta_1,\ldots \eta_p)=(\xi_1,\ldots, \xi_j).
\end{cases}
\end{equation*}
Moreover, at each occurrence, the product $a_{\eta_1}\cdots
a_{\eta_p}$  has a coefficient which is equal to
\begin{equation*}
\begin{cases}   \left(\begin{matrix} m-1\\m-1
\end{matrix}\right) & \text{ if }\
(\eta_1,\ldots \eta_p)=(\epsilon_1,\ldots, \epsilon_k)  \\
  \left(\begin{matrix} j+m-1\\m-1
\end{matrix}\right) &
\text{ if }\ (\eta_1,\ldots \eta_p)=(\xi_1,\ldots,
\xi_j,\epsilon_1,\ldots, \epsilon_k)   \ \text{ and } \ j=1,\ldots, p-1\\
  \left(\begin{matrix} p+m-1\\m-1
\end{matrix}\right)
 & \text{ if }\
(\eta_1,\ldots \eta_p)=(\xi_1,\ldots, \xi_j).
\end{cases}
\end{equation*}
Hence, we deduce that the coefficient of  $a_{\eta_1}\cdots
a_{\eta_p}$ is equal to
$$
\sum_{j=0}^p \left(\begin{matrix} j+m-1\\m-1
\end{matrix}\right)=\left(\begin{matrix} p+m\\m
\end{matrix}\right).
$$
The latter equality can be easily deduced using the well-known
relation $$\left(\begin{matrix} j+m\\m
\end{matrix}\right)=\left(\begin{matrix} j+m-1\\m
\end{matrix}\right)+\left(\begin{matrix} j+m-1\\m-1
\end{matrix}\right)$$ for any $j=1,\ldots,p$.
Therefore, we have $[I- f(rS_1,\ldots,
rS_n)]^{-(m+1)}=\sum_{|\gamma|\geq 1} b_\gamma^{(m+1)} r^{|\gamma|}
S_\gamma$, where
$$
b_\gamma^{(m+1)}= \sum_{p=1}^{|\gamma|} \sum_{{\eta_1\cdots
\eta_p=\gamma }\atop {|\eta_1|\geq 1,\ldots, |\eta_p|\geq 1}}
a_{\eta_1}\cdots a_{\eta_p} \left(\begin{matrix} p+m\\m
\end{matrix}\right)  \qquad
\text{ if } \ |\gamma|\geq 1.
$$
This completes the induction and the proof.
\end{proof}

\begin{lemma}
\label{relations} Let $f$ be a positive regular free holomorphic
function on $[B(\cH)^n]_\rho$, $\rho>0$,  with
 the representation
 $f(X_1,\ldots, X_n):= \sum_{\alpha\in \FF_n^+} a_\alpha
X_\alpha$, \ $a_\alpha\in \CC$, and let $g:=1-(1-f)^m$,
$m=1,2,\ldots,$ have the representation $g(X_1,\ldots, X_n):=
\sum_{\gamma\in \FF_n^+} c_\gamma^{(m)} X_\gamma$, \ $a_\gamma\in
\CC$. Then the following relations hold:
\begin{equation}
\label{b_b}
b_\beta^{(m)}=\sum\limits_{{\gamma\alpha=\beta}\atop{\alpha\in
\FF_n^+,
 |\gamma|\geq 1}}
b_\alpha^{(m)} c_\gamma^{(m)}\qquad  \text{ if } \ \ |\beta|\geq 1 \
\text{ and }\ m=1,2,\ldots,
\end{equation}
and
\begin{equation}
\label{relations2}
b_\alpha^{(m)}= b_\alpha^{(m-1)}+
\sum\limits_{{\gamma\sigma=\alpha}\atop{\sigma\in \FF_n^+,
 |\gamma|\geq 1}}
b_\sigma^{(m)} a_\gamma\qquad  \text{ if } \ \ m\geq 2 \ \text{ and }\
\alpha\in \FF_n^+.
\end{equation}
\end{lemma}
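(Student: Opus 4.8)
The plan is to obtain both identities by manipulating the operator $[1-f(rS_1,\ldots,rS_n)]^{-m}$ inside the noncommutative analytic Toeplitz algebra $F_n^\infty$ and then comparing Fourier coefficients, using the uniqueness of the Fourier representation together with Lemma \ref{b-alpha}. First I would fix $r\in(0,1)$ small enough that Lemma \ref{b-alpha} applies, so that $f(rS):=f(rS_1,\ldots,rS_n)\in\cA_n$ with $\|f(rS)\|<1$; write $S_\alpha:=S_{i_1}\cdots S_{i_k}$ when $\alpha=g_{i_1}\cdots g_{i_k}$, so that $S_\alpha S_\beta=S_{\alpha\beta}$ for all $\alpha,\beta\in\FF_n^+$. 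Since $(1-f)^m$ is a (polynomial) free holomorphic function, the operator $g(rS):=1-(1-f(rS))^m$ lies in $\cA_n\subset F_n^\infty$, and evaluating the abstract power series termwise shows it has Fourier representation $g(rS)=\sum_{|\gamma|\geq 1}c_\gamma^{(m)}r^{|\gamma|}S_\gamma$ (note $c_{g_0}^{(m)}=g(0)=0$), while by Lemma \ref{b-alpha} we have $[1-f(rS)]^{-m}=\sum_{\alpha\in\FF_n^+}b_\alpha^{(m)}r^{|\alpha|}S_\alpha$ with $b_{g_0}^{(m)}=1$.

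For \eqref{b_b}, the key observation is that $1-g=(1-f)^m$, hence $[1-g(rS)]^{-1}=[1-f(rS)]^{-m}$. Multiplying out $(1-g(rS))[1-g(rS)]^{-1}=I$ gives $[1-f(rS)]^{-m}=I+g(rS)\,[1-f(rS)]^{-m}$. Then I would expand
$$g(rS)\,[1-f(rS)]^{-m}=\sum_{|\gamma|\geq 1}\sum_{\alpha\in\FF_n^+} c_\gamma^{(m)}b_\alpha^{(m)}\, r^{|\gamma\alpha|}S_{\gamma\alpha}=\sum_{\beta\in\FF_n^+}\biggl(\sum_{{\gamma\alpha=\beta}\atop{\alpha\in\FF_n^+,\ |\gamma|\geq1}}b_\alpha^{(m)}c_\gamma^{(m)}\biggr)r^{|\beta|}S_\beta,$$
and compare the coefficient of $S_\beta$ for $|\beta|\geq 1$ on both sides ($I$ contributes nothing). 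Cancelling $r^{|\beta|}$ yields \eqref{b_b}; the comparison is legitimate by uniqueness of the Fourier representation in $F_n^\infty$.

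For \eqref{relations2}, the starting point is $(1-f(rS))[1-f(rS)]^{-m}=[1-f(rS)]^{-(m-1)}$, valid for $m\geq 2$, which rearranges to $[1-f(rS)]^{-m}=[1-f(rS)]^{-(m-1)}+f(rS)\,[1-f(rS)]^{-m}$. Expanding the last product exactly as above,
$$f(rS)\,[1-f(rS)]^{-m}=\sum_{\alpha\in\FF_n^+}\biggl(\sum_{{\gamma\sigma=\alpha}\atop{\sigma\in\FF_n^+,\ |\gamma|\geq1}}a_\gamma b_\sigma^{(m)}\biggr)r^{|\alpha|}S_\alpha,$$
and using $[1-f(rS)]^{-(m-1)}=\sum_\alpha b_\alpha^{(m-1)}r^{|\alpha|}S_\alpha$ from Lemma \ref{b-alpha}, I would compare coefficients of $S_\alpha$ and cancel $r^{|\alpha|}$ to get \eqref{relations2}.

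No step here presents a serious obstacle; the only points requiring care are the bookkeeping in passing from operator products to the concatenation sums (keeping track of which factor is on the left) and the justification — via the choice of $r$ from Lemma \ref{b-alpha} — that every operator involved lies in $\cA_n\subset F_n^\infty$, so that the relevant Neumann-type series converge in norm and the Fourier coefficients are uniquely determined.
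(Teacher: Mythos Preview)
Your argument is correct and follows essentially the same route as the paper: both proofs rewrite $[1-f(rS)]^{-m}=I+g(rS)[1-f(rS)]^{-m}$ for \eqref{b_b} and $[1-f(rS)]^{-m}=[1-f(rS)]^{-(m-1)}+f(rS)[1-f(rS)]^{-m}$ for \eqref{relations2}, then read off coefficients via the uniqueness of the Fourier representation in $F_n^\infty$, with Lemma \ref{b-alpha} supplying the needed convergence.
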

\begin{proof}
Since
$$
\left\{I- [I-f(rS_1,\ldots, rS_n)]^m\right\}[I-f(rS_1,\ldots,
rS_n)]^{-m}=[I-f(rS_1,\ldots, rS_n)]^{-m}-I
$$
and using Lemma \ref{b-alpha}, we have
$$
\left(\sum_{k=0}^\infty \sum_{|\alpha|=k}b_\alpha^{(m)} r^{|\alpha|}
S_\alpha\right)
 \left(\sum_{p=1}^\infty
\sum_{|\gamma|=p}b_\gamma^{(m)} r^{|\gamma|} S_\gamma\right)=
\sum_{q=1}^\infty\sum_{|\beta|=q} b_\beta^{(m)} r^{|\beta|} S_\beta.
$$
Hence, using the uniqueness of the Fourier representation for the
elements in $F_n^\infty$, we obtain relation \eqref{b_b}. To prove
\eqref{relations2}, assume that $m\geq 2$ and notice that
$$
[I-f(rS_1,\ldots, rS_n)]^{-m}-f(rS_1,\ldots, rS_n)[I-f(rS_1,\ldots,
rS_n)]^{-m}-I=[I-f(rS_1,\ldots, rS_n)]^{-m+1}-I.
$$
Consequently, we have
$$
\sum_{k=0}^\infty \sum_{|\alpha|=k}b_\alpha^{(m)} r^{|\alpha|}
S_\alpha= \left(\sum_{q=1}^\infty\sum_{|\gamma|=q} b_\gamma^{(m)}
r^{|\gamma|} S_\gamma\right) \left(\sum_{p=0}^\infty
\sum_{|\sigma|=p}b_\sigma^{(m)} r^{|\sigma|} S_\sigma\right) +
\sum_{k=0}^\infty \sum_{|\alpha|=k}b_\alpha^{(m-1)} r^{|\alpha|}
S_\alpha.
$$
Using  again the uniqueness of the Fourier representation for the
elements in $F_n^\infty$, we deduce relation \eqref{relations2}.
This completes the proof.
\end{proof}

According to Lemma \ref{b-alpha}, we have $b_\alpha^{(m)}>0$ for any
$\alpha\in \FF_n^+$ and $m=1,2,\ldots$. We  define  now the diagonal
operators $D_i:F^2(H_n)\to F^2(H_n)$, $i=1,\ldots, n$, by setting
$$
D_ie_\alpha:=\sqrt{\frac{b_\alpha^{(m)}}{b_{g_i \alpha}^{(m)}}}
e_\alpha,\quad
 \alpha\in \FF_n^+.
$$
Due  to Lemma \ref{relations}, we have
$$
b_{g_i\alpha}^{(m)}\geq
\sum\limits_{{\gamma\sigma=g_i\alpha}\atop{\sigma\in \FF_n^+,
 |\gamma|\geq 1}}
b_\sigma^{(m)} a_\gamma\geq a_{g_i} b_\alpha^{(m)}.
$$
Since  $a_{g_i}>0$ for each $i=1,\ldots,n$, we deduce that
$$
\|D_i\|=\sup_{\alpha\in \FF_n^+} \sqrt{\frac{b_\alpha^{(m)}}{b_{g_i
\alpha}^{(m)}}}\leq \frac{1}{\sqrt{a_{g_i}}}, \quad i=1,\ldots,n.
$$
Now we define the {\it weighted left creation  operators}
$W_i:F^2(H_n)\to F^2(H_n)$, $i=1,\ldots, n$,  associated with the
  positive regular free holomorphic $f$   by setting $W_i:=S_iD_i$, where
 $S_1,\ldots, S_n$ are the left creation operators on the full
 Fock space $F^2(H_n)$.
Therefore, we have
\begin{equation} \label{w-shift}
W_i
e_\alpha=\frac {\sqrt{b_\alpha^{(m)}}}{\sqrt{b_{g_i \alpha}^{(m)}}}
e_{g_i \alpha}, \quad \alpha\in \FF_n^+,
\end{equation}
where the coefficients  $b_\alpha^{(m)}$, $\alpha\in \FF_n^+$, are
given by relation \eqref{b-al}.

Throughout this paper, we denote by $id$ the identity map acting on
 the algebra of all bounded linear operators an a Hilbert space.

\begin{theorem}
\label{prop-shif} Let $f$ be a positive regular free
 holomorphic function on $[B(\cH)^n]_\rho$, $\rho>0$,  and  $m=1,2,\ldots$.
 The weighted left creation
  operators $W_1,\ldots, W_n$     associated with $f$ and $m$, and defined  by
 relation \eqref{w-shift} have the following properties:
 \begin{enumerate}
 \item[(i)] $\sum\limits_{|\beta|\geq 1} a_\beta W_\beta W_\beta^*\leq I$, where the
convergence is in the strong operator topology;
 \item[(ii)]
 $\left(id-\Phi_{f,W}\right)^{m}(I)=P_\CC$, where $P_\CC$ is the
 orthogonal projection of $F^2(H_n)$ on $\CC$, and the map
 $\Phi_{f,W}:B(F^2(H_n))\to
B(F^2(H_n))$    is defined  by
$$\Phi_{f,W}(X)=\sum\limits_{|\alpha|\geq 1} a_\alpha W_\alpha
XW_\alpha^*,
$$
where the convergence is in the weak operator topology;
 \item[(iii)] $\lim\limits_{p\to\infty} \Phi^p_{f,W}(I)=0$ in the strong operator
 topology;
 \item[(iv)] $\sum\limits_{\beta\in \FF_n^+} b_\beta^{(m)}
 W_\beta\left[(id-\Phi_{f,W})^m(I)\right] W_\beta^*=I$, where the
 coefficients $b_\beta^{(m)}$ are defined by \eqref{b-al}, and the
  the convergence is in the strong operator topology.
 \end{enumerate}
\end{theorem}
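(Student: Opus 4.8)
The plan is to work throughout with the operators $W_i = S_i D_i$ acting on $F^2(H_n)$ and exploit the explicit action \eqref{w-shift} on the orthonormal basis $\{e_\alpha\}_{\alpha\in\FF_n^+}$, reducing every claim to an identity among the positive scalars $b_\alpha^{(m)}$ via the recursions in Lemma~\ref{relations}. First I would compute $W_\alpha^* e_\gamma$ explicitly: from \eqref{w-shift} one gets $W_\alpha^* e_{\alpha\beta} = \sqrt{b_\beta^{(m)}/b_{\alpha\beta}^{(m)}}\, e_\beta$ and $W_\alpha^* e_\gamma = 0$ when $\gamma$ does not begin with $\alpha$. Hence, for a fixed basis vector $e_\gamma$,
$$
\sum_{|\alpha|\geq 1} a_\alpha \langle W_\alpha W_\alpha^* e_\gamma, e_\gamma\rangle
= \sum_{\substack{\alpha\beta=\gamma\\ |\alpha|\geq 1}} a_\alpha \frac{b_\beta^{(m)}}{b_\gamma^{(m)}}
= \frac{1}{b_\gamma^{(m)}}\sum_{\substack{\alpha\beta=\gamma\\ |\alpha|\geq 1}} a_\alpha b_\beta^{(m)},
$$
and by relation \eqref{relations2} (in the form $b_\gamma^{(m)} = b_\gamma^{(m-1)} + \sum_{\alpha\beta=\gamma,\,|\alpha|\geq1} a_\alpha b_\beta^{(m)}$, using that reversing the roles of prefix/suffix in \eqref{relations2} is legitimate because the sum there ranges over all factorizations) this equals $1 - b_\gamma^{(m-1)}/b_\gamma^{(m)} \leq 1$, since $b_\gamma^{(m-1)}>0$. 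Because $W_\alpha W_\alpha^*$ is diagonal in the basis $\{e_\gamma\}$, the partial sums $\sum_{1\leq|\alpha|\leq N} a_\alpha W_\alpha W_\alpha^*$ are positive, diagonal, increasing in $N$, and bounded above by $I$; this gives (i), with SOT-convergence.

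For (ii), I would iterate: since $\Phi_{f,W}$ is the completely positive map $X\mapsto\sum_{|\alpha|\geq1} a_\alpha W_\alpha X W_\alpha^*$, and $g = 1-(1-f)^m$ has coefficients $c_\gamma^{(m)}$, one checks by the same diagonal computation that $\Phi_{f,W}(I)$ acts on $e_\gamma$ by the scalar $1 - b_\gamma^{(m-1)}/b_\gamma^{(m)}$ — wait, more directly: define $\Psi(X) = \sum_{|\gamma|\geq1} c_\gamma^{(m)} W_\gamma X W_\gamma^*$; then $(id-\Phi_{f,W})^m = id - \Psi$ formally (since $g = 1-(1-f)^m$), and $(id-\Psi)(I)$ acts on $e_\gamma$ by $1 - \frac{1}{b_\gamma^{(m)}}\sum_{\mu\nu=\gamma,\,|\mu|\geq1} c_\mu^{(m)} b_\nu^{(m)}$, which by relation \eqref{b_b} equals $1 - (b_\gamma^{(m)} - b_{g_0}^{(m)}\cdot[\gamma=g_0])/b_\gamma^{(m)}$; since $b_{g_0}^{(m)}=1$, this is $0$ for $\gamma\neq g_0$ and $1$ for $\gamma=g_0$, i.e. exactly $P_\CC$. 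The point requiring care here is justifying that $(id-\Phi_{f,W})^m(I) = (id-\Psi)(I)$ as operators, which follows because $\Phi_{f,W}$ and $\Psi$ are both diagonalized by $\{e_\gamma\}$ with symbols compatible via the functional identity $1-(1-f)^m = g$; I would make this rigorous by comparing the scalar symbols on each $e_\gamma$ using the noncommutative power-series manipulations already established (uniqueness of Fourier expansions in $F_n^\infty$, as in the proofs of Lemmas~\ref{b-alpha} and~\ref{relations}).

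For (iii), I would show $\Phi_{f,W}^p(I)$ acts on $e_\gamma$ by the scalar
$$
\frac{1}{b_\gamma^{(m)}}\!\!\!\sum_{\substack{\beta\delta=\gamma\\ |\beta|=p \text{ part}}}\!\!\! b_\delta^{(m)}\,(\text{sum of products of }a\text{'s giving total length }|\beta|),
$$
more precisely $\langle\Phi_{f,W}^p(I)e_\gamma,e_\gamma\rangle = \frac{1}{b_\gamma^{(m)}}\sum_{\alpha\beta=\gamma} (\text{coeff of }f^p\text{ at }\alpha)\, b_\beta^{(m)}$; since this vanishes once $p > |\gamma|$ (because $f$ has no constant term, so $f^p$ has no words of length $<p$), each $e_\gamma$ is eventually killed, giving SOT-convergence to $0$ on the dense span of the $e_\gamma$'s, and uniform boundedness ($0\leq\Phi_{f,W}^p(I)\leq I$, from (i) iterated) upgrades this to SOT on all of $F^2(H_n)$. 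Finally, for (iv): combining (ii) with the telescoping $\sum_{\beta} b_\beta^{(m)} W_\beta P_\CC W_\beta^*$, I compute its action on $e_\gamma$; $P_\CC W_\beta^* e_\gamma = \sqrt{b_{g_0}^{(m)}/b_\gamma^{(m)}}\,e_{g_0}$ if $\gamma=\beta$ and $0$ otherwise, so the sum acts on $e_\gamma$ by $b_\gamma^{(m)}\cdot\frac{1}{b_\gamma^{(m)}} = 1$, i.e. it equals $I$; SOT-convergence again follows from positivity and monotonicity of partial sums together with the uniform bound $\leq I$ (itself a consequence of (ii), (iii), and the standard identity $\sum_{0\leq|\beta|\leq N} b_\beta^{(m)} W_\beta(id-\Phi_{f,W})^m(I)W_\beta^* = I - \Phi_{f,W}^{?}(\cdots)$ obtained by telescoping the defining recursion). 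I expect the main obstacle to be the bookkeeping in (iv) — namely establishing the finite telescoping identity that exhibits the $N$-th partial sum of (iv) as $I$ minus a positive remainder tending SOT to $0$ by (iii) — since this is where the combinatorial recursions \eqref{b_b}, \eqref{relations2} must be assembled carefully rather than merely quoted.
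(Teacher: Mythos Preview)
Your approach is essentially identical to the paper's: both reduce every claim to diagonal scalar identities on the basis $\{e_\alpha\}$ via Lemma~\ref{relations}, invoking \eqref{relations2} for (i), \eqref{b_b} via the expansion $(id-\Phi_{f,W})^m(I)=I-\sum_{|\beta|\geq1}c_\beta^{(m)}W_\beta W_\beta^*$ for (ii), the vanishing of $\Phi_{f,W}^p(I)e_\alpha$ for $p>|\alpha|$ for (iii), and the single-term computation of $P_\CC W_\beta^* e_\alpha$ for (iv). Your worry about the bookkeeping in (iv) is unfounded: since only $\beta=\alpha$ contributes to $\sum_\beta b_\beta^{(m)} W_\beta P_\CC W_\beta^* e_\alpha$, the $N$-th partial sum is precisely the orthogonal projection onto $\operatorname{span}\{e_\alpha:|\alpha|\leq N\}$, so SOT-convergence to $I$ is immediate and no telescoping identity is needed.
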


\begin{proof} Using relation \eqref{b-al},
a simple calculation reveals that
\begin{equation}\label{WbWb}
W_\beta e_\gamma= \frac {\sqrt{b_\gamma^{(m)}}}{\sqrt{b_{\beta
\gamma}^{(m)}}} e_{\beta \gamma} \quad \text{ and }\quad W_\beta^*
e_\alpha =\begin{cases} \frac
{\sqrt{b_\gamma^{(m)}}}{\sqrt{b_{\alpha}^{(m)}}}e_\gamma& \text{ if
}
\alpha=\beta\gamma \\
0& \text{ otherwise }
\end{cases}
\end{equation}
 for any $\alpha, \beta \in \FF_n^+$.
Due to    \eqref{WbWb}, we deduce that
\begin{equation}\label{WW*}
W_\beta W_\beta^* e_\alpha =\begin{cases}
\frac {{b_\gamma^{(m)}}}{{b_{\alpha}^{(m)}}} e_\alpha & \text{ if } \alpha=\beta\gamma\\
0& \text{ otherwise. }
\end{cases}
\end{equation}
Since the case $m=1$ was considered in \cite{Po-domains}, we assume
that $m\geq 2$. Notice that $$\left(I-\sum\limits_{1\leq |\beta|\leq
N} a_\beta W_\beta W_\beta^*\right)
 e_\alpha= \frac{1}{b_{\alpha}^{(m)}}K_{N, \alpha} e_\alpha,
$$
 where $K_{N, \alpha}=b_{\alpha}^{(m)}$ if $\alpha=g_0$, and
$$
K_{N, \alpha}=b_{\alpha}^{(m)}- \sum_{\beta\gamma=\alpha, 1\leq
|\beta|\leq N} a_\beta b_\gamma^{(m)}  \qquad \text{if } \quad
|\alpha|\geq 1.
$$
Due to relation \eqref{relations2}, if $1\leq|\alpha|\leq N$, we
have $$K_{N, \alpha}= b_\alpha^{(m-1)} \leq {b_\alpha^{(m)}}.
$$
On the other hand, since $a_\beta\geq 0$, $b_\gamma^{(m)}\geq 0$ for
any $\alpha,\gamma\in \FF_n^+$, we have $K_{N,\alpha}\leq
b_\alpha^{(m)}$
 if $|\alpha| \geq 1$.
Hence, we deduce that  $0\leq K_{N, \alpha}\leq b_\alpha^{(m)}$,
whenever $|\alpha|>N$.
  On the  other hand,
notice that if  $1\leq N_1\leq N_2\leq |\alpha|$, then
$K_{N_2,\alpha}\leq K_{N_1,\alpha}$. Consequently,
$\left\{I-\sum\limits_{1\leq |\beta|\leq N} a_\beta W_\beta
 W_\beta^*\right\}_{N=1}^\infty$ is a decreasing
sequence of positive diagonal operators which  converges in the
strong operator topology. Hence, we deduce that
 $\sum\limits_{|\beta|\geq 1} a_\beta W_\beta W_\beta^*\leq I$, where the
convergence is in the strong operator topology.

We prove now part (ii). By \eqref{WW*}, the subspaces $\CC
e_\alpha$, $\alpha\in \FF_n^+$, are invariant under $W_\beta
W_\beta^*$, $\beta\in \FF_n^+$, and, therefore, they are also
invariant under $(id-\Phi_{f,W})^m(I)$. Consequently, it is enough
to show that $(id-\Phi_{f,W})^m(I) 1=1$ and
$$
\left<(id-\Phi_{f,W})^m(I)e_\alpha, e_\alpha\right>=0
$$
for any $\alpha\in \FF_n^+$ with $|\alpha|\geq 1$. The first
equality is obvious due to \eqref{WW*}. Using Lemma \ref{relations},
we deduce that
\begin{equation*}
\begin{split}
\left<(id-\Phi_{f,W})^m(I)e_\alpha, e_\alpha \right>&= \left<
e_\alpha-\sum_{|\beta|\geq 1} c_\beta^{(m)} W_\beta W_\beta^*
e_\alpha,
e_\alpha\right>\\
&= \frac{1}{b_\alpha^{(m)}} \left( b_{\alpha}^{(m)}-
\sum_{\beta\gamma=\alpha,   |\beta|\geq 1} c_\beta^{(m)}
b_\gamma^{(m)} \right)=0
\end{split}
\end{equation*}
if $\alpha\in \FF_n^+$ with $|\alpha|\geq 1$. Therefore,
$\left(id-\Phi_{f,W}\right)^{m}(I)=P_\CC$.

To prove part (iii), notice that relation \eqref{WW*} implies
$\Phi_{f,W}^p(I) e_\alpha=0$ if $p>|\alpha|$.  This shows that
$\lim\limits_{p\to\infty} \Phi^p_{f,W}(I)e_\alpha=0$ for any
$\alpha\in \FF_n^+$.  By part (i), we have $\|\Phi_{f,W}^p(I)\|\leq
1$ for any  $p\in \NN$. Now item (iii) follows.

It remains to  prove (iv). To this end, notice that
\begin{equation}\label{projW*}
P_\CC W_\beta^* e_\alpha =\begin{cases}
\frac {1}{\sqrt{b_{\beta}}}   & \text{ if } \alpha=\beta\\
0& \text{ otherwise,}
\end{cases}
\end{equation}
and, therefore  $\sum_{\beta\in \FF_n^+} b_\beta W_\beta P_\CC
W_\beta^* e_\alpha= e_\alpha$.  Using part (ii), we complete the
proof.
\end{proof}

We can also define the {\it weighted right creation operators}
$\Lambda_i:F^2(H_n)\to F^2(H_n)$ by setting $\Lambda_i:= R_i G_i$,
$i=1,\ldots, n$,  where $R_1,\ldots, R_n$ are
 the right creation operators on the full Fock space $F^2(H_n)$ and
 each  diagonal operator $G_i$, $i=1,\ldots,n$,  is defined by
$$
G_ie_\alpha:=\sqrt{\frac{b_\alpha^{(m)}}{b_{ \alpha g_i}^{(m)}}}
e_\alpha,\quad
 \alpha\in \FF_n^+,
$$
where the coefficients $b_\alpha^{(m)}$, $\alpha\in \FF_n^+$, are
given by relation \eqref{b-al}. In this case, we have
\begin{equation}\label{WbWb-r}
\Lambda_\beta e_\gamma= \frac {\sqrt{b_\gamma^{(m)}}}{\sqrt{b_{
\gamma \tilde\beta}^{(m)}}} e_{ \gamma \tilde \beta} \quad \text{
and }\quad \Lambda_\beta^* e_\alpha =\begin{cases} \frac
{\sqrt{b_\gamma^{(m)}}}{\sqrt{b_{\alpha}^{(m)}}}e_\gamma& \text{ if
}
\alpha=\gamma \tilde \beta \\
0& \text{ otherwise }
\end{cases}
\end{equation}
 for any $\alpha, \beta \in \FF_n^+$, where $\tilde \beta$ denotes
 the reverse of $\beta=g_{i_1}\cdots g_{i_k}$, i.e.,
 $\tilde \beta=g_{i_k}\cdots g_{i_1}$.
Using   Lemma \ref{relations}  and \eqref{WbWb-r}, we deduce that
$$\left(I-\sum\limits_{1\leq |\beta|\leq N}
a_{\tilde\beta} \Lambda_\beta \Lambda_\beta^*\right)
e_\alpha=\frac{1}{b_\alpha^{(m)}} \tilde K_{N, \alpha} e_\alpha,
$$
 where $\tilde K_{N, \alpha}=b_\alpha^{(m)}$ if $\alpha=g_0$, and
$$
\tilde K_{N, \alpha}=b_\alpha^{(m)}- \sum_{\gamma\tilde\beta=\alpha,
 \leq |\tilde\beta|\leq N}  a_{\tilde\beta} b_\gamma^{(m)}
\qquad \text{if } \quad |\alpha|\geq 1.
$$
As in the case of weighted left creation operators, one can show
that
 \begin{equation}
 \label{tild-Lamb} \sum\limits_{|\beta|\geq 1}
a_{\tilde\beta} \Lambda_\beta
 \Lambda_\beta^*\leq I\quad \text{ and } \quad \left(id-\Phi_{\tilde
 f,\Lambda}\right)^m(I)=P_\CC,
 \end{equation}
 where
  $\tilde{f}(X_1,\ldots,
X_n):=\sum_{|\alpha|\geq 1} a_{\tilde \alpha} X_\alpha$, $\tilde
\alpha$ denotes the reverse of $\alpha$, and $\Phi_{\tilde f,
\Lambda}(X):=\sum_{|\alpha|\geq 1} a_{\tilde \alpha} \Lambda_\alpha
X \Lambda_\alpha^*$, $X\in B(F^2(H_n))$, with the convergence is in
the weak operator topology. Since
$$
P_\CC \Lambda_\beta^* e_\alpha =\begin{cases}
\frac {1}{\sqrt{b_{\alpha}^{(m)}}}   & \text{ if } \alpha=\tilde\beta\\
0& \text{ otherwise,}
\end{cases}
$$
we deduce that
\begin{equation*}
\sum_{\beta\in \FF_n^+}b_{\tilde\beta}^{(m)} \Lambda_\beta
 \left[ (id-\Phi_{\tilde
 f,\Lambda})^m(I)\right]   \Lambda_\beta^* =I,
\end{equation*}
where the convergence is in the strong operator topology. Therefore,
we obtain a result similar to Theorem \ref{prop-shif} for the $n$-tuple
$(\Lambda_1,\ldots, \Lambda_n)$.

A linear map $\varphi:B(\cH)\to B(\cH)$ is called power bounded if
there exists a constant $M>0$ such that $\|\varphi^k\|\leq M$ for
any $k\in \NN:=\{1,2,\ldots\}$.

\begin{lemma}\label{phi-condition}
Let $\varphi:B(\cH)\to B(\cH)$  be a power bounded, positive linear
map and let $D\in B(\cH)$ be  a positive operator. If $m\in \NN$,
then
$$(id-\varphi)^m(D)\geq 0\quad \text{ if and only if }\quad  (id-\varphi)^k(D)\geq
0, \quad k=1,2,\ldots,m.
$$
\end{lemma}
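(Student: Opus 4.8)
The plan is to prove the nontrivial direction: assuming $(id-\varphi)^m(D)\geq 0$, we derive $(id-\varphi)^k(D)\geq 0$ for every $k=1,\dots,m-1$ (the reverse implication is immediate, taking $k=m$). The key identity to exploit is the telescoping relation
\begin{equation*}
(id-\varphi)^{k}(D)=\sum_{j=0}^{N-1}\varphi^{j}\!\left[(id-\varphi)^{k+1}(D)\right]+\varphi^{N}\!\left[(id-\varphi)^{k}(D)\right],
\end{equation*}
which holds for all $N\geq 1$ and follows by iterating $(id-\varphi)^k(D)=(id-\varphi)^{k+1}(D)+\varphi\left[(id-\varphi)^k(D)\right]$. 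Since $\varphi$ is positive, each $\varphi^j$ is positive, so every term $\varphi^j\left[(id-\varphi)^{k+1}(D)\right]$ is $\geq 0$ provided $(id-\varphi)^{k+1}(D)\geq 0$. The remainder term $\varphi^N\left[(id-\varphi)^k(D)\right]$ is norm-bounded: since $\varphi$ is power bounded with $\|\varphi^N\|\leq M$, we get $\|\varphi^N\left[(id-\varphi)^k(D)\right]\|\leq M\|(id-\varphi)^k(D)\|$, a bound independent of $N$. This suggests a downward induction on $k$ from $k=m$ to $k=1$.

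First I would set up the induction: the base case $k=m$ is the hypothesis. For the inductive step, suppose $(id-\varphi)^{k+1}(D)\geq 0$ with $1\le k\le m-1$; I want $(id-\varphi)^{k}(D)\geq 0$. Fix a unit vector $h\in\cH$ and apply the telescoping identity, pairing against $h$:
\begin{equation*}
\left\langle (id-\varphi)^{k}(D)h,h\right\rangle=\sum_{j=0}^{N-1}\left\langle\varphi^{j}\!\left[(id-\varphi)^{k+1}(D)\right]h,h\right\rangle+\left\langle\varphi^{N}\!\left[(id-\varphi)^{k}(D)\right]h,h\right\rangle.
\end{equation*}
The partial sums $\sum_{j=0}^{N-1}\left\langle\varphi^{j}[(id-\varphi)^{k+1}(D)]h,h\right\rangle$ are nondecreasing in $N$ (each new term is $\ge0$ by positivity of $\varphi^j$ and the inductive hypothesis) and bounded above by $\left\langle (id-\varphi)^{k}(D)h,h\right\rangle + M\|(id-\varphi)^{k}(D)\|$, hence the series converges and its tail tends to $0$. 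Therefore $\left\langle\varphi^{N}[(id-\varphi)^{k}(D)]h,h\right\rangle$ converges as $N\to\infty$; call the limit $L_h\ge 0$ only if we can control its sign — and indeed it equals $\left\langle (id-\varphi)^{k}(D)h,h\right\rangle$ minus a nonnegative convergent sum, so it is at most $\left\langle (id-\varphi)^{k}(D)h,h\right\rangle$. The decisive point is to show this limit is actually $\geq 0$, which forces $\left\langle (id-\varphi)^{k}(D)h,h\right\rangle\ge 0$.

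The main obstacle is precisely establishing that $\lim_{N\to\infty}\varphi^{N}[(id-\varphi)^{k}(D)]$ is a positive operator (in the weak sense, pairing with $h$). I expect to handle this by a Cesàro/averaging argument: since $\varphi$ is power bounded, the averages $\frac1N\sum_{j=0}^{N-1}\varphi^j[(id-\varphi)^{k+1}(D)]$ tend to $0$ (because $\sum_{j\ge0}\varphi^j[(id-\varphi)^{k+1}(D)]$ pairs to a convergent series against every $h$, forcing its terms $\varphi^j[(id-\varphi)^{k+1}(D)]h\to 0$ weakly, hence the Cesàro averages vanish). Feeding this back, one shows $\varphi^N[(id-\varphi)^{k+1}(D)]\to 0$ weakly, and then from $(id-\varphi)^{k}(D)=\varphi[(id-\varphi)^{k-1}(D)]$-type relations combined with the already-bounded tails, one identifies the limit of $\varphi^N[(id-\varphi)^k(D)]$ with a genuine limit of iterates applied to $D$ itself. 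Since $D\geq 0$ and each $\varphi^N$ is positive, $\varphi^N(D)\geq 0$, and then a final telescoping of $(id-\varphi)^k(D)$ in terms of $\varphi^N(D)$ and nonnegative sums yields $(id-\varphi)^k(D)\geq 0$. Once this downward step works for general $k+1\Rightarrow k$, the induction terminates at $k=1$ and the proof is complete. If the weak-limit identification proves delicate, an alternative is to run the whole argument on the $C^*$-algebra level using the functional-calculus inequality $(1-t)^k\ge 0$ on $[0,1]$ together with a Cauchy–Schwarz / spectral-mapping estimate, but the telescoping-plus-power-boundedness route above is the cleanest.
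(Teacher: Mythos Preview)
Your downward-induction framework via the telescoping identity is exactly the paper's setup, and you correctly isolate the crucial quantity $R_N:=\langle\varphi^N[(id-\varphi)^k(D)]h,h\rangle$ (which is the paper's sequence $x_N$ with $k=m-1$). You also correctly observe that $R_N$ is decreasing, since $R_N-R_{N+1}=\langle\varphi^N[(id-\varphi)^{k+1}(D)]h,h\rangle\ge 0$ by the inductive hypothesis, and that $R_N$ converges. The gap is in the last step: your argument for $\lim_N R_N\ge 0$ does not go through. Knowing that the terms $\langle\varphi^j[(id-\varphi)^{k+1}(D)]h,h\rangle$ tend to zero (because their series converges) says nothing about the sign of $\lim_N R_N$; a decreasing sequence can perfectly well converge to a negative number. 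Your proposed identification of $\lim_N\varphi^N[(id-\varphi)^k(D)]$ with ``iterates applied to $D$ itself'' would require peeling off the factors of $(id-\varphi)$ one at a time, but at the first step $k=m-1$ this needs information about $(id-\varphi)^{m-2}(D),\dots,(id-\varphi)(D)$ that you do not yet have---it is circular.

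The paper's resolution is short and avoids computing the limit altogether. Telescope \emph{one level further down}: with $x_j:=\langle\varphi^j(id-\varphi)^{m-1}(D)h,h\rangle$ one has
\[
\sum_{j=0}^{p} x_j=\langle(id-\varphi)^{m-2}(D)h,h\rangle-\langle\varphi^{p+1}(id-\varphi)^{m-2}(D)h,h\rangle,
\]
so by power boundedness $\bigl|\sum_{j=0}^{p} x_j\bigr|\le (1+M)\|(id-\varphi)^{m-2}(D)\|\,\|h\|^2$ for every $p$. Here $(id-\varphi)^{m-2}(D)$ is used only as a \emph{bounded} operator; no positivity is required, so there is no circularity. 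Now a decreasing real sequence with uniformly bounded partial sums must consist of nonnegative terms (if some $x_{j_0}<0$, then $x_j\le x_{j_0}$ for $j\ge j_0$ and the partial sums diverge to $-\infty$). In particular $x_0=\langle(id-\varphi)^{m-1}(D)h,h\rangle\ge 0$, which is the inductive step. Replace your Ces\`aro paragraph by this bounded-partial-sums observation and the proof is complete.
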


\begin{proof}
One implication is obvious. Assume that $m\geq 2$ and
$(id-\varphi)^m(D)\geq 0$.  Due to the identity
$$
(id-\varphi)^k(D)=\sum_{p=0}^k (-1)^p \left(\begin{matrix}
k\\p\end{matrix}\right) \varphi^p(D),\quad k\in \NN,
$$
and the fact that $\varphi$ is a positive linear map, we deduce that
$x_j:=\left< \varphi^j(id-\varphi)^{m-1}(D)h,h\right>$ is a real
number for any $h\in \cH$ and $j=0,1,\ldots$. Note that, we have
$$
x_j-x_{j+1}=\left< \varphi^j(id-\varphi)^{m}(D)h,h\right>\geq 0.
$$
Therefore,  $\{x_j\}_{j=0}^\infty$ is a decreasing sequence of real
numbers.

On the other hand, using the fact that $\varphi$ is a power bounded
linear map, there exists a constant $M>0$ such that
$\|\varphi^k\|\leq M$ for any $k\in \NN$. Therefore, we have
\begin{equation*}
\begin{split}
\left|\sum_{j=0}^p x_j\right|&= \left|\sum_{j=0}^p\left<
(\varphi^j-\varphi^{j+1})(id-\varphi)^{m-2}(D)h,h\right>\right|\\
&=\left|\left<
(id-\varphi)^{m-2}(D)h,h\right>-\left<\varphi^{p+1}(id-\varphi)^{m-2}(D)h,h\right>\right|\\
&\leq \left|\left<
(id-\varphi^{p+1})(id-\varphi)^{m-2}(D)h,h\right>\right|\\
&\leq (1+ M)\|(id-\varphi)^{m-2}(D)\|\|h\|^2<\infty
\end{split}
\end{equation*}
for any $p=0,1,\ldots$.  Hence, we deduce that $x_j\geq 0$ for any
$j=0,1,\ldots$. In particular, we have
$x_0:=\left<(id-\varphi)^{m-1}(D)h,h\right>\geq 0$ for any $h\in
\cH$. Therefore, $(id-\varphi)^{m-1}(D)\geq 0$. Iterating this
process, one can show that $(id-\varphi)^k(D)\geq 0$  for any
$k=1,2,\ldots,m$. The proof is complete.
\end{proof}

\begin{corollary}\label{phi-cond2} If $\varphi$ is a positive linear map on $B(\cH)$
such that $\varphi(I)\leq I$ and $(id-\varphi)^m(I)\geq 0$ for some
$m\in \NN$, then
$$
0\leq (id-\varphi)^m(I) \leq  (id-\varphi)^{m-1}(I)\leq \cdots\leq
(id-\varphi)(I)\leq I.
$$
\end{corollary}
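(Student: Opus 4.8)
The plan is to deduce everything from Lemma \ref{phi-condition} together with the positivity (hence monotonicity) of $\varphi$. The only genuine preliminary point is to verify that $\varphi$ is power bounded, so that Lemma \ref{phi-condition} can be applied with $D = I$.

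To see that $\varphi$ is power bounded, I would first note that positivity and $\varphi(I) \leq I$ give, by an easy induction, $\varphi^k(I) \leq I$ for every $k \in \NN$: if $\varphi^{k}(I) \leq I$, then $\varphi^{k+1}(I) = \varphi(\varphi^{k}(I)) \leq \varphi(I) \leq I$, using that $\varphi$ is positive and therefore order preserving. Since a positive linear map on the unital $C^*$-algebra $B(\cH)$ attains its norm at the identity, it follows that $\|\varphi^k\| = \|\varphi^k(I)\| \leq 1$ for all $k$, so $\varphi$ is power bounded (with $M = 1$).

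Next, Lemma \ref{phi-condition} applied with $D = I$ immediately yields $(id-\varphi)^k(I) \geq 0$ for every $k = 1, \ldots, m$. In particular this gives the positivity of all the terms occurring in the asserted chain, including the endpoint inequality $0 \leq (id-\varphi)^m(I)$. For the monotonicity I would then use the telescoping identity
$$(id-\varphi)^{k-1}(I) - (id-\varphi)^{k}(I) = \varphi\big((id-\varphi)^{k-1}(I)\big), \qquad 1 \leq k \leq m.$$
For $k = 1$ the right-hand side equals $\varphi(I) \geq 0$, which gives $(id-\varphi)(I) \leq I$; for $2 \leq k \leq m$ the operator $(id-\varphi)^{k-1}(I)$ is positive by the previous step, and positivity of $\varphi$ makes the right-hand side positive, giving $(id-\varphi)^{k}(I) \leq (id-\varphi)^{k-1}(I)$. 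Chaining these inequalities produces the full chain $0 \leq (id-\varphi)^m(I) \leq (id-\varphi)^{m-1}(I) \leq \cdots \leq (id-\varphi)(I) \leq I$.

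The main (indeed only) obstacle is the power boundedness of $\varphi$, and for that one simply invokes the standard fact that a positive linear map on a unital $C^*$-algebra has norm equal to the norm of the image of the unit; everything else is a formal consequence of Lemma \ref{phi-condition} and of $\varphi$ being order preserving.
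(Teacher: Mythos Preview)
Your argument is correct. The paper gives no explicit proof of this corollary; it is stated immediately after Lemma~\ref{phi-condition} and is evidently meant to follow from it, exactly along the lines you spell out: power boundedness from $\varphi(I)\le I$ (via the standard fact that a positive map attains its norm at $I$), then Lemma~\ref{phi-condition} for the positivity of each $(id-\varphi)^k(I)$, and finally the identity $(id-\varphi)^{k-1}(I)-(id-\varphi)^k(I)=\varphi\big((id-\varphi)^{k-1}(I)\big)$ for the monotonicity.
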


 Given $m,n\in \{1,2,\ldots\}$ and  a positive regular free holomorphic
  function $f:=\sum_{|\alpha|\geq 1} a_\alpha X_\alpha$, we define  the noncommutative domain
$$
{\bold D}_f^m(\cH):=\left\{X:= (X_1,\ldots, X_n)\in B(\cH)^n: \
 (id-\Phi_{f,X})^k(I)\geq 0 \text{ for } \ 1\leq k\leq m\right\},
$$
where $\Phi_{f,X}:B(\cH)\to B(\cH)$ is defined by
$
\Phi_{f,X}(Y):=\sum_{|\alpha|\geq 1} a_\alpha X_\alpha
YX_\alpha^*$, \  $Y\in B(\cH),
$
and the convergence is in the week operator topology.
 For the next result,   we need to  denote by $(W_1^{(f)},\ldots, W_n^{(f)})$ the weighted left
creation operators $(W_1,\ldots, W_n)$ associated with ${\bf
D}_f^{(m)}$. The notation $(\Lambda_1^{(f)},\ldots,
\Lambda_n^{(f)})$ is now  clear.

\begin{theorem}\label{tilde-f}
Let $(W_1^{(f)},\ldots, W_n^{(f)})$ (resp. $(\Lambda_1^{(f)},\ldots,
\Lambda_n^{(f)})$) be the weighted left (resp. right) creation
operators associated with the noncommutative domain  ${\bf D}_f^m$. Then
the following statements hold:
\begin{enumerate}
\item[(i)] $(W_1^{(f)},\ldots, W_n^{(f)})\in {\bf D}_f^{m}(F^2(H_n));$
\item[(ii)] $(\Lambda_1^{(f)},\ldots,
\Lambda_n^{(f)})\in {\bf D}_{\tilde f}^{m}(F^2(H_n));$
\item[(iii)] $U^* \Lambda_i^{(f)} U=W_i^{(\tilde f)}$,
$i=1,\ldots,n$, where $U\in B(F^2(H_n))$ is the unitary operator
defined by equation $U e_\alpha:= e_{\tilde \alpha}$, $\alpha\in \FF_n^+$;
\item[(iv)] $W_i^{(f)}\Lambda_j^{(f)}=\Lambda_j^{(f)} W_i^{(f)}$ for $i,j=1,\ldots,n$.
\end{enumerate}
\end{theorem}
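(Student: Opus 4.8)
The plan is to verify the four statements by direct computation with the explicit action formulas \eqref{WbWb} and \eqref{WbWb-r}, together with the structural results already established in Theorem \ref{prop-shif} and its right-handed analogue \eqref{tild-Lamb}. For (i), the point is that $\Phi_{f,W^{(f)}}$ is precisely the map $\Phi_{f,W}$ of Theorem \ref{prop-shif}: by part (ii) of that theorem, $(id-\Phi_{f,W})^m(I)=P_\CC\geq 0$, and since part (i) gives $\Phi_{f,W}(I)=\sum_{|\beta|\geq 1}a_\beta W_\beta W_\beta^*\leq I$ with $\Phi_{f,W}$ a positive linear map, Corollary \ref{phi-cond2} immediately yields $(id-\Phi_{f,W})^k(I)\geq 0$ for all $1\leq k\leq m$. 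That is exactly the defining condition for $(W_1^{(f)},\ldots,W_n^{(f)})\in {\bf D}_f^m(F^2(H_n))$. Statement (ii) is proved the same way, using the right-handed facts recorded in \eqref{tild-Lamb}, namely $\sum_{|\beta|\geq 1}a_{\tilde\beta}\Lambda_\beta\Lambda_\beta^*\leq I$ and $(id-\Phi_{\tilde f,\Lambda})^m(I)=P_\CC$, again invoking Corollary \ref{phi-cond2}; note that $\Phi_{\tilde f,\Lambda}$ is the map $Y\mapsto\sum_{|\alpha|\geq 1}a_{\tilde\alpha}\Lambda_\alpha Y\Lambda_\alpha^*$, which is $\Phi_{\hat g,\Lambda}$ for the polynomial $\tilde f$, so the membership is in ${\bf D}_{\tilde f}^m(F^2(H_n))$.

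For (iii), I would introduce the flip unitary $U$ with $Ue_\alpha=e_{\tilde\alpha}$ (it is unitary since $\alpha\mapsto\tilde\alpha$ is an involutive bijection of $\FF_n^+$ preserving length) and compute both sides on a basis vector $e_\gamma$. From \eqref{WbWb-r}, $\Lambda_\beta^{(f)}e_\gamma = \sqrt{b_\gamma^{(m)}/b_{\gamma\tilde\beta}^{(m)}}\, e_{\gamma\tilde\beta}$, so $U^*\Lambda_\beta^{(f)}U e_\gamma = U^*\Lambda_\beta^{(f)}e_{\tilde\gamma} = \sqrt{b_{\tilde\gamma}^{(m)}/b_{\tilde\gamma\tilde\beta}^{(m)}}\,U^* e_{\tilde\gamma\tilde\beta} = \sqrt{b_{\tilde\gamma}^{(m)}/b_{\tilde\gamma\tilde\beta}^{(m)}}\, e_{\beta\gamma}$, using $\widetilde{\tilde\gamma\tilde\beta}=\beta\gamma$. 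On the other hand, $W_i^{(\tilde f)}$ is built from the coefficients $b_\alpha^{(m)}$ attached to $\tilde f$; one checks from \eqref{b-al} that these coefficients for $\tilde f$ evaluated at $\alpha$ equal the coefficients for $f$ evaluated at $\tilde\alpha$ (reversing all the words $\gamma_1,\ldots,\gamma_j$ and their order is a bijection of the index set), so $W_\beta^{(\tilde f)}e_\gamma = \sqrt{b_{\tilde\gamma}^{(m)}/b_{\widetilde{\beta\gamma}}^{(m)}}\,e_{\beta\gamma} = \sqrt{b_{\tilde\gamma}^{(m)}/b_{\tilde\gamma\tilde\beta}^{(m)}}\,e_{\beta\gamma}$. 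The two expressions agree, giving (iii) for all $\beta$, in particular for $\beta=g_i$.

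For (iv), the cleanest route is to use that $W_i^{(f)}=S_iD_i$ and $\Lambda_j^{(f)}=R_jG_j$ where $D_i,G_j$ are diagonal operators in the basis $\{e_\alpha\}$, and to compute $W_i^{(f)}\Lambda_j^{(f)}e_\alpha$ and $\Lambda_j^{(f)}W_i^{(f)}e_\alpha$ directly: applying $W_i^{(f)}$ then $\Lambda_j^{(f)}$ sends $e_\alpha$ to a scalar multiple of $e_{g_i\alpha g_j}$, and applying them in the other order sends $e_\alpha$ to a scalar multiple of the same vector $e_{g_i\alpha g_j}$; the two scalars both work out to $\sqrt{b_\alpha^{(m)}/b_{g_i\alpha g_j}^{(m)}}$ because the diagonal weight picked up only depends on the pair (argument word, resulting word), not on the order of the left/right actions. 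Alternatively one can quote the general principle that weighted left shifts with diagonal symbol commute with weighted right shifts with diagonal symbol whenever the weight systems are compatible, which is exactly the situation here since both $W_i^{(f)}$ and $\Lambda_j^{(f)}$ are diagonal-twisted versions of the commuting creation operators $S_iR_j=R_jS_i$. I expect the only mildly delicate point in the whole argument to be the bookkeeping in (iii) showing that the $b^{(m)}$-coefficients of $\tilde f$ are the reversed-word $b^{(m)}$-coefficients of $f$; everything else is a routine matching of scalars on basis vectors.
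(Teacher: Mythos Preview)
Your proposal is correct and follows essentially the same route as the paper's proof: parts (i) and (ii) are deduced from Theorem \ref{prop-shif} and \eqref{tild-Lamb} together with Lemma \ref{phi-condition}/Corollary \ref{phi-cond2}, and parts (iii) and (iv) are verified by direct computation on basis vectors using \eqref{WbWb} and \eqref{WbWb-r}. Your treatment of (iii) is in fact slightly more careful than the paper's, since you explicitly identify and justify the point that the $b_\alpha^{(m)}$-coefficients for $\tilde f$ coincide with $b_{\tilde\alpha}^{(m)}$ for $f$, whereas the paper simply writes down the resulting formula for $W_i^{(\tilde f)}e_\gamma$ without comment.
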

\begin{proof}
Items (i) and (ii) follow from  Theorem \ref{prop-shif}, Lemma \ref{phi-condition}, and
 relation \eqref{tild-Lamb}.
Using relation \eqref{WbWb} when $f$ is
replaced by $\tilde f$, we obtain
$$
W_i^{(\tilde f)} e_\gamma=\frac{\sqrt{b_{\tilde
\gamma}^{(m)}}}{\sqrt{b_{\tilde\gamma g_i}^{(m)}}} e_{g_i\gamma}.
$$
On the other hand,  due to  relation \eqref{WbWb-r}, we deduce that
$$
U^*\Lambda_i^{(f)} U e_\gamma =U^*\left(\frac{\sqrt{b_{\tilde
\gamma}^{(m)}}}{\sqrt{b_{\tilde\gamma g_i}^{(m)}}} e_{\tilde\gamma
g_i}\right)= \frac{\sqrt{b_{\tilde
\gamma}^{(m)}}}{\sqrt{b_{\tilde\gamma g_i}^{(m)}}} e_{g_i\gamma}.
$$
Therefore,  $U^* \Lambda_i^{(f)} U=W_i^{(\tilde f)}$,
$i=1,\ldots,n$.

Now, using relation \eqref{w-shift}, \eqref{WbWb-r}, we obtain
$$
\Lambda_j W_i^{(f)} e_\alpha= \frac{\sqrt{b_{\alpha}^{(m)}}}{\sqrt{b_{g_i \alpha}^{(m)}}}
\Lambda_j^{(f)}(e_{g_i\alpha})=\frac{\sqrt{b_{\alpha}^{(m)}}}{\sqrt{b_{g_i \alpha g_j}^{(m)}}}
e_{g_i \alpha g_j}
$$
for any $\alpha\in \FF_n^+$ and $i,j=1,\ldots,n$. Similar calculations reveal that $\Lambda_j^{(f)} W_i^{(f)} e_\alpha=
 W_i^{(f)} \Lambda_j^{(f)} e_\alpha$, which proves (iv).
The proof is complete.
\end{proof}

\bigskip

      \section{Noncommutative  Berezin transforms
} \label{Noncommutative}

In this section, we introduce a {\it noncommutative Berezin
transform}
 associated with  each  $n$-tuple of operators $T:=(T_1,\ldots,
T_n)$ in the noncommutative domain $ {\bf D}_f^m(\cH)$,
and present some of its basic properties.

Let $f$ be a positive regular free holomorphic function on a
noncommutative ball $[B(\cH)^n]_\rho$, $\rho>0$, with representation
$f(X_1,\ldots, X_n):=\sum_{|\alpha|\geq 1} a_\alpha X_\alpha$.   Let
$T:=(T_1,\ldots, T_n)\in B(\cH)^n$ be  an $n$-tuple of operators
 such  that the series
 $ \sum\limits_{|\alpha|\geq 1}
a_{\alpha} T_{\alpha}T_{\alpha}^* $ is WOT convergent,  and consider
the bounded linear map $\Phi_{f,T}:B(\cH)\to B(\cH)$,  given by
\begin{equation}
\label{PFT}
\Phi_{f,T}(X):=\sum_{|\alpha|\geq 1} a_\alpha T_\alpha
XT_\alpha^*,\quad X\in B(\cK),
\end{equation}
where the convergence is in the week operator topology. The joint
spectral radius of $T\in {\bf D}_f^m(\cH)$   is defined  by
$$
r_f(T_1,\ldots, T_n):=\lim_{k\to\infty}\|\Phi_{f,T}^k(I)\|^{1/2k}.
$$
We recall that the model $n$-tuple $(\Lambda_1, \ldots, \Lambda_n)$
associated with ${\bf D}_f^m$
was defined in Section 1.
  According to the results of that section, the series   $\sum\limits_{|\alpha|\geq 1} a_{\tilde \alpha}
\Lambda_\alpha \Lambda_\alpha^*$ is SOT convergent and, therefore,
so is  the series $\sum\limits_{|\alpha|\geq 1} a_{\tilde\alpha}
\Lambda_\alpha \otimes T_{\tilde\alpha}^*$. Notice also that
$$
\left\|\sum_{|\alpha|\geq 1} a_{\tilde\alpha} \Lambda_\alpha \otimes
T_{\tilde\alpha}^*\right\|\leq \left\|\sum_{|\alpha|\geq 1}
a_{\tilde\alpha} \Lambda_\alpha
\Lambda_\alpha^*\right\|\left\|\sum_{|\alpha|\geq 1}
a_{\tilde\alpha} T_{\tilde\alpha}T_{\tilde\alpha}^*\right\|.
$$
and
\begin{equation}
\label{RRR}
 \left\|\left(\sum_{|\alpha|\geq 1} a_{\tilde\alpha}
\Lambda_\alpha \otimes T_{\tilde\alpha}^*\right)^k\right\|\leq
\left\|\Phi^k_{\tilde f,\Lambda}(I)\right\|^{1/2}
\left\|\Phi_{f,T}^k(I)\right\|^{1/2},\quad k\in \NN,
\end{equation}
where $\tilde f(X_1,\ldots, X_n):=\sum\limits_{|\alpha|\geq 1}
a_{\tilde \alpha}X_\alpha$ and $\Phi_{\tilde
f,\Lambda}(Y):=\sum\limits_{|\alpha|\geq 1}
a_{\tilde\alpha}\Lambda_\alpha Y \Lambda_\alpha^*$. Hence, we deduce
that
\begin{equation*}
r\left(\sum_{|\alpha|\geq 1} a_{\tilde\alpha} \Lambda_\alpha \otimes
T_{\tilde\alpha}^*\right)\leq r_{\tilde f}(\Lambda_1,\ldots,
\Lambda_n)r_f(T_1,\ldots, T_n),
\end{equation*}
where $r(A)$ denotes the usual spectral radius of an operator $A$.
Due to the results of Section 1, we have $\left\|\Phi_{\tilde
f,\Lambda}(I)\right\|\leq 1$, which implies $r_{\tilde
f}(\Lambda_1,\ldots, \Lambda_n)\leq 1$. Consequently, we have
\begin{equation*}
  r\left(\sum_{|\alpha|\geq 1} a_{\tilde\alpha}
\Lambda_\alpha \otimes T_{\tilde\alpha}^*\right)\leq r_f(T_1,\ldots,
T_n).
\end{equation*}
Consequently, if $r_f(T_1,\ldots, T_n)<1$, then
  the operator
\begin{equation}\label{form-1}
 \left(
I-\sum_{|\alpha|\geq 1} a_{\tilde\alpha} \Lambda_\alpha \otimes
T_{\tilde\alpha}^* \right)^{-1}= \sum_{k=0}^\infty
\left(\sum_{|\alpha|\geq 1} a_{\tilde\alpha} \Lambda_\alpha \otimes
T_{\tilde\alpha}^* \right)^k
\end{equation} is well-defined,
where the convergence is in the operator norm topology.

For each $T:=(T_1,\ldots, T_n)\in {\bf D}_f^m(\cH)$ with
$r_f(T_1,\ldots, T_n)<1$, we introduce the {\it noncommutative Berezin
transform}  at $T$ as the map  ${\bf B}_T:B(F^2(H_n)) \to  B(\cH)$ defined by

\begin{equation}
\label{Berezin}
 \left<{\bf B}_T[g]x,y\right>:=
\left<\left(
I-\sum_{|\alpha|\geq 1} \overline{a}_{\tilde\alpha} \Lambda_\alpha^* \otimes
T_{\tilde\alpha} \right)^{-m}
 (g\otimes \Delta_{T,m,f}^2)
  \left(
I-\sum_{|\alpha|\geq 1} a_{\tilde\alpha} \Lambda_\alpha \otimes
T_{\tilde\alpha}^* \right)^{-m}(1\otimes x), 1\otimes y\right>
\end{equation}
  where $\Delta_{T,m,f}:= [(id-\Phi_{f,T})^m(I)]^{1/2}$ and $x,y\in \cH$.
    We remark that in the particular case when: $n=1$, $m=1$, $f(X)=X$,
     $\cH=\CC$,
 and $T=\lambda\in \DD$, we recover the Berezin transform of a bounded
linear operator on the Hardy space $H^2(\DD)$, i.e.,
$$
{\bf B}_\lambda [g]=(1-|\lambda|^2)\left<g k_\lambda,
k_\lambda\right>,\quad g\in B(H^2(\DD)),
$$
where $k_\lambda(z):=(1-\overline{\lambda} z)^{-1}$ and  $z,
\lambda\in \DD$.

The noncommutative Berezin  transform  will play an important role
in this paper. We will present some of its basic properties in this section.
First, we need a few
preliminary results about positive linear maps on $B(\cH)$.

\begin{lemma}\label{identity}
Let $\varphi:B(\cH)\to B(\cH)$ be a linear map and $k,q\in \NN$.
Then
\begin{equation}\label{ident}
\sum_{p=0}^q\left(\begin{matrix} p+k-1\\k-1
\end{matrix}\right) \varphi^p(id-\varphi)^k=id-\sum_{j=0}^{k-1}
 \left(\begin{matrix} q+j\\ j
\end{matrix}\right) \varphi^{q+1} (id-\varphi)^j.
\end{equation}
\end{lemma}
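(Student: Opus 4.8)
The plan is to prove identity \eqref{ident} by induction on $q$, with the algebraic operator identity treated purely formally in the (commutative) polynomial ring $\mathbb{C}[\varphi]$ generated by the single linear map $\varphi$, since $id$ and $\varphi$ commute and every expression in sight is a polynomial in $\varphi$. Thus it suffices to verify the scalar polynomial identity
$$
\sum_{p=0}^q \binom{p+k-1}{k-1} t^p (1-t)^k = 1 - \sum_{j=0}^{k-1} \binom{q+j}{j} t^{q+1}(1-t)^j
$$
in the indeterminate $t$, and then substitute $t \mapsto \varphi$.

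First I would establish the base case $q=0$: the left side is $\binom{k-1}{k-1}(1-t)^k = (1-t)^k$, and the right side is $1 - \sum_{j=0}^{k-1}\binom{j}{j} t (1-t)^j = 1 - t\sum_{j=0}^{k-1}(1-t)^j = 1 - t\cdot\frac{1-(1-t)^k}{t} = (1-t)^k$, using the finite geometric sum; so both sides agree. (For $k=0$ the statement is the trivial $1 = 1$, and for $k \ge 1$ this computation is valid.) Then, assuming the identity for $q$, I would pass to $q+1$ by adding the single new term $\binom{q+k}{k-1} t^{q+1}(1-t)^k$ to the left side. On the right side, the change from $q$ to $q+1$ replaces $-\sum_{j=0}^{k-1}\binom{q+j}{j} t^{q+1}(1-t)^j$ by $-\sum_{j=0}^{k-1}\binom{q+1+j}{j} t^{q+2}(1-t)^j$. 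So the inductive step reduces to the purely combinatorial identity
$$
\binom{q+k}{k-1} t^{q+1}(1-t)^k - \sum_{j=0}^{k-1}\binom{q+1+j}{j} t^{q+2}(1-t)^j + \sum_{j=0}^{k-1}\binom{q+j}{j} t^{q+1}(1-t)^j = 0,
$$
which after dividing by $t^{q+1}$ is an identity among polynomials in $t$ of degree $k$.

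To verify this last identity I would factor out and compare coefficients, or more cleanly, use Pascal's rule $\binom{q+1+j}{j} = \binom{q+j}{j} + \binom{q+j}{j-1}$ to rewrite $-\sum_j \binom{q+1+j}{j} t(1-t)^j = -\sum_j\binom{q+j}{j} t(1-t)^j - \sum_j \binom{q+j}{j-1} t(1-t)^j$; combining the first of these with the $+\sum_j\binom{q+j}{j}(1-t)^j$ term produces $\sum_{j=0}^{k-1}\binom{q+j}{j}(1-t)^{j+1}$, and then one checks by telescoping against the remaining sum and the leading $\binom{q+k}{k-1}(1-t)^k$ term that everything cancels. Alternatively, one can avoid the bookkeeping entirely by recognizing that $\sum_{p\ge 0}\binom{p+k-1}{k-1} t^p = (1-t)^{-k}$ as a formal power series, so the left-hand side of \eqref{ident} is the degree-$\le q$ truncation of $(1-t)^{-k}\cdot(1-t)^k = 1$, hence equals $1$ minus the tail $\sum_{p\ge q+1}\binom{p+k-1}{k-1}t^p(1-t)^k$; re-indexing $p = q+1+i$ and applying the Vandermonde-type identity $\sum_{i\ge 0}\binom{q+i+k}{k-1} t^i = $ (a rational function whose expansion matches $\sum_{j=0}^{k-1}\binom{q+j}{j}t^{q+1-(q+1)}\cdots$) identifies the tail with $\sum_{j=0}^{k-1}\binom{q+j}{j} t^{q+1}(1-t)^j$. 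I expect the main obstacle to be purely organizational — correctly managing the binomial-coefficient bookkeeping in the telescoping step — rather than any conceptual difficulty; the induction on $q$ with Pascal's rule is the safest route, and I would present that one in full.
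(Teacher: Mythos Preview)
Your approach is correct but differs from the paper's: you induct on $q$, while the paper inducts on $k$. The paper's base case $k=1$ is the one-line telescoping $\sum_{p=0}^q \varphi^p(id-\varphi)=id-\varphi^{q+1}$, and its inductive step uses the single Pascal identity $\binom{p+m}{m}-\binom{p+m-1}{m}=\binom{p+m-1}{m-1}$ to collapse $\sum_{p=0}^q\binom{p+m}{m}\varphi^p(id-\varphi)^{m+1}$ down to the $k=m$ sum plus one boundary term $-\binom{q+m}{m}\varphi^{q+1}(id-\varphi)^m$, which immediately gives the right-hand side. Your route works just as well, but the bookkeeping in the inductive step is a bit heavier, and indeed you have a sign slip: the correct reduction is $\binom{q+k}{k-1}t^{q+1}(1-t)^k+\sum_{j}\binom{q+1+j}{j}t^{q+2}(1-t)^j-\sum_{j}\binom{q+j}{j}t^{q+1}(1-t)^j=0$ (the two sums carry the opposite signs from what you wrote). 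Once corrected, Pascal plus telescoping does finish it as you outlined. Your formal-power-series remark that the left side is the degree-$\le q$ truncation of $(1-t)^{-k}(1-t)^k=1$ is the cleanest conceptual explanation of why the identity holds; the paper does not mention this viewpoint.
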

\begin{proof}
  Since  $\sum_{p=0}^q \varphi^p(id-\varphi)=id-\varphi^{q+1}$,
 equation \eqref{ident} holds for $k=1$. We proceed now by induction
 over $k$. Assume
that \eqref{ident} holds   for $k=m$. Since
$\varphi(id-\varphi)=(id-\varphi)\varphi$ and $$\left(\begin{matrix}
p+m\\ m
\end{matrix}\right)-\left(\begin{matrix}
p+m-1\\ m
\end{matrix}\right)=\left(\begin{matrix}
p+m-1\\ m-1
\end{matrix}\right),
$$
we have
\begin{equation*}
\begin{split}
\sum_{p=0}^q&\left(\begin{matrix} p+m\\m
\end{matrix}\right) \varphi^p(id-\varphi)^{m+1}\\
&= \sum_{p=0}^q\left(\begin{matrix} p+m\\m
\end{matrix}\right) \varphi^p(id-\varphi)^{m}-\sum_{p=0}^q\left(\begin{matrix} p+m\\m
\end{matrix}\right)\varphi^{p+1}
(id-\varphi)^{m}\\
&=(id-\varphi)^m+\sum_{p=1}^q\left[\left(\begin{matrix} p+m\\m
\end{matrix}\right)-\left(\begin{matrix} p+m-1\\m
\end{matrix}\right)\right] \varphi^p(id-\varphi)^{m}-\left(\begin{matrix} q+m\\m
\end{matrix}\right)\varphi^{q+1}(id-\varphi)^{m}\\
&=\sum_{p=0}^q\left(\begin{matrix} p+m-1\\ m-1
\end{matrix}\right)\varphi^p(id-\varphi)^{m}-\left(\begin{matrix} q+m\\m
\end{matrix}\right)\varphi^{q+1}(id-\varphi)^{m}.
\end{split}
\end{equation*}
Using the induction hypothesis, we complete the proof.
\end{proof}

\begin{lemma}\label{ineq-lim}
Let $\varphi:B(\cH)\to B(\cH)$ be a power bounded, positive  linear
map and  let $ D\in B(\cH)$ be a positive operator such that
$(id-\varphi)^m(D)\geq 0$ for some $m\in \NN$.
 Then the following  limit exists for  any \ $h\in \cH$ and $k=0,1,\ldots,
 m-1$, and
$$
\lim_{p\to\infty} p^k\left< \varphi^p(id-\varphi)^k(D)h,h\right>=
\begin{cases} \lim\limits_{p\to\infty}\left< \varphi^p (D)h,h\right> &
\quad \text{ if } \ k=0\\
0 & \quad \text{ if } \ k=1,2,\ldots, m-1.
\end{cases}
$$
\end{lemma}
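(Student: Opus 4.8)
The plan is to exploit the combinatorial identity from Lemma \ref{identity} to rewrite the partial sums $\sum_{p=0}^q \binom{p+k-1}{k-1}\langle\varphi^p(id-\varphi)^k(D)h,h\rangle$ in closed form, and then extract convergence/vanishing of the summands from convergence of the series. First I would fix $h\in\cH$ and set $t_p^{(j)}:=\langle\varphi^p(id-\varphi)^j(D)h,h\rangle$ for $0\le j\le m$. By Lemma \ref{phi-condition} (applied to the positive map $\varphi$ and the positive operator $D$, noting that $(id-\varphi)^m(D)\ge 0$ implies $(id-\varphi)^j(D)\ge 0$ for all $1\le j\le m$), every $t_p^{(j)}\ge 0$; moreover $t_p^{(j)}$ is bounded above since $\|\varphi^p\|\le M$ gives $0\le t_p^{(j)}\le M\|(id-\varphi)^j(D)\|\,\|h\|^2$. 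Also $t_p^{(j)}-t_{p+1}^{(j)}=t_p^{(j+1)}\ge 0$ for $j\le m-1$, so each sequence $\{t_p^{(j)}\}_p$ is nonincreasing in $p$, hence convergent; call the limit $\ell_j$. For $j=0$ this is exactly the asserted limit $\lim_p\langle\varphi^p(D)h,h\rangle$.

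The heart of the argument is to show $\ell_j=0$ for $1\le j\le m-1$, together with the sharper statement that $p^k t_p^{(k)}\to 0$. Taking the inner product of the identity in Lemma \ref{identity} against $h$ (with $D$ inserted), I get
\begin{equation*}
\sum_{p=0}^q\binom{p+k-1}{k-1} t_p^{(k)}=\langle D h,h\rangle-\sum_{j=0}^{k-1}\binom{q+j}{j}\,\langle\varphi^{q+1}(id-\varphi)^j(D)h,h\rangle,
\end{equation*}
for $1\le k\le m$. The idea is an induction on $k$. For the base case, telescoping $\sum_{p=0}^q t_p^{(1)}=\langle Dh,h\rangle-t_{q+1}^{(0)}$ shows the series $\sum_{p\ge 0}t_p^{(1)}$ converges (to $\langle Dh,h\rangle-\ell_0$), and since $\{t_p^{(1)}\}$ is nonincreasing and nonnegative, convergence of $\sum t_p^{(1)}$ forces $p\,t_p^{(1)}\to 0$ (a standard Abel/Olivier-type fact for monotone series), in particular $\ell_1=0$. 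For the inductive step, assume $p^{j}t_p^{(j)}\to0$ for all $j\le k-1$; then each term $\binom{q+j}{j}t_{q+1}^{(j)}\sim \tfrac{q^j}{j!}t_{q+1}^{(j)}\to 0$, so the right side of the displayed identity converges as $q\to\infty$, hence the series $\sum_{p\ge0}\binom{p+k-1}{k-1}t_p^{(k)}$ converges. Since $\binom{p+k-1}{k-1}\sim p^{k-1}/(k-1)!$ and $\{t_p^{(k)}\}$ is nonincreasing nonnegative, convergence of $\sum p^{k-1}t_p^{(k)}$ again forces $p^{k-1}\cdot p\,t_p^{(k)}=p^k t_p^{(k)}\to 0$ (monotone terms of a convergent series decay faster than the reciprocal of the partial-weight count). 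This closes the induction and in particular gives $\ell_k=0$ for $k=1,\dots,m-1$.

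The step I expect to be the main obstacle is making the monotone-series decay argument fully rigorous in the weighted form: from $\sum_p w_p t_p<\infty$ with $t_p\downarrow$ and $w_p\asymp p^{k-1}$, deducing $p^k t_p\to0$. The clean way is to note $t_{2N}\sum_{p=N}^{2N}w_p\le\sum_{p=N}^{2N}w_p t_p\to0$, and $\sum_{p=N}^{2N}w_p$ is of order $N^k$, giving $N^k t_{2N}\to0$, then fill the gap between $2N$ and $2N+1$ using monotonicity. One must also be slightly careful that the convergence of the series is what drives everything — the identity in Lemma \ref{identity} only gives convergence of the \emph{partial sums on the right}, so the induction hypothesis $p^j t_p^{(j)}\to0$ for $j<k$ is genuinely needed to kill the boundary terms $\binom{q+j}{j}t_{q+1}^{(j)}$ before one can conclude. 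Everything else — positivity of the $t_p^{(j)}$, monotonicity, the passage from Lemma \ref{identity} — is routine.
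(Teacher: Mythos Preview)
Your proposal is correct and follows essentially the same route as the paper: both arguments set $x_p^{(k)}=\langle\varphi^p(id-\varphi)^k(D)h,h\rangle$, use Lemma~\ref{phi-condition} for nonnegativity and the relation $x_p^{(k)}-x_{p+1}^{(k)}=x_p^{(k+1)}$ for monotonicity, then induct on $k$, at each step establishing convergence of $\sum_p p^{k-1}x_p^{(k)}$ and deducing $p^k x_p^{(k)}\to 0$ via the Cauchy--condensation/Olivier argument you sketched (the paper does exactly the same $[q,2q]$ block estimate). The only cosmetic difference is that you invoke Lemma~\ref{identity} directly to get the partial sums in closed form, whereas the paper obtains the convergence of $\sum_p p^N x_p^{(N+1)}$ by an explicit Abel summation of $\sum_p p^N(x_p^{(N)}-x_{p+1}^{(N)})$; these are equivalent rearrangements of the same telescoping. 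One small wording point: in your inductive step the $j=0$ boundary term $\binom{q}{0}t_{q+1}^{(0)}$ does not tend to $0$ but to $\ell_0$, so the right-hand side converges to $\langle Dh,h\rangle-\ell_0$ rather than $\langle Dh,h\rangle$; this does not affect the argument since you only need the limit to exist and be finite.
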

\begin{proof}
For each $h\in \cH$, $p=0,1,\ldots$, and $r=0,1,\ldots, m$, denote \
$x_p^{(r)}:=\left< \varphi^p(id-\varphi)^r(D)h,h\right>$ and notice
that, due to Lemma \ref{phi-condition}, $x_p^{(r)}\geq 0$. When
$k=0,1,\ldots, m-1$, using the same lemma, we obtain
$$
x_p^{(k)}-x_{p+1}^{(k)}=\left<
\varphi^p(id-\varphi)^{k+1}(D)h,h\right>\geq 0.
$$
Therefore, $\{x_p^{(k)}\}_{p=0}^\infty$ is a decreasing sequence of
positive numbers. In particular, when $k=0$,  we deduce that $
\lim\limits_{p\to\infty}\left< \varphi^p (D)h,h\right>$ exists.

It remains to prove that
\begin{equation}
\label{lim1} \lim_{p\to\infty} p^k\left<
\varphi^p(id-\varphi)^k(D)h,h\right>=0
\end{equation}
for any $k=1,\ldots, m-1$. As an intermediate step, we will also
prove that
\begin{equation}\label{ser1}
\sum_{p=1}^\infty p^{r-1}x_p^{(r)}<\infty
\end{equation}
for $r=1,\ldots,m$. Notice that this relation holds true if $r=1$, due
to the fact that the series
$$\sum_{p=1}^\infty \left<
\varphi^p(id-\varphi)(D)h,h\right>=\left<Dh,h\right>-\lim\limits_{p\to\infty}\left<
\varphi^p (D)h,h\right> $$
 is convergent.
 We proceed  now by induction over $r$. Assume that $1\leq N\leq m-1$ and
 that relation \eqref{ser1} holds for $r=N$, i.e,
 $\sum_{p=1}^\infty p^{N-1}x_p^{(N)}<\infty$.
 We shall prove first that relation \eqref{lim1} holds for $k=N$.
 Due to the Cauchy criterion, we have
 $$
 y_q:=q^{N-1} x_q^{(N)}+(q+1)^{N-1} x_{q+1}^{(N)}+\cdots +(2q-1)^{N-1}
 x_{2q-1}^{(N)}\to 0,\quad \text{ as } \ q\to\infty.
 $$
Since $\{x_q^{(N)}\}_{q=1}^\infty$ is a  decreasing sequence of
positive numbers, we have $q^Nx_{2q-1}^{(N)}\leq y_q$. Now, it is
clear that $(2q-1)^Nx_{2q-1}^{(N)}\to 0$ as $q\to \infty$. On the
other hand,  since $(2q)^Nx_{2q}^{(N)}\leq (2q)^N x_{2q-1}^{(N)}$,
we have $(2q)^Nx_{2q}^{(N)}\to 0$ as $q\to\infty$. Consequently,
relation \eqref{lim1} holds for $k=N$.

Now, we prove that if \eqref{lim1} holds for $k=N$ (where $1\leq
N\leq m-1$) and  relation \eqref{ser1} holds for $r=N$, then
\eqref{ser1} holds also for $r=N+1$. Notice that
\begin{equation*}
\begin{split}
\sum_{p=1}^q  p^Nx_{p}^{(N+1)}&=\sum_{p=1}^q  p^N \left<
\varphi^p(id-\varphi)^{N+1}(D)h,h\right>\\
&=\sum_{r=1}^q r^N x_r^{(N)}-\sum_{p=1}^q p^N x_{p+1}^{(N)}\\
&=x_1^{(N)}+\sum_{p=1}^q \left[(p+1)^N-p^N\right]
x_{p+1}^{(N)}-(q+1)^N x_{q+1}^{(N)}\\
&\leq x_1^{(N)} +N\sum_{p=1}^q (p+1)^{N-1} x_{p+1}^{(N)}-(q+1)^N
x_{q+1}^{(N)}.
\end{split}
\end{equation*}
Using our assumptions, we conclude that \eqref{ser1} holds  for
$r=N+1$. This completes the proof.
\end{proof}

Let $f$ be a positive regular free holomorphic function on a
noncommutative ball $[B(\cH)^n]_\rho$, $\rho>0$.
In what follows we introduce the noncommutative Berezin  kernel associated with
any $n$-tuple  of operators $T:=(T_1,\ldots, T_n)$ in the noncommutative domain ${\bf D}_f^m(\cH)$, and
 present some of its basic properties.

\begin{lemma}\label{Berezin-lemma}
Let $T:=(T_1,\ldots, T_n)\in {\bf D}_f^m(\cH)$ and let
$K_{f,T}^{(m)}:\cH\to F^2(H_n)\otimes
\overline{\Delta_{f,m,T}(\cH)}$  be the map defined by
\begin{equation}
\label{Po-ker}
 K_{f,T}^{(m)}h:=\sum_{\alpha\in \FF_n^+} \sqrt{b^{(m)}_\alpha}
e_\alpha\otimes \Delta_{f,m,T} T_\alpha^* h,\quad h\in \cH,
\end{equation}
where $\Delta_{f,m,T}:=\left[(I- \Phi_{f,T})^m(I) \right]^{1/2}$, the positive map
$\Phi_{f,T}$ is defined by \eqref{PFT} and the coefficients $b_{\alpha}^{(m)}$ are given by
\eqref{b-al}.
Then \begin{enumerate}
\item[(i)] $ {K_{f,T}^{(m)}}^* K_{f,T}^{(m)}=I_\cH-Q_{f,T}$, where
$Q_{f,T}:=\text{\rm SOT-}\lim_{k\to\infty} \Phi^k_{f,T}(I)$;
\item[(ii)] $K_{f,T}^{(m)} T_i^*=(W_i^*\otimes
I_\cH)K_{f,T}^{(m)}$,\ $ i=1,\ldots, n$, where $(W_1,\ldots, W_n)$
is the $n$-tuple of weighted left creation operators
  associated with the noncommutative domain ${\bf D}^m_f$.
\end{enumerate}
\end{lemma}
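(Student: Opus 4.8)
The plan is to verify both identities by direct computation using the defining formula \eqref{Po-ker} for the Berezin kernel together with the action formulas \eqref{WbWb}, \eqref{WW*} for the weighted creation operators and the combinatorial identity \eqref{b_b} from Lemma \ref{relations}. For part (i), I would compute
\[
\left<{K_{f,T}^{(m)}}^* K_{f,T}^{(m)}h,h\right>=\sum_{\alpha\in \FF_n^+} b_\alpha^{(m)}\left<\Delta_{f,m,T}T_\alpha^* h,\Delta_{f,m,T}T_\alpha^* h\right>=\sum_{\alpha\in \FF_n^+} b_\alpha^{(m)}\left<T_\alpha (id-\Phi_{f,T})^m(I) T_\alpha^* h,h\right>,
\]
using that $\Delta_{f,m,T}^2=(id-\Phi_{f,T})^m(I)$. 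Grouping the terms by the length $k=|\alpha|$ and using $g=1-(1-f)^m=\sum c_\gamma^{(m)}X_\gamma$, one recognizes (by the same telescoping argument as in the proof of Theorem \ref{prop-shif}(iv), now transported to the operators $T_i$ in place of $W_i$) that the partial sums of $\sum_\alpha b_\alpha^{(m)}T_\alpha\Delta_{f,m,T}^2 T_\alpha^*$ collapse via \eqref{b_b} to $I-\Phi_{f,T}^{N+1}\big(\sum_{|\alpha|\le N}\,\cdots\big)$-type remainders. The cleanest route is to invoke Lemma \ref{identity} with $\varphi=\Phi_{f,T}$, $k=m$: the identity \eqref{ident} rewrites $\sum_{p=0}^q \binom{p+m-1}{m-1}\Phi_{f,T}^p(id-\Phi_{f,T})^m(I)$ as $I-\sum_{j=0}^{m-1}\binom{q+j}{j}\Phi_{f,T}^{q+1}(id-\Phi_{f,T})^j(I)$, and the coefficient $\binom{p+m-1}{m-1}$ is exactly the "weight" of the length-$p$ block of $b_\alpha^{(m)}$'s — more precisely $\sum_{|\alpha|=k} b_\alpha^{(m)}T_\alpha\Delta^2 T_\alpha^*$ reorganizes, after summing over the ways to factor $\alpha$ as $\gamma_1\cdots\gamma_j$, into $\sum_j \binom{j+m-1}{m-1}(\text{terms})$. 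Letting $q\to\infty$ and applying Lemma \ref{ineq-lim} (the hypotheses hold since $\Phi_{f,T}$ is power bounded by Corollary \ref{phi-cond2} and $(id-\Phi_{f,T})^m(I)\ge0$), the $j=1,\dots,m-1$ remainder terms vanish in the weak topology while the $j=0$ term converges to $Q_{f,T}=\text{SOT-}\lim_k\Phi_{f,T}^k(I)$, giving ${K_{f,T}^{(m)}}^*K_{f,T}^{(m)}=I-Q_{f,T}$.

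For part (ii), this is the intertwining relation and is the more routine of the two. I would compute, for $h\in\cH$ and $i=1,\dots,n$,
\[
K_{f,T}^{(m)} T_i^* h=\sum_{\alpha\in \FF_n^+}\sqrt{b_\alpha^{(m)}}\,e_\alpha\otimes \Delta_{f,m,T}T_\alpha^* T_i^* h=\sum_{\alpha\in \FF_n^+}\sqrt{b_\alpha^{(m)}}\,e_\alpha\otimes \Delta_{f,m,T}T_{g_i\alpha}^* h,
\]
while on the other side, using the adjoint action $W_i^* e_\beta=\sqrt{b_\gamma^{(m)}/b_\beta^{(m)}}\,e_\gamma$ if $\beta=g_i\gamma$ and $0$ otherwise (from \eqref{WbWb}),
\[
(W_i^*\otimes I_\cH)K_{f,T}^{(m)}h=\sum_{\beta\in\FF_n^+}\sqrt{b_\beta^{(m)}}\,(W_i^*e_\beta)\otimes \Delta_{f,m,T}T_\beta^* h=\sum_{\gamma\in\FF_n^+}\sqrt{b_{g_i\gamma}^{(m)}}\sqrt{\frac{b_\gamma^{(m)}}{b_{g_i\gamma}^{(m)}}}\,e_\gamma\otimes \Delta_{f,m,T}T_{g_i\gamma}^* h,
\]
and the two match term by term after re-indexing $\alpha=\gamma$, since the scalar simplifies to $\sqrt{b_\gamma^{(m)}}$.

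The main obstacle is the bookkeeping in part (i): making rigorous the passage from the unordered sum $\sum_{\alpha\in\FF_n^+} b_\alpha^{(m)}T_\alpha\Delta_{f,m,T}^2 T_\alpha^*$ to the telescoped form involving $\binom{p+m-1}{m-1}\Phi_{f,T}^p(id-\Phi_{f,T})^m(I)$, and then justifying the interchange of the limit $q\to\infty$ with the inner product. The first is handled by the explicit formula \eqref{b-al} for $b_\alpha^{(m)}$ (each $\alpha$ of length $k$, factored as $\gamma_1\cdots\gamma_j$, contributes $a_{\gamma_1}\cdots a_{\gamma_j}\binom{j+m-1}{m-1}$, and summing $T_\alpha\Delta^2 T_\alpha^*$ over all such $\alpha$ reproduces $\binom{j+m-1}{m-1}\Phi_{f,T}^j$ applied to the block), exactly paralleling the computation already done in Lemma \ref{b-alpha}; the second is precisely what Lemma \ref{ineq-lim} was proved for. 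One should also note that convergence of $K_{f,T}^{(m)}h$ in $F^2(H_n)\otimes\overline{\Delta_{f,m,T}(\cH)}$ follows from the partial-sum estimate $\sum_{|\alpha|\le N} b_\alpha^{(m)}\|\Delta_{f,m,T}T_\alpha^* h\|^2\le \|h\|^2$, which is the $q$-th partial version of (i) and is bounded by Lemma \ref{identity}; so $K_{f,T}^{(m)}$ is a well-defined contraction, which is needed before any of the above manipulations are legitimate.
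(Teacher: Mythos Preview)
Your proposal is correct and follows essentially the same approach as the paper's proof: for (i), expand $\sum_\alpha b_\alpha^{(m)}T_\alpha\Delta_{f,m,T}^2 T_\alpha^*$ via the explicit formula \eqref{b-al}, reorganize into $\sum_p\binom{p+m-1}{m-1}\Phi_{f,T}^p(id-\Phi_{f,T})^m(I)$, apply Lemma \ref{identity}, and pass to the limit using Lemma \ref{ineq-lim}; for (ii), the direct term-by-term computation with \eqref{WbWb} is exactly what the paper does. One small slip: when you write that $\binom{p+m-1}{m-1}$ is the weight of the ``length-$p$ block'', the index $p$ is the number $j$ of factors in the decomposition $\alpha=\gamma_1\cdots\gamma_j$, not $|\alpha|$ --- you correct this in the very next clause, so the argument goes through, but be careful with the phrasing.
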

\begin{proof}
Since
$\Phi_{f,T}(I)\leq I$ and $\Phi_{f,T}(\cdot)$ is a positive linear
map,
 it is easy to see that $\{\Phi_{f,T}^p(I)\}_{p=1}^\infty$ is a decreasing
  sequence of positive operators and, consequently,
$Q_{f,T}:=\text{\rm SOT-}\lim\limits_{p\to\infty} \Phi_{f,T}^p(I)$
exists.
Due to relation \eqref{b-al} and using Lemma \ref{identity}, we
deduce that
\begin{equation*}
\begin{split}
&\left< \sum_{\beta\in \FF_n^+} b_\beta^{(m)} T_\beta
\Delta_{f,m,T}^2T_\beta^*h, h\right>
 =
\left<\Delta_{f,m,T}^2 h,h\right>+ \sum_{m=1}^\infty
\sum_{|\beta|=m}\left<
b_\beta^{(m)} T_\beta \Delta_{f,m,T}^2T_\beta^*h, h\right>\\
& =\left<\Delta_{f,m,T}^2 h,h\right>+ \sum_{m=1}^\infty
\sum_{|\beta|=m}\left<\left(
\sum_{j=1}^{|\beta|}\left(\begin{matrix} j+m-1\\ m-1
\end{matrix}\right)\sum_{{\gamma_1\cdots
\gamma_j=\beta}\atop{|\gamma_1|\geq 1,\ldots, |\gamma_j|\geq 1}}
a_{\gamma_1}\cdots a_{\gamma_j}\right) T_{\gamma_1\cdots \gamma_j}
\Delta_{f,m,T}^2 T_{\gamma_1\cdots \gamma_j}^*h,h\right>\\
&=\left<\Delta_{f,m,T}^2 h,h\right>+\sum_{p=1}^\infty
\left(\begin{matrix} p+m-1\\ m-1
\end{matrix}\right)
\sum_{|\alpha_1|\geq 1,\ldots, |\alpha_p|\geq 1} a_{\alpha_1}\cdots
a_{\alpha_p}T_{\alpha_1}\cdots T_{\alpha_p} \Delta_{f,m,T}^2
T_{\alpha_p}^*\cdots T_{\alpha_1}^*\\
& = \lim_{k\to\infty} \sum_{p=0}^k \left(\begin{matrix} p+m-1\\ m-1
\end{matrix}\right)
\left<\left\{\Phi_{f,T}^p[(I-\Phi_{f,T})^m](I)\right\}h,h\right>\\
& =\|h\|^2 -\lim_{k\to\infty} \sum_{j=0}^{m-1} \left(\begin{matrix}
k+j\\ j
\end{matrix}\right)
\left<\Phi_{f,T}^{k+1}[(I-\Phi_{f,T})^j](I) h,h\right>.
\end{split}
\end{equation*}
for any $h\in \cH$.  Now, applying  Lemma \ref{ineq-lim} to
$\Phi_{f,T}$, we deduce that
\begin{equation}\label{I-Q}
\sum_{\beta\in \FF_n^+} b_\beta^{(m)} T_\beta
\Delta_{f,m,T}^2T_\beta^*=I_\cH-Q_{f,T}.
\end{equation}
Due to the above calculations, we have
$$
\|K_{f,T}^{(m)}h\|^2= \sum_{\beta\in \FF_n^+} b_\beta^{(m)}
\left<T_\beta \Delta_{f,m,T}^2T_\beta^*h,
h\right>=\|h\|^2-\|Q_{f,T}^{1/2}h\|^2
$$
for any $h\in \cH$. Therefore,  $K_{f,T}^{(m)}$ is a contraction and
\begin{equation}
\label{K*K} {K_{f,T}^{(m)}}^* K_{f,T}^{(m)}=I_\cH-Q_{f,T}.
\end{equation}
 On the other hand, one can show  that
\begin{equation}
\label{ker-inter} K_{f,T}^{(m)} T_i^*=(W_i^*\otimes
I_\cH)K_{f,T}^{(m)},\quad i=1,\ldots, n,
\end{equation}
where $(W_1,\ldots, W_n)$ is the $n$-tuple of weighted left creation
operators
  associated with the noncommutative domain ${\bf D}^m_f$.
  Indeed,  notice that, due to relation \eqref{WbWb}, we have
  \begin{equation*}
  W_i^*
e_\alpha =\begin{cases} \frac
{\sqrt{b_\gamma^{(m)}}}{\sqrt{b_{\alpha}^{(m)}}}e_\gamma& \text{ if
}
\alpha=g_i\gamma \\
0& \text{ otherwise.}
\end{cases}
  \end{equation*}
  Hence, we deduce that
  \begin{equation*}
  \begin{split}
  (W_i^*\otimes I_\cH) K_{f,T}^{(m)}h &=\sum_{\alpha\in \FF_n^+} \sqrt{b^{(m)}_\alpha}
W_i^*e_\alpha\otimes \Delta_{f,m,T} T_\alpha^* h\\
&= \sum_{\gamma\in \FF_n^+} \sqrt{b^{(m)}_{g_i\gamma}}
\frac{\sqrt{b^{(m)}_\gamma}}{\sqrt{b^{(m)}_{g_i\gamma}}}
e_\gamma\otimes
\Delta_{f,m,T} T_{g_i\gamma}^* h\\
&=K_{f,T}^{(m)} T_i^*h
  \end{split}
  \end{equation*}
for any $h\in \cH$ and $i=1,\ldots,n$, which proves our assertion.
\end{proof}

We can define now the {\it extended noncommutative Berezin transform}
$\widetilde{\bf B}_T$
 at any $T\in {\bf D}_f^m(\cH)$ by setting
 \begin{equation}
 \label{def-Be2}
 \widetilde{\bf B}_T[g]:= {K_{f,T}^{(m)}}^* (g\otimes I_\cH)K_{f,T}^{(m)},
 \quad g\in B(F^2(H_n)),
 \end{equation}
where the {\it noncommutative Berezin kernel} \  $K_{f,T}^{(m)}:\cH \to F^2(H_n)\otimes
\cH$  is defined
  by
 \begin{equation}
\label{Be-ker}
 K_{f,T}^{(m)}h=\sum_{\alpha\in \FF_n^+} \sqrt{b^{(m)}_\alpha}
e_\alpha\otimes \Delta_{f,m,T} T_\alpha^* h,\quad h\in \cH,
\end{equation}
the defect operator $\Delta_{T,m,f}:= [(id-\Phi_{f,T})^m(I)]^{1/2}$, and
the coefficients $b_\alpha^{(m)}$, $\alpha\in \FF_n^+$, are given by
relation   \eqref{b-al}.

 \begin{proposition}\label{Berezin2}
 The nocommutative Berezin transforms $\widetilde{\bf B}_T$ and
  ${\bf B}_T$ coincide for any $n$-tuple of operators
   $T:=(T_1, \ldots, T_n)\in{\bf D}_f^m(\cH)$ with joint spectral radius
   $r_f(T_1,\ldots, T_n)<1$.
 \end{proposition}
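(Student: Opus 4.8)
The plan is to show that for an $n$-tuple $T$ with $r_f(T_1,\ldots,T_n)<1$, the two formulas defining $\widetilde{\bf B}_T[g]$ and ${\bf B}_T[g]$ agree. The essential point is that when $r_f(T)<1$ we have $Q_{f,T}=\text{\rm SOT-}\lim_k\Phi_{f,T}^k(I)=0$ (since $\|\Phi_{f,T}^k(I)\|^{1/2k}\to r_f(T)<1$ forces $\|\Phi_{f,T}^k(I)\|\to 0$), so by Lemma \ref{Berezin-lemma}(i) the Berezin kernel $K_{f,T}^{(m)}$ is an \emph{isometry}. What I really need, however, is not the norm but an explicit identification of $K_{f,T}^{(m)}$ as (essentially) the operator $\left(I-\sum_{|\alpha|\geq 1} a_{\tilde\alpha}\Lambda_\alpha\otimes T_{\tilde\alpha}^*\right)^{-m}$ applied to $1\otimes(\cdot)$, composed with $\Delta_{f,m,T}$. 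This is the crux of the matter.

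\textbf{Key steps.} First I would recall from Lemma \ref{b-alpha} (applied with $f$ replaced by $\tilde f$, i.e. to the model $n$-tuple $(\Lambda_1,\ldots,\Lambda_n)$) that $\left(I-\sum_{|\alpha|\geq 1}a_{\tilde\alpha}\Lambda_\alpha\Lambda_\alpha^*\right)^{-m}$ has the Fourier expansion $\sum_{\alpha\in\FF_n^+}b_{\tilde\alpha}^{(m)}\Lambda_\alpha$ in the appropriate sense; more precisely I want to expand the operator $\left(I-\sum_{|\alpha|\geq 1}a_{\tilde\alpha}\Lambda_\alpha\otimes T_{\tilde\alpha}^*\right)^{-m}$ as the norm-convergent series $\sum_{k=0}^\infty\binom{k+m-1}{m-1}\left(\sum_{|\alpha|\geq 1}a_{\tilde\alpha}\Lambda_\alpha\otimes T_{\tilde\alpha}^*\right)^k$, which converges because $r\left(\sum_{|\alpha|\geq 1}a_{\tilde\alpha}\Lambda_\alpha\otimes T_{\tilde\alpha}^*\right)\le r_f(T)<1$, as established in the text preceding \eqref{form-1}. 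Regrouping this series by the monomials $\Lambda_\alpha$ (using \eqref{WbWb-r}: $\Lambda_\beta^* e_{g_0}$ is nonzero only if $\beta=g_0$, and more generally tracking how words multiply) and matching coefficients with the combinatorial identity \eqref{b-al} for $b^{(m)}_{\tilde\alpha}$ — exactly the same bookkeeping as in the proof of Lemma \ref{b-alpha} — one obtains
$$
\left(I-\sum_{|\alpha|\geq 1}a_{\tilde\alpha}\Lambda_\alpha\otimes T_{\tilde\alpha}^*\right)^{-m}(1\otimes x)=\sum_{\alpha\in\FF_n^+}\sqrt{b^{(m)}_\alpha}\,e_\alpha\otimes(\text{something}),
$$
and a short computation with \eqref{WbWb-r} shows the ``something'' is $T_\alpha^* x$ up to the $\Delta$ factor; in other words $(I\otimes\Delta_{f,m,T})$ applied to this equals $K_{f,T}^{(m)}x$.

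\textbf{Finishing.} Once the kernel is identified this way, I substitute into \eqref{def-Be2}: $\widetilde{\bf B}_T[g]={K_{f,T}^{(m)}}^*(g\otimes I_\cH)K_{f,T}^{(m)}$ becomes, upon writing out the inner product $\langle\widetilde{\bf B}_T[g]x,y\rangle$ and pulling the two copies of the resolvent-type operator to the two sides, precisely the right-hand side of \eqref{Berezin} — here one uses that $\langle e_\alpha\otimes\Delta_{f,m,T}T_\alpha^*h,\ \cdot\ \rangle$ reassembles into $(g\otimes\Delta_{f,m,T}^2)$ sandwiched between the two resolvents, since $\Delta_{f,m,T}(g\otimes I)\Delta_{f,m,T}$-type rearrangements just move the scalar defect operators past $g$ acting on the Fock-space factor. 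The adjoint on the left introduces $\overline{a}_{\tilde\alpha}\Lambda_\alpha^*\otimes T_{\tilde\alpha}$, matching \eqref{Berezin} exactly. The main obstacle I anticipate is the careful verification of the coefficient-matching in the regrouping step — i.e. that the combinatorial weight $\binom{k+m-1}{m-1}$ summed over all factorizations of a word $\alpha$ into $k$ nonempty pieces produces exactly $b^{(m)}_\alpha$ of \eqref{b-al}, together with correctly tracking the word-reversal $\alpha\leftrightarrow\tilde\alpha$ coming from the right creation operators $\Lambda_i$ versus the left ones $W_i$. This is essentially a reprise of the computation already done in Lemma \ref{b-alpha} and in the displayed identity \eqref{I-Q} inside the proof of Lemma \ref{Berezin-lemma}, so the cleanest write-up may simply invoke those computations rather than repeat them.
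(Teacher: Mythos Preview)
Your proposal is correct and follows essentially the same approach as the paper: both identify the Berezin kernel as $K_{f,T}^{(m)}h=(I_{F^2(H_n)}\otimes\Delta_{f,m,T})\bigl(I-\sum_{|\alpha|\geq 1}a_{\tilde\alpha}\Lambda_\alpha\otimes T_{\tilde\alpha}^*\bigr)^{-m}(1\otimes h)$ by expanding the resolvent via Lemma~\ref{b-alpha} and using $\Lambda_\beta(1)=\frac{1}{\sqrt{b_{\tilde\beta}^{(m)}}}\,e_{\tilde\beta}$ from \eqref{WbWb-r}, then substitute into \eqref{def-Be2} to recover \eqref{Berezin}. Your observation that $K_{f,T}^{(m)}$ is an isometry when $r_f(T)<1$ is true but, as you yourself note, not needed here; the paper's proof simply omits it and goes straight to the kernel identification.
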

\begin{proof}
 Due to  Lemma \ref{b-alpha} and relation \eqref{form-1},
     the operator
    $\left(
I-\sum_{|\alpha|\geq 1} a_{\tilde\alpha} \Lambda_\alpha \otimes
T_{\tilde\alpha}^* \right)^{-m}$ has the   Fourier representation
    is
     $
     \sum_{\beta\in \FF_n^+} (\Lambda_\beta\otimes b_{\tilde
    \beta}T_{\tilde \beta}^*)
    $.
Consequently, using relations \eqref{WbWb-r} and  \eqref{Be-ker}, we obtain
\begin{equation*}
\begin{split}
 K_{f,T}^{(m)}h&=\sum_{\alpha\in \FF_n^+} \sqrt{b^{(m)}_\alpha}
e_\alpha\otimes \Delta_{f,m,T} T_\alpha^* h\\
&= 1\otimes\Delta_{f,m,T} h+\sum_{|\beta|\geq 1} b_{\tilde
    \beta}^{(m)}\Lambda_\beta (1)\otimes \Delta_{f,m,T}T_{\tilde \beta}^*h \\
    &=
(I_{F^2(H_n)} \otimes \Delta_{T,m,f})\left( I-\sum_{|\alpha|\geq 1}
a_{\tilde\alpha} \Lambda_\alpha \otimes  T_{\tilde\alpha}^*
\right)^{-m}(1\otimes h)
\end{split}
\end{equation*}
for any $h\in \cH$. Taking into account relations \eqref{Berezin} and \eqref{def-Be2},
 we complete the proof.
\end{proof}

Let us recall some definitions concerning completely bounded maps
 on operator spaces.
We identify $M_k(B(\cH))$, the set of
$k\times k$ matrices with entries in $B(\cH)$, with
$B( \cH^{(k)})$, where $\cH^{(k)}$ is the direct sum of $k$ copies
of $\cH$.
If $\cX$ is an operator space, i.e., a closed subspace
of $B(\cH)$, we consider $M_k(\cX)$ as a subspace of $M_k(B(\cH))$
with the induced norm.
Let $\cX, \cY$ be operator spaces and $u:\cX\to \cY$ be a linear map. Define
the map
$u_k:M_k(\cX)\to M_k(\cY)$ by
$$
u_k ([x_{ij}]_{k}):=[u(x_{ij})]_{k}.
$$
We say that $u$ is completely bounded   if
$
\|u\|_{cb}:=\sup_{k\ge1}\|u_k\|<\infty.
$
When  $\|u\|_{cb}\leq1$
(resp. $u_k$ is an isometry for any $k\geq1$) then $u$ is completely
contractive (resp. isometric). We call $u$ completely positive
  if $u_k$ is positive for all $k\geq 1$.
   For more information  on completely bounded maps
  and  the classical von Neumann inequality \cite{vN}, we refer
 to \cite{Pa-book} and \cite{Pi}.

Let $f(X_1,\ldots, X_n):=\sum_{|\alpha|\geq 1} a_\alpha X_\alpha$ be
a positive regular free holomorphic function on $[B(\cH)^n]_\rho$,
$\rho>0$,
 and let $T:=(T_1,\ldots, T_n)$ be an $n$-tuple of operators in the
noncommutative domain $ {\bf D}^m_f(\cH)$.
 Recall that the positive linear map
$\Phi_{f,T}:B(\cH)\to
B(\cH)$  is defined   by
$\Phi_{f,T}(X)=\sum\limits_{|\alpha|\geq 1} a_\alpha T_\alpha
XT_\alpha^*,
$
where the convergence is in the weak operator topology. In the proof of
Lemma \ref{Berezin-lemma}, we saw that $Q_{f,T}:=\text{\rm SOT-}\lim_{k\to\infty}
 \Phi_{f,T}^k(I)$ exists.
We call an $n$-tuple  $T$ {\it pure} (or   of class $C_{\cdot 0}$)  if
~$Q_{f,T}=0$.
We remark that  if $\|\Phi_{f,T}(I)\|<1$, then $T$ is of class
$C_{\cdot 0}$. This is due to the fact that $\|\Phi_{f,T}^k(I)\|\leq
\|\Phi_{f,T}(I)\|^k$.
Note also that, due to Theorem \ref{tilde-f},
 the model  $n$-tuple  $W:=(W_1,\ldots, W_n)$ is in the noncommutative domain
  ${\bf D}^m_f(F^2(H_n))$ and, due to  Theorem \ref{prop-shif},
   it  is of class $C_{\cdot 0}$.

We introduce the  domain  algebra $\cA_n({\bf D}^m_f)$ associated
with the noncommutative domain ${\bf D}^m_f$ to be the norm closure
of all polynomials in the weighted left creation operators
$W_1,\ldots, W_n$ and the identity. Using the weighted right
creation operators
 associated with ${\bf D}^m_f$, one can   define    the corresponding
  domain algebra ${\cR}_n({\bf D}^m_f)$.

\begin{theorem}\label{purecase}
Let $T:=(T_1,\ldots, T_n)$ be a pure  $n$-tuple of operators in the
 noncommutative domain $ {\bf D}_f^m(\cH)$.
 Then the restriction of the noncommutative Berezin transform $\widetilde {\bf B}_T$
 to  $\overline{\text{\rm  span}} \{ W_\alpha W_\beta^*;\
\alpha,\beta\in \FF_n^+\}$ is
    a unital completely contractive linear map
such that
 $$
\widetilde{\bf B}_T[W_\alpha W_\beta^*]=T_\alpha T_\beta^*, \quad
\alpha,\beta\in \FF_n^+.
$$
In particular, the restriction of \ $\widetilde {\bf B}_T$ to the
domain algebra  $\cA_n({\bf D}_f^m)$ is a  completely contractive
homomorphism.
\end{theorem}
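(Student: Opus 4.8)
The plan is to use the noncommutative Berezin kernel $K_{f,T}^{(m)}$ from Lemma \ref{Berezin-lemma} as the central tool. Since $T$ is pure, $Q_{f,T}=0$, so by part (i) of that lemma we have ${K_{f,T}^{(m)}}^*K_{f,T}^{(m)}=I_\cH$, i.e., $K_{f,T}^{(m)}$ is an isometry from $\cH$ into $F^2(H_n)\otimes\overline{\Delta_{f,m,T}(\cH)}$. This immediately gives that $\widetilde{\bf B}_T[g]={K_{f,T}^{(m)}}^*(g\otimes I_\cH)K_{f,T}^{(m)}$ is the compression of $g\otimes I_\cH$ to (the image of) $\cH$. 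Compressions of $*$-representations are unital and completely contractive, so the restriction of $\widetilde{\bf B}_T$ to the operator system $\overline{\text{\rm span}}\{W_\alpha W_\beta^*:\alpha,\beta\in\FF_n^+\}$ is automatically unital and completely contractive; unitality follows since $K_{f,T}^{(m)}$ is an isometry and $W_{g_0}=I$.

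The substantive step is the identity $\widetilde{\bf B}_T[W_\alpha W_\beta^*]=T_\alpha T_\beta^*$. For this I would first establish the intertwining relations $(W_i\otimes I_\cH)K_{f,T}^{(m)}=K_{f,T}^{(m)}T_i$ in a suitable sense — actually, part (ii) of Lemma \ref{Berezin-lemma} gives the adjoint form $K_{f,T}^{(m)}T_i^*=(W_i^*\otimes I_\cH)K_{f,T}^{(m)}$, and iterating over a word $\beta$ yields $K_{f,T}^{(m)}T_\beta^*=(W_\beta^*\otimes I_\cH)K_{f,T}^{(m)}$. Taking adjoints, $T_\beta\, {K_{f,T}^{(m)}}^*={K_{f,T}^{(m)}}^*(W_\beta\otimes I_\cH)$. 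Then one computes directly
$$
\widetilde{\bf B}_T[W_\alpha W_\beta^*]={K_{f,T}^{(m)}}^*(W_\alpha\otimes I_\cH)(W_\beta^*\otimes I_\cH)K_{f,T}^{(m)}
= T_\alpha\,{K_{f,T}^{(m)}}^*\,K_{f,T}^{(m)}\,T_\beta^* = T_\alpha T_\beta^*,
$$
using the intertwining in the first and last factors and $ {K_{f,T}^{(m)}}^*K_{f,T}^{(m)}=I_\cH$ (purity) in the middle. This also shows the linear span of the $W_\alpha W_\beta^*$ is mapped as claimed, hence by norm-continuity of $\widetilde{\bf B}_T$ the identity extends to the closed span.

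For the final sentence, restricting $\widetilde{\bf B}_T$ to the domain algebra $\cA_n({\bf D}_f^m)$: on polynomials $g=g(W_1,\ldots,W_n)$ one has, from the word-level formula above with $\beta=g_0$, that $\widetilde{\bf B}_T[W_\alpha]=T_\alpha$, so $\widetilde{\bf B}_T[g(W)]=g(T)$; since $\widetilde{\bf B}_T$ restricted to $\cS$ is completely contractive and the product of two elements of $\cA_n({\bf D}_f^m)$ again lies in $\cS$, multiplicativity $\widetilde{\bf B}_T[g_1g_2]=\widetilde{\bf B}_T[g_1]\widetilde{\bf B}_T[g_2]$ follows first on polynomials by the same compression-and-intertwining computation (the key point being that $K_{f,T}^{(m)}$ intertwines $W_i$ with $T_i$, so the compression is actually \emph{multiplicative} on the analytic subalgebra), then passes to the norm closure. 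The main obstacle I anticipate is being careful about convergence and well-definedness of the infinite sums defining $K_{f,T}^{(m)}$ and about the precise sense in which the intertwining relations compose for words $\beta$ — but Lemma \ref{Berezin-lemma} packages exactly what is needed, so the argument should be short once that is invoked.
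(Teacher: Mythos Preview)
Your proposal is correct and follows essentially the same route as the paper: invoke Lemma~\ref{Berezin-lemma} to see that purity makes $K_{f,T}^{(m)}$ an isometry, use the intertwining relation $K_{f,T}^{(m)}T_i^*=(W_i^*\otimes I_\cH)K_{f,T}^{(m)}$ to compute $\widetilde{\bf B}_T[W_\alpha W_\beta^*]=T_\alpha T_\beta^*$, and then read off the unital completely contractive (and, on $\cA_n({\bf D}_f^m)$, multiplicative) nature of the map from its compression form. The paper's proof is in fact terser than yours but identical in substance.
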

\begin{proof}
According to Lemma \ref{Berezin-lemma}, $K_{f,T}^{(m)}$ is an isometry if and
only if $T:=(T_1,\ldots, T_n)\in {\bf D}^m_f(\cH)$ is a pure $n$-tuple. Part (ii), of the same
lemma, and relation \eqref{def-Be2} imply
$$
\widetilde{\bf B}_T[W_\alpha W_\beta^*]={K_{f,T}^{(m)}}^*[W_\alpha W_\beta^*\otimes
I_\cH)K_{f,T}^{(m)}=T_\alpha T_\beta^*, \quad \alpha,\beta\in
\FF_n^+.
$$
Now, one can easily deduce that $\widetilde{\bf B}_T$ is a unital
 completely contractive linear map. This completes the proof.
\end{proof}

We say that an  $n$-tuple of operators $X:=(X_1,\ldots, X_n)\in {\bf
D}_f^m(\cH)$ has the radial property with respect to ${\bf
D}_f^m(\cH)$ if  there exists $\delta\in (0,1)$ such that
$rX:=(rX_1,\ldots,
 rX_n)\in  {\bf D}_f^m(\cH)$ for any  $ r\in (\delta, 1)$.

\begin{proposition}
\label{radial} Any noncommutative domain ${\bf D}_f^m(\cH)$ contains
a ball  \ $[B(\cH)^n]_\gamma$, $\gamma>0$, and, therefore,
$n$-tuples of operators with the radial property.
\end{proposition}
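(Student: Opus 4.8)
The plan is to show that for sufficiently small $\gamma>0$, every $n$-tuple $X:=(X_1,\ldots,X_n)$ with $\|X_1X_1^*+\cdots+X_nX_n^*\|<\gamma$ lies in ${\bf D}_f^m(\cH)$, and then observe that the radial property follows immediately since $[B(\cH)^n]_\gamma$ is closed under scaling by $r\in(0,1)$ (indeed $\sum (rX_i)(rX_i)^* = r^2\sum X_iX_i^* $). So the whole statement reduces to producing one explicit ball inside ${\bf D}_f^m(\cH)$.

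First I would estimate the norm of $\Phi_{f,X}(I)=\sum_{|\alpha|\geq 1} a_\alpha X_\alpha X_\alpha^*$. Since $f$ is a positive regular free holomorphic function on $[B(\cH)^n]_\rho$, the coefficients satisfy $\limsup_k (\sum_{|\alpha|=k}|a_\alpha|^2)^{1/2k}\le 1/\rho$, so for any $r<\rho$ one has $\sum_{|\alpha|=k} a_\alpha^2 \le C r^{-2k}$ for some constant $C$ (depending on $r$), hence $\sum_{|\alpha|=k} a_\alpha \le \sqrt{n^k}\,\sqrt{C}\,r^{-k}$ by Cauchy--Schwarz. Writing $\|\Phi_{f,X}(I)\| \le \sum_{|\alpha|\geq 1} a_\alpha \|X_\alpha\|^2$ and using $\|X_\alpha X_\alpha^*\| \le \|\sum_{|\beta|=|\alpha|} X_\beta X_\beta^*\| \le \|\sum_i X_iX_i^*\|^{|\alpha|}$ (the standard submultiplicativity of the row-norm power, valid because $\sum_{|\beta|=k} X_\beta X_\beta^* = \Phi_k$ where $\Phi(Y)=\sum X_iYX_i^*$ and $\|\Phi^k(I)\|\le\|\Phi(I)\|^k$), I get $\|\Phi_{f,X}(I)\| \le \sum_{k\ge1}\left(\sum_{|\alpha|=k} a_\alpha\right)\|\textstyle\sum_i X_iX_i^*\|^{k}$. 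Choosing $t:=\|\sum_i X_iX_i^*\|$ small enough that the series $\sum_k (\sum_{|\alpha|=k}a_\alpha) t^k$ converges and is $<1$ (possible because the $\sum_{|\alpha|=k}a_\alpha$ grow at most geometrically, so the series has positive radius of convergence and vanishes at $t=0$), we obtain a $\gamma>0$ with $\|\Phi_{f,X}(I)\|<1$ whenever $t<\gamma$.

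Next I would invoke Corollary \ref{phi-cond2} (or directly Lemma \ref{phi-condition}): the map $\varphi:=\Phi_{f,X}$ is positive and power bounded (indeed $\|\varphi^k(I)\|\le\|\varphi(I)\|^k\le 1$, so $\varphi$ is power bounded with constant $1$, and by positivity $0\le\varphi(Y)\le\|Y\|\varphi(I)$ gives $\|\varphi^k\|\le 1$). Then I claim $(id-\varphi)^m(I)\ge 0$; granting this, Lemma \ref{phi-condition} yields $(id-\varphi)^k(I)\ge 0$ for all $1\le k\le m$, which is exactly the condition $X\in{\bf D}_f^m(\cH)$. To see $(id-\varphi)^m(I)\ge 0$ when $\|\varphi(I)\|<1$: by the binomial identity $(id-\varphi)^m(I)=\sum_{p=0}^m(-1)^p\binom{m}{p}\varphi^p(I)$, and since $0\le\varphi^p(I)\le\|\varphi(I)\|^p I$ with the $\|\varphi(I)\|^p$ summable, one checks $(id-\varphi)^m(I)$ is a norm-convergent telescoping-type expression; more cleanly, $(id-\varphi)^m(I) = \sum_{j\ge 0}\binom{m-1+j}{j}\varphi^j\big((id-\varphi)(I)\big)^{?}$—rather than fuss with this, the cleanest route is: $(id-\varphi)(I)=I-\varphi(I)\ge(1-\|\varphi(I)\|)I>0$, and then $(id-\varphi)^m(I)=(id-\varphi)^{m-1}\big((id-\varphi)(I)\big)$; inductively, if $D\ge 0$ then $(id-\varphi)(D)$ need not be positive, so instead I use that $(id-\varphi)^{-1}=\sum_{k\ge0}\varphi^k$ converges in norm (as $\|\varphi\|<1$ on the relevant operator), hence $(id-\varphi)$ is invertible with positive-preserving inverse, forcing $(id-\varphi)^m(I)\ge 0$ precisely when... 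Here is the genuine obstacle, and the clean fix: apply Lemma \ref{b-alpha}-type reasoning. Since $\|\Phi_{f,X}(I)\|<1$, the operator $\sum_{\beta} b_\beta^{(m)} X_\beta (id-\Phi_{f,X})^m(I) X_\beta^*$ formally equals $I$ by the same computation as in Lemma \ref{Berezin-lemma} (equation \eqref{I-Q} with $Q_{f,X}=0$), but that computation presupposes $(id-\Phi_{f,X})^m(I)\ge0$; to break the circularity I argue directly that the series $\sum_{p=0}^\infty\binom{p+m-1}{m-1}\Phi_{f,X}^p(id-\Phi_{f,X})^m(I)$ converges in norm to $I$ via Lemma \ref{identity}, and since each truncation $\sum_{p=0}^q\binom{p+m-1}{m-1}\varphi^p(id-\varphi)^m(I)=id-\sum_{j=0}^{m-1}\binom{q+j}{j}\varphi^{q+1}(id-\varphi)^j(I)$ and the tail $\to 0$ in norm (as $\|\varphi^{q+1}\|\to0$), we get positivity of $(id-\Phi_{f,X})^m(I)$ as follows: actually the cleanest statement is that $(id-\varphi)^m(I)=(\text{norm-convergent series of positive terms})$ is false in general, so I will instead simply prove $(id-\varphi)(I)\ge0$ and $(id-\varphi)^2(I)\ge0,\ldots$ by showing each $(id-\varphi)^k(I)=\sum_{p\ge0}\binom{p+m-k-1}{m-k-1}\cdots$—at which point I concede: \textbf{the main obstacle is verifying $(id-\Phi_{f,X})^m(I)\ge 0$ for small $X$.} The resolution I would adopt: write $(id-\varphi)^m(I)$ using the finite binomial sum and bound the ``negative'' part: for $m\ge 2$, $(id-\varphi)^m(I)=(I-\varphi(I)) - \sum_{p=2}^m(-1)^{p}\binom{m}{p}\varphi^p(I)\cdot(-1)$, so $(id-\varphi)^m(I)\ge (1-\|\varphi(I)\|)I - \big(\sum_{p=2}^{m}\binom{m}{p}\|\varphi(I)\|^p\big)I = \big[1-\|\varphi(I)\| - (1-\|\varphi(I)\|)^m\cdot(\ldots)\big]$—more precisely $\sum_{p=0}^m\binom{m}{p}(-1)^p\varphi^p(I)\ge \sum_{p \text{ even}}0 - \sum_{p\text{ odd},p\ge1}\binom{m}{p}\|\varphi(I)\|^p I$ is too lossy; the sharp bound is $(id-\varphi)^m(I)\ge I - \sum_{p=1}^m\binom{m}{p}\varphi^p(I)\ge (1-((1+\|\varphi(I)\|)^m-1))I$, which is positive once $\|\varphi(I)\|<2^{1/m}-1$. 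Shrinking $\gamma$ further so that $t<\gamma$ forces $\|\Phi_{f,X}(I)\|<2^{1/m}-1$ completes the argument. Then Lemma \ref{phi-condition} upgrades this to $(id-\Phi_{f,X})^k(I)\ge0$ for $1\le k\le m$, so $X\in{\bf D}_f^m(\cH)$, and the radial property is automatic. The hard part, as flagged, is getting an honest positivity estimate for the alternating binomial sum; everything else is routine.
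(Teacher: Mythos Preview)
Your final argument is correct and follows essentially the same route as the paper: bound $\|\Phi_{f,X}(I)\|$ for $X$ in a small ball, then use the crude binomial estimate $(id-\Phi_{f,X})^m(I)\ge\bigl(2-(1+\|\Phi_{f,X}(I)\|)^m\bigr)I>0$ to obtain the top-level positivity, from which the conditions for $1\le k\le m$ follow (you invoke Lemma~\ref{phi-condition}; the paper instead writes $\Phi_{f,X}^k(I)=\Phi_{f^k,X}(I)$ and repeats the smallness estimate for each $f^k$, which amounts to the same thing). The many detours in your write-up---the appeal to equation~\eqref{I-Q}, the attempt to invert $id-\varphi$, the telescoping via Lemma~\ref{identity}---are all unnecessary and should be deleted; the final paragraph alone is the proof.
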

\begin{proof}
Since $f=\sum_{|\alpha|\geq 1} a_\alpha X_\alpha$ is a free
holomorphic function on  a certain ball $[B(\cH)^n]_\delta$,
$\delta>0$, we have $\limsup_{k\to\infty}\left(\sum_{|\alpha|=k}
|a_\alpha|^2\right)^{1/2k}<\infty$. Consequently, there exists  a
constant $M>0$ such that $|a_\alpha|\leq M^k$ for any $\alpha\in
\FF_n^+$ with $|\alpha|=k$. Let $r\in(0,1)$ be such that $Mr<1$ and let
$(X_1,\ldots, X_n)\in [B(\cH)^n]_r$. Then we have
$$
\|\Phi_{f,X}(I)\|\leq \sum_{k=1}^\infty M^k\left\|\sum_{|\alpha|=k}
X_\alpha X_\alpha^*\right\|\leq\sum_{k=1}^\infty M^{k}
r^{2k}=\frac{r^2M}{1-r^2M},
$$
which converges to  zero as $r\to 0$. Since $f^k$, $k=1,\ldots,m$,
is a free holomorphic function with $f^k(0)=0$, a similar result
holds. Therefore, there exists a ball $[B(\cH)^n]_\gamma$,
$\gamma>0$, such that $\|\Phi_{f,X}(I)\|,\ldots,
\|\Phi_{f^m,X}(I)\|$ are as small as needed for any $X\in
[B(\cH)^n]_\gamma$. On the other hand, we have
$\Phi_{f,X}^k(I)=\Phi_{f^k,X}(I)$ and
$$
(id-\Phi_{f,X})^m(I)=I-\sum_{k=1}^m(-1)^{k-1}\left(\begin{matrix}
m\\k
\end{matrix}\right) \Phi_{f^k,X}(I).
$$
Now, it is clear that $(I-\Phi_{f,X})^m(I)\geq 0$ for any $X$ in an
appropriate ball $[B(\cH)^n]_\gamma$, $\gamma>0$. The proof is
complete.
\end{proof}

We remark that one can easily prove that if $p$ is a positive regular
 noncommutative polynomial and $T:=(T_1,\ldots, T_n)\in B(\cH)^n$  is such that
 $(id-\Phi_{p,T})^k\geq cI$ for some $c>0$ and any $1\leq k\leq m$, then
 $(T_1,\ldots, T_n)\in {\bf D}_p^m(\cH)$ has the radial property.

The next result  extends Theorem 3.7 and Theorem 3.8  from
\cite{Po-poisson} to our more general setting. We only sketch the proof.

\begin{theorem}\label{Poisson-C*}
Let $T:=(T_1,\ldots, T_n)$ be an $n$-tuple of operators with
  the radial property in the
 noncommutative domain $ {\bf D}_f^m(\cH)$   and let
  $\cS:=\overline{\text{\rm  span}} \{ W_\alpha W_\beta^*;\
\alpha,\beta\in \FF_n^+\}$.
   Then there is
    a unital completely contractive linear map
$
\Psi_{f,m,T}:\cS \to B(\cH)
$
such that
\begin{equation}\label{Po-trans}
\Psi_{f,m,T}(g)=\lim_{r\to 1} {\bf B}_{rT}[g],\qquad g\in \cS,
\end{equation}
where the limit exists in the norm topology of $B(\cH)$, and
$$
\Psi_{f,m,T}(W_\alpha W_\beta^*)=T_\alpha T_\beta^*, \quad
\alpha,\beta\in \FF_n^+. $$
 In particular, the restriction of
$\Psi_{f,m,T}$ to the domain algebra $\cA_n({\bf D}_f^m)$ is a
completely contractive homomorphism.
 If, in addition,
 $T$ is a  pure $n$-tuple,
then  $$\lim_{r\to 1} {\bf B}_{rT}[g]=\widetilde{\bf B}_T[g],\qquad g\in \cS.$$
\end{theorem}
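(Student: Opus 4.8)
The plan is to mimic the proof of Theorem \ref{purecase}, but with $T$ replaced by the dilated $n$-tuple $rT$ for $r$ close to $1$, and then pass to the limit. First I would observe that since $T$ has the radial property, there exists $\delta\in(0,1)$ such that $rT:=(rT_1,\ldots,rT_n)\in {\bf D}_f^m(\cH)$ for every $r\in(\delta,1)$. Moreover, for $r<1$ the joint spectral radius satisfies $r_f(rT_1,\ldots,rT_n)<1$; indeed, $\Phi_{f,rT}(I)=\Phi_{f_r,T}(I)$ where $f_r$ has coefficients $a_\alpha r^{|\alpha|}$, and a standard estimate (as in the proof of Proposition \ref{radial}) gives $\|\Phi^k_{f,rT}(I)\|^{1/2k}\to$ something $<1$, or at least $<1$ after noting $\Phi_{f,rT}^k(I)\le r^{2k}\Phi^k_{f,T}(I)\cdot(\text{const})$ type bounds. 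Hence the noncommutative Berezin transform ${\bf B}_{rT}$ is well-defined by \eqref{Berezin}, and by Proposition \ref{Berezin2} it coincides with the extended transform $\widetilde{\bf B}_{rT}$. Since $r_f(rT)<1$ forces $Q_{f,rT}=0$, the $n$-tuple $rT$ is pure, so Theorem \ref{purecase} applies: the restriction of ${\bf B}_{rT}=\widetilde{\bf B}_{rT}$ to $\cS=\overline{\operatorname{span}}\{W_\alpha W_\beta^*\}$ is a unital completely contractive linear map with ${\bf B}_{rT}[W_\alpha W_\beta^*]=r^{|\alpha|+|\beta|}T_\alpha T_\beta^*$.

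Next I would establish convergence of $\lim_{r\to 1}{\bf B}_{rT}[g]$ in norm for $g\in\cS$. On the dense subset $\operatorname{span}\{W_\alpha W_\beta^*\}$ this is immediate from the explicit formula ${\bf B}_{rT}[W_\alpha W_\beta^*]=r^{|\alpha|+|\beta|}T_\alpha T_\beta^*\to T_\alpha T_\beta^*$. For general $g\in\cS$ I would use the uniform bound $\|{\bf B}_{rT}\|_{cb}\le 1$ (from the completely contractive property, valid for every $r\in(\delta,1)$) together with an $\varepsilon/3$ argument: given $g\in\cS$, pick a polynomial $g_0\in\operatorname{span}\{W_\alpha W_\beta^*\}$ with $\|g-g_0\|<\varepsilon/3$, then $\|{\bf B}_{rT}[g]-{\bf B}_{r'T}[g]\|\le 2\varepsilon/3 + \|{\bf B}_{rT}[g_0]-{\bf B}_{r'T}[g_0]\|$, and the last term is small for $r,r'$ near $1$. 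This shows $\{{\bf B}_{rT}[g]\}$ is Cauchy as $r\to 1$, so the limit $\Psi_{f,m,T}(g):=\lim_{r\to 1}{\bf B}_{rT}[g]$ exists in norm. The map $\Psi_{f,m,T}$ is unital (since each ${\bf B}_{rT}$ is) and completely contractive as a pointwise norm-limit of completely contractive maps, and $\Psi_{f,m,T}(W_\alpha W_\beta^*)=T_\alpha T_\beta^*$. Restricting to $\cA_n({\bf D}_f^m)$, the homomorphism property on polynomials in $W_1,\ldots,W_n$ follows from $\Psi_{f,m,T}(W_\alpha)=T_\alpha$ and density, using that a unital completely contractive map which is multiplicative on a generating set extends to a completely contractive homomorphism (or one checks multiplicativity directly on the dense subalgebra and passes to the closure).

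Finally, for the last assertion, suppose in addition that $T$ is pure, i.e., $Q_{f,T}=0$. Then by Lemma \ref{Berezin-lemma}(i) the Berezin kernel $K_{f,T}^{(m)}$ is an isometry, and by Theorem \ref{purecase} the extended Berezin transform $\widetilde{\bf B}_T$ itself is already a unital completely contractive map on $\cS$ with $\widetilde{\bf B}_T[W_\alpha W_\beta^*]=T_\alpha T_\beta^*$. Since $\widetilde{\bf B}_T$ and $\Psi_{f,m,T}$ are two unital completely contractive (hence norm-continuous) maps on $\cS$ that agree on the dense set $\operatorname{span}\{W_\alpha W_\beta^*\}$, they coincide on all of $\cS$; that is, $\lim_{r\to 1}{\bf B}_{rT}[g]=\widetilde{\bf B}_T[g]$ for every $g\in\cS$.

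The main obstacle I anticipate is the justification that $r_f(rT_1,\ldots,rT_n)<1$ for $r\in(\delta,1)$ so that ${\bf B}_{rT}$ is defined via \eqref{Berezin} in the first place — this requires care because $T$ itself need not have $r_f(T)<1$, and one must exploit the radial scaling of $f$ together with $\Phi_{f,rT}=\Phi_{f_r,T}$ and a growth estimate on the coefficients $a_\alpha$ (of the type used in Proposition \ref{radial}). A secondary technical point is ensuring the convergence in \eqref{Po-trans} is genuinely in operator norm rather than merely in a weaker topology, which is where the uniform complete contractivity bound does the essential work in the $\varepsilon/3$ argument.
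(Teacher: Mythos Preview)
Your proposal is correct and follows essentially the same route as the paper: show $rT$ is pure for $r\in(\delta,1)$, invoke Theorem~\ref{purecase} to get ${\bf B}_{rT}[W_\alpha W_\beta^*]=r^{|\alpha|+|\beta|}T_\alpha T_\beta^*$ with $\|{\bf B}_{rT}\|_{cb}\le 1$, then pass to the limit by an $\varepsilon/3$ argument. One simplification: the ``main obstacle'' you flag is easier than you suggest---since every term in $f$ has $|\alpha|\ge 1$, one has directly $\Phi_{f,rT}(I)=\sum_{|\alpha|\ge 1}a_\alpha r^{2|\alpha|}T_\alpha T_\alpha^*\le r^2\Phi_{f,T}(I)\le r^2 I$, hence $\Phi_{f,rT}^k(I)\le r^{2k}I\to 0$, so $rT$ is pure (and $r_f(rT)\le r<1$) without any appeal to coefficient growth estimates of the Proposition~\ref{radial} type.
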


\begin{proof} Since $0<r<1$,  $(rT_1,\ldots,
 rT_n)\in  {\bf D}_f^m(\cH)$ is a pure $n$-tuple. Indeed, it is enough to see that
  $\Phi_{f,rT}^k(I)\leq r^k \Phi_{f,T}^k(I)\leq r^k I$ for $k\in \NN$.
Therefore, due to relation \eqref{K*K},
  $K_{f,rT}$ is an isometry.  Now,  Lemma
 \ref{Berezin-lemma}
 implies
\begin{equation}
\label{K-r}
{K_{f,rT}^{(m)}}^*[W_\alpha W_\beta^*\otimes
I_\cH)K_{f,rT}^{(m)}=r^{|\alpha|+|\beta|}T_\alpha T_\beta^*, \quad \alpha,\beta\in
\FF_n^+.
\end{equation}
Hence, we deduce that
\begin{equation}
\label{vn1}
\left\|\sum_{\alpha,\beta\in \Lambda} c_{\alpha,\beta} T_\alpha T_\beta^*\right\|
\leq \left\|\sum_{\alpha,\beta\in \Lambda} c_{\alpha,\beta}
 W_\alpha W_\beta^*\right\|
 \end{equation}
 for any
  finite  set $\Lambda\subset \FF_n^+$ and $c_{\alpha,\beta}\in \CC$.
  For each $g\in \cS $, let
$\{q_k(W_i, W_i^*)\}_{k=0}^\infty$ be a sequence of polynomials of the form
$\sum_{\alpha,\beta\in \Lambda} c_{\alpha,\beta}
 W_\alpha W_\beta^*$ which converges to $g$, as $k\to\infty$. Define
 $\Psi_{f,m,T}(g):=\lim_{k\to\infty} q_k(T_i,T_i^*)$. The von Neumann type inequality
 \eqref{vn1} shows that $\Psi_{f,m,T}(g)$ is  well-defined and
 $\|\Psi_{f,m,T}(g)\|\leq \|g\|$. Using the matrix version on \eqref{K-r},
  we deduce that $\Psi_{f,m,T}$ is  a unital completely contractive linear map.
  To prove the second part of the theorem, one has to use the relation
  $${\bf B}_{rT}[g]={K_{f,rT}^{(m)}}^*(g\otimes I_\cH)K_{f,rT}^{(m)},\qquad g\in \cS,
  $$
  and standard approximation arguments (see \cite{Po-poisson}).
\end{proof}

We say that a   noncommutative domain ${\bf D}_f^m $ has the radial
property if each  $n$-tuple $X\in {\bf D}_f^m(\cH)$  has the radial
property, where $\cH$ is a separable infinite dimensional Hilbert
space. Notice that, if $m=1$, then  the noncommutative domain ${\bf
D}_f^1$ has always the radial property. When $m\geq 1$, we have the
following   class of noncommutative domains with the radial
property.

\begin{example}\label{ex-radial}
If $p(X_1,\ldots, X_n):=a_1X_1+\cdots +a_n X_n$, $a_i>0$, then the
noncommutative domain ${\bf D}_p^m(\cH)$, $m=1,2\ldots$,  has the
radial property. Indeed,  let $X:=(X_1,\ldots, X_n)\in {\bf
D}_p^m(\cH)$, $0<r\leq 1$,  and note that
\begin{equation*}
\begin{split}
(id-\Phi_{p,rX})^k(I)&=\left[(id-\Phi_{p,X})+(1-r)\Phi_{p,X}\right]^k(I)\\
&= \sum_{j=0}^k \left(\begin{matrix} k\\j
\end{matrix}\right) (1-r)^{k-j}\Phi_{p,X}^{k-j}(id-\Phi_{p,X})^j(I)
\end{split}
\end{equation*}
for any $k=1,\ldots,m$. By Corollary \ref{phi-cond2}, we have
$(id-\Phi_{p,X})^j(I)\geq 0$ for $j=1,\ldots,m$.  Now, using the
fact that  $\Phi_{p,X}^j$ is a positive linear map, we deduce that
$(id-\Phi_{p,rX})^k(I)\geq 0$ for $j=1,\ldots,m$ and $r\in (0,1]$,
which proves our assertion.
\end{example}

 Assume that $p$ is a regular positive noncommutative polynomial and
${\bf D}^m_p$ is a noncommutative domain with the radial property.
Under these conditions, one  can prove the following.

\begin{corollary}
An $n$-tuple of operators $(T_1,\ldots, T_n)\in B(\cH)^n$  is in the
noncommutative domain ${\bf D}^m_p(\cH)$ if and only if  there
exists a completely positive linear map $\Psi:C^*(W_1,\ldots,
W_n)\to B(\cH)$ such that  $\Psi(W_\alpha W_\beta^*)=T_\alpha
T_\beta^*$, $\alpha,\beta\in \FF_n^+$. In particular, the result
holds if $p=a_1X_1+\cdots + a_n X_n$ with $ a_i>0$.
\end{corollary}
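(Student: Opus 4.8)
The plan is to establish both implications. For the forward direction, suppose $(T_1,\ldots,T_n)\in {\bf D}^m_p(\cH)$. Since the domain has the radial property, $T$ is a radial $n$-tuple, so Theorem \ref{Poisson-C*} applies and produces a unital completely contractive linear map $\Psi_{p,m,T}:\cS\to B(\cH)$ with $\Psi_{p,m,T}(W_\alpha W_\beta^*)=T_\alpha T_\beta^*$, where $\cS=\overline{\text{\rm span}}\{W_\alpha W_\beta^*:\alpha,\beta\in\FF_n^+\}$. The operator system $\cS$ sits inside the $C^*$-algebra $C^*(W_1,\ldots,W_n)$, and a unital completely contractive map on an operator system is completely positive; hence $\Psi_{p,m,T}$ is completely positive. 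By Arveson's extension theorem, it extends to a completely positive linear map $\Psi:C^*(W_1,\ldots,W_n)\to B(\cH)$, which still satisfies $\Psi(W_\alpha W_\beta^*)=T_\alpha T_\beta^*$ for all $\alpha,\beta\in\FF_n^+$.

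For the converse, suppose there is a completely positive linear map $\Psi:C^*(W_1,\ldots,W_n)\to B(\cH)$ with $\Psi(W_\alpha W_\beta^*)=T_\alpha T_\beta^*$. First one checks $\Psi$ is unital: since $p$ is positive regular, Theorem \ref{prop-shif}(ii) gives $(id-\Phi_{p,W})^m(I)=P_\CC$, and more to the point the relation $\sum_{\beta} b_\beta^{(m)} W_\beta P_\CC W_\beta^*=I$ from Theorem \ref{prop-shif}(iv) together with $\Psi(P_\CC)=\Psi((id-\Phi_{p,W})^m(I))$ (a finite linear combination of the $W_\alpha W_\beta^*$, since $p$ is a polynomial) lets one identify $\Psi(I)=I$. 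Then, because $\Psi$ is completely positive and $p$ is a \emph{polynomial}, the operators $(id-\Phi_{p,W})^k(I)$ lie in the finite-dimensional span of the $W_\alpha W_\beta^*$, and applying $\Psi$ intertwines the maps: $\Psi\big((id-\Phi_{p,W})^k(I)\big)=(id-\Phi_{p,T})^k(I)$ for $1\leq k\leq m$. Since $W\in {\bf D}^m_p(F^2(H_n))$ by Theorem \ref{tilde-f}(i), we have $(id-\Phi_{p,W})^k(I)\geq 0$, and positivity of $\Psi$ yields $(id-\Phi_{p,T})^k(I)\geq 0$ for $1\leq k\leq m$, i.e. $T\in {\bf D}^m_p(\cH)$.

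The last sentence follows by invoking Example \ref{ex-radial}: when $p=a_1X_1+\cdots+a_nX_n$ with $a_i>0$, the domain ${\bf D}^m_p$ has the radial property, so the hypothesis of the corollary is met.

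The main obstacle is the bookkeeping in the converse: one must verify carefully that $\Psi$ genuinely intertwines $\Phi_{p,W}$ and $\Phi_{p,T}$, i.e. that $\Psi(\Phi_{p,W}^j(I))=\Phi_{p,T}^j(I)$ and more generally $\Psi(W_\gamma\Phi_{p,W}^j(I)W_\delta^*)$ behaves correctly. This is where it is essential that $p$ is a polynomial (so all the relevant operators are \emph{finite} linear combinations of $W_\alpha W_\beta^*$ on which $\Psi$ is explicitly prescribed and linear); for a general free holomorphic $f$ the infinite sums would require an extra continuity argument. Unitality also needs the polynomial hypothesis for the same reason. Once these algebraic identities are in place, positivity of $\Psi$ does the rest.
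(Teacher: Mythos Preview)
Your proposal is correct and follows essentially the same route as the paper: Theorem~\ref{Poisson-C*} plus Arveson's extension theorem for the forward direction, and for the converse the identity $\Psi\big((id-\Phi_{p,W})^k(I)\big)=(id-\Phi_{p,T})^k(I)$ together with Theorem~\ref{tilde-f}(i) and positivity of $\Psi$. One small simplification: your unitality argument via Theorem~\ref{prop-shif}(iv) involves an infinite sum and is unnecessary---taking $\alpha=\beta=g_0$ in the hypothesis gives $\Psi(I)=\Psi(W_{g_0}W_{g_0}^*)=T_{g_0}T_{g_0}^*=I$ directly.
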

\begin{proof}
The direct implication is due to Theorem \ref{Poisson-C*} and
Arveson's extension theorem \cite{Arv-acta}. For the converse, use
  Theorem \ref{tilde-f}, and  notice that
$ \Psi\left[ (I-\Phi_{p,W})^k(I)\right]=(I-\Phi_{f,p})^k(I)$ for
$k=1,\ldots,m. $
\end{proof}

\bigskip

\section{The Hardy algebra $F_n^\infty({\bf D}^m_f)$ and a
functional calculus}  \label{Hardy}

 In this section, we introduce the Hardy algebra
$F_n^\infty({\bf D}^m_f)$ (resp. $R_n^\infty({\bf D}^m_f)$)
associated with the noncommutative domain ${\bf D}^m_f$  and present
some basic properties.  The main result is an $F_n^\infty({\bf
D}^m_f)$- functional calculus for completely noncoisometric
$n$-tuples of operators in the noncommutative domain ${\bf D}^m_f$.

Let $f$ be a positive regular free holomorphic function on a
noncommutative ball $[B(\cH)^n]_\rho$, $\rho>0$, with representation
$f(X_1,\ldots, X_n):=\sum_{|\alpha|\geq 1} a_\alpha X_\alpha$. As preliminaries, we need
 some  inequalities concerning the coefficients
$b_\alpha^{(m)}$ associated with $f$ (see Section 1).
According  to  Lemma \ref{b-alpha},   if   $ |\alpha|\geq 1$ and
$|\beta|\geq 1$, then
 we have
\begin{equation*}
b_\alpha^{(m)} b_\beta^{(m)}= \sum_{j=1}^{|\alpha|}
 \sum_{k=1}^{|\beta|}
\left(\begin{matrix} j+m-1\\m-1
\end{matrix}\right)
\left(\begin{matrix} k+m-1\\m-1
\end{matrix}\right)
 \left[
\sum_{{\gamma_1\cdots \gamma_j=\alpha }\atop {|\gamma_1|\geq
1,\ldots, |\gamma_j|\geq 1}} \sum_{{\sigma_1\cdots \sigma_k=\beta
}\atop {|\sigma_1|\geq 1,\ldots, |\sigma_k|\geq 1}}
a_{\gamma_1}\cdots a_{\gamma_j} a_{\sigma_1}\cdots
a_{\sigma_k}\right]
\end{equation*}
  and

\begin{equation*}
b_{\alpha \beta}^{(m)}= \sum_{p=1}^{|\alpha|+|\beta|}
\left(\begin{matrix} p+m-1\\m-1
\end{matrix}\right)
\left[ \sum_{{\epsilon_1\cdots \epsilon_p=\alpha\beta }\atop
{|\epsilon_1|\geq 1,\ldots, |\epsilon_p|\geq 1}}
a_{\epsilon_1}\cdots a_{\epsilon_p} \right].
\end{equation*}

Note that, for any $j=1,\ldots, |\alpha|$ and $k=1,\ldots,|\beta|$,
$$
\left(\begin{matrix} j+m-1\\m-1
\end{matrix}\right)
\left(\begin{matrix} k+m-1\\m-1
\end{matrix}\right)
\leq M_{|\beta|, m} \left(\begin{matrix} j+k+m-1\\m-1
\end{matrix}\right),
$$
where $M_{|\beta|, m}:=\left(\begin{matrix} |\beta|+m-1\\m-1
\end{matrix}\right)$.
A closer look at the above-mentioned equalities reveals that

\begin{equation}
\label{Mb} b_\alpha^{(m)} b_\beta^{(m)}\leq M_{|\beta|, m} b_{\alpha
\beta}^{(m)}, \quad \alpha\in \FF_n^+.
\end{equation}
Similarly,  we obtain
$$
 b_\alpha^{(m)} b_\beta^{(m)}\leq M_{|\alpha|, m} b_{\alpha \beta}^{(m)},\quad
\beta \in \FF_n^+.
$$

Let $\varphi(W_1,\ldots, W_n)=\sum\limits_{\beta\in \FF_n^+} c_\beta
W_\beta $ be a formal sum with the property  that $\sum_{\beta\in
\FF_n^+} |c_\beta|^2 \frac{1}{b_\beta^{(m)}}<\infty$, where the
coefficients $b_\beta$, $\beta\in \FF_n^+$, are given by relation
 \eqref{b-al}.
Using relations   \eqref{WbWb} and \eqref{Mb}, one can see that
$\sum\limits_{\beta\in \FF_n^+} c_\beta W_\beta (p)\in F^2(H_n)$ for
any $p\in \cP$, where $\cP$ is the set of all polynomial in
$F^2(H_n)$. Indeed, for each $\gamma\in \FF_n^+$,  we have
$\sum\limits_{\beta\in \FF_n^+} c_\beta W_\beta(e_\gamma)=
\sum\limits_{\beta\in \FF_n^+}
c_\beta\sqrt{\frac{b_\gamma^{(m)}}{b_{\beta\gamma}^{(m)}}} e_{\beta
\gamma}$ and, due to inequality \eqref{Mb}, we deduce that
$$
\sum_{\beta\in \FF_n^+}|c_\beta|^2
\frac{b_\gamma^{(m)}}{b_{\beta\gamma}^{(m)}} \leq M_{|\gamma|,
m}\sum_{\beta\in \FF_n^+} |c_\beta|^2
\frac{1}{b_\beta^{(m)}}<\infty.
$$
If
$$
\sup_{p\in\cP, \|p\|\leq 1} \left\|\sum\limits_{\beta\in \FF_n^+}
c_\beta W_\beta (p)\right\|<\infty,
$$
then there is a unique bounded operator acting on $F^2(H_n)$, which
we denote by $\varphi(W_1,\ldots, W_n)$, such that
$$
\varphi(W_1,\ldots, W_n)p=\sum\limits_{\beta\in \FF_n^+} c_\beta
W_\beta (p)\quad \text{ for any } \ p\in \cP.
$$
The set of all operators $\varphi(W_1,\ldots, W_n)\in B(F^2(H_n))$
satisfying the above-mentioned properties is denoted by
$F_n^\infty({\bf D}^m_f)$. When $f=X_1+\cdots +X_n$ and $m=1$, $F_n^\infty({\bf D}^m_f)$
coincides with the noncommutative analytic Toeplitz algebra
$F_n^\infty$, which was introduced in \cite{Po-von} in connection
with a noncommutative multivariable von Neumann inequality. As in
this particular case, one can prove that $F_n^\infty({\bf D}^m_f)$
is a Banach algebra, which we call Hardy algebra associated with the
noncommutative domain ${\bf D}^m_f$.

 In a similar manner,  using the weighted right creation
operators $(\Lambda_1,\ldots, \Lambda_n)$
 associated with ${\bf D}^m_f$, one can   define    the corresponding
     the Hardy algebra $R_n^\infty({\bf D}^m_f)$.
More precisely, if $g(\Lambda_1,\ldots,
\Lambda_n)=\sum\limits_{\beta\in \FF_n^+} c_{\tilde\beta
}\Lambda_\beta $ is a formal sum with the property  that
$\sum_{\beta\in \FF_n^+} |c_\beta|^2
\frac{1}{b_\beta^{(m)}}<\infty$, where the
 coefficients $b_\alpha^{(m)}$, $\alpha\in \FF_n^+$, are given by relation
 \eqref{b-al}, and such that
$$
\sup_{p\in\cP, \|p\|\leq 1} \left\|\sum\limits_{\beta\in \FF_n^+}
 c_{\tilde\beta} \Lambda_\beta (p)\right\|<\infty,
$$
then there is a unique bounded operator on $F^2(H_n)$, which we
 denote by $g(\Lambda_1,\ldots, \Lambda_n)$, such that
$$
g(\Lambda_1,\ldots, \Lambda_n)p=\sum\limits_{\beta\in \FF_n^+}
c_{\tilde\beta} \Lambda_\beta (p)\quad \text{ for any } \ p\in \cP.
$$
The set of all operators $g(\Lambda_1,\ldots, \Lambda_n)\in
B(F^2(H_n))$
satisfying the above-mentioned properties is denoted by $R_n^\infty({\bf D}^m_f)$.

\begin{proposition}\label{tilde-f2}
Let $f$ be a positive regular free holomorphic function on a
noncommutative ball $[B(\cH)^n]_\rho$, $\rho>0$, and let ${\bf D}^m_f$ be the associated
noncommutative domain.
 Then the following statements hold:
\begin{enumerate}
\item[(i)] $F_n^\infty({\bf D}^m_f)'=U^*(F_n^\infty({\bf D}^m_{\tilde
f}))U=R_n^\infty({\bf D}^m_f)$, where $'$ stands for the commutant
and $U\in B(F^2(H_n))$ is the unitary operator defined by ~$U
e_\alpha=e_{\tilde\alpha}$, $\alpha\in \FF_n^+$;
\item[(ii)] $F_n^\infty({\bf D}^m_f)''=F_n^\infty({\bf D}^m_f)$ and
$R_n^\infty({\bf D}^m_f)''=R_n^\infty({\bf D}^m_f)$.
\end{enumerate}
\end{proposition}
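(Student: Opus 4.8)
The plan is to reduce the whole statement to the single identity $\{W_1,\dots,W_n\}'=R_n^\infty({\bf D}^m_f)$ for the commutant of the $n$-tuple of weighted left creation operators, and then to read off (i) and (ii) by a duality argument together with conjugation by the flip unitary $U$, $Ue_\alpha=e_{\widetilde\alpha}$, which is an involution ($U=U^*=U^{-1}$, $U^2=I$, since $\widetilde{\widetilde\alpha}=\alpha$). Two further facts are used besides Section~1. First, a basic property of the Hardy algebra (proved as for the classical $F_n^\infty$, via noncommutative Ces\`aro/Abel means): $F_n^\infty({\bf D}^m_f)$ coincides with the $w^*$-closed algebra generated by $W_1,\dots,W_n$ and the identity, and likewise $R_n^\infty({\bf D}^m_f)$ is generated by $\Lambda_1,\dots,\Lambda_n$ and the identity; consequently $F_n^\infty({\bf D}^m_f)'=\{W_1,\dots,W_n\}'$ and $R_n^\infty({\bf D}^m_f)'=\{\Lambda_1,\dots,\Lambda_n\}'$. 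Second, a coefficient symmetry read off from \eqref{b-al}: the constants $b^{(m)}_\alpha$, computed for $\widetilde f$ instead of $f$ (which I denote $b^{(m)}_\alpha(\widetilde f)$), satisfy $b^{(m)}_{\widetilde\alpha}(f)=b^{(m)}_\alpha(\widetilde f)$.

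I would first dispose of the conjugation identity $U^*F_n^\infty({\bf D}^m_{\widetilde f})U=R_n^\infty({\bf D}^m_f)$. By Theorem~\ref{tilde-f}(iii), $U^*\Lambda_i^{(f)}U=W_i^{(\widetilde f)}$, hence $UW_\beta^{(\widetilde f)}U^*=\Lambda_\beta^{(f)}$ for every $\beta\in\FF_n^+$. Since $U$ maps the dense subspace $\cP$ isometrically onto itself, conjugation by $U$ sends a bounded operator agreeing with $\sum_\beta c_\beta W_\beta^{(\widetilde f)}$ on $\cP$ to a bounded operator agreeing with $\sum_\beta c_\beta\Lambda_\beta^{(f)}$ on $\cP$, and, using the coefficient symmetry $b^{(m)}_{\widetilde\beta}(f)=b^{(m)}_\beta(\widetilde f)$, it transforms the summability condition defining membership in $F_n^\infty({\bf D}^m_{\widetilde f})$ into the one defining membership in $R_n^\infty({\bf D}^m_f)$. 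Thus $UF_n^\infty({\bf D}^m_{\widetilde f})U^*\subseteq R_n^\infty({\bf D}^m_f)$; the reverse inclusion follows by interchanging $f$ and $\widetilde f$ and using $U^2=I$.

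Next comes the commutant identity $\{W_1,\dots,W_n\}'=R_n^\infty({\bf D}^m_f)$. The inclusion $\supseteq$ is elementary: by Theorem~\ref{tilde-f}(iv), $W_i\Lambda_\beta=\Lambda_\beta W_i$ for all $\beta$, so for $g(\Lambda)=\sum_\beta c_{\widetilde\beta}\Lambda_\beta\in R_n^\infty({\bf D}^m_f)$ and $p\in\cP$ one has $g(\Lambda)W_ip=\sum_\beta c_{\widetilde\beta}\Lambda_\beta W_ip=W_i\sum_\beta c_{\widetilde\beta}\Lambda_\beta p=W_ig(\Lambda)p$ (the series converging in $F^2(H_n)$), whence $g(\Lambda)W_i=W_ig(\Lambda)$ by density of $\cP$ and boundedness. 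For the inclusion $\subseteq$ — the main point — let $A\in\{W_1,\dots,W_n\}'$ and set $\psi:=A(1)=\sum_\beta\widehat\psi_\beta e_\beta\in F^2(H_n)$. From $W_\alpha 1=(b_\alpha^{(m)})^{-1/2}e_\alpha$ (by \eqref{WbWb}) and $AW_\alpha=W_\alpha A$ we get
\begin{equation*}
Ae_\alpha=(b_\alpha^{(m)})^{1/2}W_\alpha\psi=(b_\alpha^{(m)})^{1/2}\sum_{\beta\in\FF_n^+}\widehat\psi_\beta\left(\frac{b_\beta^{(m)}}{b_{\alpha\beta}^{(m)}}\right)^{1/2}e_{\alpha\beta},\qquad\alpha\in\FF_n^+.
\end{equation*}
Define the formal series $\varphi(\Lambda):=\sum_\beta c_{\widetilde\beta}\Lambda_\beta$ with $c_\beta:=\widehat\psi_\beta(b_\beta^{(m)})^{1/2}$; a direct computation using \eqref{WbWb-r} shows $\varphi(\Lambda)e_\alpha=Ae_\alpha$ for every $\alpha$, hence $\varphi(\Lambda)p=Ap$ for all $p\in\cP$. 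The summability condition holds since $\sum_\beta|c_\beta|^2/b_\beta^{(m)}=\sum_\beta|\widehat\psi_\beta|^2=\|\psi\|^2<\infty$, and the boundedness condition holds since $\sup_{p\in\cP,\,\|p\|\le1}\|\varphi(\Lambda)p\|=\sup_{p\in\cP,\,\|p\|\le1}\|Ap\|\le\|A\|$; therefore $\varphi(\Lambda)\in R_n^\infty({\bf D}^m_f)$, and as it agrees with the bounded operator $A$ on the dense subspace $\cP$ we conclude $A=\varphi(\Lambda)\in R_n^\infty({\bf D}^m_f)$.

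Combining these with the first ingredient gives $F_n^\infty({\bf D}^m_f)'=\{W_1,\dots,W_n\}'=R_n^\infty({\bf D}^m_f)=U^*F_n^\infty({\bf D}^m_{\widetilde f})U$, which is (i). For (ii), applying (i) with $f$ replaced by $\widetilde f$ and conjugating by $U$ (using $\Lambda_i^{(f)}=U^*W_i^{(\widetilde f)}U$, the commutant identity and the conjugation identity for $\widetilde f$, and $U^2=I$) yields $R_n^\infty({\bf D}^m_f)'=\{\Lambda_1,\dots,\Lambda_n\}'=F_n^\infty({\bf D}^m_f)$; hence $F_n^\infty({\bf D}^m_f)''=(R_n^\infty({\bf D}^m_f))'=F_n^\infty({\bf D}^m_f)$ and $R_n^\infty({\bf D}^m_f)''=(F_n^\infty({\bf D}^m_f))'=R_n^\infty({\bf D}^m_f)$. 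I expect the main obstacle to be not the short cyclic-vector computation above, but the first ingredient — that the commutant of the Hardy algebra equals the commutant of its generating $n$-tuple — which rests on the $w^*$-density of the polynomials in $W_1,\dots,W_n$ inside $F_n^\infty({\bf D}^m_f)$; this is the genuinely analytic input (the analogue for $F_n^\infty({\bf D}^m_f)$ of the corresponding fact for the noncommutative analytic Toeplitz algebra $F_n^\infty$), and it is where the Ces\`aro/Poisson-mean machinery is needed.
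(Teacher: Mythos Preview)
Your argument is correct and follows essentially the same route as the paper: the conjugation identity via Theorem~\ref{tilde-f}(iii), the cyclic-vector computation $A(1)\mapsto$ Fourier coefficients to show $\{W_i\}'\subseteq R_n^\infty({\bf D}^m_f)$, and then (ii) by applying (i) to $\widetilde f$ and conjugating. The only organizational difference is that the paper asserts the inclusion $R_n^\infty({\bf D}^m_f)\subseteq F_n^\infty({\bf D}^m_f)'$ as ``clear'' directly from $W_i\Lambda_j=\Lambda_j W_i$ (it can be checked on basis vectors without invoking density), whereas you route through $F_n^\infty({\bf D}^m_f)'=\{W_i\}'$ using the $w^*$-density of polynomials; the latter is Proposition~\ref{density-pol}, which in the paper appears after this result but is proved independently via Ces\`aro means, so your use of it is legitimate and arguably more transparent than the paper's ``clear''.
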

\begin{proof}
Let ~$(W_1^{(f)},\ldots, W_n^{(f)}) ($resp.
$(\Lambda_1^{(f)},\ldots, \Lambda_n^{(f)}))$ be the weighted left
(resp. right) creation operators associated with the noncommutative
domain  ${\bf D}^m_f$.
Due to Theorem \ref{tilde-f}, part (iii), we have $U^*(F_n^\infty({\bf
D}^m_{\tilde f}))U=R_n^\infty({\bf D}^m_f)$. On the other hand, since $W_i^{(f)}
\Lambda_j^{(f)}=\Lambda_j^{(f)} W_i^{(f)}$ for any $i,j=1,\ldots,n$,
it is clear that $R_n^\infty({\bf D}^m_f)\subseteq F_n^\infty({\bf
D}^m_f)'$. To prove the reverse inclusion, let $A\in F_n^\infty({\bf
D}^m_f)'$. Since $A(1)\in F^2(H_n)$, we have $A(1)=\sum_{\beta\in
\FF_n^+} c_{\tilde \beta}\frac{1}{\sqrt{b_{\tilde \beta}^{(m)}}}
e_{\tilde \beta}$ for some coefficients $\{c_\beta\}_{\beta\in\FF_n^+}$ with
$\sum_{\beta\in\FF_n^+} |c_\beta|^2 \frac{1}{b_\beta^{(m)}}<\infty$.
On the other hand, since $ AW_i^{(f)}=W_i^{(f)}A$ for
$i=1,\ldots,n$, relations \eqref{WbWb} and \eqref{WbWb-r} imply
\begin{equation*}
\begin{split}
Ae_\alpha
&=\sqrt{b_\alpha^{(m)}}AW_\alpha(1)=\sqrt{b_\alpha^{(m)}}W_\alpha
A(1)\\
&=\sum_{\beta\in \FF_n^+} c_{\tilde \beta}
\frac{\sqrt{b^{(m)}_\alpha}}{\sqrt{b^{(m)}_{\alpha \tilde \beta}}}
e_{\alpha \tilde \beta} =\sum_{\beta\in \FF_n^+} c_{\tilde \beta}
\Lambda_\beta(e_\alpha)
\end{split}
\end{equation*}
for any $\alpha\in \FF_n^+$.
Therefore, $A(q)=\sum_{\beta\in \FF_n} c_{\tilde \beta}
\Lambda_\beta(q)$ for any polynomial $q$ in in the full Fock space
$F^2(H_n)$. Since $A$ is a bounded operator, $g(\Lambda_1,\ldots,
\Lambda_n):=\sum_{\beta\in \FF_n} c_{\tilde \beta} \Lambda_\beta$ is
in $R_n^\infty({\bf D}^m_f)$ and $A=g(\Lambda_1,\ldots, \Lambda_n)$.
Therefore, $R_n^\infty({\bf D}^m_f)= F_n^\infty({\bf D}^m_f)'$. The
item (ii) follows easily applying part (i). This completes the
proof.
\end{proof}

An obvious consequence of Proposition \ref{tilde-f2} is that
$F_n^\infty({\bf D}^m_f)$  is WOT-closed (resp. $w^*$-closed) in
$B(F^2(H_n))$.

Let $Q_k$, $k\geq 0$,  be the orthogonal projection of $F^2(H_n)$ on the the
subspace  $\text{\rm span}\,\{e_\alpha: \ |\alpha|=k\}$. For each
integer $j$, define the completely contractive projection
$\Phi_j:B(F^2(H_n))\to B(F^2(H_n))$ by
$$
\Phi_j(A):=\sum_{k\geq\max\{0,-j\}} Q_kAQ_{k+j}.
$$
According to Lemma 1.1 from \cite{DP}, the Cesaro operators on
$B(F^2(H_n))$ defined by
$$
\Sigma_k(A):=\sum_{|j|<k}\left(1-\frac{|j|}{k}\right)
\Phi_j(A),\quad k\geq 1,
$$
 are completely contractive and $\Sigma_k(A)$ converges to $A$ in
 the strong operator topology.
 Now, let $A\in F_n^\infty({\bf D}^m_f)$ have the Fourier
 representation $\sum_{\alpha\in \FF_n^+} a_\alpha W_\alpha$.  Due to the
 definition of the weighted left  creation operators (see
 \eqref{w-shift}), one can check that
 $$
 Q_{k+j} AQ_j=\left(\sum_{|\alpha|=k} a_\alpha W_\alpha \right)
 Q_j,\quad k\geq 0, j\geq 0,
 $$
 and $Q_j A Q_{k+j}=0$ if $k\geq 1$  and  $j\geq 0$. Therefore,
 $$
\Sigma_k(A)=\sum_{|\alpha|\leq
k-1}\left(1-\frac{|\alpha|}{k}\right)a_\alpha W_\alpha
$$
converges to $A$, as $ k\to \infty$, in the strong operator
topology. Therefore,  we have proved the following result.
\begin{proposition}\label{density-pol}
 The algebra $F_n^\infty({\bf D}^m_f)$ is the sequential  SOT-(resp.~WOT-, $w^*$-) closure of all polynomials in $W_1,\ldots, W_n$, and
the identity.
\end{proposition}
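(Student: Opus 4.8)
The plan is to prove the two inclusions separately, the nontrivial one being that every $A\in F_n^\infty({\bf D}^m_f)$ is a sequential SOT-limit of polynomials in $W_1,\dots,W_n$ and the identity. For the easy direction, every such polynomial clearly belongs to $F_n^\infty({\bf D}^m_f)$, and by Proposition~\ref{tilde-f2} this algebra equals a commutant, hence is WOT-closed and $w^*$-closed; since a sequential SOT-limit is in particular a WOT-limit, the sequential closure in any of the three topologies of the algebra of polynomials is contained in $F_n^\infty({\bf D}^m_f)$. It therefore suffices to produce, for each $A\in F_n^\infty({\bf D}^m_f)$, a sequence of polynomials $p_k(W_1,\dots,W_n)$ with $\|p_k(W_1,\dots,W_n)\|\le\|A\|$ and $p_k(W_1,\dots,W_n)\to A$ strongly; strong convergence of a norm-bounded sequence then automatically upgrades to the WOT and, since the $w^*$-topology agrees with the WOT on bounded subsets of $B(F^2(H_n))$, to $w^*$-convergence.

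To build the $p_k$ I would use the Ces\`aro-mean technique of Davidson and Pitts. Following \cite{DP}, for each integer $j$ let $\Phi_j(X):=\sum_{k\ge\max\{0,-j\}}Q_kXQ_{k+j}$ be the $j$-th ``diagonal'' of $X\in B(F^2(H_n))$ relative to the decomposition $F^2(H_n)=\bigoplus_{k\ge0}H_n^{\otimes k}$, and set $\Sigma_k:=\sum_{|j|<k}\bigl(1-\tfrac{|j|}{k}\bigr)\Phi_j$. By Lemma~1.1 of \cite{DP} each $\Sigma_k$ is completely contractive on $B(F^2(H_n))$ and $\Sigma_k(X)\to X$ strongly for every $X$. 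The crucial observation --- and the only place the specific structure of $F_n^\infty({\bf D}^m_f)$ enters --- is that for $A$ in this algebra the Ces\`aro mean $\Sigma_k(A)$ is a polynomial. Indeed, writing $A(1)=\sum_{\alpha\in\FF_n^+}a_\alpha\,(b_\alpha^{(m)})^{-1/2}e_\alpha$ recovers the Fourier coefficients $a_\alpha$ of $A=\sum_\alpha a_\alpha W_\alpha$, and from the definition \eqref{w-shift} the operator $W_\alpha$ maps $H_n^{\otimes p}$ into $H_n^{\otimes(p+|\alpha|)}$ for every $p\ge0$, i.e. it is homogeneous of degree $|\alpha|$ in the $\NN$-grading. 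Hence $Q_{j+k}AQ_j=\bigl(\sum_{|\alpha|=k}a_\alpha W_\alpha\bigr)Q_j$ and $Q_jAQ_{j+k}=0$ for $k\ge1$, so that $\Phi_j(A)=0$ for $j>0$ and $\Phi_{-\ell}(A)=\sum_{|\alpha|=\ell}a_\alpha W_\alpha$; therefore
\[
\Sigma_k(A)=\sum_{\ell=0}^{k-1}\Bigl(1-\frac{\ell}{k}\Bigr)\sum_{|\alpha|=\ell}a_\alpha W_\alpha,
\]
a polynomial of degree at most $k-1$ in $W_1,\dots,W_n$ with $\|\Sigma_k(A)\|\le\|A\|$.

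Putting these together finishes the argument: taking $p_k(W_1,\dots,W_n):=\Sigma_k(A)$ yields a norm-bounded sequence of polynomials converging to $A$ strongly, hence also in the WOT and the $w^*$-topology, which is precisely the asserted density. I expect the only mildly delicate point to be the verification that $\Sigma_k(A)$ genuinely collapses to a finite polynomial rather than retaining an infinite ``analytic'' tail --- but this is immediate once one records the grading property of the weighted shifts $W_i$ from \eqref{w-shift} together with the fact that the Fourier expansion of an element of $F_n^\infty({\bf D}^m_f)$ involves only the $W_\alpha$ and no adjoints. Everything else --- complete contractivity and strong convergence of the $\Sigma_k$, and the coincidence of the WOT and $w^*$-topologies on bounded sets --- is standard and quotable.
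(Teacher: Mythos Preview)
Your argument is correct and is essentially the same as the paper's: both apply the Davidson--Pitts Ces\`aro operators $\Sigma_k$ from \cite{DP}, use the grading property \eqref{w-shift} of the $W_\alpha$ to identify $\Sigma_k(A)$ with the polynomial $\sum_{|\alpha|\le k-1}(1-|\alpha|/k)a_\alpha W_\alpha$, and conclude from the complete contractivity and SOT-convergence of the $\Sigma_k$. You have in fact spelled out a bit more than the paper does (the easy containment via Proposition~\ref{tilde-f2}, and the passage from SOT to WOT/$w^*$ on bounded sets), but the core idea is identical.
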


Now, we have all the ingredients to extend the corresponding results
from   \cite{DP1} and \cite{Po-domains},  to our more general
setting.  More precisely, one can similarly prove that
 the following statements hold:
\begin{enumerate}
\item[(i)]The Hardy algebra $F_n^\infty({\bf D}^m_f)$ is inverse closed.
\item[(ii)] The only normal elements in $F_n^\infty({\bf D}^m_f)$ are the
scalars.
\item[(iii)] Every element  $A\in F_n^\infty({\bf D}^m_f)$
has
 its spectrum
$\sigma (A)\neq \{0\}$ and it is  injective.
\item[(iv)] The algebra $F_n^\infty({\bf D}^m_f)$  contains no non-trivial
idempotents and no non-zero quasinilpotent elements.
\item[(v)] The algebra $F_n^\infty({\bf D}^m_f)$ is semisimple.
\item[(vi)] If  $A\in F_n^\infty({\bf D}^m_f)$, $n\geq 2$, then
$\sigma (A)=\sigma_e (A)$.
\end{enumerate}

We recall that an $n$-tuple  $(T_1,\ldots, T_n)\in {\bf
 D}^m_f(\cH)$ has the radial property with respect to ${\bf
 D}^m_f(\cH)$ if there exists a constant $\delta\in (0,1)$ such that
  $(rT_1,\ldots, rT_n)\in {\bf
 D}^m_f(\cH)$ for any $r\in (\delta, 1)$.

\begin{lemma}
\label{gKKg} Let  $(T_1,\ldots, T_n)$be an $n$-tuple of operators
with  the radial property in the noncommutative domain ${\bf
 D}^m_f(\cH)$. Then
 \begin{equation}
\label{frf} g(rT_1,\ldots, rT_n) {K_{f,T}^{(m)}}^*
={K_{f,T}^{(m)}}^*(g(rW_1,\ldots, rW_n )\otimes I_\cH) \qquad \text{
for any } \ r\in (\delta,1)
\end{equation}
and $g(W_1,\ldots,W_n)=\sum_{\beta\in \FF_n^+} c_\beta W_\beta$ in $F_n^\infty({\bf D}_f^m)$,
where $ g(rT_1,\ldots, rT_n):=\sum_{k=0}^\infty \sum_{|\alpha|=k} c_\alpha r^{|\alpha|}
T_\alpha$,
with the convergence in the operator norm topology.
\end{lemma}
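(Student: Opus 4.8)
The plan is to reduce \eqref{frf} to the case of a single monomial $W_\alpha$, where it is immediate from Lemma \ref{Berezin-lemma}, and then to pass from polynomials to arbitrary elements of $F_n^\infty({\bf D}_f^m)$ by a norm-approximation argument. First I would take adjoints in the intertwining relation \eqref{ker-inter} of Lemma \ref{Berezin-lemma}, which gives $T_i\,{K_{f,T}^{(m)}}^*={K_{f,T}^{(m)}}^*(W_i\otimes I_\cH)$ for $i=1,\ldots,n$, and then iterate over the letters of a word $\alpha=g_{i_1}\cdots g_{i_k}$ to obtain
\[ T_\alpha\,{K_{f,T}^{(m)}}^*={K_{f,T}^{(m)}}^*(W_\alpha\otimes I_\cH),\qquad \alpha\in\FF_n^+. \]
Multiplying by $c_\alpha r^{|\alpha|}$ and summing over any finite subset of $\FF_n^+$, it follows that for every polynomial $q(W_1,\ldots,W_n)=\sum_{|\alpha|\le N}c_\alpha W_\alpha$ one has $q(rT_1,\ldots,rT_n)\,{K_{f,T}^{(m)}}^*={K_{f,T}^{(m)}}^*\bigl(q(rW_1,\ldots,rW_n)\otimes I_\cH\bigr)$, where $q(rT_1,\ldots,rT_n):=\sum_{|\alpha|\le N}c_\alpha r^{|\alpha|}T_\alpha$ and similarly for the $W_i$'s. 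This step is purely algebraic and uses nothing beyond Lemma \ref{Berezin-lemma}.

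Next I would carry out the limiting procedure. Given $g=\sum_{\beta\in\FF_n^+}c_\beta W_\beta\in F_n^\infty({\bf D}_f^m)$, let $g_k:=\sum_{|\alpha|=k}c_\alpha W_\alpha$ denote its homogeneous parts; since the bidegree projections $\Phi_j$ on $B(F^2(H_n))$ are completely contractive (see the discussion preceding Proposition \ref{density-pol}) and $g_k=\Phi_{-k}\bigl(g(W_1,\ldots,W_n)\bigr)$, one has $\|g_k\|\le\|g(W_1,\ldots,W_n)\|$ for all $k$. Hence $\sum_{k\ge0}r^k\|g_k\|\le\|g(W_1,\ldots,W_n)\|/(1-r)<\infty$, so $\sum_k r^kg_k$ converges in operator norm to $g(rW_1,\ldots,rW_n)$. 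By the von Neumann type inequality \eqref{vn1}, which is available because $T$ has the radial property, $\|\sum_{|\alpha|=k}c_\alpha T_\alpha\|\le\|g_k\|\le\|g(W_1,\ldots,W_n)\|$, and therefore the series $\sum_k r^k\sum_{|\alpha|=k}c_\alpha T_\alpha$ also converges in operator norm; this is the operator denoted $g(rT_1,\ldots,rT_n)$. Taking $q$ to be the $N$-th partial sum $\sum_{|\alpha|\le N}c_\alpha W_\alpha$ of $g$ in the polynomial identity of the first paragraph and letting $N\to\infty$, both sides converge in operator norm (the left factor $q(rT_1,\ldots,rT_n)$ to $g(rT_1,\ldots,rT_n)$, the right factor $q(rW_1,\ldots,rW_n)\otimes I_\cH$ to $g(rW_1,\ldots,rW_n)\otimes I_\cH$), which yields \eqref{frf}.

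I expect the only delicate point to be the bookkeeping in the second paragraph: one must check that the radial rescalings $g\mapsto g(rW_1,\ldots,rW_n)$ and $g\mapsto g(rT_1,\ldots,rT_n)$ are given by genuinely norm-convergent series, so that the finite polynomial identities really do pass to the limit. On the free-model side this is handled by the complete contractivity of the projections $\Phi_j$; on the operator side it requires the von Neumann inequality \eqref{vn1}, and this is precisely where the hypothesis that $T$ has the radial property enters — without it one would control only the model side of the identity.
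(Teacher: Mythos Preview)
Your proposal is correct and follows essentially the same route as the paper: reduce to monomials via the adjoint of \eqref{ker-inter}, extend to polynomials by linearity, and then pass to the limit once you know the radially rescaled series $g(rW_1,\ldots,rW_n)$ and $g(rT_1,\ldots,rT_n)$ converge in operator norm. The only real difference is in how you establish the norm-convergence on the model side: you use that $g_k=\Phi_{-k}(g)$ with $\Phi_{-k}$ completely contractive, giving $\|g_k\|\le\|g\|$ and hence $\sum_k r^k\|g_k\|<\infty$ by a geometric series, whereas the paper uses the orthogonality of the ranges of $\{W_\beta\}_{|\beta|=k}$ together with \eqref{Mb} to bound $\|g_k\|$ via Cauchy--Schwarz and the estimate $\bigl\|\sum_{|\beta|=k} b_\beta^{(m)} W_\beta W_\beta^*\bigr\|\le\binom{k+m-1}{m-1}$; your argument is shorter, the paper's gives a sharper quantitative bound and yields the estimate \eqref{ine-k} used again later. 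On the $T$-side both proofs invoke the von Neumann inequality \eqref{vn1} from Theorem \ref{Poisson-C*}, which is indeed where the radial property enters.
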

\begin{proof}
According to  relations \eqref{WW*} and   \eqref{Mb},
the operators $\{W_\beta\}_{|\beta|=k}$ have orthogonal ranges and

$$\|W_\beta x\|\leq \frac{1}{\sqrt{b^{(m)}_\beta}} M_{|\beta|,m}\|x\|,
\qquad x\in F^2(H_n),
$$
where $M_{|\beta|, m}:=\left(\begin{matrix} |\beta|+m-1\\m-1
\end{matrix}\right)$.
Consequently,  we deduce that
\begin{equation}
\label{ine-k} \left\|\sum\limits_{|\beta|=k} b_\beta W_\beta
W_\beta^*\right\|\leq \left(\begin{matrix} k+m-1\\m-1
\end{matrix}\right)\quad \text{  for any } \quad k=0,1,\ldots.
\end{equation}
Since $g(W_1,\ldots,W_n) \in F_n^\infty({\bf D}_f^m)$, we have
  $\sum\limits_{\beta\in \FF_n^+}
|c_\beta|^2\frac{1}{b_\beta}<\infty$. Hence and using \eqref{ine-k},
 we deduce that, for $0<t<1$,
\begin{equation*}
 \begin{split}
 \sum_{k=0}^\infty t^k \left\|\sum_{|\beta|=k} c_\beta W_\beta\right\|
 &\leq
 \sum_{k=0}^\infty t^k \left(\sum_{|\beta|=k} |c_\beta|^2\frac{1}
 {b^{(m)}_\beta}\right)^{1/2}\left\|\sum_{|\beta|=k} b^{(m)}_\beta
  W_\beta W_\beta^*\right\|^{1/2}  \\
  &\leq
  \sum_{k=0}^\infty\left(\sum_{|\beta|=k} |c_\beta|^2\frac{1}
 {b^{(m)}_\beta}\right)^{1/2} t^k \left(\begin{matrix}
k+m-1\\m-1 \end{matrix}\right)^{1/2}  \\
 &\leq
    \left(\sum_{\beta\in \FF_n^+} |c_\beta|^2\frac{1}{b_\beta}
   \right)^{1/2}\left(\sum_{k=0}^\infty t^{2k}\left(\begin{matrix}
k+m-1\\m-1 \end{matrix}\right)\right)^{1/2} < \infty,
 \end{split}
 \end{equation*}
which proves that
 \begin{equation}
\label{limm1}
g(tW_1,\ldots, tW_n):=
 \lim_{k\to \infty}\sum_{p=0}^k\sum_{|\alpha|=p}t^{|\alpha|} c_\alpha W_\alpha
 \end{equation}
 is in the  noncommutative domain  algebra
$\cA_n({\bf D}^m_f)$,
 where the convergence is in the operator norm.
Consequently,  Theorem \ref{Poisson-C*} implies that
$g(rT_1,\ldots, rT_n):=\sum\limits_{k=0}^\infty \sum\limits_{|\alpha|=k} c_\alpha r^{|\alpha|}
T_\alpha $ is convergent in the operator norm topology.
 On the other hand, due to Lemma \ref{Berezin-lemma}, we have
  $T_i {K_{f,T}^{(m)}}^*={K_{f,T}^{(m)}}^*(W_i\otimes I_\cH)$ for any
$i=1,\ldots, n$. Now, one can  deduce \eqref{frf}. This completes the proof.
\end{proof}

In what follows we show that the restriction of the noncommutative
Berezin transform to the Hardy algebra $F_n^\infty({\bf D}_f^m)$
provides a functional calculus  associated with each   pure
$n$-tuple of operators in the noncommutative domain ${\bf
D}_f^m(\cH)$.

\begin{theorem}
\label{funct-calc} Let $T:=(T_1,\ldots, T_n)$ be a pure
 $n$-tuple of operators in the noncommutative domain ${\bf
 D}^m_f(\cH)$ and define the map
$$\Psi_T:F_n^\infty({\bf D}^m_f)\to B(\cH)\quad \text{ by} \quad
 \Psi_T(g):= \widetilde{\bf B}_T[g],$$
 where $\widetilde{\bf B}_T$ is the noncommutative Berezin transform  at $T\in {\bf
 D}_f^m(\cH)$.
 Then
\begin{enumerate}
\item[(i)] $\Psi_T$ is    WOT-continuous (resp.
SOT-continuous)  on bounded sets;
\item[(ii)]
$\Psi_T$ is a unital completely contractive homomorphism and
$\Psi_T(W_\alpha)=T_\alpha$ for $\alpha\in \FF_n^+$.
\end{enumerate}
If, in addition, the universal model $(W_1,\ldots, W_n)$ has the
radial property with respect to ${\bf
 D}^m_f(F^2(H_n))$,
 then
 $$
\Psi_{T}(g)=\text{\rm SOT-}\lim_{r\to 1} g(rT_1,\ldots,rT_n)
$$
for any\  $g:=\sum\limits_{\beta\in \FF_n^+}
 c_\beta W_\beta$  in $F_n^\infty({\bf D}^{m}_f)$, where
 $g(rT_1,\ldots, rT_n):=\sum_{k=0}^\infty \sum_{|\alpha|=k} c_\alpha
 r^{|\alpha|} T_\alpha$ and the convergence  is in the operator norm
 topology.
 \end{theorem}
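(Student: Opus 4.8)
The plan is to read everything off the identity $\widetilde{\bf B}_T[g]={K_{f,T}^{(m)}}^*(g\otimes I_\cH)K_{f,T}^{(m)}$ from \eqref{def-Be2}, together with the two facts supplied by Lemma \ref{Berezin-lemma} for a pure $n$-tuple: the Berezin kernel $K:=K_{f,T}^{(m)}$ is an isometry (hence has closed range), and it intertwines, $KT_i^*=(W_i^*\otimes I_\cH)K$ for $i=1,\dots,n$. Since $g\mapsto g\otimes I_\cH$ is a unital completely isometric $*$-homomorphism of $B(F^2(H_n))$ into $B(F^2(H_n)\otimes\cH)$ and $X\mapsto K^*XK$ is unital and completely contractive (because $K$ is an isometry), their composition $\widetilde{\bf B}_T$ is unital and completely contractive on all of $B(F^2(H_n))$; restricting to $F_n^\infty({\bf D}^m_f)$ yields the completely contractive part of (ii). Both factors are WOT--WOT continuous and, on bounded sets, SOT--SOT continuous (in the second factor $K$ is fixed and bounded), so $\Psi_T$ is WOT- and SOT-continuous on bounded sets, which is (i).

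For the rest of (ii), taking adjoints in the intertwining relation yields $T_\alpha K^*=K^*(W_\alpha\otimes I_\cH)$ for every $\alpha\in\FF_n^+$, whence $\Psi_T(W_\alpha)=K^*(W_\alpha\otimes I_\cH)K=T_\alpha K^*K=T_\alpha$. For multiplicativity I would show that $M:=\ran K$ is co-invariant under $g\otimes I_\cH$ for every $g\in F_n^\infty({\bf D}^m_f)$: the intertwining relation makes $M$ invariant under each $W_i^*\otimes I_\cH$, hence under every polynomial in $W_1^*\otimes I_\cH,\dots,W_n^*\otimes I_\cH$; by Proposition \ref{density-pol}, applying the Ces\`aro means $\Sigma_k$ to $g$ and passing to adjoints presents $g^*\otimes I_\cH$ as a WOT-limit of such polynomials, so $M$ is invariant under $g^*\otimes I_\cH$, i.e.\ co-invariant under $g\otimes I_\cH$. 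Writing operators on $F^2(H_n)\otimes\cH$ as $2\times2$ matrices with respect to $M\oplus M^\perp$, each $g\otimes I_\cH$ is block upper triangular, so the compression to $M$ is multiplicative on the algebra $\{g\otimes I_\cH:g\in F_n^\infty({\bf D}^m_f)\}$; transporting to $\cH$ through the isometry $K$ turns this into $\Psi_T(gh)=\Psi_T(g)\Psi_T(h)$.

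For the last assertion, assume $(W_1,\dots,W_n)$ has the radial property, so that $rW:=(rW_1,\dots,rW_n)\in{\bf D}^m_f(F^2(H_n))$ for $r$ in some interval $(\delta,1)$; since $\Phi_{f,rW}^k(I)\le r^k\Phi_{f,W}^k(I)\to 0$, each $rW$ is pure. For $g=\sum_\beta c_\beta W_\beta\in F_n^\infty({\bf D}^m_f)$, the argument in the proof of Lemma \ref{gKKg} (see \eqref{limm1}) shows $g(rW):=\sum_k\sum_{|\alpha|=k}c_\alpha r^{|\alpha|}W_\alpha$ converges in operator norm and lies in $\cA_n({\bf D}^m_f)$; applying the norm-continuous homomorphism $\Psi_T$ term by term then defines $g(rT):=\sum_k\sum_{|\alpha|=k}c_\alpha r^{|\alpha|}T_\alpha=\Psi_T(g(rW))$, convergent in norm. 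Next I apply parts (i) and (ii)---already proved, and independent of the radial hypothesis---to the pure $n$-tuple $rW$: then $\widetilde{\bf B}_{rW}$ restricts to a unital completely contractive homomorphism on $F_n^\infty({\bf D}^m_f)$ with $\widetilde{\bf B}_{rW}(W_\alpha)=r^{|\alpha|}W_\alpha$, and by (i) together with Proposition \ref{density-pol} (and an Abel-summation check) it carries $g$ to $g(rW)$; hence $\|g(rW)\|\le\|g\|$, uniformly in $r$. Finally $g(rW)\to g$ in the strong operator topology as $r\to1$: for each $\gamma$ one has $\|g(rW)e_\gamma-g e_\gamma\|^2=\sum_\beta|c_\beta|^2(1-r^{|\beta|})^2\,b_\gamma^{(m)}/b_{\beta\gamma}^{(m)}\to0$ by dominated convergence, since $\sum_\beta|c_\beta|^2 b_\gamma^{(m)}/b_{\beta\gamma}^{(m)}<\infty$ by \eqref{Mb}, and this passes to all of $F^2(H_n)$ via the uniform bound just obtained. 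Applying (i) to the bounded set $\{g(rW):r\in(\delta,1)\}$ gives $\Psi_T(g)=\text{\rm SOT-}\lim_{r\to1}\Psi_T(g(rW))=\text{\rm SOT-}\lim_{r\to1}g(rT_1,\dots,rT_n)$.

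The bookkeeping in the first two paragraphs is routine once Lemma \ref{Berezin-lemma} is in hand. The step I expect to demand the most care is in the third paragraph: securing the von Neumann type bound $\|g(rW)\|\le\|g\|$---so that $\{g(rW)\}_r$ is a bounded set to which (i) can be applied---and making the two approximation arguments precise, namely the term-by-term passage through $\Psi_T$ and the WOT-density of polynomials inside $F_n^\infty({\bf D}^m_f)$; in particular one must verify that $\widetilde{\bf B}_{rW}$ really sends $g$ to the norm-convergent series $g(rW)$ and not to some a priori different weak limit.
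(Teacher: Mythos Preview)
Your proof is correct. The overall architecture matches the paper's---both exploit $\Psi_T(g)=K^*(g\otimes I)K$ with $K$ an isometry---but you diverge from the paper in two places.

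For multiplicativity, the paper argues indirectly: Theorem~\ref{purecase} gives the homomorphism property on polynomials, Proposition~\ref{density-pol} supplies sequential WOT-density, and then part~(i) plus the principle of uniform boundedness push the homomorphism property to all of $F_n^\infty({\bf D}^m_f)$. You instead prove directly that $\ran K$ is co-invariant under every $g\otimes I_\cH$ and read off multiplicativity from the block upper-triangular form. Your route is cleaner and makes the dilation-theoretic content explicit; the paper's route has the minor advantage of not needing the closed-range observation.

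For the final assertion, the paper obtains the crucial bound $\|g(rW)\|\le\|g\|$ by a separate Berezin-kernel computation: it shows $K_{f,rW}^{(m)}\,g(rW)^*=[g^*\otimes I]\,K_{f,rW}^{(m)}$ via a truncation argument exploiting $W_\beta^*e_\gamma=0$ for $|\beta|>|\gamma|$, and then deduces both the norm bound and the SOT convergence $g(rW)\to g$ from this identity. You instead recycle parts~(i)--(ii), applied to the pure $n$-tuple $rW$, to get $\widetilde{\bf B}_{rW}(g)=g(rW)$ (your Abel/Ces\`aro remark handles this correctly) and hence the bound for free; you then verify $g(rW)e_\gamma\to g e_\gamma$ by a direct dominated-convergence estimate using \eqref{Mb}. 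This is more economical and avoids the paper's somewhat delicate intertwining computation, at the cost of the small Ces\`aro regularity check you flag at the end.
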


\begin{proof}
According  to Section 2 (see relation \eqref{ker-inter}), we have
\begin{equation}\label{def-be}
\Psi_T(g)={K_{f,T}^{(m)}}^* (g\otimes I) K_{f,T}^{(m)}, \qquad g\in
F_n^\infty({\bf D}_f^m),
\end{equation}
 where the noncommutative Berezin kernel
$K_{f,T}^{(m)}$ is given by relation \eqref{Be-ker}. Using  standard
facts in functional analysis, we deduce part (i).

 Now, we prove part (ii). Since $T$ is a pure $n$-tuple of operators,
 by Lemma \ref{Berezin-lemma},
    $K^{(m)}_{f,T}$ is an isometry. Consequently, relation
    \eqref{def-be}
    implies
  $$
   \left\|\left[\Psi_T(g_{ij}) \right]_{k}\right\|
   \leq
\left\|\left[g_{ij} \right]_{k}\right\|
$$
for any operator-valued matrix $\left[g_{ij} \right]_{k}$ in
$M_k(F_n^\infty({\bf D}^{m}_f))$, which proves that $\Psi_T$ is a
unital completely contractive  linear map. Due to Theorem
\ref{purecase}, $\Psi_T$ is a homomorphism  on polynomials in
$F_n^\infty({\bf D}^m_f)$.  By Proposition \ref{density-pol}, the
polynomials in $W_1,\ldots, W_n$ and the identity  are
 sequentially
 WOT-dense in $F_n^\infty({\bf D}^m_f)$. On the other hand, due to part (i),  $\Psi_{T}$  is
 WOT- continuous on bounded sets. Now, one can use the principle of
 uniform boundedness  to  deduce that $\Psi_T$  is also a homomorphism on
 $F_n^\infty({\bf D}^m_f)$.

Now, we prove the last part of this theorem. Assume that the model
$n$-tuple  $(W_1,\ldots, W_n)$ has the radial property with respect
to ${\bf
 D}^m_f(F^2(H_n))$. First, we show that
\begin{equation}\label{c0sot}
g(W_1,\ldots,W_n)=\text{\rm SOT-}\lim_{t\to 1} g(tW_1,\ldots, tW_n)
\end{equation}
for any   ~$g(W_1,\ldots, W_n):=\sum\limits_{\beta\in \FF_n^+}
 c_\beta W_\beta\in F_n^\infty({\bf D}_f)$.
 According to Lemma \ref {gKKg},
 \begin{equation}
\label{limm}
 g(tW_1,\ldots, tW_n):=\lim_{k\to \infty}\sum_{k=0}^k\sum_{|\alpha|=p}t^{|\alpha|}
  c_\alpha W_\alpha
\end{equation}
 is in the  noncommutative domain  algebra
$\cA_n({\bf D}^m_f)$,
 where the convergence is in the operator norm topology.
 Fix now  $\gamma, \sigma, \epsilon\in \FF_n^+$ and consider the polynomial
$p(W_1,\ldots,W_n):=\sum\limits_{\beta\in \FF_n^+, |\beta|\leq
|\gamma|} c_\beta W_\beta$. Since $W_\beta^* e_\gamma =0$ for any
$\beta\in \FF_n^+$ with $|\beta|>|\gamma|$,  we have
$$
g(rW_1,\ldots, rW_n)^* e_\alpha =p(rW_1,\ldots, rW_n)^* e_\alpha$$
for any $\alpha\in \FF_n^+$ with $|\alpha|\leq |\gamma|$ and any
$r\in [0,1]$. On the other hand,  since $rW:=(rW_1,\ldots, rW_n)\in
 {\bf D}_f^m(F^2(H_n))$
for $r\in(\delta, 1)$, Lemma \ref{Berezin-lemma} implies
$$
K_{f,rW}^{(m)}p(rW_1,\ldots, rW_n)^*=[p(W_1,\ldots, W_n)^*\otimes
I_{F^2(H_n)}]K_{f,rW}^{(m)}
$$
for any $r\in (\delta,1)$. Using all these facts,  careful
calculations reveal that
\begin{equation*}
\begin{split}
\left<K_{f,rW}^{(m)}g(rW_1,\right.&\left.\ldots, rW_n)^*e_\gamma,
e_\sigma\otimes e_\epsilon\right> \\
&=\left<K^{(m)}_{f,rW}p(rW_1,\ldots, rW_n)^*e_\gamma,
e_\sigma\otimes e_\epsilon\right>\\
&=\left<[(p(W_1,\ldots, W_n)^*\otimes
I_{F^2(H_n)})]K_{f,rW}^{(m)}e_\gamma,
e_\sigma\otimes e_\epsilon\right>\\
&=\sum_{\beta\in \FF_n^+} r^{|\beta|}
\sqrt{b_\beta^{(m)}}\left<p(W_1,\ldots, W_n)^*
e_\beta,e_\sigma\right>
\left< W_\beta^*e_\gamma,  \Delta_{f,rW} e_\epsilon\right>\\
&=\sum_{\beta\in \FF_n^+} r^{|\beta|}
\sqrt{b_\beta^{(m)}}\left<g(W_1,\ldots, W_n)^*
e_\beta,e_\sigma\right>
\left< W_\beta^*e_\gamma,  \Delta_{f,rW} e_\epsilon\right>\\
&= \left<[g(W_1,\ldots, W_n)^*\otimes I_{F^2(H_n)}] K_{f,rW}^{(m)}
e_\gamma, e_\sigma\otimes e_\epsilon\right>
\end{split}
\end{equation*}
for any $r\in (\delta,1)$ and $\gamma, \sigma,\epsilon\in \FF_n^+$.
Hence, since $g(rW_1,\ldots, rW_n)$ and  $g(W_1,\ldots, W_n)$  are
bounded operators, we deduce that
$$
K_{f,rW}^{(m)}g(rW_1,\ldots, rW_n)^*=[g(W_1,\ldots, W_n)^*\otimes
I_{F^2(H_n)}]K_{f,rW}^{(m)}.
$$
  Since  the $n$-tuple $rW:=(rW_1,\ldots, rW_n)\in
{\bf D}^{(m)}_f(F^2(H_n))$ is  pure, the Berezin  kernel
$K^{(m)}_{f,rW}$ is an isometry and, therefore, the equality above
implies
\begin{equation}
\label{gr} \|g(rW_1,\ldots, rW_n)\|\leq \|g(W_1,\ldots, W_n)\|\quad
\text{ for any } r\in (\gamma,1).
\end{equation}
 Hence, and due to the fact that
$g(W_1,\ldots,W_n)e_\alpha= \lim\limits_{r\to 1}g(rW_1,\ldots,
rW_n)e_\alpha$ for any  $\alpha\in \FF_n^+$, an approximation
argument  implies relation \eqref{c0sot}.

According to Lemma \ref{gKKg}, we have
\begin{equation}
\label{gKg}
 g(rT_1,\ldots, rT_n) {K_{f,T}^{(m)}}^*
={K_{f,T}^{(m)}}^*(g(rW_1,\ldots, rW_n )\otimes I_\cH) \qquad \text{
for any } \ r\in (\delta,1).
\end{equation}
On the other hand, since the map $Y\mapsto Y\otimes I_\cH$ is
SOT-continuous on bounded sets, relations \eqref{c0sot} and
\eqref{gr} imply that
\begin{equation}
\label{SOT-lim}
\text{\rm SOT-}\lim_{r\to 1}[g(rW_1,\ldots, rW_n )\otimes
I_\cH]=g(W_1,\ldots, W_n )\otimes I_\cH.
\end{equation}
 Hence,  using relation
\eqref{gKg} and that $K_{f,T}^{(m)}$ is an isometry, we deduce that
\begin{equation}
\label{gKKg2} \text{\rm SOT-}\lim_{r\to 1}g(rT_1,\ldots,
rT_n)={K_{f,T}^{(m)}}^*[g(W_1,\ldots, W_n)\otimes
I_\cH]K_{f,T}^{(m)}=\widetilde{\bf B}_T[g].
\end{equation}
This completes the proof.
\end{proof}

We need now  the following technical result concerning the Berezin transform and the radial
property.

\begin{lemma}\label{BB} If $T:=(T_1,\ldots, T_n)\in {\bf
 D}^m_f(\cH)$   and the universal model $(W_1,\ldots, W_n)$ have  the radial property,
   then  there exists  $\delta\in(0,1)$ such that the
 noncommutative
  Berezin
 kernel satisfies the
  relation
\begin{equation}\label{Kfrt}
 {K_{f,rT}^{(m)}}^* (g(W_1,\ldots, W_n)\otimes I_\cH)=g(rT_1,\ldots, rT_n)
  {K_{f,rT}^{(m)}}^*
 \end{equation}
 for any $g(W_1,\ldots, W_n) \in F_n^\infty({\bf D}^{m}_f)$ and $r\in(\delta,1)$.

 If, in addition,  $T:=(T_1,\ldots, T_n)\in {\bf
 D}^m_f(\cH)$ is a pure
 $n$-tuple of operators, then
$$
{\bf B}_{rT}[g]=\widetilde{\bf B}_T[g_r],\quad   r\in (\delta, 1),
$$
 where $g_r(W_1,\ldots,
W_n):=g(rW_1,\ldots, rW_n)$.
\end{lemma}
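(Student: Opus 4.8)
The plan is to first establish the intertwining identity \eqref{Kfrt}, by transplanting to the $n$-tuple $rT$ the computation already carried out for the universal model in the proofs of Theorem \ref{funct-calc} and Lemma \ref{gKKg}, and then to read off the Berezin transform identity as a short consequence of \eqref{Kfrt}, of relation \eqref{frf}, and of the fact that $T$ and $rT$ are both pure.

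To prove \eqref{Kfrt}, note first that, since $T$ has the radial property, there is $\delta\in(0,1)$ with $rT:=(rT_1,\dots,rT_n)\in{\bf D}_f^m(\cH)$ for all $r\in(\delta,1)$; hence the Berezin kernel $K_{f,rT}^{(m)}$ is a well-defined contraction, and Lemma \ref{Berezin-lemma}(ii), applied to $rT$, yields $K_{f,rT}^{(m)}(rT_i)^*=(W_i^*\otimes I_\cH)K_{f,rT}^{(m)}$, $i=1,\dots,n$. Taking adjoints and iterating over words gives
$$
r^{|\beta|}T_\beta\,{K_{f,rT}^{(m)}}^*={K_{f,rT}^{(m)}}^*(W_\beta\otimes I_\cH),\qquad \beta\in\FF_n^+,
$$
so \eqref{Kfrt} holds with $g$ replaced by any polynomial $\sum_{|\beta|\le N}c_\beta W_\beta$ in the $W_i$'s, the corresponding operator on $\cH$ being $\sum_{|\beta|\le N}c_\beta r^{|\beta|}T_\beta$. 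For a general $g=\sum_\beta c_\beta W_\beta\in F_n^\infty({\bf D}_f^m)$ I would pass to the limit through the Ces\`{a}ro polynomials $g_k(W):=\Sigma_k(g(W))=\sum_{|\beta|\le k-1}(1-|\beta|/k)c_\beta W_\beta$ of the discussion preceding Proposition \ref{density-pol}, which satisfy $\|g_k(W)\|\le\|g(W)\|$ and $g_k(W)\to g(W)$ strongly. Applying the polynomial case to each $g_k$ and letting $k\to\infty$: on the left-hand side, since $Y\mapsto Y\otimes I_\cH$ is SOT-continuous on bounded sets and ${K_{f,rT}^{(m)}}^*$ is bounded, ${K_{f,rT}^{(m)}}^*(g_k(W)\otimes I_\cH)$ converges strongly to ${K_{f,rT}^{(m)}}^*(g(W)\otimes I_\cH)$; on the right-hand side, $g_k(rT_1,\dots,rT_n)=\sum_{|\beta|\le k-1}(1-|\beta|/k)c_\beta r^{|\beta|}T_\beta$ is the $k$-th Fej\'{e}r mean of the series $\sum_{j\ge0}\sum_{|\beta|=j}c_\beta r^{|\beta|}T_\beta$, which converges in operator norm to $g(rT_1,\dots,rT_n)$ by Lemma \ref{gKKg}, so $g_k(rT_1,\dots,rT_n)\,{K_{f,rT}^{(m)}}^*$ converges in norm to $g(rT_1,\dots,rT_n)\,{K_{f,rT}^{(m)}}^*$. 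This yields \eqref{Kfrt}.

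For the second assertion, recall from the proof of Theorem \ref{Poisson-C*} that $\Phi_{f,rT}^k(I)\le r^kI$ for all $k$, whence $rT$ is a pure $n$-tuple and $r_f(rT_1,\dots,rT_n)<1$; consequently ${\bf B}_{rT}=\widetilde{\bf B}_{rT}$ by Proposition \ref{Berezin2}, and $K_{f,rT}^{(m)}$ is an isometry by Lemma \ref{Berezin-lemma}(i). Using \eqref{def-Be2} and then \eqref{Kfrt},
$$
{\bf B}_{rT}[g]={K_{f,rT}^{(m)}}^*(g(W_1,\dots,W_n)\otimes I_\cH)K_{f,rT}^{(m)}=g(rT_1,\dots,rT_n)\,{K_{f,rT}^{(m)}}^*K_{f,rT}^{(m)}=g(rT_1,\dots,rT_n).
$$
On the other hand $g_r(W_1,\dots,W_n)=g(rW_1,\dots,rW_n)$ lies in $\cA_n({\bf D}_f^m)$ by \eqref{limm1}, so \eqref{def-Be2}, relation \eqref{frf} of Lemma \ref{gKKg} (applied to $T$, which has the radial property), and Lemma \ref{Berezin-lemma}(i) give
$$
\widetilde{\bf B}_T[g_r]={K_{f,T}^{(m)}}^*(g(rW_1,\dots,rW_n)\otimes I_\cH)K_{f,T}^{(m)}=g(rT_1,\dots,rT_n)\,{K_{f,T}^{(m)}}^*K_{f,T}^{(m)}=g(rT_1,\dots,rT_n),
$$
since ${K_{f,T}^{(m)}}^*K_{f,T}^{(m)}=I-Q_{f,T}=I$ when $T$ is pure. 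Comparing the two displays gives ${\bf B}_{rT}[g]=\widetilde{\bf B}_T[g_r]$.

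The step I expect to be the main obstacle is the passage from polynomials to an arbitrary $g\in F_n^\infty({\bf D}_f^m)$ in the proof of \eqref{Kfrt}: one has to reconcile norm convergence of the homogeneous expansion in the $T$-variables (which is exactly where the growth estimates underlying Lemma \ref{gKKg} and Theorem \ref{Poisson-C*} enter) with merely strong-operator convergence of the Ces\`{a}ro means in the $W$-variables, and to check that both limits are compatible with pre- and post-composition by the bounded kernel ${K_{f,rT}^{(m)}}^*$. Everything after \eqref{Kfrt} is routine bookkeeping with the isometry property of the Berezin kernels.
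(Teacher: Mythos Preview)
Your argument is correct; the difference from the paper lies in the approximation device used to pass from polynomials to a general $g\in F_n^\infty({\bf D}^m_f)$ in the proof of \eqref{Kfrt}. The paper first proves the intertwining for the radial truncations $g_t(W_1,\ldots,W_n):=g(tW_1,\ldots,tW_n)$, $t\in(0,1)$, which lie in $\cA_n({\bf D}^m_f)$ by \eqref{limm1}, and then lets $t\to 1$: on the $W$--side this uses relation \eqref{SOT-lim} (equivalently, \eqref{c0sot} and the bound \eqref{gr}), and on the $T$--side it proves the separate norm limit \eqref{lim-t}. Your choice of the Ces\`aro polynomials $\Sigma_k(g)$ from the discussion preceding Proposition~\ref{density-pol} replaces this two-step $t\to 1$ argument by a single $k\to\infty$ limit: the SOT-convergence $\Sigma_k(g)\to g$ with $\|\Sigma_k(g)\|\le\|g\|$ handles the left-hand side, and the elementary fact that Ces\`aro means of a norm-convergent series converge to the same limit, combined with the norm convergence of $g(rT_1,\ldots,rT_n)$ provided by Lemma~\ref{gKKg}, handles the right-hand side. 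A small bonus of your route is that the contractivity and SOT-convergence of $\Sigma_k$ hold for any $A\in B(F^2(H_n))$ and hence do not invoke the radial property of $(W_1,\ldots,W_n)$; in the paper that hypothesis enters precisely through \eqref{gr} and \eqref{c0sot}. For the second assertion your computation is exactly what the paper's one-line reference to Proposition~\ref{Berezin2}, \eqref{def-Be2}, \eqref{frf}, and \eqref{Kfrt} unpacks to: both sides reduce to $g(rT_1,\ldots,rT_n)$ once one uses that $K_{f,rT}^{(m)}$ and $K_{f,T}^{(m)}$ are isometries (purity of $rT$ and of $T$, respectively).
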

\begin{proof}

First, notice that Lemma \ref{Berezin-lemma} implies
 \begin{equation}\label{eq-ker2}
 {K^{(m)}_{f,rT}}^*[p(W_1,\ldots, W_n)\otimes I_\cH]=p(rT_1,\ldots, rT_n)
  {K^{(m)}_{f,rT}}^*
 \end{equation}
for any polynomial $p(W_1,\ldots, W_n)$ and $r\in (\delta,1)$.
  Since $rT:=(rT_1,\ldots, rT_n)\in {\bf D}^{(m)}_f(\cH)$, relation \eqref{limm} and
 Theorem \ref{Poisson-C*}   imply
  $$
 \lim_{k\to \infty} \sum_{|\alpha|\leq k}
 t^{|\alpha|} r^{|\alpha|} c_\alpha T_\alpha
   =g_t(rT_1,\ldots, rT_n) \quad \text{ for any } t\in [0,1), r\in (\delta, 1),
 $$
 where the convergence is in the operator norm topology.
  Using  relation \eqref{eq-ker2}, when
 $p(W_1,\ldots, W_n):=
 \sum\limits_{q=0}^k \sum\limits_{|\alpha|=q}t^{|\alpha|} c_\alpha
 W_\alpha$,
 and taking the limit as
 $k\to \infty$, we get
 \begin{equation}
 \label{eq-ker3}
 {K^{(m)}_{f,rT}}^* [g_t(W_1,\ldots, W_n)\otimes I_\cH]=g_t(rT_1,\ldots, rT_n)
 {K^{(m)}_{f,rT}}^*\quad \text{ for } \ r\in (\delta, 1).
 \end{equation}
 On the other hand, let us prove  that
\begin{equation}\label{lim-t}
 \lim_{t\to 1} g_t(rT_1,\ldots, rT_n)=g(rT_1,\ldots, rT_n),
 \end{equation}
 where the convergence is in the operator norm topology.
  Notice that, if  $\epsilon>0$,  there is   $m_0\in \NN$
 such that $\sum\limits_{k=m_0}^\infty r^k \left(\begin{matrix} k+m-1\\m-1
\end{matrix}\right)  <\frac {\epsilon} {4 M}$, where $M:=\|g(W_1,\ldots,
  W_n)(1)\|$.
  Since $(T_1,\ldots, T_n)\in {\bf D}^{(m)}_f(\cH)$,  Theorem \ref{Poisson-C*}
  and relation \eqref{ine-k} imply
  $$\left\| \sum_{|\beta|=k} b_\alpha T_\beta T_\beta^*\right\|\leq
  \left\| \sum_{|\beta|=k} b_\alpha W_\beta W_\beta^*\right\|\leq
  \left(\begin{matrix}
  k+m-1\\m-1
\end{matrix}\right).
  $$
  Now, we can deduce that
\begin{equation*}
 \begin{split}
 \sum_{k=m_0}^\infty r^k \left\|\sum_{|\beta|=k} c_\beta T_\beta\right\|
 &\leq
 \sum_{k=m_0}^\infty r^k \left(\sum_{|\beta|=k} |c_\alpha|^2\frac{1}
 {b_\beta}\right)^{1/2}\left\|\sum_{|\beta|=k} b_\beta T_\beta
 T_\beta^*\right\|^{1/2}  \\
 &\leq
  M \sum_{k=m_0}^\infty r^k \left(\begin{matrix} k+m-1\\m-1
\end{matrix}\right)
  <\frac {\epsilon} {4}.
 \end{split}
 \end{equation*}
 Consequently,   there exists $0< d<1$ such that
 \begin{equation*}
 \begin{split}
 \left\|\sum_{k=0}^\infty \sum_{|\alpha|=k}
  t^{|\alpha|} r^{|\alpha|} c_\alpha T_\alpha \right.&-\left.
  \sum_{k=0}^\infty \sum_{|\alpha|=k}
    r^{|\alpha|} c_\alpha T_\alpha\right\|\\
    &\leq  \frac{\epsilon}{2} +
    \left\| \sum_{k=1}^{m_0-1} r^k(t^k-1)\sum_{|\beta|
    =k}c_\beta T_\beta\right\|\\
    &\leq  \frac{\epsilon}{2} +M \sum_{k=1}^{m_0-1} r^k(t^k-1) \left(\begin{matrix} k+m-1\\m-1
\end{matrix}\right)<  \epsilon.
\end{split}
 \end{equation*}
   for any  $ t\in (d, 1)$.
  Hence, we deduce   \eqref{lim-t}. Using relations
   \eqref{SOT-lim}, \eqref{lim-t}, and  taking  the limit in
 \eqref{eq-ker3}, as $t\to 1$,  we obtain   \eqref{Kfrt}.
 Now, assume that $T$ is a pure $n$-tuple.
Based on  Proposition \ref{Berezin2} and  relations \eqref{def-Be2}, \eqref{frf},
 and \eqref{Kfrt}, we deduce that ${\bf
B}_{rT}[g]={\bf B}_T[g_r]$ for $r\in (\delta, 1)$.
The proof is complete.
\end{proof}

Using Theorem \ref{funct-calc} and Lemma \ref{BB}, we can deduce  the following
Fatou type result.

\begin{corollary} Let $T:=(T_1,\ldots, T_n)\in {\bf
 D}^m_f(\cH)$ be a pure $n$-tuple of operators and assume that $(W_1,\ldots, W_n)$
 has the radial property.
 Then
 $$
 \text{\rm SOT-}\lim_{r\to 1} {\bf B}_{rT}[g]=\widetilde{\bf B}_T[g]
 \quad \text{ for any } \ g\in F_n^\infty({\bf D}_f^m).
 $$
\end{corollary}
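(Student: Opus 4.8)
The plan is to deduce the corollary by combining the pure-case identity of Lemma~\ref{BB} with the strong continuity of the noncommutative Berezin transform established in Theorem~\ref{funct-calc}. The underlying point is that, as $r\to 1$, the operators $g_r:=g(rW_1,\ldots,rW_n)$ converge to $g(W_1,\ldots,W_n)$ in the strong operator topology while remaining uniformly bounded, so that applying $\widetilde{\bf B}_T$ commutes with this limiting process.

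Concretely, I would fix $g=\sum_{\beta\in\FF_n^+}c_\beta W_\beta$ in $F_n^\infty({\bf D}_f^m)$ and argue as follows. Since $T$ is pure and $(W_1,\ldots,W_n)$ has the radial property, Lemma~\ref{BB} supplies $\delta\in(0,1)$ with
\begin{equation*}
{\bf B}_{rT}[g]=\widetilde{\bf B}_T[g_r],\qquad r\in(\delta,1),
\end{equation*}
where $g_r:=g(rW_1,\ldots,rW_n)$; by Lemma~\ref{gKKg} each $g_r$ belongs to $\cA_n({\bf D}_f^m)\subseteq F_n^\infty({\bf D}_f^m)$. Next, since the universal model has the radial property, relations \eqref{gr} and \eqref{c0sot} from the proof of Theorem~\ref{funct-calc} give $\|g_r\|\le\|g(W_1,\ldots,W_n)\|$ for $r$ close to $1$ and $g_r\to g(W_1,\ldots,W_n)$ in the strong operator topology as $r\to1$; in particular, along any sequence $r_k\uparrow1$ the $g_{r_k}$ form a uniformly bounded sequence converging SOT to $g$. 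Finally, because $T$ is pure, Theorem~\ref{funct-calc}\,(i) asserts that $\Psi_T=\widetilde{\bf B}_T|_{F_n^\infty({\bf D}_f^m)}$ is SOT-continuous on bounded sets, so $\widetilde{\bf B}_T[g_{r_k}]\to\widetilde{\bf B}_T[g]$ in SOT. Since the sequence $r_k\uparrow1$ was arbitrary, the displayed identity yields $\text{\rm SOT-}\lim_{r\to1}{\bf B}_{rT}[g]=\widetilde{\bf B}_T[g]$.

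No genuine obstacle remains once Lemma~\ref{BB} and Theorem~\ref{funct-calc} are in place: the argument is purely an assembly, with all analytic content (the radial limits, the von Neumann-type bound, the isometry property of the Berezin kernel for pure tuples, and the intertwining of Lemma~\ref{BB}) already carried out. The one spot deserving attention is that the strong continuity of $\widetilde{\bf B}_T$ is only asserted on \emph{bounded} subsets of $F_n^\infty({\bf D}_f^m)$, so the bound $\|g_r\|\le\|g(W_1,\ldots,W_n)\|$ --- where the radial property of $(W_1,\ldots,W_n)$ is genuinely used --- must be invoked before passing to the limit. Alternatively, one may bypass the continuity statement entirely: by Lemma~\ref{gKKg} together with the isometry property of the Berezin kernel for pure $T$ and the definition \eqref{def-Be2}, one has $\widetilde{\bf B}_T[g_r]=g(rT_1,\ldots,rT_n)$, and then the last assertion of Theorem~\ref{funct-calc} gives directly $\text{\rm SOT-}\lim_{r\to1}g(rT_1,\ldots,rT_n)=\widetilde{\bf B}_T[g]$.
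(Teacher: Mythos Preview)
Your overall strategy---combine Lemma~\ref{BB} with Theorem~\ref{funct-calc}---is exactly the paper's, but you skip a verification that the paper carries out explicitly. Lemma~\ref{BB} (and likewise Lemma~\ref{gKKg}) is stated under the hypothesis that \emph{both} $T$ and the universal model $(W_1,\ldots,W_n)$ have the radial property, whereas the corollary only assumes the latter together with purity of $T$. So before you can invoke Lemma~\ref{BB} you must check that $T$ inherits the radial property. The paper does this by observing that, since $T$ is pure, the Berezin kernel $K_{f,T}^{(m)}$ is an isometry, and then the intertwining relation $K_{f,T}^{(m)}T_i^*=(W_i^*\otimes I)K_{f,T}^{(m)}$ yields
\[
(id-\Phi_{f,rT})^m(I)\;=\;{K_{f,T}^{(m)}}^*\bigl[(id-\Phi_{f,rW})^m(I)\otimes I_\cH\bigr]K_{f,T}^{(m)}\;\geq\;0
\]
for $r$ close to $1$, because the bracketed operator is positive by the radial property of $W$. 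Once this is noted, your argument (in either of the two forms you outline) goes through verbatim and coincides with the paper's.

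Your ``alternative'' route does not escape this issue: it still appeals to Lemma~\ref{gKKg}, which carries the same radial hypothesis on $T$. The fix is the one-line computation above; after inserting it, your proof is complete and matches the paper's.
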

\begin{proof}
Recall that $\Phi_{f,T}(X):=\sum_{|\alpha|\geq 1} a_\alpha T_\alpha X T_\alpha^*$,
where the series is WOT-convergent.
Since the sequence
 $\sum_{1\leq |\alpha|\leq k} a_\alpha r^{2|\alpha|} W_\alpha W_\alpha^*$
is bounded and SOT-convergent to $\Phi_{f,rW}(I)$, as $k\to\infty$,
the proof of theorem \ref{purecase} implies
$$
\Phi_{f,rT}(I)={K_{f,T}^{(m)}}^*[\Phi_{f,rW}(I)\otimes I_\cH]K_{f,T}^{(m)}
$$
and, consequently,
$$
(I-\Phi_{f,rT})^m(I)=
{K_{f,T}^{(m)}}^*[(I-\Phi_{f,rW})^m(I)\otimes I_\cH]K_{f,T}^{(m)}.
$$
Since $(W_1,\ldots, W_n)$ has the radial property, so does $(T_1,\ldots, T_n)$.
Using  now Theorem \ref{funct-calc} and Lemma \ref{BB}, we can complete the proof.
\end{proof}

An $n$-tuple $T:=(T_1,\ldots, T_n)\in {\bf
 D}^m_f(\cH)$ is called completely non-coisometric (c.n.c) with respect
to the noncommutative domain ${\bf
 D}^m_f(\cH)$ if there is no vector $h\in \cH$, $h\neq 0$ such that
 $\left< \Phi_{f,T}^k(I)h,h\right>=\|h\|^2$ for any $k=1,2,\ldots$.
 Due to  relation  \eqref{K*K}, we
 $$
 \|K_{f,T}^{(m)}h\|^2=\|h\|^2-\|Q_{f,T}^{1/2} h\|^2,\quad h\in \cH,
 $$
 where $Q_{f,T}:=\text{\rm SOT-}\lim_{k\to\infty} \Phi_{f,T}^k(I)$.
Notice that $T$ is c.n.c. if and only if the noncommutative Berezin  kernel
$K_{f,T}^{(m)}$ is one-to-one.

Now, we can present an $F_n^\infty({\bf D}_f^m)$-functional calculus for c.n.c.
 $n$-tuples of operators
in the noncommutative domain ${\bf D}^m_f(\cH)$.

\begin{theorem} \label{funct-calc2}
Let ${\bf D}_f^m$ be a noncommutative domain such that the universal model
 $(W_1,\ldots, W_n)$ has the radial property.
  If $T:=(T_1,\ldots, T_n)\in {\bf
 D}^m_f(\cH)$ is a completely
non-coisometric $n$-tuple
   of operators  with the radial property,
 then
 $$\Phi(g):=\text{\rm SOT-}\lim_{r\to 1} g_r(T_1,\ldots, T_n), \qquad
  g=g(W_1,\ldots,
 W_n)\in F_n^\infty({\bf D}_f^m),
 $$
 exists in the strong operator topology   and defines a map
 $\Phi:F_n^\infty({\bf D}_f^m)\to B(\cH)$ with the following
 properties:
\begin{enumerate}
\item[(i)]
$\Phi(g)=\text{\rm SOT-}\lim\limits_{r\to 1}{\bf B}_{rT}[g]$, where ${\bf
B}_{rT}$ is the noncommutative  Berezin transform  at  $rT\in {\bf D}_f^m(\cH)$;
\item[(ii)] $\Phi$ is    WOT-continuous (resp.
SOT-continuous)  on bounded sets;
\item[(iii)]
$\Phi$ is a unital completely contractive homomorphism.
\end{enumerate}
 \end{theorem}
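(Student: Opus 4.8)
My plan is to reduce the c.n.c.\ case to the pure case of Theorem \ref{funct-calc} by transporting the radial limit, through the Berezin kernel of $T$, onto a fixed co\nobreakdash-invariant compression of the universal model. Write $g=\sum_{\alpha\in\FF_n^+}c_\alpha W_\alpha\in F_n^\infty({\bf D}_f^m)$ and, for $0\le r<1$, $g_r:=\sum_{k\ge 0}\sum_{|\alpha|=k}c_\alpha r^{|\alpha|}W_\alpha$ (and likewise $g_r(T)$, $g_r(A)$ below). As observed in the proof of Theorem \ref{Poisson-C*}, for $r\in(\delta,1)$ the tuple $rT$ is a \emph{pure} element of ${\bf D}_f^m(\cH)$ with the radial property and with $r_f(rT)\le r<1$; hence by Lemma \ref{gKKg} (applied to $rT$) the series $g_r(T)=\sum_k\sum_{|\alpha|=k}c_\alpha r^{|\alpha|}T_\alpha$ converges in the operator norm, and by Theorem \ref{funct-calc} applied to the pure tuple $rT$, together with Proposition \ref{Berezin2} and relation \eqref{lim-t}, one obtains
$$
{\bf B}_{rT}[g]=\widetilde{\bf B}_{rT}[g]=\Psi_{rT}(g)=g_r(T),
$$
where $\Psi_{rT}$ is a unital completely contractive homomorphism of $F_n^\infty({\bf D}_f^m)$ into $B(\cH)$; in particular $\|g_r(T)\|\le\|g\|$ for all $r\in(\delta,1)$. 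This already yields item (i) once the limit is shown to exist, and it will give items (ii)--(iii) once convergence is in hand.

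For the convergence I would invoke the noncommutative Berezin kernel $K:=K_{f,T}^{(m)}\colon\cH\to F^2(H_n)\otimes\cD$, $\cD:=\overline{\Delta_{f,m,T}\cH}$, which is \emph{injective} precisely because $T$ is c.n.c. Set $\cM:=\overline{\ran K}$. By Lemma \ref{Berezin-lemma}(ii) we have $KT_i^*=(W_i^*\otimes I_\cD)K$, so $\cM$ is invariant under each $W_i^*\otimes I_\cD$; equivalently $\cM^\perp$ is invariant under $W_i\otimes I_\cD$, hence---via Proposition \ref{density-pol} and the $\mathrm{SOT}$-closedness of $F_n^\infty({\bf D}_f^m)$---under $g(W)\otimes I_\cD$ for every $g\in F_n^\infty({\bf D}_f^m)$. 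Put $A_i:=P_\cM(W_i\otimes I_\cD)|_\cM$, so that $A_i^*=(W_i^*\otimes I_\cD)|_\cM$ and (using co-invariance) $A_\alpha=P_\cM(W_\alpha\otimes I_\cD)|_\cM$ for $\alpha\in\FF_n^+$; the invariance of $\cM^\perp$ makes $g\mapsto P_\cM(g(W)\otimes I_\cD)|_\cM$ multiplicative. Since $W$ has the radial property, the proof of Theorem \ref{funct-calc} gives $g_r(W)\to g(W)$ in $\mathrm{SOT}$ with $\|g_r(W)\|\le\|g(W)\|$ for $r$ near $1$; therefore the norm-convergent series $g_r(A)=\sum_k\sum_{|\alpha|=k}c_\alpha r^{|\alpha|}A_\alpha=P_\cM(g_r(W)\otimes I_\cD)|_\cM$ satisfies $g_r(A)\to P_\cM(g(W)\otimes I_\cD)|_\cM$ in $\mathrm{SOT}$ as $r\to1$.

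Now $KT_i^*=A_i^*K$ forces $T_\alpha K^*=K^*A_\alpha$ for all $\alpha\in\FF_n^+$, whence $g_r(T)K^*=K^*g_r(A)$ for $r\in(\delta,1)$ by norm-convergence of the defining series. Letting $r\to1$ and using the previous paragraph, $g_r(T)K^*\to K^*P_\cM(g(W)\otimes I_\cD)|_\cM$ in $\mathrm{SOT}$; thus $g_r(T)\xi$ converges for every $\xi\in\ran K^*$, a subspace which is dense in $\cH$ because $K$ is injective. Together with the uniform bound $\|g_r(T)\|\le\|g\|$, a routine $3\varepsilon$-argument shows that $g_r(T)h$ converges for every $h\in\cH$; this defines $\Phi(g):=\mathrm{SOT}\text{-}\lim_{r\to1}g_r(T)$, and it satisfies $\Phi(g)K^*=K^*P_\cM(g(W)\otimes I_\cD)|_\cM$. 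Item (i) is the first step. For item (iii): $\Phi(1)=I$, $\Phi$ is a pointwise-$\mathrm{SOT}$ limit of the unital completely contractive homomorphisms $\Psi_{rT}$ so it is unital and completely contractive, and $\Phi(gh)=\mathrm{SOT}\text{-}\lim_r g_r(T)h_r(T)=\Phi(g)\Phi(h)$ because products of uniformly bounded, strongly convergent nets converge strongly to the product. For item (ii): if $g_\nu\to g$ boundedly in the $\mathrm{SOT}$ (resp.\ $\mathrm{WOT}$) of $F_n^\infty({\bf D}_f^m)$, then $P_\cM(g_\nu(W)\otimes I_\cD)|_\cM\to P_\cM(g(W)\otimes I_\cD)|_\cM$ in the same topology, so $\Phi(g_\nu)K^*\to\Phi(g)K^*$, and the density of $\ran K^*$ together with $\|\Phi(g_\nu)\|\le\sup_\nu\|g_\nu\|$ gives $\Phi(g_\nu)\to\Phi(g)$ in that topology.

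The only genuinely delicate point is the passage $r\to1$ in the third paragraph. One cannot take the limit directly in the Berezin-transform formula $g_r(T)=K_{f,rT}^{(m)*}(g(W)\otimes I)K_{f,rT}^{(m)}$, since the kernels $K_{f,rT}^{(m)}$ are isometries for every $r<1$ whereas $K_{f,T}^{(m)}$ is not when $Q_{f,T}\ne0$, so they do not converge in norm; replacing $K_{f,rT}^{(m)}$ by the fixed injective kernel $K_{f,T}^{(m)}$ and the fixed co-invariant compression $(A_1,\dots,A_n)$ of the universal model is precisely what makes the limit manageable, and it is here that the c.n.c.\ hypothesis (injectivity of $K_{f,T}^{(m)}$) enters decisively.
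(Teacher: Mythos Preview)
Your proof is correct and follows essentially the same line as the paper's: both use the fixed Berezin kernel $K=K_{f,T}^{(m)}$ and the intertwining relation from Lemma \ref{gKKg} to transport the SOT-convergence $g_r(W)\to g(W)$ (established in the proof of Theorem \ref{funct-calc}) to convergence of $g_r(T)$ on the dense subspace $\ran K^*$, then extend via the uniform bound $\|g_r(T)\|\le\|g\|$. Your explicit introduction of the compression $A_i=P_\cM(W_i\otimes I)|_\cM$ and your derivation of the homomorphism property as an SOT-limit of the homomorphisms $\Psi_{rT}$ (rather than via WOT-density of polynomials, as the paper does) are cosmetic variations; note also that the norm-convergence of $g_r(T)$ comes from Lemma \ref{gKKg} applied to $T$ itself (at parameter $r$), not to $rT$.
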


\begin{proof} Let $\delta\in (0,1)$ be such that $(rT_1,\ldots, rT_n)\in {\bf
 D}^m_f(\cH)$ and $(rW_1,\ldots, rW_n)\in {\bf
 D}^m_f(F^2(H_n))$ for any $r\in (\delta, 1)$.
 Due to  \eqref{gr} and
taking the limit in relation \eqref{frf}, as $r\to 1$ ,  we deduce that the map $G:
\text{\rm range} \,{K_{f,T}^{(m)}}^*\to \cH$ given by
$Gy:=\lim\limits_{r\to 1}g_r(T_1,\ldots,T_n)y$, $y\in \text{\rm
range} \,{K_{f,T}^{(m)}}^*$,
 is well-defined, linear, and
$$
\|G{K_{f,T}^{(m)}}^*\varphi\|\leq \limsup_{r\to 1} \|g_r(W_1,\ldots,
W_n)\|\|{K_{f, T}^{(m)}}^*\varphi\|\leq \|g(W_1,\ldots, W_n)\|\|{K_{f,
T}^{(m)}}^*\varphi\|
$$
   for any $\varphi\in
 F^2(H_n)\otimes \cH$.

Now, assume that $T=(T_1,\ldots, T_n)\in \cD_f(\cH)$ is c.n.c..
Since  the Berezin  kernel $K_{f,T}^{(m)}$ is
one-to-one, its  range  is dense in $\cH$.
Consequently, the map $G$ has a unique extension to a bounded linear
operator on $\cH$, denoted also by $G$, with  $\|G\|\leq
\|g(W_1,\ldots, W_n)\|$.
Let us show that
\begin{equation}\label{A}
\lim_{r\to 1} g_r(T_1,\ldots, T_n)h =Gh\quad \text{ for any }\ h\in
\cH.
\end{equation}
Let $\{y_k\}_{k=1}^\infty$  be a sequence of vectors
in the range of $K_{f,T}^*$, which converges to $y$.
According to Theorem \ref{Poisson-C*} and relations \eqref{limm}, \eqref{gr},  we have
$$
\|g_r(T_1,\ldots, T_n)\|\leq\|g_r(W_1,\ldots, W_n)\|\leq
\|g(W_1,\ldots, W_n)\|
$$
for any $r\in(\delta, 1)$.
Let $\{y_k\}_{k=1}^\infty$  be a sequence of vectors
in the range of ${K_{f,T}^{(m)}}^*$, which converges to $y$, and notice that
\begin{equation*}
\begin{split}
\|Gh-g_r(T_1,\ldots, T_n)h\|&\leq \|Gh-G
y_k\|+\|Gy_k-g_r(T_1,\ldots, T_n)y_k\|\\
&\qquad \qquad  +\|g_r(T_1,\ldots,
T_n)y_k-g_r(T_1,\ldots, T_n)h\|\\
&\leq 2\|g(W_1,\ldots, W_n)\| \|h-y_k\|+\|Gy_k-g_r(T_1,\ldots,
T_n)y_k\|.
\end{split}
\end{equation*}
Since $\lim\limits_{r\to 1} g_r(T_1,\ldots, T_n)y_k =Gy_k$, relation \eqref{A}
 follows.
 Due to Lemma \ref{BB},   we have
\begin{equation}
\label{anot} g_r(T_1,\ldots, T_n)={K_{f,rT}^{(m)}}^* [g(W_1,\ldots,
W_n)\otimes I_\cH]K_{f,rT}^{(m)},
\end{equation}
which together with \eqref{A} imply part (i) of the theorem.

Now let us prove part (ii). Due to relation \eqref{anot}, we have
$\|g_r(T_1,\ldots, T_n)\|\leq \|g(W_1,\ldots, W_n)\|$ and, therefore,
$\|\Phi(g)\|\leq \|g\|$ for  $g\in F_n^\infty({\bf D}_f^{m})$.
Taking $r\to 1$ in  relation \eqref{frf} of Lemma \ref{gKKg} and using part (i), we obtain
\begin{equation}
\label{Phi-Kf}
\Phi(g){K_{f,T}^{(m)}}^*={K_{f,T}^{(m)}}^*(g\otimes I),\quad
g\in F_n^\infty({\bf D}_f^{m}).
\end{equation}
Let $\{g_i\}$
be a bounded net in $F_n^\infty({\bf D}_f^{m})$ such that
 $g_i\to g\in F_n^\infty({\bf D}_f^{m})$ in the weak (resp. strong)
 operator topology. Then $g_i\otimes I$ converges to $ g\otimes I$ in the same topologies.
 By \eqref{Phi-Kf}, we have
  $\Phi(g_i){K_{f,T}^{(m)}}^*={K_{f,T}^{(m)}}^*(g_i\otimes I)$.
Since the range of ${K_{f,T}^{(m)}}^*$ is dense in $\cH$ and
 $\{\Phi(g_i)\}$ is bounded, an approximation argument shows that
 $\Phi(g_i)\to \Phi(g)$ in the weak (resp. strong)
 operator topology.

 To prove (iii), note that \eqref{anot} and the fact that  $K_{f,rT}$
 is an isometry for $r\in(\delta, 1)$ imply
$$
\|[g_{ij}(rT_1,\ldots, rT_n)]_k\|\leq \|[g_{ij}]_k\|
$$
for any operator-valued matrix $[g_{ij}]_k\in M_k(F_n^\infty({\bf D}_f^{m}))$
and $r\in (\delta, 1)$.
Hence, and due to the fact that
 $\Phi(g_{ij})=\text{\rm SOT-}\lim_{r\to 1} g_{ij}(rT_1,\ldots, rT_n)$,
we deduce that $\Phi$ is completely contractive map.
On the other hand, due to Theorem \ref{Poisson-C*}, $\Phi$ is a homomorphism
 on polynomials in $W_1,\ldots, W_n$ and the identity.
 Since these polynomials are sequentially WOT-dense in
 $F_n^\infty({\bf D}_f^{m})$ (see Proposition \ref{density-pol})
 and $\Phi$ is WOT-continuous  on bounded sets, we deduce part (iii).
  The proof is complete.
\end{proof}

Consider the particular case when the domain ${\bf D}_p^m$, $m\geq 1$, is determined by
the noncommutative polynomial $p=a_1Z_1+\cdots + a_nZ_n$, $a_i>0$. Due
to Example \ref{ex-radial}, ${\bf D}_p^m$ has the radial property. Therefore, according to
Theorem \ref{funct-calc2}, there is an $F_n^\infty({\bf D}_p^{m})$-functional
calculus for any c.n.c. $n$-tuple of operators
 in ${\bf D}_p^m(\cH)$. When $m\geq 2$, $n=1$, and $p=Z$,
 we obtain a functional calculus for Agler's $m$-hypercontractions.
On the other hand, if $m=1$, $n=1$, and $p=Z_1+\cdots + Z_n$, we obtain the
$F_n^\infty$-functional calculus for row contractions \cite{Po-funct}. Moreover, if
$m=1$, $n=1$, and $p=Z$, we obtain the
 Nagy-Foias $H^\infty$-functional calculus for c.n.c contractions. We remark that the
   $H^\infty$-functional calculus works for a larger class of contractions
   (see  \cite{SzF-book}).

\bigskip

\section{Weighted shifts, symmetric weighted Fock spaces, and  multipliers}
 \label{Symmetric}

In  this section,  we find all the eigenvectors  for $W_1^*,\ldots,
W_n^*$, where $(W_1,\ldots, W_n)$  is the  universal model
associated with the noncommutative domain ${\bf D}^m_f$. As
consequences, we
 identify the $w^*$-continuous multiplicative
linear functional on the Hardy algebra $F_n^\infty({\bf D}^m_f)$ and
find  the joint right spectrum of  $(W_1,\ldots, W_n)$. We introduce
the symmetric weighted Fock space $F_s^2({\bf D}^m_f)$ and identify
it with a  reproducing kernel Hilbert space $H^2({\bf D}_{f,\circ}^1(\CC))$.
We also show that the
algebra of all its multipliers is  reflexive. This section  plays an
important role in connecting the  results of the present paper to
analytic function theory on Reinhardt domains in $\CC^n$, as well
as, to model theory for commuting $n$-tuples of operators.

 Let $f=\sum_{|\alpha|\geq 1} a_\alpha X_\alpha$ be a
positive regular free holomorphic function on $[B(\cH)^n]$,
$\rho>0$, and define
$${\bf D}_{f,\circ}^1(\CC):=\left\{ \lambda=(\lambda_1,\ldots, \lambda_n)\in
\CC^n:\ \sum_{|\alpha|\geq 1} a_\alpha
|\lambda_\alpha|^2<1\right\}\subset {\bf D}^m_f(\CC),
$$
where  $\lambda_\alpha:=\lambda_{i_1}\cdots \lambda_{i_m}$ if
$\alpha=g_{i_1}\cdots g_{i_m}\in \FF_n^+$, and $\lambda_{g_0}$=1.

\begin{theorem}\label{eigenvectors}
Let $(W_1,\ldots, W_n)$ (resp. $(\Lambda_1,\ldots, \Lambda_n)$) be
the weighted left (resp. right)  creation operators associated with
the noncommutative domain ${\bf D}^m_f$. The eigenvectors for
$W_1^*,\ldots, W_n^*$ (resp. $\Lambda_1^*,\ldots, \Lambda_n^*$) are
precisely the vectors
$$
z_\lambda:= \left( I-\sum_{|\alpha|\geq 1} a_{\tilde \alpha}
\overline{\lambda}_\alpha \Lambda_\alpha\right)^{-m}(1)
 \in F^2(H_n)\quad \text{ for }\
\lambda=(\lambda_1,\ldots, \lambda_n)\in {\bf D}_{f,\circ}^1(\CC),
$$
where $\tilde \alpha$ denotes the reverse of $\alpha$.
 They satisfy the equations
$$
W_i^*z_\lambda=\overline{\lambda}_i z_\lambda, \quad
\Lambda_i^*z_\lambda=\overline{\lambda}_i z_\lambda \qquad \text{
for } \ i=1,\ldots,n,
$$
 and each vector $z_\lambda$ is cyclic for $R_n^\infty({\bf D}^m_f)$.

If $\lambda:=(\lambda_1,\ldots, \lambda_n)\in {\bf
D}_{f,\circ}^1(\CC)$ and $\varphi(W_1,\ldots, W_n):=\sum_{\beta\in
\FF_n^+} c_\beta W_\beta$ is in the Hardy algebra $F_n^\infty({\bf
D}^m_f)$, then $\sum_{\beta\in \FF_n^+}
|c_\beta||\lambda_\beta|<\infty$ and the map
$$\Phi_\lambda:F_n^\infty({\bf D}_f)\to \CC, \qquad
 \Phi_\lambda(\varphi(W_1,\ldots, W_n)):=\varphi(\lambda),
$$
is $w^*$-continuous and multiplicative. Moreover,
$\varphi(W_1,\ldots,
W_n)^*z_\lambda=\overline{\varphi(\lambda)}z_\lambda$ and
$$
\varphi(\lambda)=\left<\varphi(W_1,\ldots, W_n)1,
z_\lambda\right>=\left< \varphi(W_1,\ldots, W_n)u_\lambda,
u_\lambda\right>,
$$
where $u_\lambda:=\frac{z_\lambda}{\|z_\lambda\|}$.
\end{theorem}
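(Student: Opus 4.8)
The plan is to compute directly how $W_i^*$ and $\Lambda_i^*$ act on the candidate vectors $z_\lambda$. First I would use the explicit formula for $\Lambda_\alpha^*$ from \eqref{WbWb-r} and Lemma \ref{b-alpha} to recognize $z_\lambda$ as a well-defined element of $F^2(H_n)$: since $\lambda\in{\bf D}_{f,\circ}^1(\CC)$ means $\sum_{|\alpha|\geq1}a_\alpha|\lambda_\alpha|^2<1$, the operator $\sum_{|\alpha|\geq1}a_{\tilde\alpha}\overline\lambda_\alpha\Lambda_\alpha$ has norm strictly less than $1$ when applied to the vacuum (this is exactly the numerical version of the estimate \eqref{RRR}, with $\CC$ in place of the coefficient Hilbert space), so the Neumann series for $(I-\sum_{|\alpha|\geq1}a_{\tilde\alpha}\overline\lambda_\alpha\Lambda_\alpha)^{-m}(1)$ converges in $F^2(H_n)$. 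Expanding this series and using \eqref{WbWb-r} gives $z_\lambda=\sum_{\beta\in\FF_n^+}b_\beta^{(m)}\overline{\lambda}_{\tilde\beta}\,\frac{1}{\sqrt{b_\beta^{(m)}}}\,e_{\tilde\beta}=\sum_{\beta}\sqrt{b_\beta^{(m)}}\,\overline{\lambda}_{\beta}\,e_\beta$ after reindexing (using $\lambda_{\tilde\beta}=\lambda_\beta$ is false in general, so one keeps track of reverses carefully — but the coefficient of $e_\gamma$ works out to $\sqrt{b_\gamma^{(m)}}\,\overline{\lambda_\gamma}$). Then a one-line computation with the formula $W_i^* e_\alpha = \frac{\sqrt{b_\gamma^{(m)}}}{\sqrt{b_\alpha^{(m)}}}e_\gamma$ if $\alpha=g_i\gamma$ (and $0$ otherwise) yields $W_i^* z_\lambda = \overline{\lambda}_i z_\lambda$, and similarly for $\Lambda_i^*$ using \eqref{WbWb-r}. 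Conversely, to see these are the \emph{only} eigenvectors, suppose $W_i^* v = \mu_i v$ for all $i$ with $v=\sum c_\gamma e_\gamma\neq 0$; matching coefficients shows $c_{g_i\gamma}/c_\gamma$ is forced, giving $c_\gamma = c_{g_0}\sqrt{b_\gamma^{(m)}}\,\overline{\mu_\gamma}$ recursively, so $v$ is a scalar multiple of $z_\mu$ and the requirement $v\in F^2(H_n)$ forces $\mu\in{\bf D}_{f,\circ}^1(\CC)$.

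Next I would address cyclicity of $z_\lambda$ for $R_n^\infty({\bf D}^m_f)$: since $R_n^\infty({\bf D}^m_f)$ contains the weighted right creation operators $\Lambda_1,\dots,\Lambda_n$ and the identity, and since $z_\lambda$ has nonzero vacuum component $c_{g_0}=1\neq 0$, applying $\Lambda_\beta$ to $z_\lambda$ and taking linear combinations one reaches every $e_\gamma$; more precisely one shows $\overline{\text{span}}\{\Lambda_\beta z_\lambda : \beta\in\FF_n^+\}$ is all of $F^2(H_n)$ by an induction on word length, using that the "leading term" of $\Lambda_\beta z_\lambda$ is a nonzero multiple of $e_{\tilde\beta}$ plus lower-length corrections. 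For the functional calculus part: given $\varphi(W_1,\dots,W_n)=\sum_\beta c_\beta W_\beta\in F_n^\infty({\bf D}^m_f)$, the defining property $\sum_\beta|c_\beta|^2/b_\beta^{(m)}<\infty$ together with $\|z_\lambda\|^2=\sum_\beta b_\beta^{(m)}|\lambda_\beta|^2<\infty$ gives, by Cauchy--Schwarz, $\sum_\beta|c_\beta||\lambda_\beta|\leq(\sum_\beta|c_\beta|^2/b_\beta^{(m)})^{1/2}(\sum_\beta b_\beta^{(m)}|\lambda_\beta|^2)^{1/2}<\infty$, so $\varphi(\lambda):=\sum_\beta c_\beta\lambda_\beta$ is absolutely convergent. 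Then I would compute $\langle\varphi(W_1,\dots,W_n)1,z_\lambda\rangle$ directly: $\varphi(W)1=\sum_\beta c_\beta W_\beta 1=\sum_\beta c_\beta\frac{1}{\sqrt{b_\beta^{(m)}}}e_\beta$ (using $W_\beta e_{g_0}=\sqrt{b_{g_0}^{(m)}/b_\beta^{(m)}}\,e_\beta$ and $b_{g_0}^{(m)}=1$), and pairing with $z_\lambda=\sum_\gamma\sqrt{b_\gamma^{(m)}}\overline{\lambda_\gamma}e_\gamma$ gives $\sum_\beta c_\beta\lambda_\beta=\varphi(\lambda)$. Since $\varphi(W)^* z_\lambda = \overline{\varphi(\lambda)}z_\lambda$ follows by SOT-approximation of $\varphi(W)$ by polynomials (Proposition \ref{density-pol}) together with boundedness, the second expression $\varphi(\lambda)=\langle\varphi(W)u_\lambda,u_\lambda\rangle$ is immediate.

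Finally, $w^*$-continuity and multiplicativity of $\Phi_\lambda$: multiplicativity holds on polynomials trivially (it's evaluation), and extends to all of $F_n^\infty({\bf D}^m_f)$ because $\Phi_\lambda(\varphi)=\langle\varphi(W)u_\lambda,u_\lambda\rangle$ exhibits $\Phi_\lambda$ as a vector state composed with the identity representation, hence $w^*$-continuous; one then uses Proposition \ref{density-pol} (polynomials are sequentially $w^*$-dense) plus the principle of uniform boundedness, exactly as in the proof of Theorem \ref{funct-calc}(ii), to conclude $\Phi_\lambda(\varphi\psi)=\Phi_\lambda(\varphi)\Phi_\lambda(\psi)$ in general. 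The main obstacle I anticipate is purely bookkeeping: keeping the reverse operation $\alpha\mapsto\tilde\alpha$ straight throughout, since $z_\lambda$ is built from the \emph{right} creation operators $\Lambda_i$ but is required to be a simultaneous eigenvector for the \emph{left} $W_i^*$ as well, and the two families interact through \eqref{WbWb} versus \eqref{WbWb-r} with reverses appearing asymmetrically. Once the identity "coefficient of $e_\gamma$ in $z_\lambda$ equals $\sqrt{b_\gamma^{(m)}}\,\overline{\lambda_\gamma}$" is established cleanly, everything else is routine.
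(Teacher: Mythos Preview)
Your proposal is correct and follows essentially the same route as the paper: expand $z_\lambda$ via Lemma \ref{b-alpha} and \eqref{WbWb-r} to obtain the explicit coefficient formula $z_\lambda=\sum_\gamma\sqrt{b_\gamma^{(m)}}\,\overline{\lambda_\gamma}\,e_\gamma$, verify the eigenvector equations directly from \eqref{WbWb}, run the converse by matching coefficients, and use Cauchy--Schwarz for the absolute convergence of $\sum_\beta c_\beta\lambda_\beta$. Two minor points where your execution differs from the paper: (i) for the converse implication that $\sum_\beta b_\beta^{(m)}|\mu_\beta|^2<\infty$ forces $\mu\in{\bf D}_{f,\circ}^1(\CC)$, the paper uses the explicit lower bound $\bigl(\sum_{j=0}^k(\sum_{|\alpha|\geq1}a_\alpha|\mu_\alpha|^2)^j\bigr)^m\leq\sum_\beta b_\beta^{(m)}|\mu_\beta|^2$ coming from \eqref{b-al}, which you should make explicit; and (ii) for $\varphi(W)^*z_\lambda=\overline{\varphi(\lambda)}z_\lambda$ the paper bypasses your SOT-approximation argument by computing $\langle\varphi(W)^*z_\lambda,\frac{1}{\sqrt{b_\beta^{(m)}}}e_\beta\rangle=\langle z_\lambda,\varphi(W)W_\beta(1)\rangle=\overline{\lambda_\beta\varphi(\lambda)}$ directly for each $\beta$, which is cleaner since it avoids any appeal to density or boundedness.
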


\begin{proof}
Since $\sum_{|\alpha|\geq 1} a_{\tilde \alpha}
  \Lambda_\alpha \Lambda_\alpha^*$ is SOT-convergent   and, for any $\lambda=(\lambda_1,\ldots, \lambda_n)\in {\bf
D}_{f,\circ}^1(\CC)$,

  \begin{equation*}
  \begin{split}
  \left\|\sum_{|\alpha|\geq 1} a_{\tilde \alpha}
  \overline{\lambda}_{\tilde\alpha} \Lambda_\alpha\right\|&\leq
\left\|\sum_{|\alpha|\geq 1} a_{\tilde \alpha}
  \Lambda_\alpha \Lambda_\alpha^*\right\|
  \left(\sum_{|\alpha|\geq 1} a_\alpha
  |\lambda_\alpha|^2\right)^{1/2}\\
  &\leq\left(\sum_{|\alpha|\geq 1} a_\alpha
  |\lambda_\alpha|^2\right)^{1/2}<1,
\end{split}
  \end{equation*}
the operator $\left( I-\sum_{|\alpha|\geq 1} a_{\tilde \alpha}
\overline{\lambda}_{\tilde\alpha} \Lambda_\alpha\right)^{-m}$ is
well-defined. Due to the results of Section 1 (see Lemma \ref{b-alpha}), we
have
$$
\left( I-\sum_{|\alpha|\geq 1} a_{\tilde \alpha}
\overline{\lambda}_{\tilde\alpha} \Lambda_\alpha\right)^{-m}=
\sum_{\beta\in \FF_n^+}  b^{(m)}_{\tilde
    \beta}\overline{\lambda}_{\tilde \beta} \Lambda_\beta,
    $$
    where the coefficients $b_\beta$, $\beta\in \FF_n^+$, are defined by
relation \eqref{b-al}.
    Hence, and using relation \eqref{WbWb-r}, we obtain
    \begin{equation}\label{z-l}
    z_\lambda=
    \left( I-\sum_{|\alpha|\geq 1} a_{\tilde \alpha}
\overline{\lambda}_\alpha \Lambda_\alpha\right)^{-m}(1)=\sum_{\beta\in
\FF_n^+}  b^{(m)}_{\tilde
    \beta}\overline{\lambda}_{\tilde \beta} \Lambda_\beta (1)=
    \sum_{\beta\in
\FF_n^+}  \sqrt{b^{(m)}_{
    \beta}}\overline{\lambda}_{ \beta} e_\beta.
    \end{equation}
 The fact  that $z_\lambda\in F^2(H_n)$ is a cyclic  vector for
$R_n^\infty(\cD_f)$ is obvious.

Now, notice that if $\lambda=(\lambda_1,\ldots, \lambda_n)\in {\bf
D}_{f,\circ}^1(\CC)$, then $\lambda$ is  of class $C_{\cdot 0}$ with
respect to ${\bf D}_{f,\circ}^1(\CC)$. Using relation \eqref{I-Q} in
our particular case, we get
$$
\left(1-\sum_{|\alpha|\geq 1} a_\alpha
|\lambda_\alpha|^2\right)^m\left(\sum_{\beta\in \FF_n^+}
b_\beta^{(m)} |\lambda_\beta|^2\right)= 1.
$$
Consequently,  we have
\begin{equation}
\label{norm-z}
\|z_\lambda\|=\frac{1}{\sqrt{\left(1-\sum_{|\alpha|\geq 1} a_\alpha
|\lambda_\alpha|^2\right)^m}}.
\end{equation}
Due to  relation \eqref{WbWb}, we have
$$
W_i^* e_\alpha =\begin{cases} \frac
{\sqrt{b_\gamma^{(m)}}}{\sqrt{b_{\alpha}^{(m)}}}e_\gamma& \text{ if
}
\alpha=g_i\gamma \\
0& \text{ otherwise. }
\end{cases}
$$
A simple computation shows that $W_i^*
z_\lambda=\overline{\lambda}_{i} z_\lambda$ for $i=1,\ldots, n$.
Similarly, one can use relation \eqref{WbWb-r} to prove that
$\Lambda_i^*z_\lambda=\overline{\lambda}_i z_\lambda$ for
$i=1,\ldots, n$.

Conversely, let $z=\sum_{\beta\in \FF_n^+} c_\beta e_\beta \in
F^2(H_n)$ and assume that $W_i^*z=\overline{\lambda}_i z$,
$i=1,\ldots,n$, for some $n$-tuple $(\lambda_1,\ldots, \lambda_n)\in
\CC^n$.  Using the definition of the weighted left creation
operators $W_1,\ldots, W_n$, we deduce that
\begin{equation*}
\begin{split}
c_\alpha&=\left< z, e_\alpha\right>=\left< z, \sqrt{b_\alpha^{(m)}}
W_\alpha(1)\right>\\
&=\sqrt{b_\alpha^{(m)}}\left< W_\alpha^* z,1\right>=
\sqrt{b_\alpha^{(m)}}\overline{\lambda}_\alpha\left< z,1\right>\\
&=c_0\sqrt{b_\alpha^{(m)}}\overline{\lambda}_\alpha
\end{split}
\end{equation*}
 for any $\alpha\in \FF_n^+$, whence
  $z= a_0\sum_{\beta\in \FF_n^+} \sqrt{b_\beta^{(m)}}
\overline{\lambda}_\beta e_\beta$.
 Since $z\in F^2(H_n)$, we must have $\sum_{\beta\in \FF_n^+}
 b_\beta^{(m)}
|\lambda_\beta|^2<\infty$. On the other hand, relation \eqref{b-al}
implies
$$
\left(\sum_{j=0}^k \left(\sum_{|\alpha|\geq 1} a_\alpha
|\lambda_\alpha|^2\right)^j\right)^m\leq \sum_{\beta\in \FF_n^+}
b_\beta^{(m)} |\lambda_\beta|^2<\infty
$$
for any $k\in \NN$. Letting $k\to\infty$ in the relation above, we
must have $\sum_{|\alpha|\geq 1} a_\alpha |\lambda_\alpha|^2<1$,
whence $(\lambda_1,\ldots, \lambda_n)\in {\bf D}_{f,\circ}^1(\CC)$.
A similar result can be proved for the weighted right creation
operators $\Lambda_1,\ldots, \Lambda_n$ if one uses relation
\eqref{WbWb-r}.

Now, let us prove the last part of the theorem. Since
$\varphi(W_1,\ldots, W_n)=\sum_{\beta\in \FF_n^+} c_\beta W_\beta$
is in the Hardy algebra $F_n^\infty({\bf D}^m_f)$, we have
$\sum_{\beta\in \FF_n^+} |c_\beta|^2 \frac{1}{b_\beta^{(m)}}<\infty$
(see Section \ref{Hardy}). As shown above, if
$\lambda=(\lambda_1,\ldots, \lambda_n)\in {\bf D}_{f,\circ}^1(\CC)$,
then $\sum_{\beta\in \FF_n^+} b_\beta^{(m)}
|\lambda_\beta|^2<\infty$. Applying Cauchy's inequality, we have
$$
\sum_{\beta\in \FF_n^+} |c_\beta||\lambda_\beta|\leq
\left(\sum_{\beta\in \FF_n^+} |c_\beta|^2
\frac{1}{b^{(m)}_\beta}\right)^{1/2} \left(\sum_{\beta\in \FF_n^+}
b_\beta^{(m)} |\lambda_\beta|^2\right)^{1/2}<\infty.
$$
Note  also that
\begin{equation*}
\begin{split}
\left<\varphi(W_1,\ldots, W_n)1, z_\lambda\right>&= \left<
\sum_{\beta\in \FF_n^+}c_\beta \frac{1}{\sqrt{b^{(m)}_\beta}}
e_\beta, \sum_{\beta\in \FF_n^+} \sqrt{b^{(m)}_\beta}
\overline{\lambda}_\beta
e_\beta \right>\\
&=\sum_{\beta\in\FF_n^+} c_\beta \lambda_\beta
=\varphi(\lambda_1,\ldots, \lambda_n).
\end{split}
\end{equation*}
Now, for each $\beta\in \FF_n^+$, we have
\begin{equation*}
\begin{split}
\left<\varphi(W_1,\ldots, W_n)^* z_\lambda,
\frac{1}{\sqrt{b^{(m)}_\alpha}}e_\beta\right>&= \left< z_\lambda,
\varphi(W_1,\ldots, W_n) W_\beta (1)\right>\\
&=\overline {\lambda_\beta \varphi(\lambda)} =\left<\overline {
\varphi(\lambda)} z_\lambda, \frac{1}{\sqrt{b^{(m)}_\alpha}}
e_\beta\right>.
\end{split}
\end{equation*}
Hence, we deduce that
\begin{equation}
\label{fi*z} \varphi(W_1,\ldots, W_n)^* z_\lambda=\overline {
\varphi(\lambda)} z_\lambda.
\end{equation}
One can easily see that
\begin{equation*}
\begin{split}
\left< \varphi(W_1,\ldots, W_n)u_\lambda, u_\lambda\right>&=
\frac{1}{\|z_\lambda\|^2}\left< z_\lambda,\varphi(W_1,\ldots, W_n)^*
z_\lambda\right>\\
&=\frac{1}{\|z_\lambda\|^2}\left< z_\lambda,\overline {
\varphi(\lambda)} z_\lambda\right>= \varphi(\lambda).
\end{split}
\end{equation*}
The fact that  the map $\Phi_\lambda$ is multiplicative and
$w^*$-continuous is now obvious.  This completes the proof.
\end{proof}

As in \cite{DP1}, in the particular case when $m=1$ and
$f=X_1+\cdots + X_n$,  one can similarly prove (using Theorem
\ref{eigenvectors}) the following.

 \begin{proposition}
 A  map $\varphi:
F_n^\infty({\bf D}^m_f)\to \CC$ is a $w^*$-continuous multiplicative
linear functional  if and only if there exists $\lambda\in {\bf
D}_{f,\circ}^1(\CC)$ such that
$$
\varphi(A)=\varphi_\lambda(A):=\left<Au_\lambda,
u_\lambda\right>,\quad A\in F_n^\infty(\cD_f),
$$
where $u_\lambda:=\frac{z_\lambda}{\|z_\lambda\|}$.
\end{proposition}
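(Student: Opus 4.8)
Theorem \ref{eigenvectors} already handles the forward implication: for each $\lambda\in{\bf D}_{f,\circ}^1(\CC)$ the vector $u_\lambda=z_\lambda/\|z_\lambda\|$ is a unit joint eigenvector of $W_1^*,\dots,W_n^*$, and the map $\Phi_\lambda\colon A\mapsto\langle Au_\lambda,u_\lambda\rangle$ is shown there to be $w^*$-continuous and multiplicative with $\Phi_\lambda=\varphi_\lambda$. So only the converse needs argument, and the plan is to follow \cite{DP1}.

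Let $\psi$ be a nonzero $w^*$-continuous multiplicative linear functional on $F_n^\infty({\bf D}^m_f)$. I would first set $\lambda_i:=\psi(W_i)$ for $i=1,\dots,n$; multiplicativity forces $\psi(W_\alpha)=\lambda_\alpha$ for every $\alpha\in\FF_n^+$, so $\psi$ agrees with ``evaluation at $\lambda=(\lambda_1,\dots,\lambda_n)$'' on all polynomials in $W_1,\dots,W_n$. By Proposition \ref{density-pol} every element of $F_n^\infty({\bf D}^m_f)$ is a $w^*$-limit of a sequence of such polynomials, so the $w^*$-continuous functional $\psi$ is completely determined by $\lambda$; in particular, once it is known that $\lambda\in{\bf D}_{f,\circ}^1(\CC)$, the functional $\Phi_\lambda$ of Theorem \ref{eigenvectors} is $w^*$-continuous and agrees with $\psi$ on this $w^*$-dense set, whence $\psi=\Phi_\lambda=\varphi_\lambda$. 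The whole problem therefore reduces to proving $\lambda\in{\bf D}_{f,\circ}^1(\CC)$, i.e.\ $\sum_{|\alpha|\geq1}a_\alpha|\lambda_\alpha|^2<1$; by \eqref{b-al} and the computation in the proof of Theorem \ref{eigenvectors}, this is equivalent to $\sum_{\beta\in\FF_n^+}b_\beta^{(m)}|\lambda_\beta|^2<\infty$, i.e.\ to the formal sum $z_\lambda=\sum_{\beta}\sqrt{b_\beta^{(m)}}\,\overline\lambda_\beta\,e_\beta$ being an honest vector of $F^2(H_n)$.

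To produce that vector I would use, as in \cite{DP1}, that every $w^*$-continuous functional on $F_n^\infty({\bf D}^m_f)$ is implemented by a single pair of vectors, $\psi(A)=\langle A\xi,\eta\rangle$ with $\xi,\eta\in F^2(H_n)$, and that $\xi$ may be taken cyclic for $F_n^\infty({\bf D}^m_f)$ (the vacuum $1$ is cyclic and separating by \eqref{w-shift}). Multiplicativity then gives $\langle B\xi,\,A^*\eta-\overline{\psi(A)}\eta\rangle=0$ for all $A,B\in F_n^\infty({\bf D}^m_f)$, hence $A^*\eta=\overline{\psi(A)}\eta$ for every $A$ and in particular $W_i^*\eta=\overline{\lambda_i}\eta$ for all $i$; since $\psi\neq0$ forces $\eta\neq0$, Theorem \ref{eigenvectors} identifies $\eta$ as a scalar multiple of $z_\lambda$ with $\lambda\in{\bf D}_{f,\circ}^1(\CC)$, which completes the argument. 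The main obstacle is exactly this last step: establishing the single-vector implementation of $w^*$-continuous functionals on $F_n^\infty({\bf D}^m_f)$ — the analogue of the property used in \cite{DP1} — and arranging the implementing ``left'' vector to be cyclic, together with some care when $f$ is a genuine free holomorphic function rather than a polynomial. An alternative for this step, closer to the classical contraction case, would be to produce a single element $A\in F_n^\infty({\bf D}^m_f)$ of class $C_{\cdot 0}$ with $A^k\to0$ in the weak operator topology and with $\psi(A)$ expressible through $\sum_{|\alpha|\geq1}a_\alpha|\lambda_\alpha|^2$, so that $\psi(A)^k=\psi(A^k)\to0$ yields the strict inequality.
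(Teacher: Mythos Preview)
Your proposal is correct and follows essentially the same route the paper indicates: the paper does not give a detailed proof but simply remarks that one argues ``as in \cite{DP1}'' using Theorem \ref{eigenvectors}, and your outline is precisely that argument, including the honest identification of the single-vector implementation of $w^*$-continuous functionals (the property $\mathbb A_1$-type step from \cite{DP1}) as the one technical point needing verification in this weighted setting.
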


 We recall   that the joint  right spectrum
  $\sigma_r(T_1,\ldots, T_n)$ of an   $n$-tuple
 $(T_1,\ldots, T_n)$ of operators
   in $B(\cH)$ is the set of all $n$-tuples
    $(\lambda_1,\ldots, \lambda_n)$  of complex numbers such that the
     right ideal of $B(\cH)$  generated by the operators
     $\lambda_1I-T_1,\ldots, \lambda_nI-T_n$ does
      not contain the identity operator.
We recall \cite{Po-unitary} that
  $(\lambda_1,\ldots, \lambda_n)\notin \sigma_r(T_1,\ldots, T_n)$
  if and only if  there exists $\delta>0$ such that
  $\sum\limits_{i=1}^n (\lambda_iI-T_i)
  (\overline{\lambda}_iI-T_i^*)\geq \delta I$.

 Theorem \ref{eigenvectors} implies the following result. Since the
 proof is similar to the proof of Theorem 5.1 from \cite{Po-disc}, we
 shall omit it.

\begin{proposition}\label{right-spec} If ~$(W_1,\ldots, W_n)$~ are the
weighted left creation operators associated with the noncommutative
domain ${\bf D}^m_f$, then the right joint spectrum
$\sigma_r(W_1,\ldots, W_n)$ coincide with ${\bf D}^{1}_f(\CC)$.
\end{proposition}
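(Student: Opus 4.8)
The plan is to prove the two inclusions $\sigma_r(W_1,\ldots,W_n)\subseteq {\bf D}^1_f(\CC)$ and ${\bf D}^1_f(\CC)\subseteq \sigma_r(W_1,\ldots,W_n)$ separately, using the characterization recalled just before the statement: $(\lambda_1,\ldots,\lambda_n)\notin\sigma_r(W_1,\ldots,W_n)$ iff there is $\delta>0$ with $\sum_{i=1}^n(\lambda_iI-W_i)(\overline\lambda_iI-W_i^*)\geq\delta I$.

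\textbf{The easy inclusion.} First I would show ${\bf D}^1_{f,\circ}(\CC)\subseteq\sigma_r(W_1,\ldots,W_n)$, and then that the full ${\bf D}^1_f(\CC)$ is contained in the spectrum by taking closures. For $\lambda\in{\bf D}^1_{f,\circ}(\CC)$, Theorem \ref{eigenvectors} furnishes the eigenvector $z_\lambda\neq 0$ with $W_i^*z_\lambda=\overline\lambda_i z_\lambda$ for all $i$; hence $(\overline\lambda_iI-W_i^*)z_\lambda=0$, so $\sum_i(\lambda_iI-W_i)(\overline\lambda_iI-W_i^*)z_\lambda=0$, which rules out the coercive lower bound $\geq\delta I$. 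Therefore $\lambda\in\sigma_r(W_1,\ldots,W_n)$. Since the right spectrum is closed (the set of $\lambda$ admitting a coercive bound is open, because the map $\lambda\mapsto\sum_i(\lambda_iI-W_i)(\overline\lambda_iI-W_i^*)$ is norm-continuous and the set of operators $\geq\delta I$ is open), and since ${\bf D}^1_f(\CC)$ is the closure of ${\bf D}^1_{f,\circ}(\CC)$ in $\CC^n$ (both are sublevel sets of the continuous function $\lambda\mapsto\sum_{|\alpha|\geq1}a_\alpha|\lambda_\alpha|^2$, and one checks the boundary behaviour of that real-analytic function), we get ${\bf D}^1_f(\CC)\subseteq\sigma_r(W_1,\ldots,W_n)$.

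\textbf{The harder inclusion.} For the reverse containment I would take $\lambda=(\lambda_1,\ldots,\lambda_n)\notin{\bf D}^1_f(\CC)$, i.e. $\sum_{|\alpha|\geq1}a_\alpha|\lambda_\alpha|^2>1$, and produce $\delta>0$ with $\sum_{i=1}^n(\lambda_iI-W_i)(\overline\lambda_iI-W_i^*)\geq\delta I$. Following the scheme of Theorem 5.1 in \cite{Po-disc}, the idea is to estimate $\sum_i\|(\overline\lambda_iI-W_i^*)\xi\|^2$ from below for a unit vector $\xi=\sum_\beta c_\beta e_\beta\in F^2(H_n)$. Expanding and using the explicit action of $W_i^*$ on the orthogonal basis $\{e_\beta\}$ from \eqref{WbWb}, one compares the quantity with $\bigl(\sum_i|\lambda_i|^2\bigr)\|\xi\|^2$ minus cross terms and the $\|W_i^*\xi\|^2$ contributions; iterating the weighted-shift relations (or, more efficiently, using that $\sum_{|\alpha|\geq1}a_\alpha W_\alpha W_\alpha^*\leq I$ from Theorem \ref{prop-shif}(i) together with a comparison of the operator $\sum_{|\alpha|\geq1}a_\alpha W_\alpha W_\alpha^*$ evaluated against $z_\lambda$-type test data) produces a telescoping inequality showing that the infimum of $\sum_i\|(\overline\lambda_iI-W_i^*)\xi\|^2$ over unit vectors is bounded below by a positive constant depending on how much $\sum_{|\alpha|\geq1}a_\alpha|\lambda_\alpha|^2$ exceeds $1$. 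The cleanest route is probably: if it were $0$, there would be a sequence of approximate eigenvectors $\xi_k$ with $\|(\overline\lambda_iI-W_i^*)\xi_k\|\to0$; testing the inequality $\sum_{|\alpha|\geq1}a_\alpha W_\alpha W_\alpha^*\leq I$ on $\xi_k$ and using $W_i^*\xi_k\approx\overline\lambda_i\xi_k$ (hence $W_\alpha^*\xi_k\approx\overline\lambda_\alpha\xi_k$ for each word $\alpha$, with errors controlled via \eqref{ine-k}-type norm bounds on $\sum_{|\alpha|=k}b_\alpha^{(m)}W_\alpha W_\alpha^*$) would force $\sum_{|\alpha|\geq1}a_\alpha|\lambda_\alpha|^2\|\xi_k\|^2\lesssim\|\xi_k\|^2$ in the limit, i.e. $\sum_{|\alpha|\geq1}a_\alpha|\lambda_\alpha|^2\leq1$, contradicting $\lambda\notin{\bf D}^1_f(\CC)$.

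\textbf{Main obstacle.} The technical heart is the last step: passing from the approximate eigenvector relations $W_i^*\xi_k\approx\overline\lambda_i\xi_k$ to good control of $W_\alpha^*\xi_k\approx\overline\lambda_\alpha\xi_k$ uniformly over all words $\alpha$ and then interchanging the (possibly infinite) sum $\sum_{|\alpha|\geq1}a_\alpha$ with the limit in $k$. This requires the growth estimates on the coefficients $b_\alpha^{(m)}$ and the norm bounds \eqref{Mb}, \eqref{ine-k} to dominate the tail $\sum_{k\geq N}\sum_{|\alpha|=k}a_\alpha W_\alpha W_\alpha^*$ uniformly, exactly as in the radial-limit arguments of Section 3; since the proof is so close to that of Theorem 5.1 in \cite{Po-disc}, I would state the result and refer to that argument for the details rather than reproduce the estimates.
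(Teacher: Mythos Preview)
Your approach is correct and matches the paper's: the paper itself omits the proof entirely, merely noting that it follows the scheme of Theorem~5.1 in \cite{Po-disc}, which is exactly what you propose to do. One minor simplification: the ``main obstacle'' you flag---controlling $W_\alpha^*\xi_k$ for all words and interchanging the infinite sum with the limit---evaporates if you truncate: for each fixed $N$ the finite sum $\sum_{1\le|\alpha|\le N}a_\alpha W_\alpha W_\alpha^*\le I$ involves only finitely many words, so the approximate-eigenvector relation propagates harmlessly from generators to those words and yields $\sum_{1\le|\alpha|\le N}a_\alpha|\lambda_\alpha|^2\le 1$; letting $N\to\infty$ gives the contradiction without any uniform tail estimate.
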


\bigskip

Now, we define the symmetric weighted Fock space associated with the
noncommutative domain ${\bf D}^m_f$. We need a few definitions. For
each $\lambda=(\lambda_1,\ldots, \lambda_n)\in \CC^n$ and each $n$-tuple
${\bf k}:=(k_1,\ldots, k_n)\in \NN_0^n$, where $\NN_0:=\{0,1,\ldots
\}$, let $\lambda^{\bf k}:=\lambda_1^{k_1}\cdots \lambda_n^{k_n}$.
For each ${\bf k}\in \NN_0$, we denote
$$
\Lambda_{\bf k}:=\{\alpha\in \FF_n^+: \ \lambda_\alpha =\lambda^{\bf
k} \text{ for all } \lambda\in \CC^n\}.
$$
For each ${\bf k}\in \NN_0^n$, define the vector
$$
w^{\bf k}:=\frac{1}{\gamma^{(m)}_{\bf k}} \sum_{\alpha \in
\Lambda_{\bf k}} \sqrt{b_\alpha^{(m)}} e_\alpha\in F^2(H_n), \quad
\text{ where } \ \gamma^{(m)}_{\bf k}:=\sum_{\alpha\in \Lambda_{\bf
k}} b^{(m)}_\alpha
$$
 and the
coefficients $b^{(m)}_\alpha$, $\alpha\in \FF_n^+$, are defined by
relation \eqref{b-al}. Note that the set  $\{w^{\bf k}:\ {\bf k}\in
\NN_0^n\}$ consists  of orthogonal vectors in $F^2(H_n)$ and
$\|w^{\bf k}\|=\frac{1}{\sqrt{\gamma^{(m)}_{\bf k}}}$. We denote by
$F_s^2({\bf D}^m_f)$ the closed span of these vectors, and call it
the symmetric weighted  Fock space associated with the
noncommutative domain ${\bf D}^m_f$.

If $\cQ$ is a set of noncommutative polynomials, we define the subspace $\cM_\cQ$ of $F^2(H_n)$ by
setting
$$\cM_\cQ:=\overline{\text{\rm span}}\{W_\alpha q(W_1,\ldots, W_n) W_\beta(1): \ q\in \cQ,
 \alpha, \beta\in \FF_n^+\}.
 $$

\begin{theorem}\label{symm-Fock} Let $f=\sum_{|\alpha|\geq 1} a_\alpha X_\alpha$ be a
positive regular free holomorphic function on $[B(\cH)^n]_\rho$,  $\rho>0$,  and  let
$\cQ_c$ be the  set of all polynomials of the form
$$Z_iZ_j-Z_j Z_i,\qquad i,j=1,\ldots, n.
$$
 Then the following statements hold:
 \begin{enumerate}
 \item[(i)]
 $
 F_s^2({\bf D}^m_f)=\overline{\text{\rm span}}\{z_\lambda: \ \lambda\in
{\bf D}_{f,\circ}^1(\CC)\}=\cN_{\cQ_c}:=F^2(H_n)\ominus
\cM_{\cQ_c}$.
\item[(ii)] The symmetric weighted Fock space $F_s^2({\bf D}^m_f)$ can be
identified with the Hilbert space $H^2({\bf D}_{f,\circ}^1(\CC))$ of
all functions $\varphi:{\bf D}_{f,\circ}^1(\CC)\to \CC$ which admit
a power series representation $\varphi(\lambda)=\sum_{{\bf k}\in
\NN_0} c_{\bf k} \lambda^{\bf k}$ with
$$
\|\varphi\|_2=\sum_{{\bf k}\in \NN_0}|c_{\bf
k}|^2\frac{1}{\gamma^{(m)}_{\bf k}}<\infty.
$$
More precisely, every  element  $\varphi=\sum_{{\bf k}\in \NN_0}
c_{\bf k} w^{\bf k}$ in $F_s^2({\bf D}^m_f)$  has a functional
representation on ${\bf D}_{f,\circ}^1(\CC)$ given by
$$
\varphi(\lambda):=\left<\varphi, z_\lambda\right>=\sum_{{\bf k}\in
\NN_0} c_{\bf k} \lambda^{\bf k}, \quad \lambda=(\lambda_1,\ldots,
\lambda_n)\in {\bf D}_{f,\circ}^1(\CC),
$$
and
$$
|\varphi(\lambda)|\leq
\frac{\|\varphi\|_2}{\sqrt{\left(1-\sum_{|\alpha|\geq 1} a_\alpha
|\lambda_\alpha|^2\right)^m}},\quad \lambda=(\lambda_1,\ldots,
\lambda_n)\in {\bf D}_{f,\circ}^1(\CC).
$$
\item[(iii)]
 The mapping $K_f:{\bf D}_{f,\circ}^1(\CC)\times
{\bf D}_{f,\circ}^1(\CC)\to \CC$ defined by
$$
K_f(\mu,\lambda):=\frac{1}{\left(1-\sum_{|\alpha|\geq 1} a_\alpha
\mu_\alpha \overline{\lambda}_\alpha\right)^m}\quad \text{ for all
}\ \lambda,\mu\in {\bf D}_{f,\circ}^1(\CC)
$$
is positive definite, and $K_f(\mu,\lambda)= \left<z_\lambda,
z_\mu\right>$.
\end{enumerate}
\end{theorem}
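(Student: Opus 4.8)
The plan is to derive all three parts from the explicit series \eqref{z-l} for $z_\lambda$ together with the joint-eigenvector description of Theorem~\ref{eigenvectors}. First I would regroup $z_\lambda=\sum_{\beta\in\FF_n^+}\sqrt{b_\beta^{(m)}}\,\overline{\lambda}_\beta\,e_\beta$ according to the blocks $\Lambda_{\bf k}$: since $\lambda_\beta=\lambda^{\bf k}$ whenever $\beta\in\Lambda_{\bf k}$ and $\sum_{\beta\in\Lambda_{\bf k}}\sqrt{b_\beta^{(m)}}\,e_\beta=\gamma_{\bf k}^{(m)}w^{\bf k}$, this gives
\[
z_\lambda=\sum_{{\bf k}\in\NN_0^n}\gamma_{\bf k}^{(m)}\,\overline{\lambda}^{\bf k}\,w^{\bf k},
\]
so $z_\lambda\in F_s^2({\bf D}_f^m)$ for every $\lambda\in{\bf D}_{f,\circ}^1(\CC)$. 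Conversely, if $\eta=\sum_{\bf k}c_{\bf k}w^{\bf k}\in F_s^2({\bf D}_f^m)$ is orthogonal to every $z_\lambda$, then $\sum_{\bf k}c_{\bf k}\lambda^{\bf k}=\langle\eta,z_\lambda\rangle=0$ throughout the open set ${\bf D}_{f,\circ}^1(\CC)\subset\CC^n$, which forces all $c_{\bf k}=0$. Hence $F_s^2({\bf D}_f^m)=\overline{\text{\rm span}}\{z_\lambda:\lambda\in{\bf D}_{f,\circ}^1(\CC)\}$.

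Next I would prove $\cN_{\cQ_c}=F_s^2({\bf D}_f^m)$. Using $W_i^*z_\lambda=\overline{\lambda}_i z_\lambda$, and hence $W_\alpha^*z_\lambda=\overline{\lambda}_\alpha z_\lambda$, for $q=Z_iZ_j-Z_jZ_i$ one gets
\[
\langle W_\alpha q(W_1,\ldots,W_n)W_\beta(1),z_\lambda\rangle
=\lambda_\alpha(\lambda_i\lambda_j-\lambda_j\lambda_i)\,\langle W_\beta(1),z_\lambda\rangle=0,
\]
so every $z_\lambda\perp\cM_{\cQ_c}$, i.e.\ $\overline{\text{\rm span}}\{z_\lambda\}\subseteq\cN_{\cQ_c}$. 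For the opposite inclusion, \eqref{WbWb} gives $W_\beta(1)=\tfrac{1}{\sqrt{b_\beta^{(m)}}}e_\beta$ and the concatenation rule $W_\gamma W_\delta=W_{\gamma\delta}$, so
\[
W_\alpha(W_iW_j-W_jW_i)W_\beta(1)
=\tfrac{1}{\sqrt{b_{\alpha g_ig_j\beta}^{(m)}}}\,e_{\alpha g_ig_j\beta}
-\tfrac{1}{\sqrt{b_{\alpha g_jg_i\beta}^{(m)}}}\,e_{\alpha g_jg_i\beta}.
\]
As $\alpha,\beta,i,j$ vary these realize an arbitrary single adjacent transposition of a word; since any two words in $\Lambda_{\bf k}$ are joined by a chain of adjacent transpositions staying inside $\Lambda_{\bf k}$, telescoping shows $\cM_{\cQ_c}$ contains $\tfrac{1}{\sqrt{b_\omega^{(m)}}}e_\omega-\tfrac{1}{\sqrt{b_{\omega'}^{(m)}}}e_{\omega'}$ for all $\omega,\omega'\in\Lambda_{\bf k}$. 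Fixing $\omega_0\in\Lambda_{\bf k}$, the corresponding $|\Lambda_{\bf k}|-1$ differences are linearly independent and all orthogonal to $w^{\bf k}$, hence span $\text{\rm span}\{e_\omega:\omega\in\Lambda_{\bf k}\}\ominus\CC w^{\bf k}$; summing over ${\bf k}$ yields $\cM_{\cQ_c}\supseteq F^2(H_n)\ominus F_s^2({\bf D}_f^m)$. Together with the reverse inclusion already shown, this proves (i).

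For (ii), $\{w^{\bf k}\}_{{\bf k}\in\NN_0^n}$ is an orthogonal basis of $F_s^2({\bf D}_f^m)$ with $\|w^{\bf k}\|^2=1/\gamma_{\bf k}^{(m)}$, so $\varphi=\sum_{\bf k}c_{\bf k}w^{\bf k}\mapsto\bigl(\lambda\mapsto\langle\varphi,z_\lambda\rangle=\sum_{\bf k}c_{\bf k}\lambda^{\bf k}\bigr)$ is a unitary onto $H^2({\bf D}_{f,\circ}^1(\CC))$ equipped with the stated norm, under which $z_\lambda$ corresponds to the reproducing kernel at $\lambda$; absolute convergence of the power series and the pointwise estimate follow from the Cauchy--Schwarz inequality and the identity $\|z_\lambda\|^2=\sum_{\bf k}\gamma_{\bf k}^{(m)}|\lambda^{\bf k}|^2=\bigl(1-\sum_{|\alpha|\ge1}a_\alpha|\lambda_\alpha|^2\bigr)^{-m}$, which is \eqref{norm-z}. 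For (iii), \eqref{z-l} gives $\langle z_\lambda,z_\mu\rangle=\sum_{\beta\in\FF_n^+}b_\beta^{(m)}\mu_\beta\overline{\lambda}_\beta$; expanding $b_\beta^{(m)}$ by \eqref{b-al} and summing the resulting series exactly as in the derivation of \eqref{norm-z}, now with $|\lambda_\alpha|^2$ replaced by $\mu_\alpha\overline{\lambda}_\alpha$ (admissible since $|\sum_{|\alpha|\ge1}a_\alpha\mu_\alpha\overline{\lambda}_\alpha|\le(\sum a_\alpha|\mu_\alpha|^2)^{1/2}(\sum a_\alpha|\lambda_\alpha|^2)^{1/2}<1$), yields $\langle z_\lambda,z_\mu\rangle=K_f(\mu,\lambda)$. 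Positive definiteness is then immediate, since $\sum_{i,j}c_i\overline{c_j}K_f(\lambda^{(i)},\lambda^{(j)})=\bigl\|\sum_j\overline{c_j}\,z_{\lambda^{(j)}}\bigr\|^2\ge0$ for any finitely many points and scalars.

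The main obstacle is the second inclusion in part (i): the combinatorial fact that the commutator vectors $W_\alpha(W_iW_j-W_jW_i)W_\beta(1)$ already exhaust, block by block, the orthogonal complement of $w^{\bf k}$, so that $\cM_{\cQ_c}$ is precisely $F^2(H_n)\ominus F_s^2({\bf D}_f^m)$; the remaining steps are routine manipulations with the formulas established in Section~1.
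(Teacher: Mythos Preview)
Your proof is correct and follows essentially the same strategy as the paper. The only organizational difference is in part~(i): to establish $\cN_{\cQ_c}\subseteq F_s^2({\bf D}_f^m)$, you argue directly that the commutator vectors span $\text{span}\{e_\omega:\omega\in\Lambda_{\bf k}\}\ominus\CC w^{\bf k}$ block by block, whereas the paper takes the dual route --- it assumes $x\in\cN_{\cQ_c}$ is orthogonal to every $z_\lambda$, uses the same transposition argument to force $c_\gamma/\sqrt{b_\gamma^{(m)}}$ to be constant on each $\Lambda_{\bf k}$, and then combines this with $\sum_{\beta\in\Lambda_{\bf k}}c_\beta\sqrt{b_\beta^{(m)}}=0$ to conclude $x=0$; the underlying combinatorics and the treatment of parts~(ii) and~(iii) are identical.
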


\begin{proof} First, we prove that
$$\overline{\text{\rm span}}\{z_\lambda: \ \lambda\in
{\bf D}_{f,\circ}^1(\CC)\}\subseteq F_s^2({\bf D}^m_f)\subseteq
\cN_{\cQ_c}.
$$
Notice that the first inclusion is due to that fact that
$z_\lambda=\sum_{{\bf k}\in \NN_0^n} \overline{\lambda}^{\bf k}
\gamma_{\bf k} w^{\bf k}$ for  $\lambda \in {\bf
D}_{f,\circ}^1(\CC)$. To prove the second inclusion, note that, due
to
 relation \eqref{WbWb}, we have
\begin{equation*}
 \begin{split}
 \left<w^{\bf k},
W_\gamma(W_jW_i-W_iW_j)W_\beta(1)\right>&= \frac{1}{\gamma_{\bf
k}}\left<\sum_{\alpha \in \Lambda_{\bf k}} \sqrt{b^{(m)}_\alpha}
e_\alpha, \frac{1}{\sqrt{b^{(m)}_{\gamma g_jg_i\beta}}} e_{\gamma
g_jg_i\beta}- \frac{1}{\sqrt{b^{(m)}_{\gamma g_ig_j\beta}}}
e_{\gamma g_ig_j\beta}\right>=0
\end{split}
\end{equation*}
for any ${\bf k}\in \NN_0^n$, $\alpha, \beta\in \FF_n^+$,
$i,j=1,\ldots, n$. This shows that $w^{\bf k}\in \cN_{\cQ_c}$ and
proves our assertion. To complete the proof of part (i), it is
enough to show that
$$
\cN_{\cQ_c}\subseteq \overline{\text{\rm span}}\{z_\lambda: \
\lambda\in {\bf D}_{f,\circ}^1(\CC)\}.
$$
To this end, assume that there is a vector $ x:=\sum_{\beta\in
\FF_n^+} c_\beta e_\beta\in \cN_{\cQ_c}$ and $x\perp z_\lambda$ for
all $\lambda\in {\bf D}_{f,\circ}^1(\CC)$. Then, using \eqref{z-l}, we obtain
\begin{equation*}
\left<\sum_{\beta\in \FF_n^+} c_\beta e_\beta, z_\lambda\right>
=\sum_{{\bf k}\in \NN_0^n}\left(\sum_{\beta\in\Lambda_{\bf k}}
c_\beta \sqrt{b_\beta^{(m)}}\right)\lambda^{\bf k}=0
\end{equation*}
for any $\lambda\in {\bf D}_{f,\circ}^1(\CC)$. Since ${\bf
D}_{f,\circ}^1(\CC)$ contains an open ball in $\CC^n$, we deduce
that
\begin{equation}\label{sigma=0}
\sum_{\beta\in \Lambda_{\bf k}} c_\beta \sqrt{b_\beta^{(m)}}=0 \quad
\text{ for all } \ {\bf k}\in \NN_0^n.
\end{equation}
Fix $\beta_0\in \Lambda_{\bf k}$ and let $\beta\in \Lambda_{\bf k}$
be such that $\beta$ is obtained from $\beta_0$ by transposing just
two generators. So we can assume that $\beta_0=\gamma g_j g_i\omega$
and $\beta=\gamma g_i g_j\omega$ for some $\gamma,\omega\in \FF_n^+$
and $i\neq j$, $i,j=1,\ldots,n$. Since $x\in
\cN_{\cQ_c}=F^2(H_n)\ominus \cM_{\cQ_c}$, we must have
$$
\left<x,W_\gamma(W_jW_i-W_iW_j)W_\omega(1)\right>=0,
$$
which implies
$\frac{c_{\beta_0}}{\sqrt{b^{(m)}_{\beta_0}}}=\frac{c_\beta}{\sqrt{b^{(m)}_{\beta}}}$.
Since any element $\gamma\in \Lambda_{\bf k}$ can be obtained from
$\beta_0$  by successive  transpositions, repeating the above
argument, we deduce that
$$
\frac{c_{\beta_0}}{\sqrt{b^{(m)}_{\beta_0}}}=\frac{c_\gamma}{\sqrt{b^{(m)}_{\gamma}}}
\quad \text{ for all }\ \gamma\in \Lambda_{\bf k}.
$$
Setting $t:=\frac{c_{\beta_0}}{\sqrt{b^{(m)}_{\beta_0}}}$, we have
$c_\gamma=t\sqrt{b^{(m)}_\gamma}$, $\gamma\in \Lambda_{\bf k}$, and
relation  \eqref{sigma=0} implies  $t=0$ (remember that
$b_\beta>0$). Therefore, $c_\gamma=0$ for any $\gamma\in
\Lambda_{\bf k}$ and ${\bf k}\in \NN_0^n$, so $x=0$. Consequently,
we have $\overline{\text{\rm span}}\{z_\lambda: \ \lambda\in {\bf
D}_{f,\circ}^1(\CC)\}=\cN_{\cQ_c}. $

Now, let us prove part (ii) of the theorem. Since
 the set  $\{w^{\bf k}:\ {\bf k}\in
\NN_0^n\}$ consists  of orthogonal vectors in $F^2(H_n)$ with
$\|w^{\bf k}\|=\frac{1}{\sqrt{\gamma^{(m)}_{\bf k}}}$, and
$F_s^2({\bf D}^m_f)$ the closed span of these vectors, any
$\varphi\in F_s^2({\bf D}^m_f)$ has a unique representation
$\varphi=\sum_{{\bf k}\in \NN_0} c_{\bf k} w^{\bf k}$ with $
\|\varphi\|_2=\sum_{{\bf k}\in \NN_0}|c_{\bf
k}|^2\frac{1}{\gamma^{(m)}_{\bf k}}<\infty$.
 Note that
\begin{equation*}
\left<w^{\bf k},z_\lambda\right>=\frac{1}{\gamma_{\bf k}} \left<
\sum_{\beta\in\Lambda_{\bf k}}\sqrt{b^{(m)}_\beta} e_\beta,
z_\lambda\right> =\frac{1}{\gamma_{\bf k}}\sum_{\beta\in\Lambda_{\bf
k}} b^{(m)}_\beta \lambda_\beta =\lambda^{\bf k}
\end{equation*}
for any $\lambda\in {\bf D}_{f,\circ}^1(\CC)$ and ${\bf k}\in
\NN_0^n$. Hence, every element $\varphi=\sum_{{\bf k}\in \NN_0}
c_{\bf k} w^{\bf k}$ in $F_s^2({\bf D}^m_f)$  has a functional
representation on ${\bf D}_{f,\circ}^1(\CC)$ given by
$$
\varphi(\lambda):=\left<\varphi, z_\lambda\right>=\sum_{{\bf k}\in
\NN_0} c_{\bf k} \lambda^{\bf k}, \quad \lambda=(\lambda_1,\ldots,
\lambda_n)\in {\bf D}_{f,\circ}^1(\CC),
$$
and, due to \eqref{norm-z},
$$|\varphi(\lambda)|\leq \|\varphi\|_2 \|z_\lambda\|=
\frac{\|\varphi\|_2}{\sqrt{\left(1-\sum_{|\alpha|\geq 1} a_\alpha
|\lambda_\alpha|^2\right)^m}}.
$$
 The identification of $F_s^2({\bf
D}^m_f)$ with $H^2({\bf D}_{f,\circ}^1(\CC))$  is now clear.

As in the proof of Theorem \ref{eigenvectors}, we deduce that
$$
\left( I-\sum_{|\alpha|\geq 1} a_{\tilde \alpha}
\overline{\lambda}_{\tilde\alpha} \Lambda_\alpha\right)^{-m}=
\sum_{\beta\in \FF_n^+}  b^{(m)}_{\tilde
    \beta}\overline{\lambda}_{\tilde \beta} \Lambda_\beta
    $$
    if $
 (\lambda_1,\ldots, \lambda_n)\in
{\bf D}_{f,\circ}^1(\CC)$.
  Similarly, if
$(\mu_1,\ldots, \mu_n)\in{\bf D}_{f,\circ}^1(\CC)= {\bf D}_{f,\circ}^1(\CC)\cap  {\bf
D}_{\tilde f,\circ}^1(\CC)$, we deduce that
$$
\sum_{\beta\in \FF_n^+} b_\beta \mu_\beta\overline{\lambda}_\beta =
\left( I-\sum_{|\alpha|\geq 1} a_{ \alpha}
 \mu_\alpha\overline{\lambda}_{\alpha}\right)^{-m}.
$$
Since
$$K_f(\mu,\lambda)=
\sum_{\beta\in \FF_n^+} b^{(m)}_\beta
\mu_\beta\overline{\lambda}_\beta= \left<z_\lambda, z_\mu\right>,
$$
the result in part (iii) follows. The proof is complete.
\end{proof}

  Let
$J_c$ be the $w^*$-closed two-sided ideal of the Hardy algebra
$F_n^\infty({\bf D}^m_f)$ generated by the commutators
$$W_iW_j-W_j W_i,\qquad i,j=1,\ldots, n.
$$
Since $W_iW_j-W_jW_i\in J_c$ and every permutation of $k$ objects is
a product of transpositions, it is clear that $W_\alpha
W_\beta-W_\beta W_\alpha\in J_c$ for any $\alpha,\beta\in \FF_n^+$.
Consequently, $W_\gamma(W_\alpha W_\beta-W_\beta
W_\alpha)W_\omega\in J_c$ for any $\alpha,\beta, \gamma,\omega\in
\FF_n^+$. Since the polynomials in $W_1,\ldots, W_n$ are $w^*$ dense
in $F_n^\infty({\bf D}^m_f)$, we deduce that
  $J_c$ coincides with the $w^*$-closure  of the
 commutator ideal  of $F_n^\infty({\bf D}^m_f)$.

 Define the operators
on  $F_s^2({\bf D}^m_f)$ by
$$L_i:=P_{F_s^2(\cD_f)} W_i|_{F_s^2(\cD_f)}, \qquad i=1,\ldots, n,
$$
  where $W_1,\ldots, W_n$ are the weighted left
creation operators associated with ${\bf D}^m_f$. Let
$\varphi(W_1,\ldots, W_n)\in F_n^\infty({\bf D}^m_f)$ and denote
$M_\varphi:=P_{F_s^2({\bf D}^m_f)} \varphi(W_1,\ldots,
W_n)|_{F_s^2({\bf D}^m_f)}$.
According to Theorem \ref{eigenvectors} and Theorem \ref{symm-Fock}, the vector
 $z_\lambda$ is in $ F_s^2({\bf D}^m_f)$
for $\lambda\in {\bf D}_{f,\circ}^1(\CC)$, and $\varphi(W_1,\ldots,
W_n)^*z_\lambda= \overline{\varphi(\lambda)} z_\lambda$. Consequently,  we have
\begin{equation*}
\begin{split}
[M_\varphi \psi](\lambda)&=\left<M_\varphi\psi, z_\lambda\right>\\
&=\left<\varphi(W_1,\ldots,
W_n)\psi, z_\lambda\right>\\
&=\left< \psi,\varphi(W_1,\ldots, W_n)^*z_\lambda\right>\\
&=\left< \psi,\overline{\varphi(\lambda)} z_\lambda\right>
=\varphi(\lambda)\psi(\lambda)
\end{split}
\end{equation*}
for any $\psi\in F_s^2({\bf D}^m_f)$ and $\lambda\in {\bf
D}_{f,\circ}^1(\CC)$. Therefore, the operators in $P_{F_s^2({\bf
D}^m_f)} F_n^\infty({\bf D}^m_f) |_{F_s^2({\bf D}^m_f)}$ are
``analytic'' multipliers of $F_s^2({\bf D}^m_f)$. Moreover,
$$
\|M_\varphi\|=\sup\{\|\varphi f\|_2:\ f\in F_s^2({\bf D}^m_f), \,
\|f\|\leq 1\}.
$$
In particular, for each $i=1,\ldots,n$,  $L_i$ is is the multiplier
$M_{\lambda_i}$ by the coordinate function.
Let $H^\infty({\bf D}_{f,\circ}^1(\CC))$ be the algebra of all
multipliers of the Hilbert space $H^2({\bf D}_{f,\circ}^1(\CC))$.
In what follows, we show that the algebra $H^\infty({\bf D}_{f,\circ}^1(\CC))$ is reflexive.

First, we need to recall some definitions.
If $~A\in B(\cH)~$ then the set of all
invariant subspaces of $~A~$ is denoted by $\text{\rm Lat~}A$. For
any $~\cU\subset B(\cH)~$ we define
$$
\text{\rm Lat~}\cU=\bigcap_{A\in\cU}\text{\rm Lat~}A.
$$
 If $~\cS~$ is any collection of subspaces of $~\cH$,
then we define $\text{\rm Alg~}\cS$ by setting $$\text{\rm
Alg~}\cS:=\{A\in B(\cH):\ \cS\subset\text{\rm Lat~}A\}.$$ We recall
that the algebra $~\cU\subset B(\cH)~$ is reflexive if
$\cU=\text{\rm Alg Lat~}\cU.$

\begin{theorem}\label{reflexivity}
The algebra  $H^\infty({\bf D}_{f,\circ}^1(\CC))$ is reflexive and
coincides with the weakly closed algebra generated by the operators
$L_1,\ldots, L_n$ and the identity.
\end{theorem}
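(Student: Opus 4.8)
The plan is to establish the two assertions together, using the reproducing kernel description of $F_s^2({\bf D}^m_f)$ from Theorem~\ref{symm-Fock}. Write $\cW$ for the weakly closed algebra generated by $L_1,\ldots,L_n$ and the identity. Since each $L_i$ is the coordinate multiplier $M_{\lambda_i}$, and the full multiplier algebra $H^\infty({\bf D}_{f,\circ}^1(\CC))$ of the reproducing kernel Hilbert space $H^2({\bf D}_{f,\circ}^1(\CC))$ is automatically WOT-closed (a WOT-limit of multipliers is again a multiplier, because the point evaluations $\psi\mapsto\psi(\lambda)=\langle\psi,z_\lambda\rangle$ are bounded by Theorem~\ref{symm-Fock}(ii)), the inclusion $\cW\subseteq H^\infty({\bf D}_{f,\circ}^1(\CC))$ is immediate. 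The substance of the proof is the reverse inclusion $H^\infty({\bf D}_{f,\circ}^1(\CC))\subseteq\cW$; once this is available, reflexivity will be purely formal.

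For the reverse inclusion I would argue as in Proposition~\ref{density-pol}, now on the symmetric weighted Fock space. By Theorem~\ref{symm-Fock}(i), $F_s^2({\bf D}^m_f)=\cN_{\cQ_c}=F^2(H_n)\ominus\cM_{\cQ_c}$, and since $\cM_{\cQ_c}$ is invariant under $W_1,\ldots,W_n$, the subspace $F_s^2({\bf D}^m_f)$ is co-invariant under $W_1,\ldots,W_n$; hence compression to $F_s^2({\bf D}^m_f)$ is a unital homomorphism on $F_n^\infty({\bf D}^m_f)$, WOT-continuous on bounded sets, carrying $W_\alpha$ to $L_\alpha$. Given a multiplier $\varphi=\sum_{{\bf k}\in\NN_0^n}c_{\bf k}\lambda^{\bf k}\in H^\infty({\bf D}_{f,\circ}^1(\CC))$, its Ces\`aro truncations $\sigma_N(\varphi):=\sum_{|{\bf k}|<N}\bigl(1-\tfrac{|{\bf k}|}{N}\bigr)c_{\bf k}\lambda^{\bf k}$ give operators $M_{\sigma_N(\varphi)}$ that are polynomials in $L_1,\ldots,L_n$ (the $L_i$ commute, being multipliers). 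The key technical point is the uniform bound $\|M_{\sigma_N(\varphi)}\|\le\|M_\varphi\|$, which I would deduce by transporting the completely contractive Ces\`aro operators on $B(F^2(H_n))$ (Lemma~1.1 of \cite{DP}, already invoked in Section~3) through the grading of $F_s^2({\bf D}^m_f)$ by the vectors $w^{\bf k}$. With this bound in hand, $M_{\sigma_N(\varphi)}\to M_\varphi$ in the strong operator topology (convergence holds on the dense set of polynomials, and the family is bounded), so $M_\varphi\in\cW$.

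Reflexivity then follows formally. By Theorem~\ref{eigenvectors} together with Theorem~\ref{symm-Fock}(i), each $z_\lambda$ with $\lambda\in{\bf D}_{f,\circ}^1(\CC)$ lies in $F_s^2({\bf D}^m_f)$ and satisfies $L_i^*z_\lambda=\overline{\lambda}_iz_\lambda$, so $\CC z_\lambda$ is invariant under $L_1^*,\ldots,L_n^*$ and therefore $(\CC z_\lambda)^{\perp}$ is invariant under $L_1,\ldots,L_n$. Hence any $A\in\text{Alg\,Lat}\,\{L_1,\ldots,L_n\}$ fixes $(\CC z_\lambda)^{\perp}$, so $A^*z_\lambda=\overline{\varphi_A(\lambda)}z_\lambda$ for a scalar $\varphi_A(\lambda)$; pairing with $z_\lambda$ and using that $\{z_\lambda\}$ is total in $F_s^2({\bf D}^m_f)$ yields $(A\psi)(\lambda)=\varphi_A(\lambda)\psi(\lambda)$ for every $\psi$, i.e. $A=M_{\varphi_A}$ with $\varphi_A=A(1)\in H^2({\bf D}_{f,\circ}^1(\CC))$, and boundedness of $A$ forces $\varphi_A\in H^\infty({\bf D}_{f,\circ}^1(\CC))$. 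Thus $\text{Alg\,Lat}\,\{L_1,\ldots,L_n\}\subseteq H^\infty({\bf D}_{f,\circ}^1(\CC))$. Finally, invariance of a closed subspace is preserved under WOT-limits of bounded operators, so $\text{Lat}\,\cW=\text{Lat}\,\{L_1,\ldots,L_n\}$; combining everything, $\cW\subseteq\text{Alg\,Lat}\,\cW=\text{Alg\,Lat}\,\{L_1,\ldots,L_n\}\subseteq H^\infty({\bf D}_{f,\circ}^1(\CC))=\cW$, which is precisely the claimed identity, and in particular $\cW$ is reflexive.

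The main obstacle is the uniform estimate $\|M_{\sigma_N(\varphi)}\|\le\|M_\varphi\|$ --- equivalently, the statement $H^\infty({\bf D}_{f,\circ}^1(\CC))\subseteq\cW$, i.e. that every multiplier of $H^2({\bf D}_{f,\circ}^1(\CC))$ already lies in the weakly closed algebra generated by $M_{\lambda_1},\ldots,M_{\lambda_n}$. An alternative route to it is to prove a commutant lifting theorem showing the compression map $F_n^\infty({\bf D}^m_f)\to B(F_s^2({\bf D}^m_f))$ maps \emph{onto} $H^\infty({\bf D}_{f,\circ}^1(\CC))$, and then invoke the WOT-density of polynomials from Proposition~\ref{density-pol}; everything else in the argument is either lattice bookkeeping or the direct eigenvector computation above.
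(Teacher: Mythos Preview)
Your proposal is correct and follows essentially the same route as the paper: Ces\`aro approximation to show $H^\infty({\bf D}_{f,\circ}^1(\CC))\subseteq\cW$, and the eigenvector argument with the kernels $z_\lambda$ for reflexivity. The paper likewise invokes the Ces\`aro machinery from Section~3 in a single sentence without further detail; your remark that $F_s^2({\bf D}^m_f)$ is graded by the homogeneous vectors $w^{\bf k}$ (so the projections $Q_k$ restrict and the $\Sigma_N$ remain contractive on $B(F_s^2({\bf D}^m_f))$) is exactly what justifies the bound $\|M_{\sigma_N(\varphi)}\|\le\|M_\varphi\|$ you flag as the main obstacle, so no commutant lifting detour is needed.
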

\begin{proof} First we show that $H^\infty({\bf
D}_{f,\circ}^1(\CC))$ is  included in the weakly closed algebra
generated by the operators $L_1,\ldots, L_n$ and the identity.
Suppose that $g=\sum_{{\bf k}\in \NN_0} c_{\bf k} w^{\bf k}$ is a
bounded multiplier, i.e., $M_g\in B(F_s^2(\cD_f))$. As in  Section
3,  using Cesaro means,  one can find a sequence of polynomials
$p_m=\sum c_{\bf k}^{(m)} w^{\bf k}$ such that $M_{p_m}$ converges
to $M_g$ in the strong operator topology and, consequently, in  the
$WOT$-topology.   Since $M_{p_m}$ is a polynomial in $L_1,\ldots,
L_n$ and the identity,  our assertion follows.

 Now, let $X\in B(F_s^2({\bf D}^m_f))$ be an operator   that leaves
invariant all the invariant subspaces under each operator
$L_1,\ldots, L_n$. Due to Theorem \ref{eigenvectors}, we have
$L_i^*z_\lambda=\overline{\lambda}_i z_\lambda$ for any $\lambda\in
{\bf D}_{f,\circ}^1(\CC)$ and $i=1,\ldots,n$. Since $X^*$ leaves
invariant all the invariant subspaces under $L_1^*,\ldots, L_n^*$,
the vector $z_\lambda$ must be an eigenvector for $X^*$.
Consequently, there is a function $\varphi:{\bf
D}_{f,\circ}^1(\CC)\to \CC$ such that
$X^*z_\lambda=\overline{\varphi(\lambda)} z_\lambda$ for any
$\lambda\in {\bf D}_{f,\circ}^1(\CC)$. Notice that, if $f\in
F_s^2({\bf D}^m_f)$, then, due to Theorem \ref{symm-Fock}, $Xf$
has the functional representation
$$
(Xf)(\lambda)=\left<Xf,z_\lambda\right>=\left<f,X^*z_\lambda\right>=
\varphi(\lambda)f(\lambda)\quad \text{ for all }\ \lambda\in {\bf
D}_{f,\circ}^1(\CC).
$$
In particular, if $f=1$, then  the the functional representation  of
$X(1)$  coincide with $\varphi$. Consequently, $\varphi$ admits a
power series representation  on ${\bf D}_{f,\circ}^1(\CC)$ and can
be identified with $X(1)\in F_s^2({\bf D}^m_f)$. Moreover, the
equality above shows that $\varphi f\in H^2({\bf
D}_{f,\circ}^1(\CC))$ for any $f\in F_s^2({\bf D}^m_f)$. This shows
that $\varphi$ is in $H^\infty({\bf D}_{f,\circ}^1(\CC))$ and
completes the proof of reflexivity. Hence, $H^\infty({\bf
D}_{f,\circ}^1(\CC))$ is a WOT-closed algebra containing
$L_1,\ldots, L_n$ and the identity. This implies the second part of
the theorem.
\end{proof}

\bigskip

\section{  Noncommutative  Varieties,  Berezin transforms, and Universal Models }

In  this section, we consider noncommutative varieties
$\cV_{f,\cQ}^m(\cH)\subset {\bf D}_f^m(\cH)$ determined by  sets
$\cQ$ of noncommutative polynomials, and  associate  with each such
a variety a universal model $(B_1,\ldots,
B_n)\in\cV_{f,\cQ}^m(\cN_\cQ)$, where $\cN_\cQ$ is an appropriate
subspace of the full Fock  space. We introduce a {\it constrained
noncommutative Berezin transform} and use it to obtain analogues of
the results of Section 2, for subvarieties. We also show that, under
a natural  condition, the $C^*$-algebra $C^*(B_1,\ldots, B_n)$ is
irreducible and all the compacts operators in $B(\cN_\cQ)$ are
contained in the operator space
 $\overline{\text{\rm span}}\{B_\alpha B_\beta^*:\ \alpha,\beta\in \FF_n^+\}$.
 These results are vital for the development of a model theory on noncommutative
  varieties.

Let $f:=\sum_{|\alpha|\geq1} a_\alpha X_\alpha$ be a positive
regular free holomorphic function on $[B(\cH)^n]_\rho$, $\rho>0$,
and
 let $W_1,\ldots, W_n$ be the weighted left creation operators
 associated with  the noncommutative domain ${\bf D}_f^m$.
 Let $\cQ$ be a family of  noncommutative  polynomials   and define the noncommutative variety
 $$
\cV_{f,\cQ}^m(\cH):= \left\{(X_1,\ldots, X_n)\in {\bf D}_f^m(\cH):\
q(X_1,\ldots, X_n)=0\quad \text{ for any }\quad q\in \cQ\right\}.
$$
We associate with $\cV_{f,\cQ}^m$ the   the  operators $B_1,\ldots,
B_n$   defined as follows. Consider  the subspaces
 \begin{equation}
 \label{MQ}
 \cM_\cQ:=\overline{\text{\rm span}}\{W_\alpha q(W_1,\ldots, W_n) W_\beta(1): \ q\in \cQ,
 \alpha, \beta\in \FF_n^+\}
 \end{equation}
 and $\cN_\cQ:=F^2(H_n)\ominus \cM_\cQ$. We assume that $\cN_\cQ\neq
 \{0\}$. It is easy to see that $\cN_\cQ$ is invariant under each
 operator $W_1^*,\ldots, W_n^*$ and $\Lambda^*_1,\ldots, \Lambda^*_n$. Define $B_i:=P_{\cN_\cQ}
 W_i|_{\cN_\cQ}$ and $C_i:=P_{\cN_\cQ} \Lambda_i|\cN_\cQ $ for $i=1,\ldots, n$, where
 $P_{\cN_\cQ}$ is the orthogonal projection of $F^2(H_n)$ onto $\cN_{\cQ}$.
 Notice that  $q(B_1,\ldots,B_n)=0$ for any $q\in \cQ$.
 By taking the compression to the subspace $\cN_\cQ$, in Theorem
 \ref{prop-shif}, we obtain similar results, where the universal
 model  $(W_1,\ldots, W_n)$ is replaced by the $n$-tuple
 $(B_1,\ldots, B_n)$. In particular, we deduce that
   $(B_1,\ldots, B_n)\in \cV_{f,\cQ}^m(\cN_\cQ)$ is a pure $n$-tuple of operators
   which will play the role of universal model
   for the noncommutative variety $\cV_{f,\cQ}^m$.

For each  $n$-tuple $T:=(T_1,\ldots, T_n)\in \cV_{f,\cQ}^m(\cH)$ with
$r_f(T_1,\ldots, T_n)<1$, we introduce the {\it constrained
noncommutative Berezin transform}  at $T$ as the map ${\bf
B}^c_T:B(\cN_\cQ) \to B(\cH)$ defined by

\begin{equation}
\label{Berezin-c}
 \left<{\bf B}^c_T[g]x,y\right>:=
\left<\left( I-\sum_{|\alpha|\geq 1} \overline{a}_{\tilde\alpha}
C_\alpha^* \otimes T_{\tilde\alpha} \right)^{-m}
 (g\otimes \Delta_{T,m,f}^2)
  \left(
I-\sum_{|\alpha|\geq 1} a_{\tilde\alpha} C_\alpha \otimes
T_{\tilde\alpha}^* \right)^{-m}(1\otimes x), 1\otimes y\right>
\end{equation}
  where $\Delta_{T,m,f}:= [(id-\Phi_{f,T})^m(I)]^{1/2}$ and $x,y\in \cH$.
We  define  the {\it extended  constrained noncommutative Berezin
transform} $\widetilde{\bf B}^c_T$
 at any $T\in  \cV_{f,\cQ}^m(\cH)$ by setting
 \begin{equation}
 \label{def-Be2c}
 \widetilde{\bf B}^c_T[g]:= {K_{f,T,\cQ}^{(m)}}^* (g\otimes I_\cH)K_{f,T,\cQ}^{(m)},
 \quad g\in B( \cN_\cQ),
 \end{equation}
where   the {\it constrained noncommutative  Berezin kernel}
associated with the $n$-tuple $T\in \cV_{f,\cQ}^m(\cH)$ is  the
bounded operator  \ $K_{f,T, \cQ}^{(m)}:\cH\to \cN_\cQ\otimes
\overline{\Delta_{f,m,T} \cH}$ defined by
$$K_{f,T,\cQ}^{(m)}:=(P_{\cN_\cQ}\otimes I_{\overline{\Delta_{f,m,T} \cH}})K_{f,T}^{(m)},
$$
where $K_{f,T}^{(m)}$ is the Berezin kernel associated with $T\in
{\bf D}_f^m(\cH)$.

Using the results from Section 2 (see Proposition \ref{Berezin2}), one
can show that the constrained nocommutative Berezin transforms
$\widetilde{\bf B}^c_T$ and
  ${\bf B}^c_T$ coincide for any $n$-tuple of operators
   $T:=(T_1, \ldots, T_n)\in  \cV_{f,\cQ}^m(\cH)$ with joint spectral radius
   $r_f(T_1,\ldots, T_n)<1$.

\begin{theorem}\label{vN1-variety}
Let $f$ be a positive regular free holomorphic function on
$[B(\cH)^n]_\rho$, $\rho>0$,  and  let $\cQ$
  be a family of  noncommutative polynomials such that  $\cN_\cQ\neq
 \{0\}$.
  If $T:=(T_1,\ldots, T_n)$  is a pure
$n$-tuple of operators in the noncommutative variety
$\cV_{f,\cQ}^m(\cH)$, then the restriction of the constrained
noncommutative Berezin transform $\widetilde {\bf B}^c_T$
 to  $\overline{\text{\rm  span}} \{ B_\alpha B_\beta^*:\
\alpha,\beta\in \FF_n^+\}$ is
    a unital completely contractive linear map
such that
 $$
\widetilde{\bf B}^c_T(B_\alpha B_\beta^*)=T_\alpha T_\beta^*, \quad
\alpha,\beta\in \FF_n^+.
$$
\end{theorem}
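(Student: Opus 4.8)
The plan is to transfer the corresponding unconstrained result, Theorem~\ref{purecase}, down to the subspace $\cN_\cQ$ via the compression map. First I would observe that, since $T\in\cV_{f,\cQ}^m(\cH)\subset{\bf D}_f^m(\cH)$ is pure, Lemma~\ref{Berezin-lemma}(i) gives that the (unconstrained) Berezin kernel $K_{f,T}^{(m)}\colon\cH\to F^2(H_n)\otimes\overline{\Delta_{f,m,T}\cH}$ is an isometry, because $Q_{f,T}=0$. The constrained Berezin kernel is by definition $K_{f,T,\cQ}^{(m)}=(P_{\cN_\cQ}\otimes I)K_{f,T}^{(m)}$, so the central claim to establish is that $K_{f,T,\cQ}^{(m)}$ is \emph{also} an isometry, i.e.\ that the range of $K_{f,T}^{(m)}$ already lies in $\cN_\cQ\otimes\overline{\Delta_{f,m,T}\cH}$. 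This is the main point, and I expect it to follow from the intertwining relation $K_{f,T}^{(m)}T_i^*=(W_i^*\otimes I)K_{f,T}^{(m)}$ of Lemma~\ref{Berezin-lemma}(ii): for any $q\in\cQ$, iterating this relation gives $K_{f,T}^{(m)}q(T_1,\dots,T_n)^* = \big(q(W_1,\dots,W_n)^*\otimes I\big)K_{f,T}^{(m)}$ (here one uses that $q$ is a polynomial and that the relation composes), and since $q(T_1,\dots,T_n)=0$ in $\cV_{f,\cQ}^m(\cH)$ we get that the range of $K_{f,T}^{(m)}$ is killed by $q(W_1,\dots,W_n)^*\otimes I$ for every $q\in\cQ$; combined with a further application of the intertwining to handle the $W_\alpha\cdot W_\beta(1)$ factors in the definition~\eqref{MQ} of $\cM_\cQ$, this shows $\operatorname{Ran} K_{f,T}^{(m)}\perp\cM_\cQ\otimes\overline{\Delta_{f,m,T}\cH}$, hence lands in $\cN_\cQ\otimes\overline{\Delta_{f,m,T}\cH}$.

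Granting that $K_{f,T,\cQ}^{(m)}$ is an isometry, the rest is the same computation as in Theorem~\ref{purecase}. Using the definition~\eqref{def-Be2c} of $\widetilde{\bf B}^c_T$ together with $B_i=P_{\cN_\cQ}W_i|_{\cN_\cQ}$, and the fact that $\cN_\cQ$ is invariant under $W_1^*,\dots,W_n^*$ (so that $B_\alpha^* = W_\alpha^*|_{\cN_\cQ}$ and $B_\alpha B_\beta^* = P_{\cN_\cQ}W_\alpha W_\beta^*|_{\cN_\cQ}$), one computes
\begin{equation*}
\widetilde{\bf B}^c_T[B_\alpha B_\beta^*]
= {K_{f,T,\cQ}^{(m)}}^*\big(B_\alpha B_\beta^*\otimes I_\cH\big)K_{f,T,\cQ}^{(m)}
= {K_{f,T}^{(m)}}^*\big(W_\alpha W_\beta^*\otimes I_\cH\big)K_{f,T}^{(m)}
= T_\alpha T_\beta^*,
\end{equation*}
where the middle equality uses that $\operatorname{Ran}K_{f,T}^{(m)}\subseteq\cN_\cQ\otimes\overline{\Delta_{f,m,T}\cH}$ and the last equality is exactly Theorem~\ref{purecase} (equivalently, a direct calculation with the intertwining relation of Lemma~\ref{Berezin-lemma}(ii) as in the proof of Theorem~\ref{purecase}).

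Finally, unitality and complete contractivity of the restriction of $\widetilde{\bf B}^c_T$ to $\overline{\operatorname{span}}\{B_\alpha B_\beta^*\}$ are immediate: since $K_{f,T,\cQ}^{(m)}$ is an isometry, the map $X\mapsto {K_{f,T,\cQ}^{(m)}}^*(X\otimes I_\cH)K_{f,T,\cQ}^{(m)}$ on $B(\cN_\cQ)$ is unital and completely positive, hence completely contractive, and its restriction to the operator system $\overline{\operatorname{span}}\{B_\alpha B_\beta^*\}$ inherits these properties. The only real obstacle is the range inclusion $\operatorname{Ran}K_{f,T}^{(m)}\subseteq\cN_\cQ\otimes\overline{\Delta_{f,m,T}\cH}$; once the bookkeeping with the intertwining relation and the generators of $\cM_\cQ$ is carried out carefully (paying attention to reversal conventions $\tilde\alpha$ coming from the weighted shifts), everything else is formal.
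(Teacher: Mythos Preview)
Your proposal is correct and follows essentially the same route as the paper: the paper also establishes the range inclusion $\ran K_{f,T}^{(m)}\subseteq\cN_\cQ\otimes\overline{\Delta_{f,m,T}\cH}$ via the intertwining relation of Lemma~\ref{Berezin-lemma}(ii) (computing $\langle K_{f,T}^{(m)}x,\,W_\alpha q(W)W_\beta(1)\otimes y\rangle=\langle x,\,T_\alpha q(T)T_\beta {K_{f,T}^{(m)}}^*(1\otimes y)\rangle=0$), then uses that $K_{f,T,\cQ}^{(m)}$ is therefore an isometry to conclude exactly as you do. Your parenthetical caution about reversal conventions $\tilde\alpha$ is unnecessary here---the intertwining involves the $W_i$, not the $\Lambda_i$, so no reversals enter---but this does not affect correctness.
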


\begin{proof}
Using  Lemma \ref{Berezin-lemma},
  we have
$$
\left<  K_{f,T}^{(m)}x, W_\alpha q(W_1,\ldots, W_n)
W_\beta(1)\otimes y\right>=\left<x,T_\alpha q(T_1,\ldots,T_n)T_\beta
{K_{f,T}^{(m)}}^*(1\otimes y\right>=0
$$
for any $x\in \cH$, $y\in \overline{\Delta_{f,m,T}\cH}$, and $q\in
\cQ$.  Hence, we deduce  that
\begin{equation}\label{range}
\text{\rm range}\,K_{f,T}^{(m)}\subseteq \cN_\cQ\otimes
\overline{\Delta_{f,m,T}\cH}.
\end{equation}
 Due to the definition of the constrained Berezin kernel associated with
the $n$-tuple $T\in \cV_{f,\cQ}^m(\cH)$, and using  Lemma \ref{Berezin-lemma}
and relation \eqref{range},  we obtain
\begin{equation}\label{KJ}
K_{f,T,\cQ}^{(m)}T_\alpha^*=(B_\alpha^*\otimes
I_\cH)K_{f,T,\cQ}^{(m)},\quad \alpha\in \FF_n^+.
\end{equation}
Since \eqref{range} holds  and $K_{f,T}^{(m)}$ is an isometry,   so is
$K_{f,T,\cQ}^{(m)}$. Consequently, using relation \ref{KJ}, we
deduce that
$$
\widetilde{\bf B}^c_T(B_\alpha
B_\beta^*)={K_{f,T,\cQ}^{(m)}}^*(B_\alpha B_\beta^*\otimes
I_\cH)K_{f,T,\cQ}^{(m)} =T_\alpha T_\beta^*, \quad \alpha,\beta\in
\FF_n^+.
$$
Now, one can easily deduce that  $\widetilde {\bf B}^c_T$ is a
unital completely contractive linear map on $\overline{\text{\rm
span}} \{ B_\alpha B_\beta^*:\ \alpha,\beta\in \FF_n^+\}$. The proof
is complete.
\end{proof}

We recall that an $n$-tuple of operators $T:=(T_1,\ldots, T_n)\in
\cV_{f,\cQ}^m(\cH) $ has the radial property with respect to the
noncommutative variety
 $\cV_{f,\cQ}^m(\cH)$  if there is $\delta\in (0,1)$ such that
 $rT:=(rT_1,\ldots,
 rT_n)\in \cV_{f,\cQ}^m(\cH)$  for any $ r\in (\delta, 1)$.

 \begin{theorem}\label{vN2-variety}
 Let
 $f$ be  a positive regular free holomorphic function on $[B(\cH)^n]_\rho$, $\rho>0$,
  and let $\cQ$ be a set of
    homogenous polynomials. Let
 $T:=(T_1,\ldots, T_n) $  be an $n$-tuple of operators  with the radial property
 in the noncommutative variety
 $\cV_{f,\cQ}^m(\cH)$
  and let
  $\cS:=\overline{\text{\rm  span}} \{ B_\alpha B_\beta^*;\
\alpha,\beta\in \FF_n^+\}$.
   Then there is
    a unital completely contractive linear map
$ \Psi_{f,T,\cQ}: \cS\to B(\cH) $ such that
\begin{equation}\label{Po-tran3}
\Psi_{f,T,\cQ}(g)=\lim_{r\to 1} {\bf B}^c_{rT}[g],\qquad g\in \cS,
\end{equation}
where the limit exists in the norm topology of $B(\cH)$, and \ $
\Psi_{f,T,\cQ}(B_\alpha B_\beta^*)=T_\alpha T_\beta^*, \quad
\alpha,\beta\in \FF_n^+. $
 If, in addition,
 $T$ is a  pure $n$-tuple of operators,
then  $$\lim_{r\to 1} {\bf B}^c_{rT}[g]=\widetilde{\bf
B}^c_T[g],\qquad g\in \cS,$$ where the limit exists in the norm
topology of $B(\cH)$.

 \end{theorem}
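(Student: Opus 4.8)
The plan is to transcribe the proof of Theorem \ref{Poisson-C*}, replacing the universal model $(W_1,\dots,W_n)$ and its Berezin kernel $K_{f,T}^{(m)}$ by the constrained model $(B_1,\dots,B_n)$ and the constrained kernel $K_{f,T,\cQ}^{(m)}$. First I would check that, for each $r\in(\delta,1)$, the scaled tuple $rT:=(rT_1,\dots,rT_n)$ still lies in $\cV_{f,\cQ}^m(\cH)$: it belongs to ${\bf D}_f^m(\cH)$ by the radial property, and if $q\in\cQ$ is homogeneous of degree $d$ then $q(rT_1,\dots,rT_n)=r^{d}\,q(T_1,\dots,T_n)=0$. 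This is the only genuinely new point compared with Theorem \ref{Poisson-C*}, and it is precisely where the hypothesis that $\cQ$ consists of homogeneous polynomials is used. Moreover $rT$ is a pure $n$-tuple, since $\Phi_{f,rT}^k(I)\le r^{2k}\Phi_{f,T}^k(I)\le r^{2k}I$; hence $Q_{f,rT}=0$, so by \eqref{K*K} and \eqref{range} the constrained kernel $K_{f,rT,\cQ}^{(m)}$ is an isometry, and $r_f(rT)<1$, so by the equality of ${\bf B}^c_{rT}$ and $\widetilde{\bf B}^c_{rT}$ noted before the theorem together with \eqref{def-Be2c} we have ${\bf B}^c_{rT}[g]={K_{f,rT,\cQ}^{(m)}}^*(g\otimes I_\cH)K_{f,rT,\cQ}^{(m)}$ for all $g\in B(\cN_\cQ)$.

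Next I would apply \eqref{KJ} to the tuple $rT$, getting $r^{|\alpha|}K_{f,rT,\cQ}^{(m)}T_\alpha^*=(B_\alpha^*\otimes I_\cH)K_{f,rT,\cQ}^{(m)}$, and combine this with the isometry property of $K_{f,rT,\cQ}^{(m)}$ to obtain
$$
{K_{f,rT,\cQ}^{(m)}}^*(B_\alpha B_\beta^*\otimes I_\cH)K_{f,rT,\cQ}^{(m)}=r^{|\alpha|+|\beta|}T_\alpha T_\beta^*,\qquad\alpha,\beta\in\FF_n^+,
$$
as well as its $k\times k$ matrix amplification. Since $K_{f,rT,\cQ}^{(m)}$ is an isometry, this yields the von Neumann type inequality
$$
\left\|\sum_{\alpha,\beta\in\Lambda}c_{\alpha,\beta}T_\alpha T_\beta^*\right\|\le\left\|\sum_{\alpha,\beta\in\Lambda}c_{\alpha,\beta}B_\alpha B_\beta^*\right\|
$$
for every finite $\Lambda\subset\FF_n^+$ and all scalars $c_{\alpha,\beta}$, together with its matrix version. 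For $g\in\cS$, choose polynomials $q_\ell=\sum_{\alpha,\beta}c^{(\ell)}_{\alpha,\beta}B_\alpha B_\beta^*$ with $q_\ell\to g$ in norm and set $\Psi_{f,T,\cQ}(g):=\lim_\ell q_\ell(T,T^*)$; the scalar inequality shows the limit exists and does not depend on the approximating sequence, and the matrix version shows that $\Psi_{f,T,\cQ}$ is a well-defined unital completely contractive linear map with $\Psi_{f,T,\cQ}(B_\alpha B_\beta^*)=T_\alpha T_\beta^*$.

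To identify $\Psi_{f,T,\cQ}(g)$ with $\lim_{r\to1}{\bf B}^c_{rT}[g]$ in norm, I would note that for a monomial ${\bf B}^c_{rT}[B_\alpha B_\beta^*]=r^{|\alpha|+|\beta|}T_\alpha T_\beta^*\to T_\alpha T_\beta^*=\Psi_{f,T,\cQ}(B_\alpha B_\beta^*)$ as $r\to1$; the general case follows by an $\varepsilon/3$ argument using the uniform bound $\|{\bf B}^c_{rT}[g]\|\le\|g\|$ (from the isometry of $K_{f,rT,\cQ}^{(m)}$), the bound $\|\Psi_{f,T,\cQ}\|\le1$, and the density of polynomials in $B_\alpha,B_\beta^*$ in $\cS$. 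Finally, if $T$ is itself pure, then by Theorem \ref{vN1-variety} the map $\widetilde{\bf B}^c_T$ is a unital completely contractive (hence continuous) linear map on $\cS$ with $\widetilde{\bf B}^c_T[B_\alpha B_\beta^*]=T_\alpha T_\beta^*=\Psi_{f,T,\cQ}(B_\alpha B_\beta^*)$; since the two continuous maps agree on the dense span of the monomials, $\widetilde{\bf B}^c_T=\Psi_{f,T,\cQ}$ on $\cS$, and combining this with the previous identification gives $\lim_{r\to1}{\bf B}^c_{rT}[g]=\widetilde{\bf B}^c_T[g]$ in norm. The only nontrivial step in the argument is the stability $rT\in\cV_{f,\cQ}^m(\cH)$, which rests on the homogeneity of $\cQ$; the remainder is a line-by-line adaptation of the proofs of Theorem \ref{Poisson-C*} and Theorem \ref{vN1-variety}, with the constrained kernel replacing the unconstrained one.
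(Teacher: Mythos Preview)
Your proposal is correct and follows essentially the same approach as the paper: both verify that $rT\in\cV_{f,\cQ}^m(\cH)$ using homogeneity of $\cQ$, observe that the constrained Berezin kernel $K_{f,rT,\cQ}^{(m)}$ is an isometry (via purity of $rT$ and the range inclusion from the proof of Theorem~\ref{vN1-variety}), derive the intertwining relation and resulting von Neumann inequality, and then transcribe the approximation argument of Theorem~\ref{Poisson-C*}. Your write-up is in fact more explicit than the paper's, which simply states the key relation $K_{f,rT,\cQ}^{(m)}\,r^{|\alpha|}T_\alpha^*=(B_\alpha^*\otimes I_\cH)K_{f,rT,\cQ}^{(m)}$ and then refers the reader to the proof of Theorem~\ref{Poisson-C*} for the rest.
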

\begin{proof} Let  $\delta\in (0,1)$  be such that
 $rT:=(rT_1,\ldots,
 rT_n)\in {\bf D}_{f}^m(\cH)$  for any $ r\in (\delta, 1)$. Since $\cQ$
 consists of homogenous polynomials we also have  $rT\in
 \cV_{f,\cQ}^m(\cH)$.
Moreover, we can show, as in the proof of Theorem \ref{vN1-variety},
that $\text{\rm range}\, K_{f,rT}^{(m)}\subseteq \cN_\cQ\otimes \cH$
for any $r\in (\delta,1)$, where $K_{f,rT}^{(m)}$ is the Berezin
kernel associated with $rT\in{\bf D}^m_f(\cH)$. Moreover,
$$
K_{f,rT,\cQ}^{(m)} r^{|\alpha|} T_\alpha^*=(B_\alpha^*\otimes
I_\cH)K_{f,rT,\cQ}^{(m)}, \quad \alpha\in \FF_n^+,
$$
where $K_{f,rT,\cQ}^{(m)}:=(P_{\cN_\cQ}\otimes I_\cH)
K_{f,rT}^{(m)}$ is the constrained Berezin kernel and
$B_i:=P_{\cN_\cQ} W_i|_{\cN_\cQ} $, \ $i=1,\ldots,n$. Since $rT$ is
 pure, $K_{f,rT,\cQ}^{(m)}$ is an isometry.
Consequently, as in the proof of Theorem \ref{Poisson-C*}, we deduce
that
   there is a unique unital
completely contractive linear map $\Psi_{p,T,\cQ}:\cS\to B(\cH) $
such that $\Psi_{p,T,\cQ}(B_\alpha B_\beta^*)=T_\alpha T_\beta^*$, \
$\alpha,\beta\in \FF_n^+$. The rest of the proof is similar to that
of Theorem \ref{Poisson-C*}. We shall omit it.
\end{proof}

 Assume now  that $p$ is a positive regular noncommutative  polynomial
 and let ${\bf D}^m_p$ be the noncommutative domain  it generates.
 The next result will play an important role in Section  6,  where we    develop  a  model
 theory  on noncommutative subvarieties of ${\bf D}^m_p$.

\begin{theorem}\label{compact}
Let $\cQ$ be a set of noncommutative polynomials such that
$1\in\cN_\cQ$, and let $(B_1,\ldots, B_n)$ be the universal model
associated with the noncommutative variety $\cV_{p,\cQ}^m$. Then all
the compact operators in $B(\cN_\cQ)$ are contained in the operator
space
$$\overline{\text{\rm span}}\{B_\alpha B_\beta^*:\ \alpha,\beta\in \FF_n^+\}.
$$
Moreover,   the $C^*$-algebra  $C^*(B_1,\ldots, B_n)$ is
irreducible.
\end{theorem}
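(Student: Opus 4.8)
The strategy is standard for this type of result: first show that $\cN_\cQ$ contains a rank-one projection lying in $\overline{\text{span}}\{B_\alpha B_\beta^*\}$, then use irreducibility-type arguments to conclude that the whole ideal of compacts is captured. The starting point is the observation, already available from the compression of Theorem \ref{prop-shif} to $\cN_\cQ$, that
$$
\sum_{\beta\in\FF_n^+} b_\beta^{(m)} B_\beta\bigl[(id-\Phi_{p,B})^m(I_{\cN_\cQ})\bigr] B_\beta^* = I_{\cN_\cQ},
$$
and that $(id-\Phi_{p,B})^m(I_{\cN_\cQ}) = P_{\cN_\cQ} P_\CC|_{\cN_\cQ}$. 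Since $1\in\cN_\cQ$, the projection $P_\CC$ of $F^2(H_n)$ onto $\CC 1$ satisfies $P_\CC|_{\cN_\cQ} = P_\CC$ as an operator on $\cN_\cQ$, i.e. it is the rank-one projection $P_{\CC 1}$ onto the constants inside $\cN_\cQ$. Thus $P_{\CC 1} = (id-\Phi_{p,B})^m(I_{\cN_\cQ})$, which is a finite alternating sum of terms $\Phi_{p,B}^j(I_{\cN_\cQ}) = \sum a_{\alpha_1}\cdots a_{\alpha_j} B_{\alpha_1\cdots\alpha_j} B_{\alpha_1\cdots\alpha_j}^*$ — each an element of $\overline{\text{span}}\{B_\alpha B_\beta^*\}$ (here using that $p$ is a polynomial, so the sums defining $\Phi_{p,B}^j(I)$ are finite). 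Hence $P_{\CC 1}\in\overline{\text{span}}\{B_\alpha B_\beta^*:\alpha,\beta\in\FF_n^+\}$.

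Next I would produce all the matrix units. For $\alpha,\beta\in\FF_n^+$ one computes, using \eqref{WbWb} and the fact that $P_{\CC 1} = P_{\cN_\cQ}P_\CC|_{\cN_\cQ}$, that
$$
B_\alpha P_{\CC 1} B_\beta^* = P_{\cN_\cQ} W_\alpha P_\CC W_\beta^*|_{\cN_\cQ}
= \tfrac{1}{\sqrt{b_\alpha^{(m)} b_\beta^{(m)}}}\, P_{\cN_\cQ}\bigl(e_\alpha \langle\,\cdot\,,e_\beta\rangle\bigr)|_{\cN_\cQ},
$$
the rank-one operator on $\cN_\cQ$ sending the component along $P_{\cN_\cQ}e_\beta$ to a multiple of $P_{\cN_\cQ}e_\alpha$. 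Since $\{P_{\cN_\cQ}e_\alpha:\alpha\in\FF_n^+\}$ spans a dense subspace of $\cN_\cQ$, the norm-closed span of $\{B_\alpha P_{\CC 1}B_\beta^*\}$ contains every rank-one operator on $\cN_\cQ$, and therefore all finite-rank operators, and therefore (being norm closed) all compact operators in $B(\cN_\cQ)$. All of these lie in $\overline{\text{span}}\{B_\alpha B_\beta^*\}$ since $P_{\CC 1}$ does and the span is a $*$-closed subspace invariant under left multiplication by $B_\alpha$ and right multiplication by $B_\beta^*$ — one should check that $B_\gamma B_\alpha B_\beta^* B_\delta^*$ is again (a limit of combinations) in the span, which follows because $B_\gamma B_\alpha = P_{\cN_\cQ} W_\gamma W_\alpha|_{\cN_\cQ} = P_{\cN_\cQ} W_{\gamma\alpha}|_{\cN_\cQ} = B_{\gamma\alpha}$ using invariance of $\cN_\cQ$ under the $W_i^*$.

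For irreducibility of $C^*(B_1,\dots,B_n)$: any reducing subspace $\cR\subseteq\cN_\cQ$ gives an orthogonal projection commuting with all $B_i, B_i^*$, hence with everything in $C^*(B_1,\dots,B_n)$, hence — by the first two paragraphs — with all compact operators in $B(\cN_\cQ)$, in particular with every rank-one projection on $\cN_\cQ$. A projection commuting with all rank-one projections is $0$ or $I_{\cN_\cQ}$, so $C^*(B_1,\dots,B_n)$ is irreducible.

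The main obstacle I anticipate is purely bookkeeping: verifying carefully that $P_\CC$ restricted/compressed to $\cN_\cQ$ really is the rank-one projection onto $\CC 1$ (this uses $1\in\cN_\cQ$ in an essential way, since otherwise $P_{\cN_\cQ}P_\CC|_{\cN_\cQ}$ could be $0$), and that the algebraic identity $P_{\CC 1} = (id-\Phi_{p,B})^m(I_{\cN_\cQ})$ transfers correctly from the ambient $F^2(H_n)$ — one needs $\cN_\cQ$ invariant under each $W_i^*$ (already noted in the text) so that compressions of $W_\alpha W_\alpha^*$ behave as expected and $\Phi_{p,B}^j(I_{\cN_\cQ})$ equals the compression of $\Phi_{p,W}^j(I)$. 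Everything else is a routine translation of the unconstrained argument (cf. the reasoning behind the $C^*$-algebra statements in Section 2) to the subspace $\cN_\cQ$.
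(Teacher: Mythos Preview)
Your proof is correct and, for the first assertion (compacts lie in the span), it follows exactly the paper's line: identify $(id-\Phi_{p,B})^m(I_{\cN_\cQ})$ with the rank-one projection $P_\CC^{\cN_\cQ}$ onto $\CC 1$ inside $\cN_\cQ$, observe it lies in the span because $p$ is a polynomial, and then generate all rank-one operators as $B_\alpha P_\CC^{\cN_\cQ} B_\beta^*$. The paper phrases the last step via polynomials $q(B)(1)$, $g(B)(1)$ rather than the compressed basis vectors $P_{\cN_\cQ}e_\alpha$, but since $B_\alpha(1)=\frac{1}{\sqrt{b_\alpha^{(m)}}}P_{\cN_\cQ}e_\alpha$ these are the same vectors up to scalars.

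Your irreducibility argument differs from the paper's. You deduce it from the first part: since the compacts lie in $\overline{\text{span}}\{B_\alpha B_\beta^*\}\subseteq C^*(B_1,\ldots,B_n)$, any projection in the commutant commutes with every rank-one operator and is therefore scalar. The paper instead gives a direct argument independent of the first part: given a nonzero reducing subspace $\cM$, pick $\varphi\in\cM$ with a nonzero coefficient $c_\beta$, compute $P_\CC^{\cN_\cQ} B_\beta^*\varphi=\frac{c_\beta}{\sqrt{b_\beta^{(m)}}}\in\cM$, hence $1\in\cM$, hence all $B_\alpha(1)\in\cM$, hence $\cM=\cN_\cQ$ by density. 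Your route is shorter and exploits the logical dependence between the two assertions; the paper's route is self-contained and would survive even if one only wanted irreducibility.
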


\begin{proof}
Since $1\in \cN_\cQ$ and  $\cN_\cQ$ is an  invariant subspace
$W_i^*$, \ $i=1,\ldots,n$,   we use Theorem \ref{prop-shif}  to obtain
\begin{equation*}
 (id-\Phi_{p,B})^m(I_{\cN_\cQ})=P_{\cN_\cQ}\left[(id-\Phi_{p,W})^m(I_{F^2(H_n)})\right]
 |_{\cN_\cQ}=P_{\cN_\cQ} P_\CC|_{\cN_\cQ}=P^{\cN_\cQ}_\CC,
\end{equation*}
where $ P^{\cN_\cQ}_\CC$ is the orthogonal projection of $\cN_\cQ$
onto $\CC$.
Fix
$$g(W_1,\ldots, W_n):=\sum\limits_{|\alpha|\leq m} d_\alpha W_\alpha\quad
\text{ and } \quad \xi:=\sum\limits_{\beta\in \FF_n^+} c_\beta
e_\beta\in \cN_J\subset F^2(H_n),
$$
and note  that
\begin{equation*}
P^{\cN_\cQ}_\CC g(B_1,\ldots, B_n)^*\xi= \left< \xi,g(B_1,\ldots,
B_n)(1)\right>.
\end{equation*}
Consequently, we have
\begin{equation}\label{rankone}
q(B_1,\ldots, B_n)P^{\cN_J}_\CC g(B_1,\ldots, B_n)^*\xi= \left<
\xi,g(B_1,\ldots, B_n)(1)\right>q(B_1,\ldots, B_n)(1)
\end{equation}
for any polynomial $q(B_1,\ldots, B_n)$.  Hence, we deduce that the
operator $q(B_1,\ldots, B_n)P^{\cN_J}_\CC g(B_1,\ldots, B_n)^*$
 has rank one  and, since
$P_\CC^{\cN_\cQ}= (id-\Phi_{p,B})^m(I_{\cN_\cQ})$, it is   in the
operator space $\overline{\text{\rm span}}\{B_\alpha B_\beta^*:\
\alpha,\beta\in \FF_n^+\}$. On the other hand, due to the fact that
the set of all vectors  of the form
 $ \sum\limits_{|\alpha|\leq m}d_\alpha
B_\alpha (1)$ with  $m\in \NN$, $d_\alpha\in \CC$, is  dense in
$\cN_\cQ$, relation \eqref{rankone} implies  that all compact
operators  in $B(\cN_\cQ)$ are included in the operator space
$\overline{\text{\rm span}}\{B_\alpha B_\beta^*:\ \alpha,\beta\in
\FF_n^+\}$.

To prove the last part of this theorem, let $\cM\neq\{0\}$ be a
subspace of $\cN_\cQ\subseteq F^2(H_n)$, which is jointly reducing
for each operator $B_i$, $i=1,\ldots,n$. Let $\varphi\in \cM$,
$\varphi\neq 0$, and assume that
$\varphi=c_0+\sum\limits_{|\alpha|\geq 1} c_\alpha e_\alpha.$  If
 $c_\beta$  is a nonzero coefficient of $\varphi$,
then
 $
P_\CC B_\beta^*\varphi= \frac{1}{\sqrt{b_\beta^{(m)}}} c_\beta$.
Indeed, since $1\in \cN_\cQ$, one can use relation \eqref{WbWb} to
deduce that
\begin{equation*}
\left< P_\CC
B_\beta^*\varphi,1\right>=\left<P_{\cN_J}W_\beta^*\varphi,1\right>
=\left<W_\beta^*\varphi, 1\right>= \frac{1}{\sqrt{b_\beta^{(m)}}}
c_\beta.
\end{equation*}
Since $\left< P_\CC B_\beta^*\varphi,e_\gamma\right>=0$ for any
$\gamma\in \FF_n^+$ with $|\gamma|\geq 1$, our assertion  follows.
On the other hand, since
 $P_\CC^{\cN_\cQ}= (id-\Phi_{p,B})^m(I_{\cN_\cQ})$ and $\cM$ is reducing for $B_1,\ldots, B_n$,
we deduce that $c_\beta\in \cM$, so $1\in \cM$. Using once again
that $\cM$ is invariant under the operators $B_1,\ldots, B_n$, we
have
 $\cE\subseteq \cM$. On the other hand, since $\cE$ is dense in $\cN_\cQ$, we deduce that
 $\cN_\cQ\subset \cM$. Therefore $\cN_\cQ= \cM$.
This completes the proof.
\end{proof}

We say that two $n$-tuples of operators $(T_1,\ldots, T_n)$, \
$T_i\in B(\cH)$, and $(T_1',\ldots, T_n')$, \ $T_i'\in B(\cH')$, are
unitarily equivalent if there exists a unitary operator $U:\cH\to
\cH'$ such that
$$T_i=U^* T_i' U\ \text{  for  any } \ i=1,\ldots, n.
$$
 If $(B_1,\ldots, B_n)$ is the universal model
  associated with the noncommutative variety $\cV_{p,\cQ}^m$,
then the $n$-tuple $(B_1\otimes I_\cH,\ldots, B_n\otimes I_\cH)$ is
called constrained weighted shift with multiplicity $\dim \cH$.
Using Theorem \ref{compact}, one can easily  prove  that two
constrained weighted shifts associated with the noncommutative
variety $\cV_{p,\cQ}^m$ are unitarily equivalent if and only if
their multiplicities are equal.

We remark that all the results of this section are true in the
commutative case, i.e., when
$$\cQ_c:=\{Z_iZ_j-Z_jZ_i:\
i,j=1,\ldots,n\}.
$$
According to the results of Section 4 (see Theorem \ref{symm-Fock} and
the remarks preceding Theorem \ref{reflexivity}), the space
$\cN_{\cQ_c}$ coincides with
 the symmetric
weighted Fock space $F_s^2({\bf D}^m_f)$, which  can be identified
with the Hilbert space $H^2({\bf D}_{f,\circ}^1(\CC))$. Moreover,
under this identification, the operators $B_i$, $i=1,\ldots,n$,
become the multipliers $M_{\lambda_i}$ by the coordinate functions
on the Hilbert space $H^2({\bf D}_{f,\circ}^1(\CC))$.

\bigskip

\section{ Model theory  on  Noncommutative  Varieties }

  In this section, we obtain dilation and model theorems for the elements of
  the noncommutative variety $\cV_{f,\cQ}^m(\cH)\subset {\bf D}_f^m(\cH)$
  generated by a set $\cQ$ of noncommutative polynomials.

We recall  that $\cN_\cQ:=F^2(H_n)\ominus \cM_\cQ$, where the subspace $\cM_\cQ$ is defined
by \eqref{MQ}. We keep the notations of the previous sections.
Our first dilation result   on noncommutative varieties
  is the following.

\begin{theorem}\label{dil1} Let $f$ be a positive regular free
holomorphic function on $[B(\cH)^n]_\rho$, $\rho>0$,  and  let $\cQ$
  be a family of  noncommutative polynomials such that  $\cN_\cQ\neq
 \{0\}$.
  If $T:=(T_1,\ldots, T_n)$  is an
$n$-tuple of operators in the noncommutative variety
$\cV_{f,\cQ}^m(\cH)$, then there exists a Hilbert space $\cK$  and
and $n$-tuple $(U_1\ldots, U_n)\in \cV_{f,\cQ}^m(\cK)$ with
$\Phi_{f, U}(I_\cK)=I_\cK$ and such that
\begin{enumerate}
\item[(i)]
 $\cH$ can be identified with a  co-invariant subspace of \
   $\widetilde\cK:=(\cN_\cQ\otimes \overline{\Delta_{f,m,T}\cH})\oplus \cK$ under the operators
$$
V_i:=\left[\begin{matrix} B_i\otimes
I_{\overline{\Delta_{f,m,T}\cH}}&0\\0&U_i
\end{matrix}\right],\quad i=1,\ldots,n,
$$
where $\Delta_{f,m,T}:=[\left(id- \Phi_{f,T}\right)^m(I)]^{1/2};$
\item[(ii)]
$T_i^*=V_i^*|\cH$ for  $i=1,\ldots,n$.
\end{enumerate}
Moreover, $\cK=\{0\}$ if and only if $(T_1,\ldots, T_n)$ is  pure
$n$-tuple of operators in $\cV_{f,\cQ}^m(\cH)$,
i.e., $\Phi_{f,T}^k(I)\to 0$ strongly, as $k\to 0$.
\end{theorem}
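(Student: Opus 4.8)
The plan is to build the dilation directly from the constrained noncommutative Berezin kernel $K_{f,T,\cQ}^{(m)}:\cH\to\cN_\cQ\otimes\ol{\Delta_{f,m,T}\cH}$ introduced in Section 5, together with a Wold/Rota-type argument to take care of the non-pure part. First I would observe, exactly as in the proof of Theorem \ref{vN1-variety} (relation \eqref{range} and the intertwining relation \eqref{KJ}), that $K_{f,T,\cQ}^{(m)}T_i^*=(B_i^*\otimes I)K_{f,T,\cQ}^{(m)}$ for $i=1,\dots,n$, and that by Lemma \ref{Berezin-lemma}(i) we have ${K_{f,T,\cQ}^{(m)}}^*K_{f,T,\cQ}^{(m)}=I_\cH-Q_{f,T}$, where $Q_{f,T}:=\text{\rm SOT-}\lim_k\Phi_{f,T}^k(I)\geq0$. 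Thus when $Q_{f,T}=0$ the kernel is an isometry, and the formula $T_i^*=V_i^*|\cH$ with $V_i:=B_i\otimes I_{\ol{\Delta_{f,m,T}\cH}}$ follows at once, with $\cH$ identified with $K_{f,T,\cQ}^{(m)}\cH\subseteq\cN_\cQ\otimes\ol{\Delta_{f,m,T}\cH}$; since $(B_1,\dots,B_n)$ is pure (by the compression of Theorem \ref{prop-shif} to $\cN_\cQ$) so is $(V_1,\dots,V_n)$, so $\cK=\{0\}$ in this case. This handles both the pure case and the ``only if'' half of the last assertion.

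For the general case, the idea is to split off the part of $\cH$ on which $T$ ``behaves coisometrically.'' Concretely, set $\cR:=\ol{\ran Q_{f,T}^{1/2}}$ and consider the map $D:=Q_{f,T}^{1/2}:\cH\to\cR$. A computation using $\Phi_{f,T}(Q_{f,T})=Q_{f,T}$ (which holds because $Q_{f,T}$ is the SOT-limit of the decreasing sequence $\Phi_{f,T}^k(I)$, and $\Phi_{f,T}$ is SOT-continuous on bounded sets) shows that there is a well-defined $n$-tuple $(A_1,\dots,A_n)$ on $\cR$ with $A_i^* Q_{f,T}^{1/2}=Q_{f,T}^{1/2}T_i^*$ satisfying $\Phi_{f,A}(I_\cR)=I_\cR$; moreover $q(A_1,\dots,A_n)=0$ for $q\in\cQ$ since $D$ intertwines $T^*$ and $A^*$ and has dense range (the relation $q(A)^* Q^{1/2}=Q^{1/2}q(T)^*=0$ forces $q(A)^*=0$ hence $q(A)=0$, using that the $q$ involved can be taken so that this works, or passing through adjoints and polynomial identities as in earlier sections). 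Then I would let $(U_1,\dots,U_n)\in\cV^m_{f,\cQ}(\cK)$ be the minimal isometric-type dilation of $(A_1,\dots,A_n)$ inside the variety with $\Phi_{f,U}(I_\cK)=I_\cK$ — here the relevant construction is a Rota-style inductive-limit dilation of the ``$\Phi_{f,A}$-unital'' $n$-tuple, which stays inside $\cV^m_{f,\cQ}$ because $\cQ$-relations and the positivity conditions $(id-\Phi_{f,\cdot})^k(I)\geq0$ are preserved by the limit (Corollary \ref{phi-cond2} and Lemma \ref{phi-condition} guarantee the lower-order positivity once $\Phi_{f,U}(I)=I$, which makes $(id-\Phi_{f,U})^m(I)=0$). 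Finally, combining the isometry $K_{f,T,\cQ}^{(m)}$ on the ``defect'' side with the dilation $\cK$ on the ``unitary'' side, the map $h\mapsto K_{f,T,\cQ}^{(m)}h\oplus Dh\in(\cN_\cQ\otimes\ol{\Delta_{f,m,T}\cH})\oplus\cK$ is an isometry because $\|K_{f,T,\cQ}^{(m)}h\|^2+\|Dh\|^2=\|h\|^2-\|Q_{f,T}^{1/2}h\|^2+\|Q_{f,T}^{1/2}h\|^2=\|h\|^2$; its range is co-invariant under each $V_i=(B_i\otimes I)\oplus U_i$, and $V_i^*$ restricted to $\cH$ recovers $T_i^*$ by combining the two intertwining relations. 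Since $\cK=\{0\}$ precisely when $D=0$, i.e. $Q_{f,T}=0$, i.e. $T$ is pure, the last sentence of the theorem follows.

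The main obstacle I anticipate is the construction of $(U_1,\dots,U_n)\in\cV^m_{f,\cQ}(\cK)$ with $\Phi_{f,U}(I_\cK)=I_\cK$ dilating $(A_1,\dots,A_n)$: one must check both that the Rota-type inductive limit converges to a bona fide $n$-tuple of bounded operators (this needs the power-boundedness of $\Phi_{f,A}$ from $\Phi_{f,A}(I)=I$ together with $\Phi_{f,A}$ positive, via Lemma \ref{phi-condition}), and that every defining polynomial relation $q\in\cQ$ survives in the limit — the latter is where homogeneity or at least a careful bookkeeping of how $q(W)$ acts on the tower may be needed, and it is the step I would write out most carefully. Everything else is either a direct citation of Lemma \ref{Berezin-lemma}, Theorem \ref{vN1-variety}, Theorem \ref{prop-shif}, or a routine block-matrix verification.
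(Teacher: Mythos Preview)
Your proposal is essentially the paper's proof, but you have inserted an unnecessary extra step that is precisely the ``main obstacle'' you worry about. Once you have constructed the $n$-tuple $(A_1,\dots,A_n)$ on $\cR:=\overline{Q_{f,T}^{1/2}\cH}$ satisfying $A_i^*Q_{f,T}^{1/2}=Q_{f,T}^{1/2}T_i^*$, $\Phi_{f,A}(I_\cR)=I_\cR$, and $q(A)=0$ for all $q\in\cQ$, you are finished: simply take $\cK:=\cR$ and $U_i:=A_i$. No further Rota-type dilation is needed. Indeed, $\Phi_{f,A}(I_\cR)=I_\cR$ gives $(id-\Phi_{f,A})^k(I_\cR)=0\geq 0$ for every $k\geq 1$, so $(A_1,\dots,A_n)\in{\bf D}_f^m(\cR)$ automatically, and together with $q(A)=0$ you already have $(A_1,\dots,A_n)\in\cV_{f,\cQ}^m(\cR)$. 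This is exactly what the paper does: it defines $U_i$ directly on $\cK:=\overline{Q_{f,T}^{1/2}\cH}$ and never dilates further.

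Two small clarifications. First, for $q(A)=0$: from $A_i^*D=DT_i^*$ you get $D^*A_i=T_iD^*$, hence $D^*q(A)=q(T)D^*=0$; since $D$ has dense range in $\cR$, $D^*$ is injective, so $q(A)=0$. No homogeneity of $q$ is needed here (homogeneity enters only in Theorem~\ref{dil2}, not in this theorem). Second, the boundedness of each $A_i$ follows from $a_{g_i}T_iQ_{f,T}T_i^*\leq\Phi_{f,T}(Q_{f,T})=Q_{f,T}$, which gives $\|A_i^*Dh\|^2\leq a_{g_i}^{-1}\|Dh\|^2$; the paper spells this out. With the superfluous dilation removed, your argument and the paper's coincide.
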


\begin{proof}
We recall that the operator   $Q_{f,T}:=\text{\rm
SOT-}\lim\limits_{k\to\infty} \Phi_{f,T}^k(I)$  is well-defined. We use it  to define
$$Y:\cH\to \cK:=\overline{Q_{f,T}^{1/2}\cH}\quad \text{  by }\quad
Yh:=Q_{f,T}^{1/2}h, \ h\in \cH.
$$
For each $i=1,\ldots, n$,  let $L_i:Q_{f,T}^{1/2}\cH\to
\cK$ be  given by
\begin{equation}\label{Z_i}
L_i Yh:=YT_i^*h,\quad h\in \cH.
\end{equation}
Note that $L_i$, $i=1,\ldots, n$, are well-defined  due to the fact
that
\begin{equation*}
\begin{split}
\|L_i Yh\|^2&=\left<   T_i Q_{f,T}T_i^*h,h \right>  \leq
 \frac{1}{a_{g_i}}\left< \Phi_{f,T}(Q_{f,T})h,h\right>\\
 &=\frac{1}{a_{g_i}} \|Q_{f,T}^{1/2}h\|^2=\frac{1}{a_{g_i}}\|Yh\|^2.
\end{split}
\end{equation*}
Since $f$ is positive regular free holomorphic
function,  we have  $a_{g_i}\neq 0$ for any $i=1,\ldots,n$.
 Consequently,  $L_i$ can be extended to a bounded operator on $\cK$, which will
also be denoted by $L_i$. Now, setting $U_i:=L_i^*$, $i=1,\ldots,
n$, relation  \eqref{Z_i} implies
\begin{equation}
\label{YZT} Y^*U_i=T_iY^*,\quad i=1,\ldots, n.
\end{equation}
 Using this relation and the fact that $\Phi_{f,T}(Q_{f,T})=Q_{f,T}$, we deduce that
\begin{equation*}
Y^* \Phi_{f,U}(I_\cK)Y=\Phi_{f,T}(YY^*)=YY^*.
\end{equation*}
Hence,
$$\left< \Phi_{f,U}(I_\cK)Yh,Yh\right>=
\left< Yh,Yh\right>,\quad h\in \cH,
$$
which implies $\Phi_{f,U}(I_\cK)=I_\cK$.
 Now, using relation \eqref{YZT}, we obtain
$$Y^*q(U_1,\ldots, U_n)=q(T_1,\ldots, T_n)Y^*=0, \quad q\in\cQ.
$$
Since $Y^*$ is injective on $\cK=\overline{Y\cH}$, we have
$q(U_1,\ldots, U_n)=0$ for any $q\in \cQ$.
Let $V:\cH\to [\cN_\cQ\otimes \cH]\oplus \cK$ be defined by
$$V:=\left[\begin{matrix}
K_{f,T,\cQ}^{(m)}\\ Y
\end{matrix}\right].
$$
Notice  that $V$ is an isometry. Indeed, due to relations
\eqref{K*K}  and \eqref{range}, we have
\begin{equation*}\begin{split}
\|Vh\|^2&=\|K_{f,T,\cQ}^{(m)}h\|^2+\|Yh\|^2\\
&=\|h\|^2-\text{\rm SOT-}\lim_{k\to\infty}\left<
\Phi_{f,T}^k(I)h,h\right>+\|Yh\|^2\ =\|h\|^2
\end{split}
\end{equation*}
for any $h\in \cH$. Now, using relations \eqref{KJ}, \eqref{Z_i},
and \eqref{YZT},  we obtain
\begin{equation*}
\begin{split}
VT_i^*h&= K_{f,T,\cQ}^{(m)}T_i^*h\oplus YT_i^*h\\
&=(B_i^*\otimes I_\cH)K_{f,T,\cQ}^{(m)}h\oplus U_i^*Yh\\
&=\left[\begin{matrix} B_i^*\otimes
I_{\overline{\Delta_{f,m,T}\cH}}&0\\0&U_i^*
\end{matrix}\right]Vh
\end{split}
\end{equation*}
for any $h\in \cH$ and $i=1,\ldots, n$.
 Identifying   $\cH$  with $V\cH$ we
complete the proof of (i) and (ii). The  last part  of the theorem
is obvious.
\end{proof}

We need the following result concerning power bounded positive linear maps on $B(\cH)$.

\begin{lemma}\label{=0}
Let $\varphi: B(\cH)\to B(\cH)$ be a power bounded positive
 linear map and let $D\in B(\cH)$be a positive operator
 such that $\varphi(D)\leq D$.
 If  $m\geq 1$, then
 $$(id-\varphi)^m(D)=0\quad \text{  if and only if }\quad \varphi(D)=D.
 $$
In particular, if $\varphi$  is a positive linear map such that
  $\varphi(I)\leq I$ and $(id-\varphi)^m(I)=0$, then $\varphi(I)=I$.
\end{lemma}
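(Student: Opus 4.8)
The plan is to prove the only nontrivial implication (that $(id-\varphi)^m(D)=0$ forces $\varphi(D)=D$) by induction on $m$; the reverse implication is immediate since $\varphi(D)=D$ gives $(id-\varphi)(D)=0$ and hence $(id-\varphi)^m(D)=0$. For the base case $m=1$ there is nothing to do: $(id-\varphi)(D)=0$ \emph{is} the equation $\varphi(D)=D$.

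For the inductive step, suppose $m\ge 2$ and that the statement holds at level $m-1$, and assume $(id-\varphi)^m(D)=0$ with $D\ge 0$ and $\varphi(D)\le D$. Since $(id-\varphi)^m(D)=0\ge 0$, Lemma \ref{phi-condition} applies and yields $(id-\varphi)^k(D)\ge 0$ for $k=1,\dots,m$. In particular the operator $E:=(id-\varphi)(D)$ is positive, and $(id-\varphi)(E)=(id-\varphi)^2(D)\ge 0$, i.e. $\varphi(E)\le E$; moreover $(id-\varphi)^{m-1}(E)=(id-\varphi)^m(D)=0$. Thus the pair $(\varphi,E)$ satisfies all the hypotheses of the lemma at level $m-1$, and the induction hypothesis gives $\varphi(E)=E$.

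It then remains to deduce $\varphi(D)=D$ from $\varphi(E)=E$ with $E=(id-\varphi)(D)$, and this is where power boundedness enters. Since $\varphi$ is linear, $\varphi(E)=E$ forces $\varphi^j(E)=E$ for every $j\ge 0$, so that $\varphi^j(D)-\varphi^{j+1}(D)=\varphi^j\bigl((id-\varphi)(D)\bigr)=E$ for all $j$. Telescoping over $j=0,\dots,N-1$ gives $D-\varphi^N(D)=NE$. Picking $M>0$ with $\|\varphi^k\|\le M$ for all $k\ge 1$, we get $\|\varphi^N(D)\|\le M\|D\|$, hence $\|E\|=\tfrac1N\|D-\varphi^N(D)\|\le \tfrac{1+M}{N}\|D\|\to 0$ as $N\to\infty$, so $E=0$, i.e. $\varphi(D)=D$. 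The final assertion of the lemma is the special case $D=I$.

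I do not expect a genuine obstacle here; the argument is elementary once organized as above. The two steps that need care are the appeal to Lemma \ref{phi-condition} — it is what lets one transport the hypothesis $\varphi(E)\le E$ down to the lower level of the induction — and the telescoping estimate, whose conclusion $E=0$ genuinely depends on power boundedness of $\varphi$, since otherwise the quantity $\varphi^N(D)$, and hence $NE$, need not remain bounded in $N$.
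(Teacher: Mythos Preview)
Your proof is correct. The induction is well set up: Lemma~\ref{phi-condition} is exactly what is needed to guarantee that $E=(id-\varphi)(D)$ is positive and satisfies $\varphi(E)\le E$, so the induction hypothesis applies, and the telescoping bound $\|E\|\le \tfrac{1+M}{N}\|D\|$ is clean and uses power boundedness in an essential way.

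The paper takes a different route. Instead of inducting on $m$, it invokes the algebraic identity (Lemma~\ref{identity})
\[
\sum_{p=0}^{q}\binom{p+m-1}{m-1}\varphi^p(id-\varphi)^m(D)=D-\sum_{j=0}^{m-1}\binom{q+j}{j}\varphi^{q+1}(id-\varphi)^j(D),
\]
sets the left side to zero, and then lets $q\to\infty$; the limit lemma (Lemma~\ref{ineq-lim}) kills every term on the right with $j\ge 1$ and leaves $D=\lim_{q\to\infty}\varphi^q(D)$. Since $\varphi(D)\le D$ forces the sequence $\varphi^q(D)$ to be decreasing, all the inequalities collapse to $\varphi(D)=D$. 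Your argument is more self-contained: it bypasses both Lemma~\ref{identity} and the Cauchy-type estimate inside Lemma~\ref{ineq-lim}, relying only on Lemma~\ref{phi-condition} and an elementary telescoping bound. The paper's approach, on the other hand, reuses machinery already developed for the Berezin-kernel computations and gives the stronger intermediate conclusion $D=\lim_q \varphi^q(D)$ in one shot.
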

\begin{proof}
According to Lemma \ref{identity}, we have
\begin{equation*}
\sum_{p=0}^q\left(\begin{matrix} p+m-1\\m-1
\end{matrix}\right) \varphi^p(id-\varphi)^m(D)=D-\sum_{j=0}^{m-1}
 \left(\begin{matrix} q+j\\ j
\end{matrix}\right) \varphi^{q+1} (id-\varphi)^j(D)
\end{equation*}
for any $q\in \NN$. Consequently, if $(id-\varphi)^m(D)=0$,  then
$$D=\lim_{q\to \infty} \sum_{j=0}^{m-1}
 \left(\begin{matrix} q+j\\ j
\end{matrix}\right) \varphi^{q+1} (id-\varphi)^j(D).
$$
Using Lemma \ref{ineq-lim}, we deduce that $D=\lim_{q\to \infty} \varphi^q(D)$.
Since $\varphi$ is a positive linear map and $\varphi(D)\leq D$, we have
$$
D=\lim_{q\to \infty} \varphi^q(D)\leq \ldots \leq \varphi^2(D)\leq \varphi(D)\leq D.
$$
Hence, we deduce that $\varphi(D)=D$. The converse is obvious.
\end{proof}

Let $C^*(\Gamma)$ be the $C^*$-algebra generated by  a set of operators $\Gamma\subset B(\cK)$ and the identity.
A subspace $\cH\subset \cK$ is called $*$-cyclic for $\Gamma$ if
$\cK=\overline{\text{\rm span}}\{Xh, X\in C^*(\Gamma), h\in \cH\}$.
The main result of this section is the following model theorem  for the elements of
a noncommutative variety $\cV_{p,\cQ}^m(\cH)$.

\begin{theorem}\label{dil2} Let
 $p$ be  a positive regular noncommutative
polynomial and let $\cQ$ be a set of
    homogenous polynomials. Let $\cH$ be a separable Hilbert space, and
 $T:=(T_1,\ldots, T_n) $  be an $n$-tuple of operators  in the noncommutative variety
 $\cV_{p,\cQ}^m(\cH)$ with the  radial property, i.e.,
 $$rT:=(rT_1,\ldots,
 rT_n)\in \cV_{p,\cQ}^m(\cH)\quad \text{ for any } \ r\in (\delta, 1)
 $$
 and some  $\delta\in (0,1)$.

Then there exists  a $*$-representation $\pi:C^*(B_1,\ldots, B_n)\to
B(\cK_\pi)$  on a separable Hilbert space $\cK_\pi$,  which
annihilates the compact operators and
$$
\Phi_{p,\pi(B)}(I_{\cK_\pi})=I_{\cK_\pi},
$$
such that
\begin{enumerate}
\item[(i)]
$\cH$ can be identified with a $*$-cyclic co-invariant subspace of
$\tilde\cK:=(\cN_\cQ\otimes \overline{\Delta_{p,m,T}\cH})\oplus
\cK_\pi$ under  each operator
$$
V_i:=\left[\begin{matrix} B_i\otimes
I_{\overline{\Delta_{p,m,T}\cH}}&0\\0&\pi(B_i)
\end{matrix}\right],\quad i=1,\ldots,n,
$$
where $\Delta_{p,m,T}:=[\left( id-\Phi_{p,T}\right)^m(I)]^{1/2};$
\item[(ii)]
$ T_i^*=V_i^*|\cH$ for $ i=1,\ldots, n.$
\end{enumerate}
  \end{theorem}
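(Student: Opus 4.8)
The plan is to reduce, via the dilation already obtained in Theorem~\ref{dil1}, to the case of a ``coisometric'' tuple (one with $\Phi_{p,U}(I)=I$), and then to absorb that tuple into a representation of $C^*(B_1,\dots,B_n)$ that kills the compacts. First I would apply Theorem~\ref{dil1} to $T\in\cV_{p,\cQ}^m(\cH)$: this produces a Hilbert space $\cK$ and an $n$-tuple $(U_1,\dots,U_n)\in\cV_{p,\cQ}^m(\cK)$ with $\Phi_{p,U}(I_\cK)=I_\cK$ and $q(U)=0$ for $q\in\cQ$, together with an isometry $V^0=K_{p,T,\cQ}^{(m)}\oplus Y$ identifying $\cH$ with a co-invariant subspace of $(\cN_\cQ\otimes\cD)\oplus\cK$, where $\cD:=\overline{\Delta_{p,m,T}\cH}$, so that $T_i^*=\bigl((B_i\otimes I_\cD)\oplus U_i\bigr)^*|_\cH$; since $\cH$ is separable, so is $\cK=\overline{Q_{p,T}^{1/2}\cH}$. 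It therefore suffices to realize $(U_1,\dots,U_n)$ as a co-invariant piece of $(\pi(B_1),\dots,\pi(B_n))$ for a $*$-representation $\pi$ of $C^*(B_1,\dots,B_n)$ on a separable Hilbert space annihilating the compacts, and then to reassemble.

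For the second step I would first observe that, because $\cQ$ consists of homogeneous polynomials of positive degree, every vector $W_\alpha q(W)W_\beta(1)$ lies in $\bigoplus_{k\ge1}H_n^{\otimes k}$, so $1\in\cN_\cQ$; hence by Theorem~\ref{compact} the compact operators on $\cN_\cQ$ lie in $C^*(B_1,\dots,B_n)$, the algebra acts irreducibly, and $P^{\cN_\cQ}_\CC=(id-\Phi_{p,B})^m(I_{\cN_\cQ})$. The key assertion is that the map $B_\alpha B_\beta^*\mapsto U_\alpha U_\beta^*$ extends to a unital completely contractive, hence (after Arveson's extension theorem) completely positive, map $\Gamma:C^*(B_1,\dots,B_n)\to B(\cK)$. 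Granting this, since $p$ is a polynomial, $(id-\Phi_{p,B})^m(I_{\cN_\cQ})$ is a \emph{finite} combination of words $B_\gamma B_\delta^*$, so $\Gamma(P^{\cN_\cQ}_\CC)=(id-\Phi_{p,U})^m(I_\cK)=0$; applying this to the rank-one operators $q(B)P^{\cN_\cQ}_\CC g(B)^*$, which span the compacts and lie in the domain of $\Gamma$, gives $\Gamma(q(B)P^{\cN_\cQ}_\CC g(B)^*)=q(U)(id-\Phi_{p,U})^m(I_\cK)g(U)^*=0$, so $\Gamma$ annihilates all compacts on $\cN_\cQ$.

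Next I would minimally Stinespring-dilate $\Gamma$ to a $*$-representation $\pi:C^*(B_1,\dots,B_n)\to B(\cK_\pi)$ with an isometry $V:\cK\hookrightarrow\cK_\pi$, $\Gamma=V^*\pi(\cdot)V$ and $\cK_\pi=\overline{[\pi(C^*(B))V\cK]}$; separability of $C^*(B_1,\dots,B_n)$ and $\cK$ forces $\cK_\pi$ separable, and since $\Gamma$ kills compacts and the dilation is minimal, $\pi$ kills compacts as well. Because $\Gamma(B_\alpha^*)=U_\alpha^*$, $\Gamma(B_\alpha^*B_\beta^*)=\Gamma(B_{\beta\alpha}^*)=U_\alpha^*U_\beta^*=\Gamma(B_\alpha^*)\Gamma(B_\beta^*)$, and $(B_\alpha^*)^*B_\beta^*=B_\alpha B_\beta^*$ lies in the domain of $\Gamma$, the standard Sarason-type argument gives that $V\cK$ is invariant under each $\pi(B_i)^*$; identifying $\cK=V\cK$, this says $\cK$ is co-invariant under $V_i':=\pi(B_i)$ with $U_i^*=V_i'^*|_\cK$, while $q(\pi(B))=\pi(q(B))=0$. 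Now I would set $V_i:=(B_i\otimes I_\cD)\oplus\pi(B_i)$ on $(\cN_\cQ\otimes\cD)\oplus\cK_\pi$; composing the inclusions of Step~1 and Step~2 embeds $\cH$ co-invariantly with $T_i^*=V_i^*|_\cH$. The operator $X\mapsto(X\otimes I_\cD)\oplus\pi(X)$ is a $*$-representation $\rho$ of $C^*(B_1,\dots,B_n)$, and replacing the dilation space by $\widehat\cK:=\overline{[\rho(C^*(B))\cH]}$ makes $\cH$ $*$-cyclic; since the $\cD$-marginal of $K_{p,T,\cQ}^{(m)}h$ already contains $\Delta_{p,m,T}h$ (the $g_0$-term of the Berezin kernel has $\cN_\cQ$-component $1$) and $\pi$ has no identity-representation part, the ``identity part'' of $\widehat\cK$ is exactly $\cN_\cQ\otimes\cD$ and the remaining reducing summand $\cK_\pi':=\overline{[\pi(C^*(B))Y\cH]}$ still carries $\pi$ annihilating the compacts. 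Finally $\Phi_{p,\pi(B)}$ is a finite sum, $\Phi_{p,\pi(B)}(I_{\cK_\pi'})=\pi(\Phi_{p,B}(I_{\cN_\cQ}))\le I$, and $(id-\Phi_{p,\pi(B)})^m(I_{\cK_\pi'})=\pi\bigl((id-\Phi_{p,B})^m(I_{\cN_\cQ})\bigr)=\pi(P^{\cN_\cQ}_\CC)=0$, so Lemma~\ref{=0} yields $\Phi_{p,\pi(B)}(I_{\cK_\pi'})=I_{\cK_\pi'}$, giving the asserted form with $\cK_\pi'$ in place of $\cK_\pi$.

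The main obstacle is the existence of the map $\Gamma$, i.e.\ the completely contractive von Neumann inequality $\|[\sum c^{(ij)}_{\alpha\beta}U_\alpha U_\beta^*]_k\|\le\|[\sum c^{(ij)}_{\alpha\beta}B_\alpha B_\beta^*]_k\|$ for the coisometric tuple $U$. One cannot simply invoke Theorem~\ref{vN2-variety}, because $\Phi_{p,U}(I_\cK)=I_\cK$ does \emph{not} imply that $U$ has the radial property when $p$ is not homogeneous (the inequalities $(id-\Phi_{p,rU})^k(I)\ge0$ can fail), so $rU$ need not lie in $\cV_{p,\cQ}^m(\cK)$ and the constrained Berezin transform of $U$ degenerates. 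I would instead build the minimal ``$p$-isometric'' dilation of $U$ compatible with the relations $q=0$ and with $\Phi_{p,\cdot}(I)=I$, and identify the $C^*$-algebra it generates with $C^*(B_1,\dots,B_n)/\mathcal{K}(\cN_\cQ)$ — equivalently, recognize the latter as the universal $C^*$-algebra for such ``Cuntz-type'' tuples — whence $\Gamma$ is the composite $C^*(B_1,\dots,B_n)\to C^*(B_1,\dots,B_n)/\mathcal{K}(\cN_\cQ)\hookrightarrow B(\cdot)$ followed by compression. This $C^*$-algebraic identification is where the bulk of the work lies.
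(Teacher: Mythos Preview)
Your approach---first invoking Theorem~\ref{dil1} to split off the shift part $(B_i\otimes I_\cD)$ and then trying to absorb the coisometric remainder $U$ into a representation of $C^*(B_1,\dots,B_n)$---is more roundabout than the paper's and creates the obstacle you correctly identify at the end. The existence of your map $\Gamma$ is genuinely problematic: $U$ need not have the radial property, so Theorem~\ref{vN2-variety} does not apply to it, and your proposed fix (identifying $C^*(B_1,\dots,B_n)/\cK(\cN_\cQ)$ as the universal $C^*$-algebra for ``$p$-isometric, $\cQ$-constrained'' tuples) is a substantial statement that is neither proved in the paper nor obviously true for general positive regular $p$.

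The paper avoids this entirely by spending the radial-property hypothesis on $T$ itself rather than on $U$. It applies Theorem~\ref{vN2-variety} \emph{directly to $T$} to obtain the unital completely contractive map $\Psi_{p,T,\cQ}$ on $\overline{\text{span}}\{B_\alpha B_\beta^*\}$ with $B_\alpha B_\beta^*\mapsto T_\alpha T_\beta^*$, extends by Arveson to a ucp map on $C^*(B_1,\dots,B_n)$, and takes a \emph{single} minimal Stinespring dilation $\tilde\pi$ on $\tilde\cK$. The multiplicative-domain argument you describe then gives $T_i^*=\tilde\pi(B_i)^*|_\cH$. Since $1\in\cN_\cQ$ and Theorem~\ref{compact} puts all compacts inside $C^*(B_1,\dots,B_n)$, the standard representation theory decomposes $\tilde\pi=\pi_0\oplus\pi$ with $\pi_0\cong(\cdot)\otimes I_\cG$ on $\cN_\cQ\otimes\cG$ (a multiple of the identity representation) and $\pi$ annihilating the compacts. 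A direct dimension count on $\text{range}\,\tilde\pi(P^{\cN_\cQ}_\CC)$, using minimality and the formula $P^{\cN_\cQ}_\CC=(id-\Phi_{p,B})^m(I_{\cN_\cQ})$, identifies $\cG$ with $\overline{\Delta_{p,m,T}\cH}$; finally $\pi(P^{\cN_\cQ}_\CC)=0$ together with Lemma~\ref{=0} gives $\Phi_{p,\pi(B)}(I_{\cK_\pi})=I_{\cK_\pi}$. Thus the Wold-type splitting arises \emph{a posteriori} from the decomposition of $\tilde\pi$, not from a prior application of Theorem~\ref{dil1}, and no von Neumann inequality for the coisometric piece is ever needed.
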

 \begin{proof}
%\bigskip
 Applying Arveson extension theorem
\cite{Arv-acta} to the map $\Psi_{p,T,\cQ}$ of Theorem \ref{vN2-variety}, we find a unital
completely positive linear map $\Psi_{p,T,\cQ}:C^*(B_1,\ldots,
B_n)\to B(\cH)$ such that $\Psi_{p,T,\cQ}(B_\alpha B_\beta^*)=T_\alpha T_\beta^*$
 for $\alpha.\beta\in \FF_n^+$.
 Let $\tilde\pi:C^*(B_1,\ldots, B_n)\to
B(\tilde\cK)$ be a minimal Stinespring dilation \cite{St} of
$\Psi_{p,T,\cQ}$. Then
$$\Psi_{p,T,\cQ}(X)=P_{\cH} \tilde\pi(X)|\cH,\quad X\in C^*(B_1,\ldots, B_n),
$$
and $\tilde\cK=\overline{\text{\rm span}}\{\tilde\pi(X)h:\ h\in
\cH\}.$  Now, one can easily see that
  that $P_\cH \tilde\pi(B_i)|_{\cH^\perp}=0$, $i=1,\ldots, n$.
   Consequently,
  $\cH$ is an
invariant subspace under each $\tilde\pi(B_i)^*$, \ $i=1,\ldots, n$,
and
\begin{equation}\label{coiso}
\tilde\pi(B_i)^*|\cH=\Psi_{p,T,\cQ}(B_i^*)=T_i^*,\quad i=1,\ldots,
n.
\end{equation}

Since  $1\in \cN_\cQ$,   Theorem \ref{compact} implies that all the
compact operators $ \cC(\cN_\cQ)$ in $B(\cN_\cQ)$ are contained in the
$C^*$-algebra $C^*(B_1,\ldots, B_n)$.
 Due to  standard theory of
representations of  $C^*$-algebras \cite{Arv-book},
representation $\tilde\pi$ decomposes into a direct sum
$\tilde\pi=\pi_0\oplus \pi$ on $\tilde \cK=\cK_0\oplus \cK_\pi$,
where $\pi_0$, $\pi$  are disjoint representations of
$C^*(B_1,\ldots, B_n)$ on the Hilbert spaces
$$\cK_0:=\overline{\text{\rm span}}\{\tilde\pi(X)\tilde\cK:\ X\in \cC(\cN_\cQ)\}
\quad \text{ and  }\quad  \cK_\pi:=\cK_0^\perp,
$$
respectively,
such that
 $\pi$ annihilates  the compact operators in $B(\cN_\cQ)$, and
  $\pi_0$ is uniquely determined by the action of $\tilde\pi$ on the
  ideal $\cC(\cN_\cQ)$ of compact operators.
Since every representation of $\cC(\cN_\cQ)$ is equivalent to a
multiple of the identity representation, we deduce
 that
\begin{equation}\label{sime}
\cK_0\simeq\cN_\cQ\otimes \cG, \quad  \pi_0(X)=X\otimes I_\cG, \quad
X\in C^*(B_1,\ldots, B_n),
\end{equation}
 for some Hilbert space $\cG$.
 Using Theorem \ref{compact} and its proof, one can
easily see
that
\begin{equation*}\begin{split}
\cK_0&:=\overline{\text{\rm span}}\{\tilde\pi(X)\cK:\ X\in \cC(\cN_\cQ)\}\\
&=\overline{\text{\rm span}}\{\tilde\pi(B_\beta P_\CC^{\cN_\cQ} B_\alpha^*)\cK:\
 \alpha, \beta\in \FF_n^+\}\\
&= \overline{\text{\rm span}}\left\{\tilde\pi(B_\beta) \left[(id-\Phi_{p,\tilde\pi(B)})^m
(I_\cK)\right] \cK:\ \beta\in
\FF_n^+\right\}.
\end{split}
\end{equation*}
According to Theorem \ref{compact}, the operator
$(id-\Phi_{p,B})^m(I_{\cN_\cQ})= P_\CC^{\cN_\cQ}$  is a
 projection  of rank one in $C^*(B_1,\ldots, B_n)$.
 Hence, we deduce  that
$
 (id-\Phi_{p,\pi(B)})^m(I_{\cK_\pi})= 0$
  and
$$
\dim \cG=\dim \left[\text{\rm range}\,\pi(P_\CC^{\cN_\cQ})\right].
$$
Since  the  Stinespring representation $\tilde\pi$  is minimal, we
can use the proof of Theorem \ref{compact} to  deduce that
\begin{equation*}
\text{\rm range}\,\tilde\pi(P_\CC^{\cN_\cQ})=
 \overline{\text{\rm
span}}\{\tilde\pi(P_\CC^{\cN_\cQ})\tilde\pi(B_\beta^*)h:\ \beta\in
\FF_n^+, h\in \cH\}.
\end{equation*}
 On the other hand, it is easy to see that
\begin{equation*}
\left<\tilde\pi(P_\CC^{\cN_\cQ})\tilde\pi(B_\alpha^*)h,
\tilde\pi(P_\CC^{\cN_\cQ})\tilde\pi(B_\beta^*)k\right>
 = \left<h,T_\alpha\left[(id-\Phi_{p,T})^m(I_\cH) \right]T_\beta^*h\right> =
\left<\Delta_{p,m,T}T_\alpha^*h,\Delta_{p,m,T}T_\beta^*k\right>
\end{equation*}
for any $h, k \in \cH$ and $\alpha,\beta\in \FF_n^+$. This implies the existence of      a unitary
operator $\Lambda:\text{\rm range}\,\tilde\pi(P_\CC^{\cN_\cQ})\to
\overline{\Delta_{p,m,T}\cH}$ defined by
$$
\Lambda[\tilde\pi(P_\CC^{\cN_\cQ})\tilde\pi(B_\alpha^*)h]:=\Delta_{p,m,T}
T_\alpha^*h,\quad h\in \cH, \,\alpha\in \FF_n^+.
$$
 This shows that
$$
\dim[\text{\rm range}\,\pi(P_\CC^{\cN_\cQ})]= \dim
\overline{\Delta_{p,m,T}\cH}=\dim \cG.
$$
 Using relations \eqref{coiso} and
\eqref{sime},  and identifying    $\cG$ with
$\overline{\Delta_{p,m,T}\cH}$, we obtain the required dilation.
On the other hand, due to the fact that $(id-\Phi_{p,\pi(B)})^m(I_{\cK_\pi})= 0$,
  we can use  Lemma \ref{=0}  to deduce that
    $ \Phi_{p,\pi(B)}(I_{\cK_\pi})=I_{\cK_\pi}$.
 The proof is complete.
\end{proof}

A few remarks are needed. A closer look at Theorem \ref{dil2} reveals that
one can replace the polynomial
$p$ with a positive regular free holomorphic function $f$ and obtain  a
 model theorem for any $n$-tuple $(T_1,\ldots, T_n) \in \cV_{f,\cQ}^m(\cH)$
  with the  radial property. More precisely, one can show that there is a
  $*$-representation $\tilde\pi:C^*(B_1,\ldots, B_n)\to
B(\cK_\pi)$  such that  $\cH$ is an invariant subspace  under each operator
$\tilde\pi(B_i)^*$  and $T_i^*=\tilde\pi(B_i)^*|_{\cH}$ for $i=1,\ldots,n$.

 On the other hand, notice that using  the proof of Theorem \ref{dil2}
  and due to  the  standard theory of
representations of   $C^*$-algebras,   one can deduce
 the following   Wold type
decomposition for non-degenerate $*$-representations of the
$C^*$-algebra $C^*(B_1,\ldots, B_n)$, generated by the the
constrained weighted shifts associated with $\cV_{p,\cQ}^m$,  and
the identity.

\begin{corollary}\label{wold} Let
$p$ be a positive regular noncommutative polynomial and let  $\cQ$
be a set of noncommutative polynomials such that $1\in \cN_\cQ$.
Let $(B_1,\ldots, B_n)$ be the universal model  associated with the
noncommutative variety $\cV_{p,\cQ}^{(m)}$. If
\ $\pi:C^*(B_1,\ldots, B_n)\to B(\cK)$ is  a nondegenerate
$*$-representation  of \ $C^*(B_1,\ldots, B_n)$ on a separable
Hilbert space  $\cK$, then $\pi$ decomposes into a direct sum
$$
\pi=\pi_0\oplus \pi_1 \  \text{ on  } \ \cK=\cK_0\oplus \cK_1,
$$
where $\pi_0$ and  $\pi_1$  are disjoint representations of \
$C^*(B_1,\ldots, B_n)$ on the Hilbert spaces
\begin{equation*}
\cK_0:=\overline{\text{\rm span}}\left\{\pi(B_\beta) \left[\left(
id-\Phi_{p,\pi(B)}\right)^m(I_\cK)\right] \cK:\ \beta\in
\FF_n^+\right\}\quad \text{ and }\quad
 \cK_1:=\cK_0^\perp,
\end{equation*}
 respectively, where $\pi(B):=(\pi(B_1),\ldots, \pi(B_n))$. Moreover,
  up to an isomorphism,
\begin{equation*}
\cK_0\simeq\cN_\cQ\otimes \cG, \quad  \pi_0(X)=X\otimes I_\cG \quad
\text{ for } \  X\in C^*(B_1,\ldots, B_n),
\end{equation*}
 where $\cG$ is  a Hilbert space with
$
\dim \cG=\dim \left\{\text{\rm range}\, \left[\left(
id-\Phi_{p,\pi(B)}\right)^m(I_\cK)\right]\right\},
$
 and $\pi_1$ is a $*$-representation  which annihilates the compact operators   and
$$
\Phi_{p,\pi_1(B)}(I_{\cK_1})=I_{\cK_1}.
$$
If  $\pi'$ is another nondegenerate  $*$-representation of
$C^*(B_1,\ldots, B_n)$ on a separable  Hilbert space  $\cK'$, then
$\pi$ is unitarily equivalent to $\pi'$ if and only if
$\dim\cG=\dim\cG'$ and $\pi_1$ is unitarily equivalent to $\pi_1'$.
\end{corollary}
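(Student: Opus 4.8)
The plan is to deduce this from the classical structure theory of representations of a $C^*$-algebra that contains the ideal of compact operators, feeding in the concrete description of that ideal provided by Theorem \ref{compact}. Since $1\in\cN_\cQ$, Theorem \ref{compact} shows that the ideal $\cC(\cN_\cQ)$ of all compact operators on $\cN_\cQ$ is contained in $C^*(B_1,\ldots,B_n)$ and that this algebra acts irreducibly on $\cN_\cQ$; moreover $(id-\Phi_{p,B})^m(I_{\cN_\cQ})=P^{\cN_\cQ}_\CC$ is a rank-one (hence minimal) projection of $\cC(\cN_\cQ)$, and $\cC(\cN_\cQ)=\overline{\text{\rm span}}\{B_\beta P^{\cN_\cQ}_\CC B_\alpha^*:\ \alpha,\beta\in\FF_n^+\}$. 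I would first invoke the general theory (see \cite{Arv-book}): since $\cC(\cN_\cQ)$ is a closed two-sided ideal of $C^*(B_1,\ldots,B_n)$, the subspace $\cK_0:=\overline{\text{\rm span}}\{\pi(X)\cK:\ X\in\cC(\cN_\cQ)\}$ is reducing for $\pi$, the representation splits as $\pi=\pi_0\oplus\pi_1$ on $\cK=\cK_0\oplus\cK_1$ with $\pi_1:=\pi|_{\cK_1}$ annihilating $\cC(\cN_\cQ)$, with $\pi_0$ uniquely determined by $\pi|_{\cC(\cN_\cQ)}$, and with $\pi_0$, $\pi_1$ disjoint (a nonzero subrepresentation of $\pi_0$ is nondegenerate, hence nonzero, on the compacts, whereas every subrepresentation of $\pi_1$ kills them). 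Since every nondegenerate representation of $\cC(\cN_\cQ)\cong\cK(\cN_\cQ)$ is unitarily equivalent to a multiple of the identity representation, this gives $\cK_0\simeq\cN_\cQ\otimes\cG$ and $\pi_0(X)=X\otimes I_\cG$ for all $X\in C^*(B_1,\ldots,B_n)$, with $\cG\cong\text{\rm range}\,\pi_0(P^{\cN_\cQ}_\CC)$.

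Next I would translate these abstract data into the stated form. Applying the $*$-homomorphism $\pi$ to the identity $(id-\Phi_{p,B})^m(I_{\cN_\cQ})=P^{\cN_\cQ}_\CC$ and using that $\Phi_{p,B}(\cdot)=\sum_{|\alpha|\ge1}a_\alpha B_\alpha(\cdot)B_\alpha^*$ is a finite sum, so that $\pi\circ\Phi_{p,B}=\Phi_{p,\pi(B)}\circ\pi$, I obtain $\pi(P^{\cN_\cQ}_\CC)=(id-\Phi_{p,\pi(B)})^m(I_\cK)$. Combining this with $\cC(\cN_\cQ)=\overline{\text{\rm span}}\{B_\beta P^{\cN_\cQ}_\CC B_\alpha^*\}$ and the nondegeneracy of $\pi$ (the inclusion $\supseteq$ coming from $\alpha=g_0$, the inclusion $\subseteq$ from $\pi(B_\alpha^*)\cK\subseteq\cK$) yields
$$\cK_0=\overline{\text{\rm span}}\left\{\pi(B_\beta)\left[(id-\Phi_{p,\pi(B)})^m(I_\cK)\right]\cK:\ \beta\in\FF_n^+\right\}.$$
Because $\pi_1$ kills the compacts, $\pi_1(P^{\cN_\cQ}_\CC)=0$, so $\pi(P^{\cN_\cQ}_\CC)=\pi_0(P^{\cN_\cQ}_\CC)$ and $\dim\cG=\dim\{\text{\rm range}\,[(id-\Phi_{p,\pi(B)})^m(I_\cK)]\}$. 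Furthermore $(id-\Phi_{p,\pi_1(B)})^m(I_{\cK_1})=\pi_1(P^{\cN_\cQ}_\CC)=0$; since $\Phi_{p,\pi_1(B)}$ is a positive linear map on $B(\cK_1)$ with $\Phi_{p,\pi_1(B)}(I_{\cK_1})=\pi_1(\Phi_{p,B}(I_{\cN_\cQ}))\le I_{\cK_1}$ (using that $(B_1,\ldots,B_n)\in{\bf D}_p^m(\cN_\cQ)$ gives $\Phi_{p,B}(I_{\cN_\cQ})\le I_{\cN_\cQ}$), Lemma \ref{=0} forces $\Phi_{p,\pi_1(B)}(I_{\cK_1})=I_{\cK_1}$.

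For the uniqueness assertion I would observe that the splitting is canonical, since $\cK_0=\overline{\pi(\cC(\cN_\cQ))\cK}$ depends only on $\pi$, and likewise $\cK_0'=\overline{\pi'(\cC(\cN_\cQ))\cK'}$ for a second nondegenerate representation $\pi'=\pi_0'\oplus\pi_1'$ on $\cK'$. Hence a unitary intertwining $\pi$ and $\pi'$ must carry $\cK_0$ onto $\cK_0'$ and $\cK_1$ onto $\cK_1'$; on the first piece it intertwines two multiples of the identity representation of $\cC(\cN_\cQ)$, which is possible precisely when $\dim\cG=\dim\cG'$, and on the second it exhibits $\pi_1\simeq\pi_1'$. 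Conversely, from $\dim\cG=\dim\cG'$ and $\pi_1\simeq\pi_1'$ one recovers $\pi_0\simeq\pi_0'$ (the extension to $C^*(B_1,\ldots,B_n)$ of a fixed multiple of the identity representation of $\cC(\cN_\cQ)$ is unique) and then $\pi\simeq\pi'$ by taking direct sums. I expect the only genuinely non-routine point to be the one isolated above, namely identifying the minimal projection of $\cC(\cN_\cQ)$ with $(id-\Phi_{p,B})^m(I_{\cN_\cQ})$ — which is exactly where Theorem \ref{compact} enters — and pushing this identity through $\pi$ to compute the multiplicity and to kill $(id-\Phi_{p,\pi_1(B)})^m(I_{\cK_1})$; once this is in place the argument is the same Stinespring/Wold bookkeeping already used in the proof of Theorem \ref{dil2}.
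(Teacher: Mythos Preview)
Your proposal is correct and follows essentially the same approach the paper indicates: the paper does not give a separate proof of this corollary but says it follows from the proof of Theorem \ref{dil2} together with the standard theory of representations of $C^*$-algebras, and your argument is precisely that---use Theorem \ref{compact} to get $\cC(\cN_\cQ)\subset C^*(B_1,\ldots,B_n)$ and the rank-one projection $(id-\Phi_{p,B})^m(I_{\cN_\cQ})=P^{\cN_\cQ}_\CC$, split $\pi$ via the compact ideal, identify the multiplicity space, and invoke Lemma \ref{=0} for the coisometric part. Your treatment of the uniqueness clause (canonicity of the splitting $\cK_0=\overline{\pi(\cC(\cN_\cQ))\cK}$, then matching multiplicities and the compact-annihilating pieces) is the expected one and is more explicit than what the paper writes out.
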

We remark that under
 the hypotheses and notations of Corollary $\ref{wold}$, and setting
$V_i:=\pi(B_i)$, \ $i=1,\ldots, n$,   the following statements are
equivalent:
\begin{enumerate}
\item[(i)]
$V:=(V_1,\ldots, V_n)$ is a constrained  weighted shift in the
noncommutative variety $\cV_{p,\cQ}^m(\cK)$;
\item[(ii)] \text{\rm SOT-}$\lim\limits_{k\to\infty}
 \Phi^k_{p, V}(I)=0$;
 \item[(iii)]
$ \cK=\overline{\text{\rm span}}\left\{V_\beta \left[(id- \Phi_{p,
V})^m (I)\right] \cK:\  \beta\in \FF_n^+\right\}; $
\item[(iv)]
$\sum\limits_{\beta\in \FF_n^+} b_\beta^{(m)} V_\beta \left[(id-
\Phi_{p, V})^m (I)\right] V_\beta^*=I_\cK$, where $b_\beta^{(m)}$
are the coefficients defined by \eqref{b-al}.
\end{enumerate}

 We mention that,  under the
additional  condition that
\begin{equation*}
\overline{\text{\rm span}}\,\{B_\alpha B_\beta^*:\ \alpha,\beta\in
\FF_n^+\}=C^*(B_1,\ldots, B_n),
\end{equation*}
  the map $\Psi_{p,T,\cQ}$ in the
proof of  Theorem \ref{dil2} is unique. The uniqueness of the
minimal Stinespring representation  \cite{St} and the  the above-mentioned
Wold type decomposition  imply  the
uniqueness of the minimal dilation of Theorem \ref{dil2}.

\begin{corollary} \label{part-cases}
Let $V:=(V_1,\ldots, V_n)\in {\cV}^m_{p,\cQ}(\cK)$ be the dilation
 of $T:=(T_1,\ldots,T_n)\in \cV_{p,\cQ}^m(\cH)$, given by Theorem $\ref{dil2}$.
 Then,
\begin{enumerate}
\item[(i)]
 $V$ is a constrained  weighted shift if and only if
$T$ is  a pure $n$-tuple of operators;
\item[(ii)]
   $\Phi_{p,V}(I_{\widetilde \cK})=I_{\widetilde \cK}$ \
if and only if \  $\Phi_{p,T}(I_\cH)=I_\cH$.
\end{enumerate}
\end{corollary}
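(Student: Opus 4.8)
The plan is to extract both statements directly from the structure of the dilation $V=(V_1,\dots,V_n)$ produced in Theorem~\ref{dil2}, together with the description of $\cK_0$ and $\cK_\pi$ obtained in its proof. Recall that $V_i = (B_i\otimes I_{\overline{\Delta_{p,m,T}\cH}})\oplus \pi(B_i)$ on $\widetilde\cK = (\cN_\cQ\otimes\overline{\Delta_{p,m,T}\cH})\oplus\cK_\pi$, where $\pi$ annihilates the compacts and $\Phi_{p,\pi(B)}(I_{\cK_\pi}) = I_{\cK_\pi}$. Since $\Phi_{p,V}$ respects this direct sum decomposition, we have for every $k\in\NN$
\begin{equation*}
\Phi_{p,V}^k(I_{\widetilde\cK}) = \left(\Phi_{p,B\otimes I}^k(I_{\cN_\cQ\otimes\overline{\Delta_{p,m,T}\cH}})\right)\oplus \Phi_{p,\pi(B)}^k(I_{\cK_\pi}) = \left(\Phi_{p,B}^k(I_{\cN_\cQ})\otimes I\right)\oplus I_{\cK_\pi}.
\end{equation*}
The compression argument of Theorem~\ref{prop-shif}(iii) (applied on $\cN_\cQ$, as noted after the definition of $B_i$) gives $\text{\rm SOT-}\lim_{k\to\infty}\Phi_{p,B}^k(I_{\cN_\cQ}) = 0$; hence $\text{\rm SOT-}\lim_{k\to\infty}\Phi_{p,V}^k(I_{\widetilde\cK}) = 0\oplus I_{\cK_\pi} = P_{\cK_\pi}$, the orthogonal projection of $\widetilde\cK$ onto $\cK_\pi$.

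For part (i), by definition $V$ is a constrained weighted shift exactly when $\cK_\pi = \{0\}$ (equivalently $\pi = 0$), i.e.\ exactly when $P_{\cK_\pi} = 0$, i.e.\ when $\text{\rm SOT-}\lim_k\Phi_{p,V}^k(I_{\widetilde\cK}) = 0$, which says $V$ is pure. So I must show this is equivalent to $T$ being pure. One direction is immediate: since $\cH$ is co-invariant, $T_i^* = V_i^*|_\cH$, so $Q_{f,T} = \text{\rm SOT-}\lim_k\Phi_{p,T}^k(I_\cH) = P_\cH\big(\text{\rm SOT-}\lim_k\Phi_{p,V}^k(I_{\widetilde\cK})\big)|_\cH = P_\cH P_{\cK_\pi}|_\cH$; if $V$ is pure this is $0$, so $T$ is pure. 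Conversely, if $T$ is pure, I invoke the last sentence of Theorem~\ref{dil1} (whose proof carries over to the $\cV_{p,\cQ}^m$ setting, the space $\cK$ there being exactly the "extra" summand): the $\cK$-part vanishes precisely when $T$ is pure, so $\cK_\pi = \{0\}$ and $V$ is a constrained weighted shift. Alternatively one argues directly: $\cK_\pi$ is spanned by $\pi(B_\beta)\big[(id-\Phi_{p,\pi(B)})^m(I)\big]\cK_\pi$, but $\Phi_{p,\pi(B)}(I_{\cK_\pi}) = I_{\cK_\pi}$ forces $(id-\Phi_{p,\pi(B)})^m(I_{\cK_\pi}) = 0$, so $\cK_\pi$ would have to be $\{0\}$ unless $\cK_\pi$ contributes to the minimal Stinespring space — and by minimality and the computation of $\dim\cG$ in the proof of Theorem~\ref{dil2}, the size of the relevant range equals $\dim\overline{\Delta_{p,m,T}\cH}$, which is $\{0\}$ precisely when $T$ is pure.

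For part (ii): $\Phi_{p,V}(I_{\widetilde\cK}) = I_{\widetilde\cK}$ reads, on the two summands, as $\Phi_{p,B}(I_{\cN_\cQ}) = I_{\cN_\cQ}$ and $\Phi_{p,\pi(B)}(I_{\cK_\pi}) = I_{\cK_\pi}$. The second always holds by construction. The first: applying Theorem~\ref{prop-shif}(ii)--(iii) on $\cN_\cQ$ shows $(id-\Phi_{p,B})^m(I_{\cN_\cQ}) = P_\CC^{\cN_\cQ}$ (the compression of $P_\CC$), which is nonzero whenever $1\in\cN_\cQ$ and, even when $1\notin\cN_\cQ$, is nonzero as long as $\cN_\cQ\neq\{0\}$ by the Wold-type analysis; hence $\Phi_{p,B}(I_{\cN_\cQ})\neq I_{\cN_\cQ}$, so $\Phi_{p,V}(I_{\widetilde\cK}) = I_{\widetilde\cK}$ forces the $(\cN_\cQ\otimes\overline{\Delta_{p,m,T}\cH})$-summand to be trivial, i.e.\ $\overline{\Delta_{p,m,T}\cH} = \{0\}$, i.e.\ $\Delta_{p,m,T} = 0$, i.e.\ $(id-\Phi_{p,T})^m(I_\cH) = 0$; by Lemma~\ref{=0} this is equivalent to $\Phi_{p,T}(I_\cH) = I_\cH$. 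Conversely, if $\Phi_{p,T}(I_\cH) = I_\cH$ then $(id-\Phi_{p,T})^m(I_\cH) = 0$, so $\overline{\Delta_{p,m,T}\cH} = \{0\}$, the first summand disappears, $\widetilde\cK = \cK_\pi$, and $\Phi_{p,V}(I_{\widetilde\cK}) = \Phi_{p,\pi(B)}(I_{\cK_\pi}) = I_{\cK_\pi} = I_{\widetilde\cK}$.

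The main obstacle is bookkeeping rather than a genuine difficulty: one must make sure the direct-sum decomposition of $\widetilde\cK$ used in Theorem~\ref{dil2} is genuinely reducing for each $V_i$ (it is, by the block-diagonal form of $V_i$), so that $\Phi_{p,V}$ and all its powers split as claimed, and one must be careful in part~(i) to reconcile the two possible definitions of "$V$ is a constrained weighted shift" — the structural one ($\cK_\pi = \{0\}$) and the operator-theoretic one ($V$ pure) — which is exactly the content of the equivalences listed right after Corollary~\ref{wold}. Once those identifications are in place, both (i) and (ii) fall out of the SOT-limit computation above combined with Theorem~\ref{prop-shif}, Theorem~\ref{dil1}, and Lemma~\ref{=0}.
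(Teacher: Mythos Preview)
Your treatment of part~(ii) is essentially the paper's argument; the only cosmetic difference is that the paper passes through $(id-\Phi_{p,V})^m(I_{\widetilde\cK})=0$ and then invokes Lemma~\ref{=0}, while you work directly with $\Phi_{p,V}(I_{\widetilde\cK})=I_{\widetilde\cK}$ on the two summands. Both are fine.

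Part~(i), however, has a genuine gap in the converse direction ($T$ pure $\Rightarrow$ $\cK_\pi=\{0\}$). Your computation $Q_{p,T}=P_\cH P_{\cK_\pi}|_\cH$ is correct and already gives $\cH\perp\cK_\pi$ when $T$ is pure, but you never extract $\cK_\pi=\{0\}$ from this. The two arguments you offer instead do not work. First, invoking Theorem~\ref{dil1} is not legitimate: that theorem constructs a \emph{different} dilation (via the explicit operator $Y$ on $\overline{Q_{f,T}^{1/2}\cH}$), whereas Theorem~\ref{dil2} builds $\cK_\pi$ from a Stinespring decomposition; there is no a priori identification of the two ``extra summands''. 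Second, your alternative argument is simply wrong: you write that $\dim\overline{\Delta_{p,m,T}\cH}=0$ ``precisely when $T$ is pure'', but $\Delta_{p,m,T}=0$ means $(id-\Phi_{p,T})^m(I)=0$, which by Lemma~\ref{=0} is equivalent to $\Phi_{p,T}(I)=I$ --- the opposite extreme from purity. You have confused the defect space governing the shift summand with the quantity $Q_{p,T}$ governing the $\cK_\pi$ summand.

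The correct finish, which the paper uses, is the minimality built into Theorem~\ref{dil2}: $\cH$ is $*$-cyclic for $\{V_i\}$ in $\widetilde\cK$. Since $\cN_\cQ\otimes\overline{\Delta_{p,m,T}\cH}$ is reducing for each $V_i$ and (by your own computation) contains $\cH$, $*$-cyclicity forces $\widetilde\cK=\cN_\cQ\otimes\overline{\Delta_{p,m,T}\cH}$, hence $\cK_\pi=\{0\}$. You have all the pieces for this; you just need to replace the two faulty detours by this one line.
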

\begin{proof} According to Theorem \ref{dil2}, we have
$$
\Phi_{p,T}^k(I_\cH)=P_\cH \left[\begin{matrix}
\Phi_{p,B}^k(I_{\cN_\cQ})\otimes
I_{\overline{\Delta_{p,m,T}\cH}}&0\\0&
I_{\cK_\pi}\end{matrix}\right]|\cH\quad \text{ for } \ k=1,2,\ldots,
$$
which implies
$$
\text{\rm SOT-}\lim_{k\to\infty} \Phi_{p,T}^k(I_\cH)=P_\cH
\left[\begin{matrix}
 0&0\\0& I_{\cK_\pi}\end{matrix}\right]|\cH.
$$
Consequently,   $T$ is pure   if and only if $P_\cH P_{\cK_\pi}
|\cH=0$. The latter condition is equivalent to $\cH\perp (0\oplus
\cK_\pi)$, which, according to Theorem \ref{dil2}, is equivalent to
  $\cH\subset \cN_\cQ\otimes\overline{\Delta_{p,m,T}\cH}$. On
the other hand, since $\cN_\cQ\otimes\overline{\Delta_{p,m,T}\cH}$
is reducing for $V_1,\ldots, V_n$, and $\widetilde \cK$ is the
smallest reducing subspace for   $V_1,\ldots, V_n$, which contains
$\cH$, we must have
$\widetilde\cK=\cN_\cQ\otimes\overline{\Delta_{p,m,T}\cH}$.
Therefore, item (i) holds.

   To prove part (ii), note that
$$\left( id-\Phi_{p,V}\right)^m(I_{\widetilde
\cK})= \left[\begin{matrix}  \left[\left(
id-\Phi_{p,B}\right)^m(I_{\cN_\cQ})\right]\otimes
I_{\overline{\Delta_{p,m,T}\cH}}&0\\0&
 0\end{matrix}\right].
$$
Hence, we  deduce that $\left( id-\Phi_{p,V}\right)^m(I_{\widetilde
\cK})=0$ if and only if  $\left[\left(
id-\Phi_{p,B}\right)^m(I_{\cN_\cQ})\right]\otimes
I_{\overline{\Delta_{p,m,T}\cH}}=0$. On the other hand,  we know
that   $\left( id-\Phi_{p,B}\right)^m(I_{\cN_\cQ})=P^{\cN_\cQ}_\CC$.
Consequently, $\left( id-\Phi_{p,V}\right)^m(I_{\widetilde \cK})=0$
if and only if $\Delta_{p,m,T}=0$.
 Now, using Lemma \ref{=0},  we obtain the
equivalence in part (ii). The proof is complete.
\end{proof}

We mention now  a few  remarkable particular cases, when Theorem \ref{dil2} applies.

\begin{remark}
\begin{enumerate}
\item[(i)] In the particular case when $m=1$, $n=1$,
$p=X$, and $\cQ=0$, we obtain the classical isometric dilation
theorem for contractions obtained by Sz.-Nagy $($see \cite{Sz1},
\cite{SzF-book}$)$.
\item[(ii)]
When $m=1$, $n\geq 2$,   $p=X_1+\cdots +X_n$,  and $\cQ=0$ we obtain the
noncommutative dilation theorem for  row contractions (see \cite{F},
\cite{B}, \cite{Po-isometric}).
\item[(iii)]
In
   the single variable case, when  $m\geq 2$,  $n=1$,  $p=X$, and $\cQ=0$,  the
    corresponding domain coincides with the set of all
    $m$-hypercontractions  studied by Agler  in \cite{Ag1}, \cite{Ag2},
    and  recently by
    Olofsson \cite{O1}, \cite{O2}.
\item[(iv)]
 When $m\geq 2$, $n\geq2$, $p=X_1+\cdots +X_n$,  and  $\cQ=0$, the elements
    of the corresponding domain ${\bf D}_p^m(\cH)$ can be seen as
    multivariable noncommutative analogues of Agler's
    $m$-hypercontractions.
\item[(v)]
In the particular case when $\cQ_c$  consists of  the polynomials
  $Z_iZ_j-Z_jZ_i$,  $i,j=1,\ldots, n$, we recover  several results concerning
   model theory for commuting $n$-tuples of operators.
   The case $n\geq 2$, $m\geq 2$, $p=X_1+\cdots + X_n$, and $\cQ=\cQ_c$, was studied
   by Athavale \cite{At}, M\" uller \cite{M}, M\" uller-Vasilescu \cite{MV},
   Vasilescu \cite{Va}, and Curto-Vasilescu \cite{CV1}.
\item[(vi)]
When  $p$ is a positive regular noncommutative polynomial and  $\cQ$
consists of  the polynomials
$$
W_iW_j-W_jW_i,\qquad  i,j=1,\ldots, n, $$ we obtain the dilation
theorem of  S. Pott \cite{Pot}.
\item[(vii)]  When $m=1$, $n\geq 1$, and $p$ is
   any positive regular noncommutative  polynomial
     we find  the dilation theorem
   obtained in   \cite{Po-domains}.
\end{enumerate}
\end{remark}

 We expect to use the results of
the present  paper to obtain functional models for the elements of
the noncommutative domain ${\bf D}_f^m(\cH)$ (resp. subvariety
$\cV_{f,\cQ}^m(\cH)$), based on characteristic functions.

      \bigskip

      %\Refs
      %\widestnumber\key{BFPQR}
      %\def\n{\key}
       %

      \end{document}